\numberwithin{equation}{section}
\newtheorem{theorem}{Theorem}[section]
\newtheorem{lemma}[theorem]{Lemma}
\newtheorem{proposition}[theorem]{Proposition}
\newtheorem{corollary}[theorem]{Corollary}
\newtheoremstyle{remarkstyle}
{}{}{}{ }{\bfseries}{.}{ }{\thmname{#1}\thmnumber{ #2}\thmnote{ (#3)}}
\theoremstyle{remarkstyle}
\newtheorem{remark}{Remark}[section]
\newtheorem{definition}{Definition}[section]
\newcommand{\R}{\mathbb R}
\newcommand{\vareps}{\varepsilon}
\DeclareMathOperator*{\loc}{loc}
\DeclareMathOperator*{\rad}{{\bf r}}
\DeclareMathOperator*{\opt}{opt}
\DeclareMathOperator*{\sha}{sha}
\DeclareMathOperator*{\supp}{supp}
\DeclareMathOperator*{\rea}{Re}
\DeclareMathOperator*{\ima}{Im}
\DeclareMathOperator*{\inh}{inh}
\DeclareMathOperator*{\gamc}{{\gamma_c}}
\DeclareMathOperator*{\sigc}{{\sigma_c}}
\newcommand{\scal}[1]{\left\langle #1 \right\rangle}
\title[Inhomogeneous NLS]
{Long time dynamics and blow-up for the focusing inhomogeneous nonlinear Schr\"odinger equation with spatial growing nonlinearity}
\author[V. D. Dinh, M. Majdoub \& T. Sanouni]{Van Duong Dinh, Mohamed Majdoub, and Tarek Saanouni}
\address[V. D. Dinh]{Ecole Normale Sup\'erieure de Lyon \& CNRS, UMPA (UMR 5669), France.\newline
	Department of Mathematics, HCMC University of Education, 280 An Duong Vuong, Ho Chi Minh City, Vietnam.}
\email{\sl contact@duongdinh.com}
\address[M. Majdoub]{Department of Mathematics, College of Science, Imam Abdulrahman Bin Faisal University, P. O. Box 1982, Dammam, Saudi Arabia.\newline
	Basic and Applied Scientific Research Center, Imam Abdulrahman Bin Faisal University, P.O. Box 1982, 31441, Dammam, Saudi Arabia.}
\email{\sl mmajdoub@iau.edu.sa}
\address[T. Saanouni]{Department of Mathematics, College of Sciences and Arts of Uglat Asugour, Qassim University, Buraydah, Kingdom of Saudi Arabia.\newline
	University of Tunis El Manar, Faculty of Sciences of Tunis, LR03ES04 partial differential equations and applications, 2092 Tunis, Tunisia.}
\email{\sl  t.saanouni@qu.edu.sa}
\email{\sl tarek.saanouni@ipeiem.rnu.tn}
\subjclass[2010]{35Q55, 35B44; 35P25}
\keywords{Inhomogeneous nonlinear Schr\"odinger equation; Gagliardo-Nirenberg inequality; Global existence; Scattering; Blowup.}
\begin{document}
	
	\begin{abstract}
		We investigate the Cauchy problem for the focusing inhomogeneous nonlinear Schr\"odinger equation $i \partial_t u + \Delta u = - |x|^b |u|^{p-1} u$ in the radial Sobolev space $H^1_{\rad}(\R^N)$, where $b>0$ and $p>1$. We show the global existence and energy scattering in the inter-critical regime, i.e., $p>\frac{N+4+2b}{N}$ and $p<\frac{N+2+2b}{N-2}$ if $N\geq 3$. We also obtain blowing-up solutions for the mass-critical and mass-supercritical nonlinearities. The main difficulty, coming from the spatial growing nonlinearity, is overcome by refined Gagliardo-Nirenberg type inequalities. Our proofs are based on improved Gagliardo-Nirenberg inequalities, the Morawetz-Sobolev approach of Dodson and Murphy, radial Sobolev embeddings, and localized virial estimates.
	\end{abstract}
	
	\maketitle
	
	\section{Introduction}
	\label{S1}
	\setcounter{equation}{0}
	
	In this paper, we consider the Cauchy problem for the following focusing inhomogeneous nonlinear Schr\"odinger equation
	\begin{align} \label{INLS}
	i \partial_t u + \Delta u = - |x|^b |u|^{p-1} u, \quad (t,x) \in \R_+ \times \R^N,
	\end{align}
	where $b>0$ and $p>1$. The equation \eqref{INLS} is a special case of a more general inhomogeneous nonlinear Schr\"odinger equation
	\begin{align} \label{INLS-K}
	i \partial_t u + \Delta u = K(x) |u|^{p-1} u
	\end{align}
	which arises in various physical contexts such as the propagation of a laser beam and plasma waves. Here $u$ is the electric field in laser optics and $K$ is proportional to the electric density \cite{Gill, LW}. For $p = 3$, the equation \eqref{INLS} can be viewed as a model of dilute Bose-Einstein condensate when the two-body interactions of the condensate are considered \cite{BPVT,TS}.
	
	The Cauchy problem for \eqref{INLS-K} was first investigated by Merle \cite{Merle} who proves the existence of blow-up solutions in the mass-critical regime and under some assumptions on $K$ including in particular $k_1\leq K(x)\leq k_2$ with $k_1, k_2$ positive constants. Later on, the stability of standing waves was studied in \cite{FW, LWW} for $K(x)=K(\varepsilon |x|)$ with $K \in C^4(\R^N)\cap L^\infty(\R^N)$, $\varepsilon>0$ small, and $p\geq 1+\frac{4}{N}$. Recently, the Cauchy problem for \eqref{INLS-K} with $K(x) = \pm |x|^{-b}$ has attracted a lot of interested in the mathematical community (see e.g., \cite{GS, Guzman, Dinh, AT, KLS, Dinh-NA, Farah, BL, FG-JDE, FG-BBMS, Campos, Dinh-2D, MMZ, CFGM, DK-SIAM, Murphy} and references therein). For instance, we refer to \cite{GS, Guzman, Dinh, AT, KLS} for the local well-posedness results, to \cite{Farah, Dinh-NA, BL} for the existence of blow-up solutions, and to
	\cite{FG-JDE, FG-BBMS, Campos, Dinh-2D, MMZ, CFGM, DK-SIAM, Murphy} for sharp thresholds for scattering versus blow-up.
	
	The main difficulty in studying \eqref{INLS} is the spatial growth of $|x|^b$ at infinity. This prevents us to simply use Sobolev embedding (as for $K(x)$ is bounded) or Hardy inequality (as for $K(x)=\pm |x|^{-b}$) to control the potential energy $\mathlarger{\int} |x|^b|u(t,x)|^{p+1} dx$ for $H^1$-solutions. To handle this nonlinearity, Chen and Guo \cite{CG-DCDS-B, Chen, Chen-CMJ, CG-AM} studied \eqref{INLS} in the subspace of $H^1(\R^N)$ consisting of radial functions, namely
	\[
	H^1_{\rad}(\R^N) := \left\{f \in H^1(\R^N) \ : \ f(x)= f(|x|)\right\}.
	\]
	By means of a Gagliardo-Nirenberg type inequality (see \eqref{GN-ineq}) and the energy method (see \cite[Theorem 3.3.9, p. 71]{Cazenave}), it was shown that \eqref{INLS} is locally well-posed in $H^1_{\rad}(\R^N)$ for
	\begin{align} \label{cond-CG}
	N\geq 2, \quad b>0, \quad p>1+\frac{2b}{N-1}, \quad  p<\frac{N+2}{N-2}+\frac{2b}{N-1} \text{ if } N\geq 3.
	\end{align}
	Moreover, there are conservation laws of mass and energy
	\begin{align*}
	M(u(t)) &=\|u(t)\|^2_{L^2} = M(u_0), \tag{Mass} \\
	E(u(t)) &= \frac{1}{2} \|\nabla u(t) \|^2_{L^2} - \frac{1}{p+1} \int |x|^b |u(t,x)|^{p+1}dx = E(u_0). \tag{Energy}
	\end{align*}
	
	Equation \eqref{INLS} has the following scaling invariance
	\[
	u_\lambda(t,x):= \lambda^{\frac{2+b}{p-1}} u(\lambda^2 t, \lambda x), \quad \lambda>0.
	\]
	A direct computation gives
	\[
	\|u_\lambda(0)\|_{\dot{H}^\gamma} = \lambda^{\gamma+\frac{2+b}{p-1}-\frac{N}{2}} \|u_0\|_{\dot{H}^\gamma}
	\]
	which shows that the critical regularity space is $\dot{H}^{\gamc}$, where
	\begin{align} \label{gamc}
	\gamc:= \frac{N}{2}-\frac{2+b}{p-1}.
	\end{align}
	The case $\gamc=0$ (resp. $\gamc=1$) corresponds to the mass-critical nonlinearity $p=\frac{N+4+2b}{N}$ (resp. the energy-critical nonlinearity $p=\frac{N+2+2b}{N-2}$). When $0<\gamc<1$ or $p>\frac{N+4+2b}{N}$ and $p<\frac{N+2+2b}{N-2}$ if $N\geq 3$, \eqref{INLS} is called mass-supercritical and energy-subcritical (inter-critical for short). In this case, for our later purposes, we define the following exponent
	\begin{align} \label{sigc}
	\sigc:= \frac{1-\gamc}{\gamc} = \frac{4+2b-(N-2)(p-1)}{N(p-1)-4-2b}.
	\end{align}
	As $\frac{N+2}{N-2}+\frac{2b}{N-1}< \frac{N+2+2b}{N-2}$ for $N\geq 3$, there is a gap in the earlier local well posedness results in \cite{CG-DCDS-B, Chen, Chen-CMJ,CG-AM} since the range $\frac{N+2}{N-2}+\frac{2b}{N-1}\leq p \leq \frac{N+2+2b}{N-2}$ is not covered. In Section \ref{S2}, we will fill this gap by using the energy method of Cazenave \cite{Cazenave} and radial Sobolev inequalities due to Cho and Ozawa \cite{CO} (see Propositions \ref{prop-lwp} and \ref{prop-lwp-ener}).
	
	The main purpose of the present paper is to investigate long time dynamics such as global existence, energy scattering, and finite time blow-up for \eqref{INLS}. It is well-known that long time dynamics of \eqref{INLS} is strongly related to the following Gagliardo-Nirenberg inequality:
	\begin{align} \label{GN-ineq}
	\int |x|^b |f(x)|^{p+1} dx \leq C(N,p,b) \|\nabla f\|^{\frac{N(p-1)-2b}{2}}_{L^2} \|f\|^{\frac{4+2b-(N-2)(p-1)}{2}}_{L^2}, \quad f \in H^1_{\rad}(\R^N).
	\end{align}
	In a series of work \cite{CG-DCDS-B, Chen, Chen-CMJ, CG-AM}, Chen and Guo proved \eqref{GN-ineq} for $N, b, p$ satisfying \eqref{cond-CG}. As we will see in Lemma \ref{lem-GN-ineq} that the upper bound for $p$ in \eqref{cond-CG} is not optimal when $N\geq 3$ as there is still a gap below the energy-critical regime. In this paper, we revisit the proof of Gagliardo-Nirenberg inequality and prove that \eqref{GN-ineq} holds with $p$ up to the energy-critical exponent $\frac{N+2+2b}{N-2}$. We also prove the optimality of the lower exponent $1+\frac{2b}{N-1}$. See Section \ref{S2} below.

	We first aim to show the global existence and energy scattering for \eqref{INLS} in the inter-critical regime, i.e., $p>\frac{N+4+2b}{N}$ and $p<\frac{N+2+2b}{N-2}$ if $N\geq 3$. We have the following global existence result below a mass-energy threshold.
	
	\begin{proposition} \label{prop-gwp}
		Let $N\geq 2, b>0, p>\max \left\{\frac{N+4+2b}{N}, 1+\frac{2b}{N-1}\right\}$, and $p<\frac{N+2+2b}{N-2}$ if $N\geq 3$. Let $u_0 \in H^1_{\rad}(\R^N)$ satisfy
		\begin{align} \label{cond-scat-inte}
		\begin{aligned}
		E(u_0)(M(u_0))^{\sigc} &< E(Q) (M(Q))^{\sigc}, \\
		\|\nabla u_0\|_{L^2} \|u_0\|^{\sigc}_{L^2} &< \|\nabla Q\|_{L^2} \|Q\|^{\sigc}_{L^2},
		\end{aligned}
		\end{align}
		where $Q$ is the unique positive radial solution to
		\begin{align} \label{equ-Q}
		-\Delta Q + Q - |x|^b |Q|^{p-1}Q =0.
		\end{align}
		Then the corresponding solution to \eqref{INLS} with initial data $\left. u\right|_{t=0}=u_0$ exists globally in time, i.e., $T^*=\infty$.
	\end{proposition}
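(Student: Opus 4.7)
The plan is to follow the classical Weinstein-type variational argument: combine the sharp Gagliardo--Nirenberg inequality \eqref{GN-ineq} with the conservation of mass and energy to derive a uniform $H^1$ bound on $u(t)$, and then invoke the blow-up alternative from the local well-posedness of Section \ref{S2}.

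First I would set $\alpha:=\tfrac{N(p-1)-2b}{2}$ and $\beta:=\tfrac{4+2b-(N-2)(p-1)}{2}$, so that \eqref{GN-ineq} reads $\int|x|^{b}|f|^{p+1}\,dx\le C_{\mathrm{opt}}\,\|\nabla f\|_{L^2}^{\alpha}\|f\|_{L^2}^{\beta}$, where $C_{\mathrm{opt}}$ denotes the sharp constant (attained by $Q$ via Lemma \ref{lem-GN-ineq}). A direct manipulation using \eqref{sigc} yields the three bookkeeping identities $\alpha+\beta=p+1$, $\alpha-2>0$ (inter-critical regime), and $\sigc(\alpha-2)=\beta$, so that $\sigc+\beta/2=\sigc\alpha/2$. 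Writing $X(t):=\|\nabla u(t)\|_{L^2}M(u_0)^{\sigc/2}$, mass conservation together with the sharp GN inequality gives
\begin{align*}
E(u_0)M(u_0)^{\sigc} \;=\; E(u(t))M(u(t))^{\sigc} \;\ge\; \tfrac{1}{2}X(t)^{2}-\tfrac{C_{\mathrm{opt}}}{p+1}X(t)^{\alpha} \;=:\;h(X(t)).
\end{align*}

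Next I would identify the threshold. Multiplying \eqref{equ-Q} by $Q$ and by $x\cdot\nabla Q$ and integrating by parts (justified by the standard decay of $Q$ provided by Lemma \ref{lem-GN-ineq}) produces the Pohozaev identities
\begin{align*}
\|\nabla Q\|_{L^2}^{2}=\tfrac{\alpha}{p+1}\!\int\! |x|^{b}|Q|^{p+1}dx, \qquad \|Q\|_{L^2}^{2}=\tfrac{\beta}{p+1}\!\int\! |x|^{b}|Q|^{p+1}dx.
\end{align*}
Inserting these into $C_{\mathrm{opt}}^{-1}=\inf_f \|\nabla f\|^{\alpha}\|f\|^{\beta}\big/\!\int\!|x|^{b}|f|^{p+1}$ (attained at $Q$) pins down $C_{\mathrm{opt}}$ in terms of $Q$, and a short calculation gives that $h$ has its unique positive maximum at $X_{*}=\|\nabla Q\|_{L^2}\|Q\|_{L^2}^{\sigc}$ with $h(X_{*})=E(Q)M(Q)^{\sigc}$.

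With the threshold identified, the hypotheses \eqref{cond-scat-inte} become exactly $E(u_0)M(u_0)^{\sigc}<h(X_{*})$ and $X(0)<X_{*}$. Since $\alpha>2$, $h$ is strictly increasing on $[0,X_{*}]$, strictly decreasing on $[X_{*},\infty)$, and tends to $-\infty$ at infinity, so the sublevel set $\{X\ge 0:h(X)\le E(u_0)M(u_0)^{\sigc}\}$ decomposes as $[0,X_{-}]\cup[X_{+},\infty)$ with $0\le X_{-}<X_{*}<X_{+}$. The displayed inequality above combined with $X(0)<X_{*}$ locates $X(0)\in[0,X_{-}]$; continuity of $t\mapsto\|\nabla u(t)\|_{L^2}$ on the maximal existence interval $[0,T^{*})$ then forbids $X(t)$ from crossing the band $(X_{-},X_{+})$, and a standard bootstrap yields $X(t)\le X_{-}<X_{*}$ for all $t\in[0,T^{*})$. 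This provides the uniform bound $\|\nabla u(t)\|_{L^2}^{2}\le X_{-}^{2}\,M(u_0)^{-\sigc}$, whence the blow-up alternative from Section \ref{S2} forces $T^{*}=\infty$.

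The only nontrivial technical input is the sharp GN identity $h(X_{*})=E(Q)M(Q)^{\sigc}$: this hinges on the Pohozaev relations for the singular elliptic problem \eqref{equ-Q}, where the spatial weight $|x|^{b}$ must be handled with some care in the boundary/approximation step. Once these identities are in place, the spatial growth plays no further role, being absorbed by \eqref{GN-ineq}, and the rest of the argument is a routine scale-invariant continuity argument.
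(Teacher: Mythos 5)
Your proposal is correct and follows essentially the same route as the paper, whose proof of Proposition \ref{prop-gwp} is carried out inside the proof of Lemma \ref{lem-coer}: the sharp Gagliardo--Nirenberg inequality together with the Pohozaev identities identifies the threshold $F\left(\|\nabla Q\|_{L^2}\|Q\|_{L^2}^{\sigc}\right)=E(Q)(M(Q))^{\sigc}$, a continuity argument traps $\|\nabla u(t)\|_{L^2}\|u(t)\|_{L^2}^{\sigc}$ strictly below $\|\nabla Q\|_{L^2}\|Q\|_{L^2}^{\sigc}$, and the blow-up alternative of Proposition \ref{prop-lwp} gives $T^*=\infty$. Only your references are off: the attainment of the sharp constant and the decay of $Q$ needed for the Pohozaev identities come from Theorem \ref{theo-GN-ineq}, Proposition \ref{prop-phi}, and Lemma \ref{lem-poho-iden}, not from Lemma \ref{lem-GN-ineq}.
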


	\begin{remark}
		The existence and uniqueness of positive radial solution to \eqref{equ-Q} will be addressed in Section \ref{S2}.
	\end{remark}

	Once solutions exist globally in time, a nature question arises that do these solutions scatter at infinity? In this direction, we have the following theorem.
	
	\begin{theorem} \label{theo-scat-inte}
		Let $N\geq 2$, $b>0$, $p>\frac{N+4}{N}+\frac{2b}{N-1}$, and $p<\frac{N+2+2b}{N-2}$ if $N\geq 3$. Let $u_0 \in H^1_{\rad}(\R^N)$ satisfy \eqref{cond-scat-inte}. Then the corresponding global solution to \eqref{INLS} scatters in $H^1_{\rad}(\R^N)$ in the sense that there exists $u^+ \in H^1_{\rad}(\R^N)$ such that
		\[
		\lim_{t\rightarrow \infty} \|u(t) - e^{it\Delta} u^+\|_{H^1} =0.
		\]
	\end{theorem}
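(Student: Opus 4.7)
The plan is to implement the Morawetz--Sobolev strategy of Dodson and Murphy, suitably adapted to the weighted nonlinearity $|x|^b|u|^{p-1}u$.

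First, I would establish energy trapping. Conservation of mass and energy together with the sharp Gagliardo--Nirenberg inequality \eqref{GN-ineq} and a standard continuity argument imply that \eqref{cond-scat-inte} is preserved by the flow: there exists $\delta=\delta(u_0)>0$ such that
\[
\|\nabla u(t)\|_{L^2}\|u(t)\|_{L^2}^{\sigc}\ \leq\ (1-\delta)\|\nabla Q\|_{L^2}\|Q\|_{L^2}^{\sigc},\qquad t\in\R.
\]
Using the Pohozaev identities satisfied by $Q$, this upgrades to the coercivity bound
\[
\|\nabla u(t)\|_{L^2}^2\ -\ \frac{N(p-1)-2b}{2(p+1)}\int|x|^b|u(t,x)|^{p+1}\,dx\ \geq\ c\,\|\nabla u(t)\|_{L^2}^2,
\]
and in particular $\sup_{t\in\R}\|u(t)\|_{H^1}<\infty$.

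Next, I would prove a scattering criterion tailored to the inhomogeneous setting: any global $H^1_{\rad}$ solution with uniformly bounded $H^1$ norm that admits sequences $t_n,R_n\to\infty$ with
\[
\int_{|x|\geq R_n}|x|^b|u(t_n,x)|^{p+1}\,dx\ \longrightarrow\ 0
\]
necessarily scatters in $H^1_{\rad}$. This is a Tao-type perturbative criterion: once the exterior potential energy at $t_n$ is small, the nonlinear evolution on $[t_n,\infty)$ can be bootstrapped against the linear one via weighted Strichartz estimates inherited from the local theory of Section \ref{S2}.

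To produce such times I would run a localized virial estimate. Take a radial $\phi_R\in C^\infty(\R^N)$ with $\phi_R(x)=|x|^2$ on $|x|\leq R$, constant on $|x|\geq 2R$, and $|\nabla\phi_R|\lesssim R$. Setting $M_R(t):=2\,\ima\int\nabla\phi_R\cdot\bar u\,\nabla u\,dx$, direct differentiation along \eqref{INLS} and the coercivity of Step~1 yield
\[
\tfrac{d}{dt}M_R(t)\ \geq\ c\Bigl[\|\nabla u(t)\|_{L^2}^2+\int|x|^b|u(t,x)|^{p+1}\,dx\Bigr]-\mathrm{Err}(R),
\]
where $\mathrm{Err}(R)$ collects the tails on $|x|\geq R$ and the additional term generated by the weight derivative $\nabla\phi_R\cdot\nabla(|x|^b)$. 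Combining the Cho--Ozawa radial Sobolev embedding $|u(x)|\lesssim|x|^{-(N-1)/2}\|u\|_{H^1}$ with the growth $|x|^b$ then gives $\mathrm{Err}(R)=O(R^{-\kappa})$ for some $\kappa>0$, provided $p>\frac{N+4}{N}+\frac{2b}{N-1}$. Since $|M_R(t)|\lesssim R$ uniformly by Step~1, integrating on $[T,T+I]$ and dividing by $I$ produces
\[
\tfrac{1}{I}\int_T^{T+I}\int|x|^b|u(t,x)|^{p+1}\,dx\,dt\ \lesssim\ \tfrac{R}{I}+R^{-\kappa}.
\]
Choosing $I=R^{1+\kappa/2}$ and $R=R_n\to\infty$, a pigeonhole extraction produces $t_n\to\infty$ with $\int|x|^b|u(t_n,x)|^{p+1}dx\to 0$; coupled with radial Sobolev decay at infinity this verifies the criterion, and scattering follows.

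The main obstacle is the virial analysis of Step~3: the spatial growth $|x|^b$ simultaneously introduces the new term $\frac{4b}{p+1}\int\phi_R'(r)|x|^{b-1}|u|^{p+1}$ in $\frac{d}{dt}M_R$ and amplifies the weighted tail of the potential energy on $|x|\geq R$. Balancing the radial Sobolev decay $|u(x)|\lesssim|x|^{-(N-1)/2}$ against the factor $|x|^b$ is precisely what dictates the sharp lower threshold $p>\frac{N+4}{N}+\frac{2b}{N-1}$ assumed in the theorem.
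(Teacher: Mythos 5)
Your overall architecture (energy trapping below the ground state, a localized virial/Morawetz bound, evacuation of potential energy along a sequence of times, then a perturbative scattering criterion) is the same Dodson--Murphy strategy the paper follows, but the step that carries the real weight is wrong as you stated it. Your scattering criterion asks for times $t_n$ and radii $R_n\to\infty$ with $\int_{|x|\geq R_n}|x|^b|u(t_n,x)|^{p+1}dx\to 0$; by the Strauss estimate \eqref{est-Strauss} this exterior quantity is $O(R^{-\frac{(N-1)\alpha}{2}})$ \emph{uniformly in time} for any radial solution with bounded $H^1$ norm, so the hypothesis is automatic and cannot imply scattering (the standing wave $e^{it}Q$ is a bounded radial global solution satisfying it). What the Morawetz estimate actually buys, and what the criterion must use, is smallness in the \emph{interior}: from $\int|x|^b|u(t_n)|^{p+1}dx\to0$ one gets $\int_{|x|\leq R}|u(t_n,x)|^2dx\to0$ (Corollary \ref{coro-mora-est-1}), this is propagated over a window of length $\vareps^{-\sigma}$ using $\bigl|\frac{d}{dt}\int\chi_R|u|^2dx\bigr|\lesssim R^{-1}$, and then one proves the Tao-type smallness $\|e^{i(t-T)\Delta}u(T)\|_{L^k([T,\infty),L^r)}\lesssim\vareps^\mu$ by splitting the Duhamel integral into the recent past (controlled by the interior mass smallness plus interpolation and radial Sobolev) and the distant past (controlled by dispersive estimates, and in dimension $N=2$ by the Morawetz spacetime bound itself). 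None of this is in your proposal; ``bootstrapped against the linear one via weighted Strichartz estimates'' does not substitute for it, and as written your Step 2 plus Step 3 would ``prove'' scattering for the soliton. A secondary inaccuracy: your differential inequality $\frac{d}{dt}M_R\geq c\bigl[\|\nabla u\|_{L^2}^2+\int|x|^b|u|^{p+1}dx\bigr]-\mathrm{Err}(R)$ with the full kinetic energy cannot hold, since averaging it would force $\|\nabla u(t_n)\|_{L^2}\to0$, contradicting scattering to a nonzero state; the correct coercivity is for the localized piece $\chi_R u$ (as in \eqref{coer-est}), and only the potential-energy part survives in the averaged bound \eqref{mora-est-I}.

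There is a second gap concerning the range of $p$. Working only with the Strauss decay rate $\frac{N-1}{2}$, i.e.\ $\alpha=p-1-\frac{2b}{N-1}$, the bootstrap exponents in the criterion require $\alpha<\frac{4}{N-2}$ when $N\geq3$, so this argument only covers $p<1+\frac{2b}{N-1}+\frac{4}{N-2}$, which is strictly below the claimed upper bound $\frac{N+2+2b}{N-2}$ whenever $b>0$. The paper closes this gap by rerunning the scheme with the Cho--Ozawa embeddings \eqref{est-CO}, i.e.\ with $\alpha=p-1-\frac{2b}{N-2s}$ for a finite increasing sequence of exponents $s\in(\tfrac12,1]$ (and with \eqref{est-BL} at the endpoint), iterating until the whole interval $\frac{N+4}{N}+\frac{2b}{N-1}<p<\frac{N+2+2b}{N-2}$ is covered. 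Your proposal treats the entire range with a single $\alpha$ and so, even after repairing the criterion, would prove the theorem only for part of the stated range when $N\geq 3$.
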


	\begin{remark}
	There is a gap $\max\left\{\frac{N+4+2b}{N}, 1+\frac{2b}{N-1}\right\} <p \leq \frac{N+4}{N} + \frac{2b}{N-1}$. We believe that this gap is technical due to our current method.
	\end{remark}

	In the proof of energy scattering given in Theorem \ref{theo-scat-inte}, we avoid the concentration-compactness way, pioneered by Kenig and Merle \cite{KM}, which requires building some heavy machinery in order to obtain the desired space-time bounds. Indeed, we give a simpler method, based on Tao's scattering criterion \cite{Tao}, and Dodson and Murphy's Virial/Morawetz inequalities \cite{DM} (see also \cite{Arora, ADM, DK}). Due to the spatial growth of nonlinearity, we make an intensive use of radial Sobolev embeddings. We refer the reader to Section \ref{S4} for more details.
	
	We are also interested in showing the existence of finite time blow-up solutions to \eqref{INLS}. To our knowledge, there are no results concerning the existence of finite time blow-up solutions with data in $H^1_{\rad}(\R^N)$. Some blow-up results with data in $\Sigma_{\rad}(\R^N):= H^1_{\rad}(\R^N)\cap L^2(\R^N,|x|^2 dx)$ were derived in \cite{CG-DCDS-B, Zhu}. Our contributions in this direction are the following results.
	
	\begin{theorem}[Mass-critical blow-up solutions] \label{theo-blow-mass-appl}
		Let $N\geq 3$, $0<b \leq N-2$, and $p = \frac{N+4+2b}{N}$. Let $u_0 \in H^1_{\rad}(\R^N)$ be such that $E(u_0)<0$. Then the corresponding solution to \eqref{INLS} with initial data $\left. u\right|_{t=0}=u_0$ blows up in finite time.
	\end{theorem}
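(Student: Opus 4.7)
The plan is a localized virial argument in the spirit of Ogawa--Tsutsumi, tailored to exploit the spatial weight $|x|^b$. Since $u_0 \in H^1_{\rad}(\R^N)$ is not assumed to lie in $L^2(|x|^2\,dx)$, Glassey's classical variance identity cannot be used directly. Fix a smooth radial cutoff $\phi:[0,\infty)\to[0,\infty)$ satisfying $\phi(r)=r^2$ for $r\leq 1$, $\phi(r)$ constant for $r\geq 2$, and $\phi''(r)\leq 2$ on $[0,\infty)$. For $R\geq 1$ set $\phi_R(x):=R^2\phi(|x|/R)$ and
\[
V_R(t):=\int \phi_R(x)\,|u(t,x)|^2\,dx.
\]
A standard computation based on \eqref{INLS}, integration by parts, and the reduction $\mathrm{Hess}(\phi_R)(\nabla u,\nabla\bar u)=\phi_R''|\nabla u|^2$ valid for radial $u$ yields
\[
V_R''(t)= 4\int \phi_R''|\nabla u|^2\,dx -\int \Delta^2\phi_R\,|u|^2\,dx -\frac{2(p-1)}{p+1}\int \Delta\phi_R\,|x|^b|u|^{p+1}\,dx -\frac{4b}{p+1}\int \phi_R'(r)\,r^{b-1}|u|^{p+1}\,dx.
\]
Splitting the integration domain into $\{|x|\leq R\}$ and $\{|x|\geq R\}$, using $\phi_R(x)=|x|^2$ on the former, invoking the mass-critical relation $N(p-1)=4+2b$, and applying energy conservation, one finds
\[
V_R''(t) = 16 E(u_0) - \frac{16b}{p+1}\int|x|^b|u(t,x)|^{p+1}\,dx + \mathcal{E}_R(t),
\]
where $\mathcal{E}_R(t)$ is the error supported in $\{|x|\geq R\}$. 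The extra negative term $-\tfrac{16b}{p+1}\int|x|^b|u|^{p+1}$, generated by $\nabla\phi\cdot\nabla(|x|^b)$ and present precisely because $b>0$, will be key to absorbing the bulk of that error.

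The second step is to control $\mathcal{E}_R(t)$. The kinetic defect $4\int(\phi_R''-2)|\nabla u|^2$ has favorable sign since $\phi_R''\leq 2$; the $\Delta^2\phi_R$ contribution is $O(R^{-2}\|u_0\|_{L^2}^2)$ by mass conservation; the remaining contributions are dominated by $C\int_{|x|\geq R}|x|^b|u(t,x)|^{p+1}\,dx$. Applying the radial Sobolev inequality $|u(x)|\leq C|x|^{-(N-1)/2}\|u\|_{L^2}^{1/2}\|\nabla u\|_{L^2}^{1/2}$ to $|u|^{p-1}$ at $p=\tfrac{N+4+2b}{N}$ gives
\[
\int_{|x|\geq R}|x|^b|u|^{p+1}\,dx \leq C\,R^{-\alpha}\,\|u_0\|_{L^2}^{A}\,\|\nabla u(t)\|_{L^2}^{(p-1)/2}, \qquad \alpha=\frac{2(N-1)-b}{N}>0,
\]
the positivity of $\alpha$ being guaranteed by the hypothesis $b\leq N-2<2(N-1)$. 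Since $E(u_0)<0$ yields $\|\nabla u(t)\|_{L^2}^2\leq\tfrac{2}{p+1}\int|x|^b|u|^{p+1}$ and $(p-1)/2=(2+b)/N<2$, Young's inequality converts the gradient factor into $\tfrac{8b}{p+1}\int|x|^b|u|^{p+1}+C\,R^{-\alpha'}$ for some $\alpha'>0$; the first piece is absorbed by the negative nonlinear main term, leaving
\[
V_R''(t) \leq 16E(u_0) - \frac{8b}{p+1}\int|x|^b|u(t)|^{p+1}\,dx + C_0\,R^{-\alpha'} \leq 8 E(u_0) < 0
\]
for $R$ chosen sufficiently large, uniformly in $t$ on the maximal interval of existence.

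Integrating twice yields $V_R(t)\leq V_R(0)+V_R'(0)\,t+4E(u_0)\,t^2$, which becomes negative at some finite time; since $V_R(t)\geq 0$ by construction, the solution cannot be global, hence $T^*<\infty$. The crux is the absorption step: the a priori uncontrolled factor $\|\nabla u(t)\|_{L^2}$ produced by the radial Sobolev estimate must be tamed by a balanced combination of the negative-energy information with Young's inequality, and this is only feasible thanks to the favorable negative nonlinear term $-\tfrac{16b}{p+1}\int|x|^b|u|^{p+1}$ already present in the unlocalized virial. In this way the spatial growth of the nonlinearity, which was the principal analytic obstacle elsewhere in the paper, becomes here the very mechanism that drives the blow-up.
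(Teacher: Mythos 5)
There is a sign error at the heart of your argument, and it is fatal to the strategy. In the virial identity (Lemma \ref{lem-viri-iden}) the contribution of the spatial weight enters with a \emph{plus} sign,
\[
+\frac{4}{p+1}\int \nabla\phi_R\cdot\nabla(|x|^b)\,|u|^{p+1}\,dx \;=\; +\frac{4b}{p+1}\int \phi_R'(r)\,r^{b-1}|u|^{p+1}\,dx,
\]
not the minus sign you wrote. Since $\phi_R'\geq 0$ this term is nonnegative, and on $\{|x|\leq R\}$, where $\phi_R=|x|^2$, it combines with the $\Delta\phi_R$ term so that at the mass-critical exponent $N(p-1)=4+2b$ the nonlinear contributions cancel \emph{exactly}: the unlocalized identity is $\frac{d^2}{dt^2}\int|x|^2|u|^2\,dx=16E(u_0)$, as recorded in \eqref{viri-iden}, with no leftover $-\frac{16b}{p+1}\int|x|^b|u|^{p+1}$. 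The ``favorable negative nonlinear term'' that you yourself identify as the crux of your absorption step therefore does not exist; the weight $|x|^b$ raises the mass-critical exponent but produces no extra negativity in the virial.

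Once the sign is corrected, the absorption collapses. Your radial-Sobolev bound $\int_{|x|\geq R}|x|^b|u|^{p+1}\lesssim R^{-\alpha}\|\nabla u(t)\|_{L^2}^{(p-1)/2}$ is fine, but after Young's inequality you are left with $\varepsilon\|\nabla u(t)\|_{L^2}^2$ over all of $\R^N$ (equivalently, via $E(u_0)<0$, with a positive multiple of $\int|x|^b|u|^{p+1}$), and $16E(u_0)$ alone contains nothing to absorb it; discarding the kinetic defect $4\int(\phi_R''-2)|\nabla u|^2$ as merely ``favorable'' throws away the only negative quantity available. This is exactly the difficulty the paper's proof is built around: there the radial Sobolev estimate is applied with the localization kept \emph{inside} the gradient (via \eqref{est-Strauss} and Lemma \ref{lem-cutoff}), so Young's inequality produces $\varepsilon\int_{|x|>R}\psi_{2,R}^{2/(p_*-1)}|\nabla u|^2$, a term supported outside $B_R$, which is then absorbed into $-4\int_{|x|>R}(2-\psi_R'')|\nabla u|^2$ through the pointwise inequality of Lemma \ref{lem-vareps}. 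Making that inequality hold forces the special quintic cutoff profile and is precisely where the hypotheses $N\geq 3$ and $0<b\leq N-2$ enter (your proposal only ever uses $b<2(N-1)$, a further indication that a step is missing). As written, the proposal does not prove the theorem; a correct proof along these lines must replace your absorption mechanism by one of this localized type.
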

	
	\begin{remark}
	The upper bound on $b$ (hence the lower bound on $N$) is technical and comes from the choice of cutoff function (see Lemma \ref{lem-vareps}). This restriction can be relaxed to $N\geq 2$ and $0<b \leq 2(N-1)$ if we assume that $u_0\in \Sigma_{\rad}(\R^N)$ since
	\begin{align} \label{viri-iden}
	\frac{d^2}{dt^2}\int |x|^2 |u(t,x)|^2dx=16 E(u_0)+\frac{4(4+2b-N(p-1))}{p+1}\int |x|^b|u(t,x)|^{p+1}dx=16 E(u_0).
	\end{align}
	Here $0<b \leq 2(N-1)$ ensures $p\geq 1+\frac{2b}{N-1}$ which is needed for the existence of local solutions.
	\end{remark}

	\begin{theorem}[Inter-critical blow-up solutions] \label{theo-blow-inter}
		Let $N\geq 2$, $b>0$, $p> \frac{N+4+2b}{N}$, $p<\frac{N+2+2b}{N-2}$ if $N\geq 3$, and $p\leq 5$. Let $u_0 \in H^1_{\rad}(\R^N)$ satisfy either $E(u_0)<0$ or $E(u_0) \geq0$ and  
		\begin{align} \label{cond-blow-inte}
		\begin{aligned}
		E(u_0) (M(u_0))^{\sigc} &< E(Q)(M(Q))^{\sigc}, \\
		\|\nabla u_0\|_{L^2} \|u_0\|^{\sigc}_{L^2} &> \|\nabla Q\|_{L^2} \|Q\|^{\sigc}_{L^2},
		\end{aligned}
		\end{align}
		where $Q$ is the unique positive radial solution to \eqref{equ-Q}. Then the corresponding solution to \eqref{INLS} with initial data $\left. u\right|_{t=0} =u_0$ blows up in finite time.
	\end{theorem}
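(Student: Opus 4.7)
The plan is to combine an invariant-set (trapping) analysis with a localized virial argument, since $u_0\in H^1_{\rad}(\R^N)$ need not have finite variance and the classical identity \eqref{viri-iden} is unavailable.

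First I would establish the coercivity of the functional
\[
\mathcal{K}(u):=8\|\nabla u\|^2_{L^2}-\frac{4(N(p-1)-2b)}{p+1}\int|x|^b|u|^{p+1}\,dx.
\]
For $E(u_0)<0$, conservation of energy gives directly $\mathcal{K}(u(t))=16E(u_0)+\frac{4(4+2b-N(p-1))}{p+1}\int|x|^b|u(t)|^{p+1}\,dx\leq 16E(u_0)<0$, the last coefficient being nonpositive since $p>\frac{N+4+2b}{N}$. In the trapped regime \eqref{cond-blow-inte}, I would combine the sharp Gagliardo--Nirenberg inequality \eqref{GN-ineq} with the Pohozaev identities obtained by testing \eqref{equ-Q} against $Q$ and $x\cdot\nabla Q$, recasting the hypothesis as a strict inequality on $y(t):=\|\nabla u(t)\|_{L^2}\|u(t)\|^{\sigc}_{L^2}$. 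A standard continuity/bootstrap argument at the threshold $y=\|\nabla Q\|_{L^2}\|Q\|^{\sigc}_{L^2}$ then produces a uniform gap $y(t)>\|\nabla Q\|_{L^2}\|Q\|^{\sigc}_{L^2}$ on the maximal existence interval $[0,T^*)$, which transfers via \eqref{GN-ineq} to $\mathcal{K}(u(t))\leq -c\|\nabla u(t)\|^2_{L^2}$ for some $c>0$.

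Second, I would introduce a radial cutoff $\varphi_R\in C^4(\R^N)$ with $\varphi_R(x)=|x|^2/2$ on $|x|\leq R$, $\varphi_R$ constant on $|x|\geq 2R$, $\partial_r^2\varphi_R\leq 1$, and $|\partial^\alpha\varphi_R|\leq C R^{2-|\alpha|}$ for $1\leq|\alpha|\leq 4$, and define the localized variance $V_R(t):=\int\varphi_R(x)|u(t,x)|^2\,dx\geq 0$. A direct computation, carefully tracking the extra $|x|^b$ factor in the nonlinearity, gives
\[
V_R''(t)=\mathcal{K}(u(t))+\mathcal{E}_R(u(t)),
\]
where $\mathcal{E}_R(u)$ is supported on $\{|x|\geq R\}$ and has three pieces: for radial $u$ the Hessian piece $4\int(\partial_r^2\varphi_R-1)|\partial_r u|^2\,dx$ is nonpositive, hence harmless; the bi-Laplacian piece is of size $O(R^{-2}M(u_0))$; and the remaining nonlinear piece is controlled by $C\int_{|x|\geq R}|x|^b|u|^{p+1}\,dx$.

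The main obstacle is controlling this nonlinear error despite the spatial growth $|x|^b$. Applying the Strauss radial Sobolev embedding $|x|^{(N-1)/2}|u(x)|\leq C\|u\|^{1/2}_{L^2}\|\nabla u\|^{1/2}_{L^2}$ to $(p-1)$ factors of $|u|$ in the integrand yields
\[
\int_{|x|\geq R}|x|^b|u(t)|^{p+1}\,dx\leq C R^{b-\frac{(N-1)(p-1)}{2}}\|u_0\|^{\frac{p+3}{2}}_{L^2}\|\nabla u(t)\|^{\frac{p-1}{2}}_{L^2}.
\]
The local well-posedness constraint $p>1+\frac{2b}{N-1}$ (implied by $p>\frac{N+4+2b}{N}$ as soon as $b<2(N-1)$, the only nontrivial range once $p\leq 5$) makes the exponent of $R$ strictly negative, and the restriction $p\leq 5$ guarantees $(p-1)/2\leq 2$, which is exactly what allows one to absorb the factor $\|\nabla u(t)\|^{(p-1)/2}_{L^2}$ into $-c\|\nabla u(t)\|^2_{L^2}$ (respectively into $16E(u_0)<0$) by Young's inequality, once $R$ is fixed sufficiently large depending only on $M(u_0)$, $E(u_0)$, and the coercivity constant. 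One then gets $V_R''(t)\leq -c'<0$ uniformly on $[0,T^*)$, and a standard double integration of this inequality against the nonnegativity of $V_R$ forces $T^*<\infty$.
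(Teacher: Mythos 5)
Your proposal follows essentially the same route as the paper: a coercivity/trapping lemma (sharp Gagliardo--Nirenberg plus Pohozaev and a continuity argument, giving a uniform gap above the ground-state threshold and hence $\mathcal K(u(t))\leq -c\|\nabla u(t)\|^2_{L^2}-\nu$), combined with a localized virial identity for a radial cutoff, the Strauss embedding to bound the exterior term $\int_{|x|\geq R}|x|^b|u|^{p+1}\,dx\lesssim R^{b-\frac{(N-1)(p-1)}{2}}\|\nabla u\|^{\frac{p-1}{2}}_{L^2}$, Young's inequality exploiting $p\leq 5$, and the convexity argument; this is precisely the paper's Lemmas \ref{lem-viri-est-inte} and \ref{lem-Q-blow}. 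One small imprecision: in the case $E(u_0)<0$ you only retain $\mathcal K(u(t))\leq 16E(u_0)<0$ and propose to absorb the exterior error ``into $16E(u_0)<0$''; this does not work as stated, since for $p=5$ the error is $CR^{-(2(N-1)-b)}\|\nabla u(t)\|^2_{L^2}$ (and for $p<5$ Young's inequality still produces a $\delta\|\nabla u(t)\|^2_{L^2}$ piece), which cannot be dominated by a fixed negative constant because $\|\nabla u(t)\|_{L^2}$ is unbounded on $[0,T^*)$. The fix is immediate and is what the paper does: rewrite the same energy identity as $\mathcal K(u)=-2\bigl(N(p-1)-4-2b\bigr)\|\nabla u\|^2_{L^2}+4\bigl(N(p-1)-2b\bigr)E(u_0)$, so that the gradient coercivity (and a strictly negative constant) is available in the negative-energy case as well, and the absorption for large $R$ proceeds exactly as in your trapped case.
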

	
	\begin{remark}
		The restriction $p\leq 5$ is technical as it is needed in an application of Young's inequality (see Lemma \ref{lem-viri-est-inte}). Thanks to \eqref{viri-iden}, this restriction can be removed if we consider initial data in $\Sigma_{\rad}(\R^N)$ satisfying either $E(u_0)<0$ or $E(u_0)\geq 0$ and \eqref{cond-blow-inte}. Note that the assumption $p\leq 5$ implies that $b<2(N-1)$ which ensures $\frac{N+4+2b}{N}>1+\frac{2b}{N-1}$.
	\end{remark}

	\begin{theorem}[Energy-critical blow-up solutions] \label{theo-blow-ener}
		Let $N\geq 4$, $0<b \leq 2(N-3)$, and $p=\frac{N+2+2b}{N-2}$. Let $u_0 \in H^1_{\rad}(\R^N)$ satisfy either $E(u_0)<0$ or $E(u_0) \geq0$ and
		\begin{align} \label{cond-blow-ener}
		\begin{aligned}
		E(u_0) &< E(W), \\
		\|\nabla u_0\|_{L^2} &> \|\nabla W\|_{L^2},
		\end{aligned}
		\end{align}
		where
		\begin{align} \label{W}
		W(x)= \left(1+\frac{|x|^{2+b}}{(N+b)(N-2)}\right)^{-\frac{N-2}{2+b}},
		\end{align}
		is the unique positive radial solution to
		\begin{align} \label{equ-W}
		-\Delta W - |x|^b |W|^{\frac{4+2b}{N-2}} W=0.
		\end{align}
		Then the corresponding solution to \eqref{INLS}  with initial data $\left. u\right|_{t=0}=u_0$ blows up in finite time.
	\end{theorem}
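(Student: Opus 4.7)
The plan is to adapt the radial energy-critical blow-up scheme of Glangetas-Merle and Ogawa-Tsutsumi to the growing weight $|x|^b$. The argument combines (i) a variational energy-trapping step that upgrades the threshold \eqref{cond-blow-ener} (or the sign condition $E(u_0)<0$) into a uniform lower bound on $\|\nabla u(t)\|_{L^2}$, with (ii) a localized virial estimate yielding $V_R''(t)\leq -c<0$, so that integrating twice contradicts the non-negativity of $V_R$ and forces $T^*<+\infty$. For (i), I begin with the Pohozaev identity $\|\nabla W\|_{L^2}^2=\int|x|^b W^{p+1}\,dx$ obtained by testing \eqref{equ-W} against $W$, which gives $E(W)=\tfrac{2+b}{2(N+b)}\|\nabla W\|_{L^2}^2$. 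Coupled with the sharp energy-critical Gagliardo-Nirenberg inequality from Section \ref{S2}, of which $W$ is the extremizer, the bound $E(u)\geq \tfrac12\|\nabla u\|_{L^2}^2 - \tfrac{C_{\mathrm{GN}}}{p+1}\|\nabla u\|_{L^2}^{p+1}$ together with a standard continuity argument yields $\|\nabla u(t)\|_{L^2}^2\geq (1+\eta)\|\nabla W\|_{L^2}^2$ on the entire lifespan. Equivalently,
\[
A(t) := (2+b)\|\nabla u(t)\|_{L^2}^2 - 2(N+b) E(u_0) \geq \delta > 0,
\]
with $\delta$ depending only on the initial data and $W$; the case $E(u_0)<0$ gives $A(t)\geq 2(N+b)|E(u_0)|$ immediately.

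For (ii), I choose a smooth radial cutoff $\varphi$ with $\varphi(r)=r^2$ on $\{r\leq 1\}$, constant on $\{r\geq 2\}$, and Hessian bounded above by $2I_N$; set $\varphi_R(x)=R^2\varphi(|x|/R)$ and $V_R(t)=\int\varphi_R|u|^2\,dx$. A standard virial computation for \eqref{INLS} gives $V_R''(t)$ as an integral of $\partial_{jk}^2\varphi_R$, $\Delta^2\varphi_R$, $x\cdot\nabla\varphi_R$ and $\Delta\varphi_R$ against the appropriate densities. On $\{|x|\leq R\}$, where $\varphi_R(x)=|x|^2$, the integrand reproduces the global virial \eqref{viri-iden}; using energy conservation and the energy-critical algebra $p-1=\tfrac{4+2b}{N-2}$, this part equals $-\tfrac{8}{N-2}A(t)$. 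The remainder $\mathcal E_R(t)$ is supported in $\{|x|>R\}$ plus an $O(R^{-2}\|u_0\|_{L^2}^2)$ bi-Laplacian piece; I estimate its nonlinear component via the radial Sobolev embedding $|u(x)|\leq C|x|^{-(N-1)/2}\|u\|_{L^2}^{1/2}\|\nabla u\|_{L^2}^{1/2}$ of Cho-Ozawa. Writing $|x|^b|u|^{p+1}=|x|^b|u|^{p-1}|u|^2$ and using $(p-1)/2=(2+b)/(N-2)$, this produces the bound
\[
|\mathcal E_R(t)| \lesssim R^{-2} + R^{-\gamma}\, \|\nabla u(t)\|_{L^2}^{(2+b)/(N-2)}, \qquad \gamma = \tfrac{b+2(N-1)}{N-2} > 0.
\]

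The main obstacle lies in this last error estimate: the weight $|x|^b$ grows precisely where the Strauss-type radial decay would help, leaving a factor $\|\nabla u(t)\|_{L^2}^{(2+b)/(N-2)}$ that threatens to overwhelm the main term as the solution concentrates. The hypothesis $b\leq 2(N-3)$ is exactly the condition $(2+b)/(N-2)\leq 2$, which lets the destabilizing factor be absorbed into the coercive $\|\nabla u(t)\|_{L^2}^2$ hidden in $-\tfrac{8}{N-2}A(t)$, either by Young's inequality when the power is strictly below $2$, or by choosing $R$ large at the endpoint; along the way it forces $N\geq 4$. Once this absorption is carried out, $V_R''(t)\leq -c<0$ uniformly in $t\in[0,T^*)$, and the usual convexity argument $V_R(t)\leq V_R(0)+V_R'(0)\,t-\tfrac{c}{2}t^2$ produces the required contradiction with $V_R(t)\geq 0$, whence $T^*<+\infty$.
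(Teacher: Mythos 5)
Your proposal is correct and follows essentially the same route as the paper: a coercivity estimate obtained from the sharp weighted Sobolev inequality (with extremizer $W$), the Pohozaev relations \eqref{prop-W}--\eqref{prop-EW} and a continuity argument, combined with a localized virial estimate whose exterior error is controlled by the radial Sobolev embedding, the restriction $b\leq 2(N-3)$ (i.e.\ $p\leq 5$) entering exactly through the Young/absorption step, and finally Glassey's convexity argument. The only point to make explicit is that the absorption needs the reserve $\vareps\|\nabla u(t)\|_{L^2}^2$ inside the coercivity bound (the paper's \eqref{est-blow-ener}), not merely $A(t)\geq\delta$; your remark that this coercive piece is ``hidden'' in $A(t)$, justified by the strict bound $\|\nabla u(t)\|_{L^2}>(1+\eta)\|\nabla W\|_{L^2}$, supplies precisely this.
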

	
	\begin{remark}
		The restrictions on $N$ and $b$ are technical to ensure $p=\frac{N+2+2b}{N-2} \leq 5$. The latter is needed in an application of Young's inequality. As in the inter-critical case, these restrictions can be removed by considering initial data $u_0\in \Sigma_{\rad}(\R^N)$ satisfying either $E(u_0)<0$ or $E(u_0)\geq 0$ and \eqref{cond-blow-ener}.
	\end{remark}
	
	The proofs of blow-up solutions are based on an idea of Ogawa and Tsutsumi \cite{OT-JDE, OT-PAMS} using virial estimates and radial Sobolev embeddings. Due to the presence of a spatial growing nonlinearity, some careful estimates are needed in our analysis.
	
	\begin{remark}
	Finally, we mention that the study of standing waves for \eqref{INLS} will be the subject of a forthcoming work. 
	\end{remark}
	
	This paper is organized as follows. In Section \ref{S2}, we study the existence of optimizers for the Gagliardo-Nirenberg inequality \eqref{GN-ineq}. We also prove the regularity, exponential decay, and the uniqueness of positive radial solutions to \eqref{equ-Q}. In Section \ref{S3}, we show the local well-posedness for \eqref{INLS} with energy-subcritical and energy-critical nonlinearities. Section \ref{S4} is devoted to the proof of energy scattering given in Theorem \ref{theo-scat-inte}. Finally, in Section \ref{S5}, we give the proofs of blow-up solutions for \eqref{INLS} in the mass-critical, mass and energy inter-critical, and energy-critical regimes.

	\section{Gagliardo-Nirenberg inequality revisited}
	\label{S2}
	\setcounter{equation}{0}

	In this section, we revisit the Gagliardo-Nirenberg inequality \eqref{GN-ineq}. More precisely, we have the following result.
	\begin{theorem} \label{theo-GN-ineq}
		Let $N\geq 2$, $b>0$, $p>1+\frac{2b}{N-1}$, and $p<\frac{N+2+2b}{N-2}$ if $N\geq 3$. Then the optimal constant in the Gagliardo-Nirenberg inequality \eqref{GN-ineq} is achieved. Moreover,
		\[
		C_{\opt} = \int |x|^b |Q(x)|^{p+1} dx \div \left[\|\nabla Q\|^{\frac{N(p-1)-2b}{2}}_{L^2} \|Q\|^{\frac{4+2b-(N-2)(p-1)}{2}}_{L^2} \right],
		\]
		where $Q \in H^1_{\rad}(\R^N)$ is the unique positive radial solution to \eqref{equ-Q}.
	\end{theorem}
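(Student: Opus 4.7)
I follow the classical variational approach of Weinstein, adapted to the weighted radial setting. Set $\alpha := \frac{N(p-1)-2b}{2}$ and $\beta := \frac{4+2b-(N-2)(p-1)}{2}$, both strictly positive in the given range, with $\alpha+\beta = p+1$. Define the Weinstein quotient
\[
J(f) := \frac{\|\nabla f\|_{L^2}^{\alpha} \|f\|_{L^2}^{\beta}}{\int |x|^b |f|^{p+1}\,dx},\qquad f \in H^1_{\rad}(\R^N)\setminus\{0\},
\]
and let $J_* := \inf J$. The inequality \eqref{GN-ineq} is precisely the statement $J_* > 0$, and attaining $J_*$ yields $C_{\opt} = 1/J_*$. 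Exploiting the two-parameter invariance $f \mapsto \lambda f(\mu \cdot)$, I would pick a minimizing sequence $\{f_n\}$ normalized by $\|f_n\|_{L^2} = \|\nabla f_n\|_{L^2} = 1$, so that $\int |x|^b|f_n|^{p+1}\,dx \to 1/J_*$, and extract (along a subsequence) a weak limit $f_n \rightharpoonup f_*$ in $H^1_{\rad}(\R^N)$.

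The core difficulty is to upgrade weak convergence to
\[
\int |x|^b |f_n|^{p+1}\,dx \longrightarrow \int |x|^b |f_*|^{p+1}\,dx,
\]
equivalently, to establish compactness of the embedding $H^1_{\rad}(\R^N) \hookrightarrow L^{p+1}(\R^N;|x|^b\,dx)$. I would split at a radius $R$. For the exterior region the radial Strauss decay $|f(x)| \leq C|x|^{-(N-1)/2}\|f\|_{H^1}$ gives
\[
\int_{|x|\geq R} |x|^b |f|^{p+1}\,dx \leq C \|f\|_{H^1}^{p-1}\, R^{b-(N-1)(p-1)/2}\, \|f\|_{L^2}^2,
\]
with negative $R$-exponent precisely because $p > 1 + \tfrac{2b}{N-1}$, making the tail uniformly small. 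The interior piece $\{|x|\leq R\}$ is the delicate one: when $p+1 > 2^*(N)$ (which does occur for $b$ large within the allowed range) Rellich--Kondrachov cannot be invoked directly. I would instead use the radial substitution $r = \rho^{2/(b+2)}$, which converts $\int r^{N-1+b}|f|^{p+1}\,dr$ and $\int |f'|^2 r^{N-1}\,dr$ into unweighted integrals in the effective radial dimension $M := \frac{2(N+b)}{b+2}$, whose Sobolev critical exponent $\frac{2(N+b)}{N-2}$ matches $p+1$ exactly at the energy-critical threshold $p = \frac{N+2+2b}{N-2}$; the strict hypothesis $p < \frac{N+2+2b}{N-2}$ therefore places us subcritically in dimension $M$, where standard radial compactness applies.

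Once compactness is secured, weak lower semicontinuity gives $J(f_*) \leq J_*$, so $f_*$ is a nonzero minimizer. Replacing $f_*$ by $|f_*|$ (still a minimizer) I may assume $f_* \geq 0$. The Lagrange multiplier method then yields positive constants $a, c, d$ with $-a\Delta f_* + c f_* = d|x|^b f_*^p$, and a two-parameter rescaling $Q(x) := \kappa f_*(\nu x)$ absorbs $a,c,d$ and reduces the equation to \eqref{equ-Q}, producing a positive radial solution which by the uniqueness statement of this section coincides with the $Q$ named in the theorem.

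To identify the constant, I would test \eqref{equ-Q} against $Q$ and against $x\cdot\nabla Q$ (using $\nabla\!\cdot(|x|^b x) = (N+b)|x|^b$ for the Pohozaev computation), obtaining
\[
\|\nabla Q\|_{L^2}^2 + \|Q\|_{L^2}^2 = I,\qquad \tfrac{N-2}{2}\|\nabla Q\|_{L^2}^2 + \tfrac{N}{2}\|Q\|_{L^2}^2 = \tfrac{N+b}{p+1}\,I,
\]
where $I := \int |x|^b Q^{p+1}\,dx$. Solving this $2\times 2$ linear system gives $\|\nabla Q\|_{L^2}^2 = \tfrac{\alpha}{p+1}I$ and $\|Q\|_{L^2}^2 = \tfrac{\beta}{p+1}I$, which inserted into $C_{\opt} = 1/J(Q)$ produces the stated closed form. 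The principal obstacle throughout is the weighted compact embedding, particularly in the range where $p+1$ exceeds the classical Sobolev critical exponent of $\R^N$; the radial change of variables into the effective dimension $M$ is the clean way around it.
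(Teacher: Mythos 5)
Your overall strategy coincides with the paper's: minimize the Weinstein quotient, use a compact embedding $H^1_{\rad}(\R^N)\hookrightarrow L^{p+1}(\R^N,|x|^b dx)$ to pass to the limit along a normalized minimizing sequence, rescale the Euler--Lagrange equation to \eqref{equ-Q}, invoke the maximum principle and the uniqueness result (Proposition \ref{prop-unique}) to identify $Q$, and compute $C_{\opt}$ from the Pohozaev identities (your $2\times 2$ system is exactly \eqref{poho-iden}, and the scale invariance of $J$ that you use to conclude $C_{\opt}=1/J(Q)$ checks out). The genuine difference is where the new content of the theorem lies, namely the compactness when $p+1>\frac{2N}{N-2}$: the paper (Lemma \ref{lem-comp-embe}) splits $\R^N$ into three regions and kills the piece near the origin directly with the radial inequality \eqref{est-2}, choosing $c<b$ with $p+1=\frac{2N+2c}{N-2}$ so as to gain a factor $\vareps^{b-c}$, while you propose the substitution $r=\rho^{2/(b+2)}$ and subcriticality in the effective dimension $M=\frac{2(N+b)}{b+2}$. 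Your route is viable and conceptually appealing (it explains why $\frac{N+2+2b}{N-2}$ is the right threshold), but as written it is under-justified: $M$ is in general not an integer, and the $L^2$-mass transforms with the weight $\rho^{\frac{2N}{b+2}-1}$ rather than $\rho^{M-1}$, so "standard radial compactness in dimension $M$" cannot be quoted verbatim; you would need to prove (or properly cite) a weighted one-dimensional compactness statement, including the pointwise bound $|g(\rho)|\lesssim \rho^{-(M-2)/2}\|g'\|_{L^2(\rho^{M-1}d\rho)}$ near the origin and a local Rellich-type argument on annuli -- at which point the work is essentially the paper's direct estimates in disguise. Two smaller points to firm up: nontriviality of the weak limit comes from the compactness (the weighted $L^{p+1}$ norm persists), not from lower semicontinuity alone; and after replacing the minimizer by its modulus you only get $Q\geq 0$, so strict positivity requires the maximum principle step that the paper makes explicit.
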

	
	\begin{remark}
		This result extends the one in \cite[Theorem 1.1]{CG-AM}, where the optimal constant is showed for $N,b,p$ satisfying \eqref{cond-CG}.
	\end{remark}
	
	\begin{remark}
		In \cite{Zhu} (see the proof of Proposition 3.1 given there), this result was proved for $N\geq 3$ and $1+\frac{2b}{N-2}<p<1+\frac{4+2b}{N-2}$. Comparing to \eqref{cond-CG}, there is still a gap between $\frac{N+2}{N-2}+\frac{2b}{N-1}$ and $1+\frac{2b}{N-2}$ for $b$ sufficiently large. Thus our result is an improvement of the one in \cite{Zhu}.
	\end{remark}
	
	Next, we show the regularity and exponential decay of non-trivial solutions to \eqref{equ-Q}.
	
	\begin{proposition} \label{prop-phi}
		Let $N\geq 2, b>0, p>1+\frac{2b}{N-1}$, and $p<\frac{N+2+2b}{N-2}$ if $N\geq 3$. Let $\phi \in H^1_{\rad}(\R^N)$ be a non-trivial solution to \eqref{equ-Q}. Then the following properties hold:
		\begin{itemize}
			\item $\phi \in C^2(\R^N)$ and $|D^\beta \phi(x)| \rightarrow 0$ as $|x| \rightarrow \infty$ for all $|\beta|\leq 2$.
			\item There exists $C>0$ such that
			\[
			e^{C|x|} \left(|\phi(x)| + |\nabla \phi(x)|\right) \in L^\infty(\R^N).
			\]
		\end{itemize}
	\end{proposition}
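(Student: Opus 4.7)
The plan is to first bootstrap the $H^1_{\rad}$ regularity of $\phi$ up to $C^2$ with pointwise decay of all derivatives of order at most two, then upgrade the polynomial decay coming from Strauss's radial embedding to an exponential decay via comparison with a Yukawa-type supersolution, and finally transfer this exponential bound to $\nabla\phi$ through interior elliptic estimates.

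For the regularity step, I would rewrite \eqref{equ-Q} as $(I-\Delta)\phi=|x|^b|\phi|^{p-1}\phi$. The Strauss radial embedding $|\phi(x)|\lesssim|x|^{-(N-1)/2}\|\phi\|_{H^1}$ for $|x|\geq 1$, combined with the hypothesis $p>1+\frac{2b}{N-1}$, equivalent to $\frac{(p-1)(N-1)}{2}>b$, shows that the nonlinearity $f:=|x|^b|\phi|^{p-1}\phi$ decays strictly faster than $|x|^{-(N-1)/2}$ at infinity; together with the standard local integrability coming from $\phi\in H^1$, this yields $f\in L^q(\R^N)$ for every $q$ in a non-trivial interval. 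Since $(I-\Delta)^{-1}$ gains two derivatives on $L^q$, we obtain $\phi\in W^{2,q}(\R^N)$, and Sobolev embedding feeds this back into improved integrability for $f$. A standard bootstrap then yields $\phi\in W^{2,q}(\R^N)$ for all large $q$, hence $\phi\in C^{1,\alpha}(\R^N)$; one more Schauder iteration (using that $|x|^b$ is smooth on $\R^N\setminus\{0\}$ and H\"older on every annulus) delivers $\phi\in C^2(\R^N)$. The pointwise statement $|D^\beta\phi(x)|\to 0$ for $|\beta|\leq 2$ then follows by applying local $W^{2,q}$ estimates on each annulus $\{R\leq|x|\leq 2R\}$ with data whose $L^q$ norm tends to $0$ as $R\to\infty$, combined with the Morrey embedding $W^{2,q}\hookrightarrow C^1$.

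For the exponential decay I would rewrite the equation as $-\Delta\phi+V\phi=0$ with $V(x):=1-|x|^b|\phi(x)|^{p-1}$. The regularity step shows $V(x)\to 1$ as $|x|\to\infty$, so there exists $R_0$ with $V\geq\tfrac12$ on $\{|x|\geq R_0\}$. Since $\phi$ may be complex-valued, I invoke Kato's inequality to deduce $-\Delta|\phi|+V|\phi|\leq 0$ distributionally on $\R^N$, hence $-\Delta|\phi|+\tfrac12|\phi|\leq 0$ on the exterior $\{|x|\geq R_0\}$. A radial supersolution $\psi(x):=Me^{-\alpha|x|}$ satisfies $-\Delta\psi+\tfrac12\psi=\psi\bigl(\tfrac12-\alpha^2+\alpha(N-1)/|x|\bigr)\geq 0$ as soon as $\alpha<1/\sqrt{2}$; choosing $M$ large enough that $\psi\geq|\phi|$ on $\{|x|=R_0\}$, the exterior-domain maximum principle (both functions vanish at infinity) gives $|\phi(x)|\leq Me^{-\alpha|x|}$. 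For $\nabla\phi$, the right-hand side of $\Delta\phi=\phi-|x|^b|\phi|^{p-1}\phi$ is now itself exponentially small, so interior Calder\'on--Zygmund $W^{2,q}$ estimates on unit balls centered at points far from the origin, followed by the embedding $W^{2,q}\hookrightarrow C^1$ for $q>N$, propagate the exponential bound to $|\nabla\phi|$ with a slightly smaller rate.

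The main technical delicacy is this last passage: the complex-valued nature of $\phi$ forces us to work with $|\phi|$ through Kato's inequality rather than apply a classical maximum principle directly, and the comparison argument must be executed in the appropriate distributional sense, for instance by approximating $|\phi|$ by $\sqrt{|\phi|^2+\vareps^2}-\vareps$ and passing to the limit $\vareps\to 0$.
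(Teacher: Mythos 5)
Your exponential-decay step is correct but follows a different route from the paper: you combine Kato's inequality with a comparison against the supersolution $Me^{-\alpha|x|}$ on an exterior domain and then transfer the bound to $\nabla\phi$ by interior $W^{2,q}$ estimates, whereas the paper tests \eqref{equ-Q} against $e^{|x|/(1+\vareps|x|)}\phi$ to obtain an exponentially weighted $H^1$ bound and converts it into a pointwise bound by a Lipschitz/covering argument. Either route is fine \emph{once} the first step (regularity, boundedness, and vanishing at infinity of $\phi$ and $\nabla\phi$) is available, since both need $|x|^b|\phi|^{p-1}\to 0$ at infinity, which follows from \eqref{est-Strauss} and $p>1+\frac{2b}{N-1}$.

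The genuine gap is in the regularity step, and it sits exactly where this proposition goes beyond the known results. Your bootstrap is the classical one and does not close on the whole range $1+\frac{2b}{N-1}<p<\frac{N+2+2b}{N-2}$ when $N\geq 3$. Concretely: (i) near the origin you invoke only ``standard local integrability from $\phi\in H^1$'', i.e. you bound $|x|^b\lesssim 1$, so the effective power is $p$ itself; as soon as $p>\frac{N+2}{N-2}$ (allowed here, since $p$ may approach $\frac{N+2+2b}{N-2}$) the nonlinearity is $\dot{H}^1$-supercritical, and the gain of two derivatives from $(I-\Delta)^{-1}$ produces a \emph{worse} Lebesgue exponent after Sobolev embedding, so the iteration moves backwards; (ii) if instead one absorbs the weight globally through \eqref{est-Strauss}, writing $|x|^b|\phi|^{p-1}\lesssim \|\phi\|_{H^1}^{2b/(N-1)}|\phi|^{\alpha}$ with $\alpha=p-1-\frac{2b}{N-1}$, the $L^{q_n}$ iteration escapes to $q=\infty$ only when $(N-2)\alpha<4$, i.e. $p<\frac{N+2}{N-2}+\frac{2b}{N-1}$ --- that is precisely the old condition \eqref{cond-CG}, and the remaining interval up to $\frac{N+2+2b}{N-2}$ is nonempty for every $b>0$. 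This is why the paper's proof trades the weight against different radial embeddings: \eqref{est-BL}, which gives the effective power $p-\frac{2b}{N-2}<\frac{N+2}{N-2}$ but requires $p\geq 1+\frac{2b}{N-2}$, and, when $b\geq 2(N-1)$ so that a gap between the two regimes persists, the Cho--Ozawa inequality \eqref{est-CO} applied along a finite increasing ladder of parameters $s_1<s_2<\cdots<1$. None of this appears in your sketch, so as written your argument only recovers the regularity in the previously known range and leaves the new range --- the actual content of Proposition \ref{prop-phi} --- unproved. (The remaining pieces are fine: your Schauder step for $C^2$ works because $|x|^b|\phi|^{p-1}\phi$ is H\"older near the origin, which plays the role of the paper's treatment of the $|x|^{b-1}$ term for $0<b<1$.)
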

	By applying the uniqueness criteria of Shioji and Watanabe \cite{SW}, we prove the following uniqueness of positive radial solutions to \eqref{equ-Q}.
	
	\begin{proposition} \label{prop-unique}
		Let $N\geq 2, b>0, p>1+\frac{2b}{N-1}$, and $p<\frac{N+2+2b}{N-2}$ if $N\geq 3$. Then there exists a unique positive radial solution to \eqref{equ-Q}.
	\end{proposition}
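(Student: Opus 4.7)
The plan is to establish both existence and uniqueness. Existence comes essentially for free from what has already been set up: by Theorem~\ref{theo-GN-ineq}, the Gagliardo-Nirenberg inequality \eqref{GN-ineq} admits an optimizer $Q^* \in H^1_{\rad}(\R^N)$, which solves the associated Euler-Lagrange equation $-a\Delta Q^* + c Q^* = d\,|x|^b |Q^*|^{p-1} Q^*$ for some positive constants $a,c,d$ depending on $N,b,p$. A scaling $Q(x) = \lambda Q^*(\mu x)$ with an appropriate choice of $(\lambda,\mu)$ normalizes this into \eqref{equ-Q}. Replacing $Q^*$ by $|Q^*|$ (still an optimizer) and invoking Proposition~\ref{prop-phi} together with the strong maximum principle shows that $Q$ can be taken strictly positive, $C^2$, radial, and decaying to $0$ at infinity.

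For uniqueness I would pass to the radial ODE. Any positive radial solution $Q(r)$ of \eqref{equ-Q} satisfies
\[
Q''(r) + \frac{N-1}{r}Q'(r) - Q(r) + r^b Q(r)^p = 0, \qquad Q'(0) = 0, \quad \lim_{r \to \infty} Q(r) = 0,
\]
with $Q \in C^2([0,\infty))$ and exponential decay of $(Q,Q')$ at infinity (Proposition~\ref{prop-phi}). This fits the framework treated by Shioji and Watanabe \cite{SW} for equations of the form $\Delta u + g(|x|)u + h(|x|)u^p = 0$, with $g \equiv -1$ and $h(r) = r^b$ in our case. Their uniqueness theorem reduces the problem to verifying the sign/monotonicity of an explicit auxiliary function built from $N$, $p$, $g$ and $h$ (a Pohozaev-type quantity). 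Because $h$ is a pure power and $g$ is constant, this reduces to an elementary computation in the exponents $N$, $b$, $p$.

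The main technical point, and the one I expect to require the most care, is the algebraic verification of Shioji-Watanabe's hypotheses in exactly the range $p > 1 + \frac{2b}{N-1}$ and (for $N \geq 3$) $p < \frac{N+2+2b}{N-2}$. The lower bound on $p$ controls the behaviour at the origin, where the vanishing of $r^b$ must be compensated by superlinearity, while the upper bound corresponds to energy-subcriticality at infinity; together they are precisely what forces the auxiliary Pohozaev function to be monotone. Once this monotonicity is in place, Shioji-Watanabe's theorem directly yields that two positive radial solutions with the prescribed boundary conditions must coincide. Combined with existence above, this gives the desired unique positive radial solution to \eqref{equ-Q}.
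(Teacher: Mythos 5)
Your proposal is correct and follows essentially the same route as the paper: existence is taken from the Gagliardo--Nirenberg optimizer of Theorem \ref{theo-GN-ineq}, and uniqueness is obtained by passing to the radial ODE with $f(r)=r^{N-1}$, $g\equiv -1$, $h(r)=r^b$ and invoking the Shioji--Watanabe criterion. The only difference is that you defer the actual verification of their hypotheses, which is where the paper's proof spends its effort (computing the auxiliary functions $\alpha,\beta,\gamma,G$ explicitly as powers of $r$ and checking signs: the lower bound $p>1+\frac{2b}{N-1}$ makes the leading coefficient of $G$ positive, while for $N\geq 3$ the subcriticality $p<\frac{N+2+2b}{N-2}$ enters the sign of the constant term), but that computation is indeed elementary and your identification of the roles of the two exponent bounds matches the paper.
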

	
	Before giving the proofs of Theorem \ref{theo-GN-ineq}, Propositions \ref{prop-phi} and \ref{prop-unique}, let us start by showing that the upper bound for $p$ given in \eqref{cond-CG} is not optimal when $N\geq 3$.
	
	\begin{lemma} \label{lem-GN-ineq}
		Let $N\geq 2$, $b>0$, $p \geq 1+\frac{2b}{N-1}$, and $p\leq \frac{N+2+2b}{N-2}$ if $N\geq 3$. Then there exists $C(N,p,b)>0$ such that
		\[
		\int |x|^b |f(x)|^{p+1} dx \leq C(N,p,b) \|\nabla f\|^{\frac{N(p-1)-2b}{2}}_{L^2} \|f\|^{\frac{4+2b-(N-2)(p-1)}{2}}_{L^2}, \quad \forall f \in H^1_{\rad}(\R^N).
		\]
	\end{lemma}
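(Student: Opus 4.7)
The strategy is a Strauss-interpolation scheme: use the Strauss pointwise radial decay to eliminate part of the $|f|^{p+1}$ power, then bound the remainder by a Hardy or Caffarelli--Kohn--Nirenberg inequality.

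For $f \in H^1_{\rad}(\R^N)$, the classical Strauss inequality
\[
|f(x)|^{p-1} \leq C\,|x|^{-(N-1)(p-1)/2}\,(\|f\|_{L^2}\|\nabla f\|_{L^2})^{(p-1)/2}
\]
together with the factorization $|f|^{p+1}=|f|^{p-1}|f|^2$ gives
\[
\int|x|^b|f|^{p+1}\,dx \leq C\,(\|f\|_{L^2}\|\nabla f\|_{L^2})^{(p-1)/2}\int |x|^{-\beta}|f|^2\,dx, \qquad \beta := \frac{(N-1)(p-1)}{2}-b.
\]
The hypothesis $p\geq 1+2b/(N-1)$ ensures $\beta \geq 0$, so the weight is genuinely a negative power. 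For $\beta\in[0,2]$ one interpolates between the classical Hardy inequality (at $\beta=2$, valid for $N\geq 3$) and the trivial $L^2$ bound (at $\beta=0$) to obtain $\int|x|^{-\beta}|f|^2\,dx \leq C\|f\|^{2-\beta}_{L^2}\|\nabla f\|^\beta_{L^2}$. A direct check of exponents $(p-1)/2+\beta=(N(p-1)-2b)/2$ and $(p-1)/2+(2-\beta)=(4+2b-(N-2)(p-1))/2$ then delivers the Gagliardo--Nirenberg inequality on the subrange $p\leq (N+3+2b)/(N-1)$.

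The main obstacle is reaching the full range $p\leq (N+2+2b)/(N-2)$: at the energy-critical endpoint the exponent $\beta$ exceeds $2$, which is beyond the scope of standard Hardy for $H^1$ functions. To close this gap I would redo the Strauss step with a free exponent $\sigma\in(2b/(N-1),p-1)$ chosen so that the residual integral $\int|x|^{b-\sigma(N-1)/2}|f|^{p+1-\sigma}\,dx$ has its $L^q$ exponent $q=p+1-\sigma$ inside the Sobolev window $[2,2^*]$, and then bound it via a radial Caffarelli--Kohn--Nirenberg inequality with negative-power weight. Since the radial version of CKN admits a strictly wider admissible range of exponents than the non-radial one, this extra flexibility is precisely what is needed to push the estimate up to the energy-critical endpoint; the scaling identity then forces the correct final exponents $A=(N(p-1)-2b)/2$ and $B=(4+2b-(N-2)(p-1))/2$, and the lemma follows.
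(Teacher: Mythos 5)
Your first stage (Strauss plus Hardy interpolation) is correct for $N\geq 3$, but note it only yields the range $p\leq \frac{N+3+2b}{N-1}$, which is even smaller than the previously known Chen--Guo range $p<\frac{N+2}{N-2}+\frac{2b}{N-1}$, and it says nothing about $N=2$ (where Hardy fails and the lemma imposes no upper bound on $p$; the paper disposes of $N=2$ by citation). The whole point of the lemma is the extension up to the energy-critical exponent $\frac{N+2+2b}{N-2}$, and that is exactly where your second stage breaks down.

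The gap is quantitative and cannot be repaired within your scheme. Scaling pins all exponents: if you absorb a power $\sigma$ of $|f|$ by Strauss you pay $\left(\|f\|_{L^2}\|\nabla f\|_{L^2}\right)^{\sigma/2}$, so the residual bound you need is $\int |x|^{-\gamma}|f|^{q}dx \lesssim \|\nabla f\|_{L^2}^{a}\,\|f\|_{L^2}^{\,q-a}$ with $\gamma=\frac{\sigma(N-1)}{2}-b$, $q=p+1-\sigma$, and, by homogeneity, $q-a = B-\frac{\sigma}{2}$ where $B=\frac{4+2b-(N-2)(p-1)}{2}$ is the $L^2$-exponent in the target inequality. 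A genuinely negative-power weight forces $\sigma\geq\frac{2b}{N-1}$ (and $q\leq\frac{2N}{N-2}$ forces $\sigma\geq p+1-\frac{2N}{N-2}$), so as soon as $p>\frac{N+2}{N-2}+\frac{2b}{N-1}$ the required exponent $B-\frac{\sigma}{2}$ is negative; but a weighted estimate carrying a negative power of $\|f\|_{L^2}$ is false even for radial functions (take $f=g+R^{-\frac{N-2}{2}}\chi(\cdot/R)$ with $g$ a fixed radial bump on an annulus: the left side stays bounded below, $\|\nabla f\|_{L^2}$ stays bounded, $\|f\|_{L^2}\to\infty$). Your appeal to the wider radial range of Caffarelli--Kohn--Nirenberg is also misdirected: the radial gain concerns weights that grow at infinity (where translation of bumps is forbidden), not weights singular at the origin, so it supplies no extra admissible exponents here. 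In short, any argument that pays Strauss' $\|f\|_{L^2}^{1/2}$ per absorbed power caps out at $\frac{N+2}{N-2}+\frac{2b}{N-1}$, precisely the old bound. The paper's proof avoids this by using the other radial estimate $|x|^{\frac{N-2}{2}}|f(x)|\lesssim\|\nabla f\|_{L^2}$, which costs no $L^2$ norm, to prove the endpoint bound $\int |x|^b|f|^{\frac{2N+2b}{N-2}}dx\lesssim \|\nabla f\|_{L^2}^{\frac{2N+2b}{N-2}}$, and then interpolates by H\"older with respect to the measure $|x|^b dx$ between this endpoint and the Strauss-based endpoint at $p=1+\frac{2b}{N-1}$; that is the ingredient your proposal is missing.
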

	
	\begin{proof}
		We only consider $N\geq 3$ since the case $N=2$ was showed in \cite{CG-DCDS-B}. Thanks to the radial Sobolev inequality (see e.g., \cite{Strauss}): for $N\geq 2$ and $f \in H^1_{\rad}(\R^N)$,
		\begin{align} \label{est-Strauss}
		|x|^{\frac{N-1}{2}} |f(x)| \leq C(N) \|\nabla f\|^{\frac{1}{2}}_{L^2} \|f\|^{\frac{1}{2}}_{L^2},
		\end{align}	
		we have
		\begin{align} \label{est-1}
		\int |x|^b |f(x)|^{2+\frac{2b}{N-1}} dx &= \int \left( |x|^{\frac{N-1}{2}} |f(x)|\right)^{\frac{2b}{N-1}} |f(x)|^2 dx \nonumber \\
		&\leq C(N,b) \|\nabla f\|^{\frac{b}{N-1}}_{L^2} \|f\|^{2+\frac{b}{N-1}}_{L^2}.
		\end{align}
		From the radial Sobolev inequality (see e.g., \cite[Appendix]{BL}): for $N\geq 3$ and $f \in H^1_{\rad}(\R^N)$,
		\begin{align} \label{est-BL}
		|x|^{\frac{N-2}{2}} |f(x)| \leq C(N) \|\nabla f\|_{L^2}
		\end{align}
		and the Sobolev embedding $\dot{H}^1(\R^N) \subset L^{\frac{2N}{N-2}}(\R^N)$, we see that
		\begin{align} \label{est-2}
		\int |x|^b |f(x)|^{\frac{2N+2b}{N-2}} dx = \int \left( |x|^{\frac{N-2}{2}} |f(x)|\right)^{\frac{2b}{N-2}} |f(x)|^{\frac{2N}{N-2}} dx \leq C(N,b)\|\nabla f\|^{\frac{2N+2b}{N-2}}_{L^2}.
		\end{align}
		As $2+\frac{2b}{N-1} \leq p+1 \leq \frac{2N+2b}{N-2}$, we interpolate between \eqref{est-1} and \eqref{est-2} to get
		\begin{align*}
		\int |x|^b |f(x)|^{p+1} dx &\leq \left( \int |x|^b |f(x)|^{2+\frac{2b}{N-1}} dx\right)^\theta \left(\int |x|^b |f(x)|^{\frac{2N+2b}{N-2}} dx\right)^{1-\theta} \\
		& \leq C(N,p,b) \|\nabla f\|^{\frac{b}{N-1} \theta + \frac{2N+2b}{N-2}(1-\theta)}_{L^2} \|f\|^{\left(2+\frac{b}{N-1}\right)\theta}_{L^2},
		\end{align*}
		where $\theta \in [0,1]$ is such that
		\[
		p+1 = \left(2+\frac{2b}{N-1}\right)\theta + \frac{2N+2b}{N-2}(1-\theta).
		\]
		A direct calculation yields
		\[
		\frac{b}{N-1} \theta + \frac{2N+2b}{N-2}(1-\theta) = \frac{N(p-1)-2b}{2}, \quad \left(2+\frac{b}{N-1}\right)\theta = \frac{4+2b-(N-2)(p-1)}{2}.
		\]
		The proof is complete.
	\end{proof}
	
	The upper bound $p=\frac{N+2+2b}{N-2}$ is optimal for $N\geq 3$ since it corresponds to the energy critical regularity. The following result shows that the lower bound $1+\frac{2b}{N-1}$ is indeed optimal for the Gagliardo-Nirenberg inequality \eqref{GN-ineq}. 
	
	\begin{lemma} \label{lem-GN-opt}
		Let $N\geq 2$, $b>0$, and $1<p<1+\frac{2b}{N-1}$. Then
			\begin{equation}
			\label{Opt-GNI}
			\sup\left\{ \frac{\displaystyle\int |x|^b |f(x)|^{p+1} dx}{\|f\|_{H^1}^{p+1}} : f \in H^1(\R^N)\backslash \{0\}\right\}=\infty.
			\end{equation}
	\end{lemma}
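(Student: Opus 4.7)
The plan is to exhibit an explicit family of radial test functions $f_R$ concentrated in a thin annulus of radius $R$ and let $R\to\infty$. The natural isotropic scaling $f_\lambda(x)=f(\lambda x)$ alone would only produce the threshold $1+\tfrac{2b}{N}$, which is too weak; the sharper exponent $1+\tfrac{2b}{N-1}$ comes from spreading the mass on a sphere so that the gain of $|x|^b\sim R^b$ in the numerator is traded against only an $(N-1)$-dimensional volume factor $R^{N-1}$ in the $H^1$ norm.

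Concretely, fix a non-zero $\phi\in C_c^\infty(\R)$ with $\supp\phi\subset[-1,1]$, and for $R\geq 2$ define the radial test function
\[
f_R(x):=\phi(|x|-R),\qquad x\in\R^N.
\]
The first step is the bookkeeping of the three relevant norms. Since $f_R$ is supported in the annulus $R-1\leq |x|\leq R+1$, on which $|x|\sim R$, passing to polar coordinates yields
\[
\|f_R\|_{L^2}^2\sim R^{N-1},\qquad \|\nabla f_R\|_{L^2}^2\sim R^{N-1},\qquad \int |x|^b |f_R|^{p+1}\,dx\gtrsim R^{b+N-1},
\]
with implicit constants depending only on $N$, $b$, $p$ and $\phi$ (and in particular independent of $R$). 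Here I use that $\nabla f_R(x)=\phi'(|x|-R)\,x/|x|$, so $|\nabla f_R|^2=|\phi'(|x|-R)|^2$, and bound $r^{N-1}$ on the support by $(R\pm 1)^{N-1}$ from above and below.

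Combining these estimates gives
\[
\frac{\int |x|^b |f_R(x)|^{p+1}\,dx}{\|f_R\|_{H^1}^{p+1}}\gtrsim \frac{R^{b+N-1}}{R^{(N-1)(p+1)/2}}=R^{\,b+N-1-(N-1)(p+1)/2}.
\]
The exponent is positive precisely when $2(b+N-1)>(N-1)(p+1)$, i.e.\ when $p<1+\tfrac{2b}{N-1}$, which is the hypothesis of the lemma. Letting $R\to\infty$ therefore drives the quotient to $+\infty$, proving \eqref{Opt-GNI}.

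There is no serious obstacle in this argument; the only subtlety is choosing the test family correctly, since the isotropic dilations $f(\lambda\cdot)$ are insensitive to the \emph{radial} Sobolev embedding \eqref{est-Strauss} that generated the exponent $1+\tfrac{2b}{N-1}$ in Lemma \ref{lem-GN-ineq}, whereas the annular concentration used above does see it. One should also remark that this shows optimality simultaneously in the larger class $H^1(\R^N)$ and in $H^1_{\rad}(\R^N)$, since the constructed $f_R$ are radial.
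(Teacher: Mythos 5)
Your proposal is correct and is essentially the paper's own proof: the paper also takes annular bumps $f_k(x)=\psi(|x|-k)$ with $\psi\in C_0^\infty(\R)$, derives the same bounds $\|f_k\|_{H^1}\lesssim k^{\frac{N-1}{2}}$ and $\int |x|^b|f_k|^{p+1}dx\gtrsim k^{b+N-1}$, and concludes the quotient grows like $k^{\frac{N-1}{2}\left(1+\frac{2b}{N-1}-p\right)}\to\infty$. Your exponent computation matches theirs exactly, so there is nothing to add.
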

	
	\begin{proof}
		Define $f_k(x)=\psi(|x|-k)$ where $0\neq \psi\in C_0^\infty(\R)$ with $\supp(\psi)\subset[0,1]$. One can easily verify that
			\begin{align*}
				\|f_k\|_{H^1} ~& \lesssim ~ k^{\frac{N-1}{2}},\\
				\int  |x|^b |f_k(x)|^{p+1} dx ~& \gtrsim~ k^{b+N-1}.
			\end{align*}
			Hence
			\[
			\frac{\displaystyle\int |x|^b |f_k(x)|^{p+1} dx}{\|f_k\|_{H^1}^{p+1}} ~\gtrsim~ k^{\frac{N-1}{2}\left(1+\frac{2b}{N-1}-p\right)}\to \infty \mbox{ as } k\to\infty,
			\]
			since $p<1+\frac{2b}{N-1}$. This finishes the proof.
	\end{proof}
	
	We are next interested in finding optimizers for \eqref{GN-ineq}. To do this, we first show the following compactness result.
	
	\begin{lemma}[Compact embedding] \label{lem-comp-embe}
	Let $N\geq 2$, $b>0$, $p>1+\frac{2b}{N-1}$, and $p<\frac{N+2+2b}{N-2}$ if $N\geq 3$. Then
	\[
	H^1_{\rad}(\R^N) \hookrightarrow L^{p+1}(\R^N, |x|^b dx)
	\]
	is compact.
	\end{lemma}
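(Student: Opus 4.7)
The plan is a standard tail-plus-interior decomposition, mirroring the interpolation used in the proof of Lemma \ref{lem-GN-ineq}. Given a bounded sequence $\{f_n\}\subset H^1_{\rad}(\R^N)$ with $\|f_n\|_{H^1}\leq M$, I would extract a subsequence so that $f_n \rightharpoonup f$ weakly in $H^1$; since $H^1_{\rad}(\R^N)$ is closed, hence weakly closed, in $H^1(\R^N)$, the limit $f$ is radial, and setting $g_n := f_n - f$ reduces the task to proving $\int|x|^b|g_n|^{p+1}\,dx \to 0$.

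For the exterior region, the Strauss-type radial Sobolev bound \eqref{est-Strauss} gives the pointwise estimate $|g_n(x)| \leq CM|x|^{-(N-1)/2}$, so
\[
\int_{|x|\geq R}|x|^b|g_n|^{p+1}\,dx \leq CM^{p-1}\int_{|x|\geq R}|x|^{b-(N-1)(p-1)/2}|g_n|^2\,dx \leq CM^{p+1}\,R^{b-(N-1)(p-1)/2}.
\]
The exponent of $R$ is strictly negative precisely because $p > 1+\frac{2b}{N-1}$, so the exterior contribution tends to $0$ uniformly in $n$ as $R\to\infty$.

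For the interior $|x|\leq R$ with $R$ fixed, the case $N=2$ is immediate since $H^1(B_R)\hookrightarrow L^{p+1}(B_R)$ is compact for every $p<\infty$ by Rellich--Kondrachov and $|x|^b\leq R^b$. For $N\geq 3$ the exponent $p+1$ may exceed $\frac{2N}{N-2}$, so I would instead invoke the stronger radial bound \eqref{est-BL}, $|g_n(x)|\leq CM|x|^{-(N-2)/2}$, to absorb $p-1$ copies of $g_n$:
\[
|x|^b|g_n|^{p+1} \leq CM^{p-1}\,|x|^{\gamma}\,|g_n|^2,\qquad \gamma := b-\frac{(N-2)(p-1)}{2}.
\]
A Hölder split with conjugate exponents $(r,r')$ then yields
\[
\int_{|x|\leq R}|x|^\gamma|g_n|^2\,dx \leq \bigl\||x|^\gamma\bigr\|_{L^r(B_R)}\, \|g_n\|_{L^{2r'}(B_R)}^2.
\]
I would pick $r$ slightly larger than $N/2$ so that $2r'<\frac{2N}{N-2}$ (making Rellich--Kondrachov applicable in $L^{2r'}$), and, when $\gamma<0$, also $r<N/|\gamma|$ so that $|x|^\gamma\in L^r(B_R)$. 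These two constraints are compatible because $|\gamma|<2<N$, a condition equivalent via direct computation to the energy-subcritical hypothesis $p<\frac{N+2+2b}{N-2}$. With $R$ fixed, the weight factor is finite and the $L^{2r'}(B_R)$ norm tends to $0$.

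The delicate step is the interior estimate for $N\geq 3$, where the simultaneous constraints $r>N/2$ and $\gamma r>-N$ consume the full subcritical margin $p<\frac{N+2+2b}{N-2}$, just as the other margin $p>1+\frac{2b}{N-1}$ is used sharply in the tail. A standard $\varepsilon/2+\varepsilon/2$ argument --- choose $R$ so the tail is small uniformly in $n$, then take $n$ large --- combines the two estimates and concludes the proof.
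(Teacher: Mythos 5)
Your proof is correct, and while the exterior estimate is the same as the paper's (the Strauss bound \eqref{est-Strauss} plus $p>1+\frac{2b}{N-1}$, giving a decay uniform in $n$), your treatment of the interior is genuinely different. The paper splits $\{|x|\leq \vareps\}\cup\{\vareps\leq|x|\leq 1/\vareps\}\cup\{|x|\geq 1/\vareps\}$: on the inner ball it chooses $c$ with $p+1=\frac{2N+2c}{N-2}$ and uses the weighted bound \eqref{est-2} to get a factor $\vareps^{b-c}$ (small uniformly in $n$, with $c<b$ encoding energy-subcriticality), and it invokes strong local convergence only on the annulus, so no weighted Hölder is ever performed near the origin. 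You instead handle the whole ball $B_R$ at once: the pointwise bound \eqref{est-BL} reduces $|g_n|^{p+1}$ to $|x|^\gamma|g_n|^2$ with $\gamma=b-\frac{(N-2)(p-1)}{2}$, and the subcritical hypothesis enters as $\gamma>-2$, which is exactly what makes the constraints $r>N/2$ (so that $2r'<\frac{2N}{N-2}$ and Rellich--Kondrachov applies) and $\gamma r>-N$ compatible. (Your phrase ``$|\gamma|<2$'' is slightly loose --- when $\gamma\geq 0$ the weight is bounded on $B_R$ and only $r>N/2$ is needed, and when $\gamma<0$ one has $|\gamma|<2$ iff $\gamma>-2$ --- but since you only impose $r<N/|\gamma|$ in the case $\gamma<0$, the argument is sound.) What each buys: your route avoids the three-region split and the auxiliary exponent $c$, works uniformly for all $N\geq 2$ and the full range of $p$ in one stroke, and makes the role of the two endpoint conditions transparent (tail vs.\ weight integrability); the paper's route keeps all smallness uniform in $n$ on the two extreme regions, uses compactness only on a compact annulus away from $x=0$, and is written so as to cover only the range not already in \cite{CG-AM}.
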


	\begin{remark}
	This compactness result was proved in \cite[Lemma 2.3]{CG-AM} for $p>1+\frac{2b}{N-1}$ and $p<\frac{N+2}{N-2}+\frac{2b}{N-1}$ if $N\geq 3$. Here we extend this result to $\frac{N+2}{N-2} +\frac{2b}{N-1} \leq p< \frac{N+2+2b}{N-2}$ in dimensions $N\geq 3$.
	\end{remark}

	\begin{proof}[Proof of Lemma \ref{lem-comp-embe}]
	We only consider the case $N\geq 3$ and $\frac{N+2}{N-2} +\frac{2b}{N-1} \leq p< \frac{N+2+2b}{N-2}$. Let $(f_n)_n \subset H^1_{\rad}(\R^N)$ be such that $f_n \rightharpoonup 0$ weakly in $H^1_{\rad}(\R^N)$. We will show that up to a subsequence,
	\begin{align} \label{prof-comp-embe}
	\int |x|^b |f_n(x)|^{p+1} dx \rightarrow 0 \text{ as } n \rightarrow \infty.
	\end{align}
	Since $f_n \rightharpoonup 0$ weakly in $H^1_{\rad}(\R^N)$, up to a subsequence, we have $f_n \rightarrow 0$ a.e. $x \in \R^N$ and $f_n \rightarrow 0$ strongly in $L^r_{\loc}(\R^N)$ for all $1\leq r <\frac{2N}{N-2}$. Let $\vareps>0$. We estimate
	\[
	\int |x|^b |f_n(x)|^{p+1} dx = \left(\int_{|x| \leq \vareps} + \int_{\vareps \leq |x| \leq \frac{1}{\vareps}} +\int_{|x| \geq \frac{1}{\vareps}} \right) |x|^b |f_n(x)|^{p+1} dx = (\text{I}) + (\text{II}) + (\text{III}).
	\]
	For $(\text{I})$, we pick $c>0$ such that $p+1= \frac{2N+2c}{N-2}$ which is possible since $p+1>\frac{2N}{N-2}$. From \eqref{est-2}, we have
	\[
	(\text{I}) = \int_{|x|\leq \vareps} |x|^{b-c} |x|^c |f_n(x)|^{\frac{2N+2c}{N-2}} dx \lesssim \vareps^{b-c} \|\nabla f_n\|^{\frac{2N+2c}{N-2}}_{L^2} \rightarrow 0 \text{ as } \vareps \rightarrow 0.
	\]
	Here we have used the fact that $(f_n)_n$ is bounded uniformly in $H^1(\R^N)$ and $c<b$ as $p<\frac{N+2+2b}{N-2}$. For $(\text{III})$, we use \eqref{est-Strauss} and $(N-1)(p-1) >2b$ to get
	\begin{align*}
	(\text{III}) &= \int_{|x| \geq \frac{1}{\vareps}} |x|^{b-\frac{(N-1)(p-1)}{2}} \left( |x|^{\frac{N-1}{2}} |f_n(x)|\right)^{p-1} |f_n(x)|^2 dx \\
	&\lesssim \vareps^{\frac{(N-1)(p-1)}{2}-b} \|\nabla f_n\|^{\frac{p-1}{2}}_{L^2} \|f_n\|^{\frac{p+3}{2}}_{L^2} \rightarrow 0 \text{ as } \vareps \rightarrow 0.
	\end{align*}
	For $(\text{II})$, we infer from \eqref{est-Strauss} that
	\begin{align*}
	(\text{II}) &=\int_{\vareps \leq |x| \leq \frac{1}{\vareps}} |x|^{b-\frac{(N-1)(p-1)}{2}} \left( |x|^{\frac{N-1}{2}} |f_n(x)|\right)^{p-1} |f_n(x)|^2 dx \\
	&\leq \vareps^{b-\frac{(N-1)(p-1)}{2}} \left(\sup_{\vareps \leq |x| \leq \frac{1}{\vareps}} |x|^{\frac{N-1}{2}} |f_n(x)\right)^{p-1} \|f_n\|^2_{L^2\left(\vareps \leq |x| \leq \frac{1}{\vareps}\right)} \\
	&\lesssim \vareps^{b-\frac{(N-1)(p-1)}{2}} \|\nabla f_n\|^{\frac{p-1}{2}}_{L^2} \|f_n\|^{\frac{p-1}{2}}_{L^2} \|f_n\|^2_{L^2\left(\vareps \leq |x| \leq \frac{1}{\vareps}\right)} \rightarrow 0 \text{ as } n \rightarrow \infty.
	\end{align*}
	Here for fixed $\vareps>0$, we have $f_n \rightarrow 0$ strongly in $L^2\left(\vareps \leq |x| \leq \frac{1}{\vareps}\right)$. Collecting the above estimates, we prove \eqref{prof-comp-embe}.
	\end{proof}

	\begin{proof} [Proof of Theorem \ref{theo-GN-ineq}]
		The proof is similar to that of \cite[Theorem 2.1]{CG-AM} using Lemma \ref{lem-comp-embe}. It was proved in \cite{CG-AM} that there exists $\phi \in H^1_{\rad}(\R^N)$ such that
		\[
		- A \Delta \phi + B \phi - \frac{p+1}{C_{\opt}} |x|^b |\phi|^{p-1} \phi =0
		\]
		and
		\[
		C_{\opt} = W(\phi):= \int |x|^b |\phi(x)|^{p+1} dx \div \left[\|\nabla \phi\|^A_{L^2} \|\phi\|^B_{L^2} \right],
		\]
		where
		\[
		A:= \frac{N(p-1)-2b}{2}, \quad B:= \frac{4+2b-(N-2)(p-1)}{2}.
		\]
		Using the fact that $|\nabla |\phi|(x)|\leq |\nabla \phi(x)|$ a.e. $x \in \R^N$, we see that $|\phi|$ is also an optimizer for \eqref{GN-ineq}. Setting $\phi(x) = \lambda Q(\mu x)$ with $\lambda, \mu>0$ such that
		\[
		\lambda^{p-1}= \frac{B C_{\opt}}{p+1} \left(\frac{B}{A}\right)^{b/2}, \quad \mu^2 = \frac{B}{A},
		\]
		we see that $Q$ is a solution to \eqref{equ-Q} and $C_{\opt} = W(Q)$. As $Q \geq 0$, we have
		\[
		\Delta Q - Q = - |x|^b|Q|^{p-1} Q \leq 0 \text{ on } \R^N.
		\]
		By the maximum principle (see e.g., \cite[Theorem 3.5]{GT}), $Q$ is either positive or identically zero. Therefore, there exists an optimizer for \eqref{GN-ineq} which is a positive and radially symmetric solution to \eqref{equ-Q}. The uniqueness of such a solution is given in Proposition \ref{prop-unique}. The proof is complete.
	\end{proof}
	
	We also have the following Pohozaev's identity due to \cite{Chen}.
	
	\begin{lemma}[\cite{Chen}] \label{lem-poho-iden}
		Let $N\geq 2$, $b>0$, $p>1+\frac{2b}{N-1}$, and $p<\frac{N+2+2b}{N-2}$ if $N\geq 3$. Let $Q \in H^1_{\rad}(\R^N)$ be a non-trivial solution to \eqref{equ-Q}. Then
		\begin{align} \label{poho-iden}
		\|\nabla Q\|^2_{L^2} = \frac{N(p-1)-2b}{4+2b-(N-2)(p-1)} \|Q\|^2_{L^2} = \frac{N(p-1)-2b}{2(p+1)} \int |x|^b |Q(x)|^{p+1} dx.
		\end{align}
	\end{lemma}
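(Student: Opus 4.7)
The plan is to derive two scalar identities satisfied by $Q$ by pairing the equation $-\Delta Q + Q - |x|^b |Q|^{p-1} Q = 0$ with two natural multipliers, and then solve the resulting $2 \times 2$ linear system for the three quantities $\|\nabla Q\|_{L^2}^2$, $\|Q\|_{L^2}^2$, and $\int |x|^b |Q|^{p+1}\,dx$. Since $Q$ is radial and positive one may treat it as real-valued; the regularity and exponential decay established in Proposition~\ref{prop-phi} will justify all the integration-by-parts steps below.

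First I would multiply \eqref{equ-Q} by $Q$ and integrate over $\R^N$, yielding the Nehari-type identity
\[
\|\nabla Q\|_{L^2}^2 + \|Q\|_{L^2}^2 = \int |x|^b |Q(x)|^{p+1}\,dx. \tag{$\ast$}
\]
Second, I would multiply \eqref{equ-Q} by the dilation generator $x\cdot \nabla Q$ and integrate. The three standard identities (valid thanks to the decay from Proposition~\ref{prop-phi}) are
\[
\int \Delta Q\,(x\cdot \nabla Q)\,dx = \frac{N-2}{2}\|\nabla Q\|_{L^2}^2, \qquad \int Q\,(x\cdot \nabla Q)\,dx = -\frac{N}{2}\|Q\|_{L^2}^2,
\]
and, writing $|Q|^{p-1}Q\,(x\cdot \nabla Q) = \frac{1}{p+1} x\cdot \nabla |Q|^{p+1}$ and using $\nabla\cdot(|x|^b x) = (N+b)|x|^b$,
\[
\int |x|^b |Q|^{p-1} Q\,(x\cdot \nabla Q)\,dx = -\frac{N+b}{p+1}\int |x|^b |Q(x)|^{p+1}\,dx.
\]
Combining these gives the Pohozaev-type identity
\[
\frac{N+b}{p+1}\int |x|^b |Q|^{p+1}\,dx = \frac{N-2}{2}\|\nabla Q\|_{L^2}^2 + \frac{N}{2}\|Q\|_{L^2}^2. \tag{$\ast\ast$}
\]

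Finally, I would eliminate $\int |x|^b |Q|^{p+1}\,dx$ between $(\ast)$ and $(\ast\ast)$: substituting $(\ast)$ into $(\ast\ast)$ and rearranging produces
\[
\frac{4+2b-(N-2)(p-1)}{2(p+1)}\|\nabla Q\|_{L^2}^2 = \frac{N(p-1)-2b}{2(p+1)}\|Q\|_{L^2}^2,
\]
which is the first claimed equality in \eqref{poho-iden}. The second equality then follows by substituting this ratio back into $(\ast)$ (or equivalently into $(\ast\ast)$) and simplifying; the algebra reduces the coefficient to exactly $\frac{N(p-1)-2b}{2(p+1)}$.

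The only real obstacle is the rigorous justification of the integration by parts against the multiplier $x\cdot \nabla Q$, which in principle involves unbounded moments such as $|x||\nabla Q|^2$. This is precisely why Proposition~\ref{prop-phi} is invoked: the pointwise exponential decay of $Q$ and $\nabla Q$ makes all boundary terms at infinity vanish and all integrals absolutely convergent, even in the presence of the spatially growing weight $|x|^b$ on the right-hand side. Once this is in place the derivation is purely algebraic.
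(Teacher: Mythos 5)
Your argument is correct: the Nehari identity $(\ast)$ and the dilation (Pohozaev) identity $(\ast\ast)$ are derived properly, the algebra eliminating $\int |x|^b|Q|^{p+1}dx$ gives exactly \eqref{poho-iden}, and invoking Proposition \ref{prop-phi} to justify the integration by parts against $x\cdot\nabla Q$ is the right way to handle the growing weight $|x|^b$. The paper itself gives no proof but simply cites \cite{Chen}, whose derivation is this same standard multiplier computation, so your proposal coincides with the intended argument.
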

	
	\begin{lemma} \label{lem-regu}
		Let $N\geq 2$, $b>0$, $p>1+\frac{2b}{N-1}$, and $p<\frac{N+2+2b}{N-2}$ if $N\geq 3$. Let $\phi \in H^1_{\rad}(\R^N)$ be a non-trivial solution to \eqref{equ-Q}. Then $\phi \in C^2(\R^N)$ and $|D^\beta \phi(x)| \rightarrow 0$ as $|x| \rightarrow \infty$ for all $|\beta|\leq 2$.
	\end{lemma}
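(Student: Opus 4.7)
The plan is to combine a standard elliptic bootstrap with the radial Sobolev (Strauss) pointwise estimate \eqref{est-Strauss}. Recast \eqref{equ-Q} as the linear equation
\[
(I-\Delta)\phi = g, \qquad g(x) := |x|^b |\phi(x)|^{p-1}\phi(x),
\]
where $|x|^b$ is continuous since $b>0$ and smooth outside the origin. First I will upgrade the $H^1$-regularity of $\phi$ to $\phi \in L^\infty(\R^N) \cap C^2(\R^N)$, and then I will exploit the Strauss decay in interior elliptic estimates on unit balls far from the origin to conclude $|D^\beta \phi(x)| \to 0$.

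For the regularity step, rewrite the equation as $-\Delta\phi = V\phi - \phi$ with $V(x) := |x|^b|\phi(x)|^{p-1}$. A Brezis--Kato / Moser iteration will yield $\phi \in L^q_{\loc}(\R^N)$ for every $q<\infty$ provided $V$ lies in $L^{N/2}_{\loc}$ in a sufficiently strong sense. Away from the origin, the Strauss bound $|\phi(x)| \lesssim |x|^{-(N-1)/2}$ combined with $p > 1+\tfrac{2b}{N-1}$ gives $V\in L^\infty(\{|x|\geq 1\})$ outright. Near the origin, the vanishing of $|x|^b$ together with Sobolev embedding $\phi \in L^{2^*}_{\loc}$ suffices when $p\leq (N+2)/(N-2)$; in the super-Sobolev range $p\in((N+2)/(N-2),(N+2+2b)/(N-2))$ one invokes a weighted Caffarelli--Kohn--Nirenberg type inequality for radial functions, which places $\phi$ in $L^{p+1}(B_1,|x|^b\,dx)$ up to the inhomogeneous critical exponent and lets the bootstrap close. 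Once $\phi\in L^q_{\loc}$ for all $q<\infty$, $L^q$-elliptic regularity gives $\phi\in W^{2,q}_{\loc}$ and hence $\phi\in C^{1,\alpha}_{\loc}$ by Sobolev embedding. Since $|x|^b$ is H\"older of exponent $\min(b,1)$ and $|\phi|^{p-1}\phi$ is H\"older on compact sets, $g\in C^{0,\alpha}_{\loc}$, and Schauder theory then yields $\phi\in C^{2,\alpha}_{\loc}(\R^N)\subset C^2(\R^N)$.

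The decay $\phi(x)\to 0$ is immediate from \eqref{est-Strauss}. For the derivatives, apply interior elliptic estimates on balls $B_1(x_0)$ with $|x_0|$ large. The inequality
\[
\|\phi\|_{C^{1,\alpha}(B_1(x_0))} \leq C \bigl(\|\phi\|_{L^\infty(B_2(x_0))}+\|g\|_{L^\infty(B_2(x_0))}\bigr),
\]
combined with $\|\phi\|_{L^\infty(B_2(x_0))} \lesssim |x_0|^{-(N-1)/2}$ and $\|g\|_{L^\infty(B_2(x_0))} \lesssim |x_0|^{b-p(N-1)/2}$ (which tends to $0$ since $p>\tfrac{2b}{N-1}$), gives $\nabla\phi(x)\to 0$. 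A second application of Schauder,
\[
\|\phi\|_{C^{2,\alpha}(B_1(x_0))} \leq C\bigl(\|\phi\|_{L^\infty(B_2(x_0))}+\|g\|_{C^{0,\alpha}(B_2(x_0))}\bigr),
\]
together with the decay of $\nabla g$ deduced from the previous step, yields $|D^2\phi(x)|\to 0$.

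The hard part is the bootstrap in the super-Sobolev range $p>(N+2)/(N-2)$, which is permitted here since the subcriticality condition is only $p<(N+2+2b)/(N-2)$ with $b>0$. The radial Strauss pointwise bound is the essential tool at infinity, while near the origin the vanishing of $|x|^b$ combined with radial symmetry is exploited via weighted embeddings to tame the high nonlinearity. This is precisely the mechanism that allowed the Gagliardo--Nirenberg inequality to cover the full inhomogeneous subcritical range in Lemma \ref{lem-GN-ineq}.
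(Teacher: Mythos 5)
Your outline is fine away from the origin and for the final decay step: the Strauss bound does give $V=|x|^b|\phi|^{p-1}\in L^\infty(\{|x|\geq 1\})$, and the windowed interior estimates on $B_1(x_0)$, $|x_0|$ large, do yield $|D^\beta\phi(x)|\to 0$ for $|\beta|\le 2$ once $\phi\in C^{1,\alpha}_{\loc}$ with globally bounded gradient is in hand (the paper instead gets decay from a global $W^{3,q}(\R^N)$ bound with $q>N$, but your route works). The genuine gap is exactly where you place "the hard part": the near-origin bootstrap in the super-Sobolev range $\frac{N+2}{N-2}<p<\frac{N+2+2b}{N-2}$. Knowing $\phi\in L^{p+1}(B_1,|x|^b\,dx)$ up to the weighted critical exponent (which is nothing new -- it is already Lemma \ref{lem-GN-ineq}, or \eqref{est-2}, restricted to a ball) does \emph{not} let the bootstrap close the way you assert. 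Concretely, on $B_1$ the weighted bound only gives $g=|x|^b|\phi|^{p-1}\phi\in L^{\frac{p+1}{p}}(B_1)$ unweighted, hence $\phi\in W^{2,\frac{p+1}{p}}_{\loc}$ and $\phi\in L^m_{\loc}$ with $\frac1m=\frac{p}{p+1}-\frac2N$; the standard iteration $\frac{1}{q_{n+1}}=\frac{p}{q_n}-\frac{2}{N}$ improves integrability only if the starting exponent exceeds $\frac{N(p-1)}{2}$, and a short computation shows $m>\frac{N(p-1)}{2}$ is equivalent to $p<\frac{N+2}{N-2}$. So precisely in the range where you need it, the iteration you gesture at regresses rather than improves, and "a weighted Caffarelli--Kohn--Nirenberg inequality lets the bootstrap close" is an assertion carrying the whole weight of the lemma.

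What is missing is the pointwise mechanism that makes the weighted nonlinearity effectively subcritical: writing $|x|^b|\phi|^{p-1}\phi=\bigl(|x|^{\frac{N-2s}{2}}|\phi|\bigr)^{\frac{2b}{N-2s}}|\phi|^{\alpha}\phi$ with $\alpha=p-1-\frac{2b}{N-2s}$ and using the radial pointwise bounds ($s=\frac12$ is \eqref{est-Strauss}, $s=1$ is \eqref{est-BL}, intermediate $s$ is the Cho--Ozawa estimate \eqref{est-CO}) so that the bootstrap runs with the reduced power $\alpha+1$, which satisfies $(N-2)\alpha<4$; when $b\geq 2(N-1)$ no single choice of $s$ covers the whole range and one needs the finite induction over $s_1<s_2<\cdots$ carried out in the paper. (Alternatively, even for your local Brezis--Kato route one can salvage the origin by the pointwise bound \eqref{est-BL}: it gives $V(x)\lesssim |x|^{b-\frac{(N-2)(p-1)}{2}}\in L^{\frac N2+\epsilon}(B_1)$ for the full range $p<\frac{N+2+2b}{N-2}$, whence $\phi\in L^\infty_{\loc}$ by standard subsolution estimates.) As written, your proposal neither contains this ingredient nor a working substitute, so the central step of the lemma is not established.
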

	
	\begin{proof} We follow an argument of \cite{Cazenave} and proceed in several steps.
		
		{\bf Step 1.} We claim that $\phi \in L^q_{\rad}(\R^N)$ for all $2\leq q <\infty$. Assume this claim for the moment, we have $|x|^b |\phi|^{p-1} \phi \in L^q_{\rad}(\R^N)$ for all $2\leq q<\infty$. As
		\begin{align} \label{equ-phi}
		-\Delta \phi +\phi -|x|^b |\phi|^{p-1}\phi=0,
		\end{align}
		the elliptic regularity yields $\phi \in W^{2,q}_{\rad}(\R^N)$ for all $2\leq q<\infty$, hence $\partial_j \phi \in W^{1,q}_{\rad}(\R^N)$ for all $j=1,\cdots, N$ and all $2\leq q<\infty$. We will prove the claim by considering several cases.
		
		$\bullet$ We first consider the case $N\geq 2$, $b>0$, $p>1+\frac{2b}{N-1}$, and $p<\frac{N+2}{N-2} +\frac{2b}{N-1}$ if $N\geq 3$. Observe that if $\phi \in L^r_{\rad}(\R^N)$ for some $r>\alpha+1$, then $|x|^b |\phi|^{p-1} \phi \in L^{\frac{r}{\alpha+1}}_{\rad}(\R^N)$, where $\alpha=p-1-\frac{2b}{N-1}$. In fact, by \eqref{est-Strauss},
		\[
		\||x|^b|\phi|^{p-1} \phi\|_{L^{\frac{r}{\alpha+1}}} = \left\|\left(|x|^{\frac{N-1}{2}} |\phi|\right)^{\frac{2b}{N-1}} |\phi|^\alpha \phi \right\|_{L^{\frac{r}{\alpha+1}}} \lesssim \|\nabla \phi\|^{\frac{b}{N-1}}_{L^2} \|\phi\|^{\frac{b}{N-1}}_{L^2} \|\phi\|^{\alpha+1}_{L^r}.
		\]
		From \eqref{equ-phi}, we infer that $\phi \in W^{2,\frac{r}{\alpha+1}}_{\rad}(\R^N)$. By Sobolev embedding, we have
		\begin{align} \label{regu-q}
		\phi \in L^q_{\rad}(\R^N) \text{ for all } q\geq \frac{r}{\alpha+1} \text{ such that } \frac{1}{q} \geq \frac{\alpha+1}{r} - \frac{2}{N}.
		\end{align}
		Define for $n\geq 0$, $q_0=\alpha+2$ and
		\[
		\frac{1}{q_{n+1}} = \frac{\alpha+1}{q_n}-\frac{2}{N}, \quad n\geq 0.
		\]
		In particular, we have
		\begin{align*}
		\frac{1}{q_{n+1}} = (\alpha+1) \left(\frac{\alpha+1}{q_{n-1}} - \frac{2}{N}\right) - \frac{2}{N} &= (\alpha+1)^2 \left(\frac{1}{q_{n-1}} -\frac{2(\alpha+2)}{N(\alpha+1)^2}\right) \\
		&=(\alpha+1)^2 \left(\frac{1}{q_{n-1}} -\frac{2}{N\alpha} +\frac{2}{N\alpha(\alpha+1)^2}\right).
		\end{align*}
		By induction, we get
		\[
		\frac{1}{q_n}= (\alpha+1)^n \left(\frac{1}{\alpha+2} -\frac{2}{N\alpha} +\frac{2}{N\alpha(\alpha+1)^n}\right).
		\]
		As $(N-2)\alpha<4$ due to $p<\frac{N+2}{N-2}+\frac{2b}{N-1}$, we have
		\[
		\frac{1}{q_{n+1}} - \frac{1}{q_n} = -(\alpha+1)^n \left(\frac{\alpha}{\alpha+2} -\frac{2}{N}\right) <0.
		\]
		This shows that $\frac{1}{q_n}$ is decreasing in $n$ and $\frac{1}{q_n} \rightarrow -\infty$ as $n\rightarrow \infty$. As $q_0=\alpha+2$, there exists $k\geq 0$ such that
		\[
		\frac{1}{q_n} >0 \text{ for } 0 \leq n \leq k \text{ and } \frac{1}{q_{k+1}} \leq 0.
		\]
		Since $\phi \in H^1_{\rad}(\R^N)$, we have $\phi \in L^{q_0}_{\rad}(\R^N)$. If $\phi \in L^{q_n}_{\rad}(\R^N)$ for some $n\leq k-1$, then by \eqref{regu-q},
		\[
		\phi \in L^q_{\rad}(\R^N) \text{ for all } q\geq \frac{q_n}{\alpha+1} \text{ such that } \frac{1}{q} \geq \frac{\alpha+1}{q_n} - \frac{2}{N} = \frac{1}{q_{n+1}},
		\]
		hence $\phi \in L^{q_{n+1}}_{\rad}(\R^N)$. This shows that $\phi \in L^{q_k}_{\rad}(\R^N)$ and, repeating the above argument, we get $\phi \in L^{q_{k+1}}_{\rad}(\R^N)$. Hence the claim is proved in this case.
		
		$\bullet$ We next consider the case $N\geq 3$, $b>0$, and $1+\frac{2b}{N-2}<p<\frac{N+2+2b}{N-2}$. Repeating the same reasoning as above with $\alpha=p-1-\frac{2b}{N-2}$ and using \eqref{est-BL} instead of \eqref{est-Strauss}, we can conclude the claim in this case.
		
		$\bullet$ Now let us consider the general case for $N\geq 3$. Comparing between $\frac{N+2}{N-2}+\frac{2b}{N-1}$ and $1+\frac{2b}{N-2}$, we see that if $0<b < 2(N-1)$, then we have the claim for all $1+\frac{2b}{N-1}<p<\frac{N+2+2b}{N-2}$. In the case $b\geq 2(N-1)$, the claim was proved for
		\[
		1+\frac{2b}{N-1} <p<\frac{N+2}{N-2} +\frac{2b}{N-1}, \quad 1+\frac{2b}{N-2} < p < \frac{N+2+2b}{N-2}.
		\]
		To fill the gap on $p \in \left[\frac{N+2}{N-2}+\frac{2b}{N-1}, 1+\frac{2b}{N-2}\right]$ when $b\geq 2(N-1)$, we first recall the following radial Sobolev embedding due to Cho and Ozawa \cite{CO}: for $N\geq 3$, $\frac{1}{2} \leq s \leq 1$, and $f \in H^1_{\rad}(\R^N)$,
		\begin{align} \label{est-CO}
		|x|^{\frac{N-2s}{2}} |f(x)|\leq C(N,s) \|\nabla f\|^s_{L^2} \|f\|^{1-s}_{L^2}.
		\end{align}
		We then take $\frac{1}{2}<s_1<1$ such that $\frac{N+2}{N-2}+\frac{2b}{N-1} = 1+\frac{2b}{N-2s_1}$. It is possible since $1+\frac{2b}{N-1}<\frac{N+2}{N-2}+\frac{2b}{N-1}< 1+\frac{2b}{N-2}$. Repeating the same argument as in the first case with $\alpha=p-1-\frac{2b}{N-2s_1}$ and using \eqref{est-CO}, we can prove the claim with $1+\frac{2b}{N-2s_1} \leq p<\frac{N+2}{N-2} +\frac{2b}{N-2s_1}$. Note that the case $\alpha=0$ works as well. If $2(N-1)<b <\frac{N-2s_1}{1-s_1}$, we have $\frac{N+2}{N-2} + \frac{2b}{N-2s_1} > 1+\frac{2b}{N-2}$. The claim is thus proved for all $1+\frac{2b}{N-1}<p<\frac{N+2+2b}{N-2}$. Otherwise, if $b\geq \frac{N-2s_1}{1-s_1}$, we take $s_2 \in (s_1,1)$ so that $\frac{N+2}{N-2} +\frac{2b}{N-2s_1} = 1+\frac{2b}{N-2s_2}$. Repeating the same line of arguments, we proved the claim for $1+\frac{2b}{N-2s_2} \leq p <\frac{N+2}{N-2} +\frac{2b}{N-2s_2}$. If $\frac{N-2s_1}{1-s_1} <b<\frac{N-2s_2}{1-s_2}$, we are done. Otherwise, we repeat the above argument until $\frac{N-2s_{k-1}}{1-s_{k-1}} <b<\frac{N-2s_k}{1-s_k}$. Note that this process will be terminated in finite steps since $s_k \rightarrow 1$ (i.e., $\frac{N-2s_k}{1-s_k}\gg 1$ for $k$ large) and $b>0$ is given. The claim is now proved.
		
		{\bf Step 2.} For each $j=1, \cdots, N$, we have
		\[
		-\Delta \partial_j \phi + \partial_j \phi = \partial_j( |x|^b |\phi|^{p-1} \phi) \sim |x|^b |\phi|^{p-1} \partial_j \phi + |x|^{b-1} |\phi|^{p-1} \phi.
		\]
		As $\phi, \partial_j \phi \in L^q_{\rad}(\R^N)$ for all $2\leq q<\infty$, we infer that $|x|^b |\phi|^{p-1} \partial_j \phi \in L^q_{\rad}(\R^N)$ for all $2 \leq q<\infty$.

		When $b\geq 1$, we have from \eqref{est-Strauss} that
		\[
		\||x|^{b-1} |\phi|^{p-1} \phi\|_{L^q} = \left\| \left( |x|^{\frac{N-1}{2}}|\phi|\right)^{\frac{2(b-1)}{N-1}} |\phi|^{\alpha+\frac{2}{N-1}} \phi\right\|_{L^q} \lesssim \|\nabla \phi\|^{\frac{b-1}{N-1}}_{L^2} \|\phi\|^{\frac{b-1}{N-1}}_{L^2} \|\phi\|^{\alpha+1+\frac{2b}{N-1}}_{L^{q\left(\alpha+1+\frac{2b}{N-1}\right)}}
		\]
		which shows $|x|^{b-1} |\phi|^{p-1} \phi \in L^q_{\rad}(\R^N)$ for all $2\leq q<\infty$.
		
		When $0<b<1$, we take $N<q^*<\frac{N}{1-b}$. We will show that $|x|^{b-1}|\phi|^{p-1}\phi \in L^q_{\rad}(\R^N)$ for all $2\leq q \leq q^*$. In fact, we have
		\[
		\||x|^{-(1-b)} |\phi|^{p-1} \phi\|_{L^q} \leq \||x|^{-(1-b)} |\phi|^{p-1} \phi\|_{L^q(B(0,1))} + \||x|^{-(1-b)} |\phi|^{p-1} \phi\|_{L^q(B^c(0,1))},
		\]
		where $B(0,1)$ is the unit ball centered at the origin. On $B^c(0,1)$, we have
		\[
		\||x|^{-(1-b)} |\phi|^{p-1} \phi\|_{L^q(B^c(0,1))} \leq \||\phi|^{p-1} \phi\|_{L^q}\leq \|\phi\|^p_{L^{qp}}.
		\]
		On $B(0,1)$, we estimate
		\[
		\||x|^{-(1-b)} |\phi|^{p-1} \phi\|_{L^q(B(0,1))} \leq \||x|^{-(1-b)}\|_{L^\gamma(B(0,1))} \||\phi|^{p-1}\phi\|_{L^\rho} \lesssim \|\phi\|^p_{L^{\rho p}}
		\]
		provided that $\gamma, \rho \geq 1$ satisfy $\frac{1}{q}=\frac{1}{\gamma} +\frac{1}{\rho}$ and $\||x|^{-(1-b)}\|_{L^\gamma(B(0,1))}<\infty$. These conditions are satisfied by taking
		$\gamma=q^*+\vareps$ with some $\vareps>0$ sufficiently small and $\rho =\frac{q\gamma}{\gamma-q}$.
		
		In any case, we have proved that $|x|^{b-1} |\phi|^{p-1} \phi \in L^q_{\rad}(\R^N)$ for all $2\leq q \leq q^*$ for some $q^*>N$. Thus we get $\partial_j \phi \in W^{2,q}_{\rad}(\R^N)$ for all $2\leq q \leq q^*$ and all $j=1,\cdots, N$. Hence $\phi \in W^{3,q}_{\rad}(\R^N)$ for all $2\leq q\leq q^*$. By Morrey's inequality, $\phi \in C^{2,\delta}(\R^N)$ for all $0<\delta<1$. In particular, $|D^\beta \phi(x)| \rightarrow 0$ as $|x| \rightarrow \infty$ for all $|\beta|\leq 2$.
	\end{proof}
	
	\begin{lemma} \label{lem-deca}
		Let $N\geq 2$, $b>0$, and $1+\frac{2b}{N-1}<p<\frac{N+2+2b}{N-2}$. Let $\phi \in H^1_{\rad}(\R^N)$ be a non-trivial solution to \eqref{equ-Q}. Then there exists $C>0$ such that
		\[
		e^{C|x|} (|\phi(x)| + |\nabla \phi(x)|) \in L^\infty(\R^N).
		\]
	\end{lemma}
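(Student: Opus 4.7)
The plan is to prove pointwise exponential decay of $\phi$ via a maximum-principle comparison with an exponential supersolution, and then deduce decay of $\nabla\phi$ by bootstrapping in the elliptic equation.

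First, I would rewrite \eqref{equ-Q} in the form
\[
-\Delta \phi + V(x)\phi = 0, \qquad V(x) := 1 - |x|^b |\phi(x)|^{p-1},
\]
and show that $V(x) \to 1$ as $|x|\to\infty$. For this, the Strauss radial inequality \eqref{est-Strauss} gives
\[
|x|^b |\phi(x)|^{p-1} \lesssim |x|^{b-\frac{(N-1)(p-1)}{2}} \|\nabla\phi\|_{L^2}^{\frac{p-1}{2}} \|\phi\|_{L^2}^{\frac{p-1}{2}},
\]
and the exponent of $|x|$ is strictly negative thanks to $p > 1 + \frac{2b}{N-1}$. Hence there exists $R_0 > 0$ such that $V(x) \geq \tfrac{1}{2}$ for all $|x|\geq R_0$.

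Second, I would set $\psi := |\phi|$ and apply Kato's inequality to obtain
\[
-\Delta \psi + V(x)\psi \leq 0 \quad \text{in } \mathcal{D}'(\R^N),
\]
so that $-\Delta \psi + \tfrac{1}{2}\psi \leq 0$ on $\{|x| > R_0\}$. Choose $C \in (0, 1/\sqrt{2})$ and compare with $g(x) := M e^{-C|x|}$, which satisfies
\[
\Delta g = \left(C^2 - \frac{(N-1)C}{|x|}\right) g \leq C^2 g \leq \tfrac{1}{2} g \quad \text{for } |x| \geq R_0.
\]
Taking $M$ large enough that $g \geq \psi$ on $\partial B(0,R_0)$ (possible since $\psi$ is continuous by Lemma \ref{lem-regu}) and noting that both $\psi$ and $g$ vanish at infinity, the weak maximum principle applied to $w := g-\psi$, which satisfies $-\Delta w + \tfrac{1}{2} w \geq 0$ on $\{|x|>R_0\}$ with nonnegative boundary values and vanishing at infinity, forces $w \geq 0$, that is, $|\phi(x)| \leq M e^{-C|x|}$ everywhere (after adjusting $M$ to absorb the bounded region $|x|\leq R_0$).

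Third, for the gradient, the equation yields the pointwise bound
\[
|\Delta \phi(x)| \leq |\phi(x)| + |x|^b |\phi(x)|^p \lesssim e^{-C|x|} + |x|^b e^{-pC|x|} \lesssim e^{-C'|x|}
\]
for any $C' \in (0,C)$. Since $\phi$ and $\Delta\phi$ are $L^\infty$ on each unit ball $B(x,1)$ with norms bounded by $e^{-C'|x|}$ (up to a uniform factor), standard interior $W^{2,q}$ estimates followed by Morrey's embedding (or, equivalently, representing $\phi = G\ast(|x|^b|\phi|^{p-1}\phi)$ via the Bessel kernel $G$, which itself decays exponentially together with $\nabla G$) yield $|\nabla\phi(x)| \lesssim e^{-C'|x|}$, completing the proof. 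The main obstacle is the supersolution comparison step, where one must carefully match the decay rate $C$ of $g$ with the lower bound on $V$; everything else is a standard bootstrap.
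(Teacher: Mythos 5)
Your proof is correct, but it follows a genuinely different route from the paper. The paper never invokes a comparison principle: it pairs the equation with $\theta_\vareps \phi$, where $\theta_\vareps(x)=e^{|x|/(1+\vareps|x|)}$ is a bounded Lipschitz weight, absorbs the nonlinear term outside a large ball using the radial inequality \eqref{est-Strauss} together with the decay $|\phi(x)|\rightarrow 0$ from Lemma \ref{lem-regu}, and lets $\vareps\rightarrow 0$ to obtain the weighted bound $\int e^{|x|}\left(|\phi|^2+|\nabla\phi|^2\right)dx<\infty$; pointwise decay (at rate $e^{-|x|/(N+2)}$) is then extracted by an elementary covering argument that uses only the boundedness of $\nabla\phi$, and the same is applied to $\nabla\phi$. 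You instead write the equation as $-\Delta\phi+V\phi=0$ with $V(x)=1-|x|^b|\phi|^{p-1}\rightarrow 1$ (again via \eqref{est-Strauss} and $p>1+\frac{2b}{N-1}$), apply Kato's inequality and compare $|\phi|$ with the barrier $Me^{-C|x|}$, $C<1/\sqrt{2}$, on an exterior domain, then bootstrap the gradient by interior $W^{2,q}$ estimates or the Bessel kernel. Both arguments rest on the same two inputs --- Lemma \ref{lem-regu} and a radial Sobolev inequality to tame the growth of $|x|^b$ --- and yours is complete, with the caveat that since $|\phi|$ is only a distributional (Kato) subsolution you must use the weak maximum principle for $H^1$ supersolutions on the unbounded exterior domain (standard, via testing with the truncated negative part, using that $w\rightarrow 0$ at infinity), rather than a classical interior-minimum argument. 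What each approach buys: the paper's multiplier argument is more self-contained (only integration by parts and Cauchy-Schwarz, no maximum principle), while your barrier method yields a sharper explicit rate --- since $V\rightarrow 1$ you could take any $C<1$, whereas the paper's conversion step only gives $e^{-|x|/(N+2)}$; both of course suffice for the stated result.
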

	
	\begin{proof}
		Let $\vareps>0$. We define $\theta_\vareps(x):= e^{\frac{|x|}{1+\vareps |x|}}$. We see that $\theta_\vareps$ is bounded, Lipschitz continuous, and $|\nabla \theta_\vareps(x)| \leq \theta_\vareps(x) \leq e^{|x|}$ for all $x \in \R^N$. Taking the scalar product with $\theta_\vareps \phi$ in $H^1(\R^N)$, we infer from \eqref{equ-Q} that
			\[
			\rea \int \nabla \phi \cdot \nabla(\theta_\vareps \overline{\phi}) dx + \int \theta_\vareps |\phi|^2 dx =\int \theta_\vareps |x|^b |\phi|^{p+1} dx.
			\]
			Using $\nabla (\theta_\vareps \overline{\phi}) = \overline{\phi} \nabla \theta_\vareps + \theta_\vareps \nabla \overline{\phi}$, we have
			\[
			\rea (\nabla \phi \cdot \nabla (\theta_\vareps \overline{\phi})) \geq \theta_\vareps |\nabla \phi|^2 - \theta_\vareps |\phi||\nabla \phi|.
			\]
			By the Cauchy-Schwarz's inequality, we get
			\[
			\rea (\nabla \phi \cdot \nabla (\theta_\vareps \overline{\phi})) \geq \frac{1}{2} \theta_\vareps |\nabla \phi|^2 - \frac{1}{2} \theta_\vareps |\phi|^2.
			\]
			Thus
			\[
			\int \theta_\vareps (|\nabla \phi|^2 +|\phi|^2) dx \leq 2 \int \theta_\vareps |x|^b |\phi|^{p+1} dx \lesssim \|\nabla \phi\|^{\frac{b}{N-1}}_{L^2} \|\phi\|^{\frac{b}{N-1}}_{L^2} \int \theta_\vareps |\phi|^{p+1-\frac{2b}{N-1}} dx.
			\]
			As $|\phi(x)| \rightarrow 0$ as $|x| \rightarrow \infty$, we have $|\phi(x)|^{p-1-\frac{2b}{N-1}} \leq \frac{1}{100}$ for all $|x| \geq R$. It follows that
			\begin{align*}
			\int \theta_\vareps (|\nabla \phi|^2 +|\phi|^2) dx &\lesssim \int_{|x|\leq R} \theta_\vareps |\phi|^{p+1-\frac{2b}{N-1}} dx + \int_{|x|\geq R} \theta_\vareps |\phi|^{p+1-\frac{2b}{N-1}} dx \\
			&\lesssim \int_{|x| \leq R} e^{|x|} |\phi|^{p+1-\frac{2b}{N-1}} dx + \frac{1}{100} \int \theta_\vareps |\phi|^2 dx.
			\end{align*}
			Hence
			\[
			\int \theta_\vareps (|\nabla \phi|^2 +|\phi|^2) dx \lesssim \int_{|x| \leq R} e^{|x|} |\phi|^{p+1-\frac{2b}{N-1}} dx <\infty.
			\]
			Letting $\vareps \rightarrow 0$, we get
			\begin{align} \label{est-phi}
			\int e^{|x|} (|\nabla \phi|^2 +|\phi|^2) dx <\infty.
			\end{align}
			As $|\nabla \phi| \in L^\infty(\R^N)$ (see Lemma \ref{lem-regu}), 
			we denote $L=\|\nabla \phi\|_{L^\infty}+1$. For $x \in \R^N$, we define
			\[
			B(x):= \left\{ y\in \R^N : |x-y| \leq \frac{|\phi(x)|}{2L}\right\}.
			\]
			By the choice of $L$, we have $|\phi(x)-\phi(y)|\leq L|x-y|$. Thus for any $y \in B(x)$,
			\begin{align*}
			|\phi(x)|^2 \leq (|\phi(y)|+L|x-y|)^2 \leq 2|\phi(y)|^2 +2L^2|x-y|^2 \leq 2|\phi(y)|^2+\frac{1}{2}|\phi(x)|^2,
			\end{align*}
			hence $|\phi(x)|^2\leq 4|\phi(y)|^2$ for all $y \in B(x)$. Integrating this inequality over $B(x)$, we have
			\[
			C(N)\left(\frac{|\phi(x)|}{2L}\right)^N |\phi(x)|^2 \leq 4 \int_{B(x)} |\phi(y)|^2 dy
			\]
			or
			\[
			|\phi(x)|^{N+2} \leq \frac{4(2L)^N}{C(N)} \int_{B(x)} |\phi(y)|^2 dy.
			\]
			For $y \in B(x)$, we have $|x| \leq |y| +\frac{\|\phi\|_{L^\infty}}{2L}$. From this, we see that
			\begin{align*}
			e^{|x|} |\phi(x)|^{N+2} &\leq \frac{4(2L)^N}{C(N)} e^{|x|} \int_{B(x)} |\phi(y)|^2 dy \\
			&\leq \frac{4(2L)^N}{C(N)} e^{\frac{\|\phi\|_{L^\infty}}{2L}} \int_{B(x)} e^{|y|} |\phi(y)|^2 dy \\
			&\leq \frac{4(2L)^N}{C(N)} e^{\frac{\|\phi\|_{L^\infty}}{2L}} \int e^{|y|} |\phi(y)|^2 dy.
			\end{align*}
			This together with \eqref{est-phi} yield $e^{\frac{|x|}{N+2}} |\phi(x)| <\infty$. A similar argument goes for $\nabla \phi$ and the proof is complete.
		\end{proof}
	
	\begin{proof}[Proof of Proposition \ref{prop-phi}]
		It follows directly from Lemmas \ref{lem-regu} and \ref{lem-deca}.
	\end{proof}	

	We next recall the uniqueness result of positive radial solution to an ordinary differential equation due to Shioji and Watanabe \cite{SW}. We consider
	\begin{align} \label{ordi-equa}
	\left\{
	\begin{aligned}
	\phi'' + \frac{f'(r)}{f(r)} \phi' &+ g(r) \phi + h(r) \phi^p =0, \quad 0<r<\infty, \\
	\phi(0)>0, \quad &\phi(r) \rightarrow 0 \text{ as } r\rightarrow \infty,
	\end{aligned}
	\right.
	\end{align}
	where $f,g,h:(0,\infty) \rightarrow \R$ are given functions. We say that $\phi$ is a positive solution to \eqref{ordi-equa} if $\phi \in C([0,\infty)) \cap C^2((0,\infty))$ and $\phi$ satisfies \eqref{ordi-equa}.
	
	We introduce the following functions
	\begin{align*}
	\alpha(r) &= (f(r))^{\frac{2(p+1)}{p+3}} (h(r))^{-\frac{2}{p+3}}, \\
	\beta(r) &= -\frac{1}{2} \alpha'(r) + \frac{f'(r)}{f(r)} \alpha(r), \\
	\gamma(r) &= -\beta'(r) +\frac{f'(r)}{f(r)} \beta(r), \\
	G(r) &= -\beta(r) g(r) + \frac{1}{2}\gamma'(r) + \frac{1}{2} (\alpha(r) g(r))'.
	\end{align*}
	
	\begin{theorem}[\cite{SW}] \label{theo-SW}
		Let $p>1$, $f,h \in C^3((0,\infty))$ are positive functions, and $g \in C^1((0,\infty))$. Assume the following conditions:
		\begin{enumerate}[leftmargin=7mm,itemsep=2pt]
			\item $\limsup_{r\rightarrow 0} f(r) <\infty$.
			\item $\lim_{r \rightarrow 0} \frac{1}{f(r)} \mathlarger{\int}_0^r f(\tau)(|g(\tau)| + h(\tau)) d\tau =0$.
			\item There exists $R>0$ such that
			\begin{enumerate}
				\item $f(g + h) \in L^1((0,R))$.
				\item $r \mapsto f(r) (|g(r)| + h(r)) \mathlarger{\int}_r^R \frac{1}{f(\tau)} d\tau \in L^1((0,R))$.
				\item $1/f \notin L^1((0,R))$.
			\end{enumerate}
			\item $\limsup_{r\rightarrow 0} \alpha(r) <\infty$, $\limsup_{r\rightarrow 0} |\beta(r)| <\infty$, and $\lim_{r\rightarrow 0} \alpha(r) g(r) =\lim_{r\rightarrow 0} \alpha(r) h(r) =0$.
			\item $\lim_{r\rightarrow 0} \gamma(r) \in [0,\infty]$.
			\item There exists $k\in [0,\infty]$ such that $G(r) \geq 0 \text{ on } (0,k)$ and $G(r) \leq 0$ on $(k,\infty)$.
			\item $G^- \not \equiv 0$, where $G^-(r) = \min \{G(r),0\}$ for $r\in (0,\infty)$.
		\end{enumerate}
		Then \eqref{ordi-equa} has at most one positive solution.
	\end{theorem}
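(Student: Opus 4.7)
The plan is to follow the classical Pohozaev-type weighted energy strategy for uniqueness of positive radial solutions to semilinear ODEs, in the spirit of Coffman, Kwong-McLeod, Yanagida, and Pucci-Serrin, with the specific weights $\alpha, \beta, \gamma$ engineered in \cite{SW} so that all unsigned contributions cancel. First, I would use assumptions (1)--(3) to set up the initial value problem at the regular singular point $r=0$: the positive global solutions of \eqref{ordi-equa} form a one-parameter family $\phi_a$ indexed by $a = \phi(0) > 0$.

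The heart of the proof is the introduction of the weighted energy
$$
P[\phi](r) := \alpha(r)\Bigl[\tfrac{1}{2}(\phi'(r))^2 + \tfrac{1}{2} g(r)\phi(r)^2 + \tfrac{h(r)}{p+1}\phi(r)^{p+1}\Bigr] + \beta(r)\phi(r)\phi'(r) + \tfrac{1}{2}\gamma(r)\phi(r)^2,
$$
together with the claim that along any positive solution of \eqref{ordi-equa},
$$
\bigl(f(r) P[\phi](r)\bigr)' = f(r)\, G(r)\, \phi(r)^2.
$$
The specific exponents in $\alpha = f^{2(p+1)/(p+3)} h^{-2/(p+3)}$ are precisely what force the nonlinear terms of order $\phi^{p+1}$ and $\phi^{p+2}$ to cancel after one differentiates $P[\phi]$ and uses the ODE, together with the recursive definitions $\beta = -\alpha'/2 + (f'/f)\alpha$ and $\gamma = -\beta' + (f'/f)\beta$, which similarly cancel the cross-terms $\phi\phi'$ and $(\phi')^2$.

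Granted this identity, I would integrate on $(0, \infty)$. Assumptions (4) and (5) control the boundary behaviour of $\alpha, \beta, \gamma$ at the origin, so $fP[\phi_a]$ vanishes at $r = 0$; the decay of positive solutions at infinity together with (3)(a)--(b) makes it vanish at $r = \infty$. Hence $\int_0^\infty f(r)G(r)\phi_a(r)^2\,dr = 0$ for every positive solution $\phi_a$. Now assume two distinct positive solutions $\phi_{a_1}\ne\phi_{a_2}$ and pass to the linearized equation
$$
v'' + \frac{f'(r)}{f(r)} v' + g(r) v + p\, h(r)\, \phi^{p-1} v = 0
$$
satisfied by $v = \partial_a \phi_a$. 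Applying the same energy calculation in Wronskian form to the pair $(\phi, v)$, and using assumption (6) (the single sign change of $G$ at $r = k$) together with (7) ($G^-\not\equiv 0$), a Sturm comparison argument forces $v$ to have a zero in $(0,k)$ and another in $(k,\infty)$, contradicting the asymptotics at $0$ and $\infty$ forced on $v$ by a non-trivial branching of the solution family.

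The principal obstacle is the algebraic verification of $(fP[\phi])' = f G \phi^2$: making the $\phi^{p+1}$ term cancel dictates the exponent $2(p+1)/(p+3)$, while the $\phi\phi'$ and $\phi^2$ contributions mix through the successive derivatives of $\alpha, \beta, \gamma$ and require a careful bookkeeping that is the main computational step. A secondary difficulty is justifying the vanishing of the boundary terms at $r = 0$; assumption (3)(c) that $1/f \notin L^1((0, R))$ is used precisely to rule out pathological solution branches at the singular point and to guarantee that the one-parameter family $\phi_a$ captures all positive solutions.
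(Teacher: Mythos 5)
You should note first that the paper does not prove this theorem at all: it is imported verbatim from Shioji--Watanabe \cite{SW} and used as a black box in the proof of Proposition \ref{prop-unique}, so there is no in-paper argument to compare with, and your sketch has to be measured against the actual Shioji--Watanabe strategy. Your identification of the engine is correct: the generalized Pohozaev identity built from $\alpha,\beta,\gamma$, with the exponent $\frac{2(p+1)}{p+3}$ in $\alpha$ chosen exactly so that the $\phi^{p+1}$ contribution cancels, and the recursive definitions of $\beta$ and $\gamma$ cancelling the $(\phi')^2$ and $\phi\phi'$ terms, is indeed the heart of \cite{SW}. One computational slip: with the definitions as stated, differentiating
$P[\phi]=\tfrac12\alpha(\phi')^2+\beta\phi\phi'+\tfrac12\gamma\phi^2+\tfrac12\alpha g\phi^2+\tfrac1{p+1}\alpha h\phi^{p+1}$
along a solution of \eqref{ordi-equa} gives $\frac{d}{dr}P[\phi](r)=G(r)\phi(r)^2$; the weights $f$ are already built into $\alpha,\beta,\gamma$, so your version $\bigl(fP\bigr)'=fG\phi^2$ carries a spurious extra term $f'P$ that does not vanish.

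The genuine gap is in how you pass from the identity to uniqueness. Integrating over $(0,\infty)$ to conclude $\int_0^\infty G(r)\phi_a(r)^2\,dr=0$ for every positive solution requires both boundary terms of $P$ to vanish, and the hypotheses do not provide this: at infinity only $\phi(r)\to0$ is assumed, with no rate for $\phi$ or $\phi'$, and at the origin assumption (5) explicitly allows $\gamma(r)\to+\infty$ --- which actually happens in this paper's own application when $N=2$ --- so $P(0^+)$ need not even be finite. The Shioji--Watanabe proof does not use a global integral identity per solution; it is a shooting/monotone-separation argument in the spirit of Kwong--McLeod and Yanagida: the IVP solutions $\phi_a$ are classified into crossing, slowly decaying and rapidly decaying types, and the Pohozaev identity (applied on finite intervals, and to the variation $v=\partial_a\phi_a$ solving the linearized equation) together with the single sign change of $G$ in assumptions (6)--(7) shows that the crossing and slow-decay parameter sets are open and ordered, which is what forces at most one positive (rapidly decaying) solution. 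Your closing step --- ``Sturm comparison forces $v$ to vanish once in $(0,k)$ and once in $(k,\infty)$, contradicting the asymptotics'' --- only gestures at this mechanism: nothing in the sketch establishes the zero/sign structure of $v$, nor why two distinct positive solutions would produce the ``non-trivial branching'' you invoke. So the algebraic cancellations are right, but the actual uniqueness argument, which is where assumptions (6) and (7) do their work, is missing.
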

	
	\begin{proof}[Proof of Proposition \ref{prop-unique}]		
	By Theorem \ref{theo-GN-ineq}, there exists a positive radial solution to \eqref{equ-Q}. By Lemmas \ref{lem-regu}, we have for $N\geq 2$, $b>0$, $p>1+\frac{2b}{N-1}$, and $p<\frac{N+2+2b}{N-2}$ if $N\geq 3$ that $\phi \in C([0,\infty)) \cap C^2((0,\infty))$. We also have
	\[
	\phi'' + \frac{N-1}{r} \phi' - \phi + r^b \phi^p =0,
	\]
	hence $\phi$ solves \eqref{ordi-equa} with
	\[
	f(r) = r^{N-1}, \quad g(r)=-1, \quad h(r) =r^b.
	\]
	Let us check the assumptions of Theorem \ref{theo-SW}. To this end, we compute
	\begin{align*}
	\alpha(r) &= r^{\frac{2(N-1)(p+1)-2b}{p+3}}, \\
	\beta(r) &= \frac{2(N-1) +b}{p+3} r^{\frac{2(N-1)(p+1)-2b}{p+3}-1}, \\
	\gamma(r) &= \frac{2(N-1)+b}{p+3} \times \frac{2N+2b-(N-2)(p+1)}{p+3} r^{\frac{2(N-1)(p+1)-2b}{p+3}-2}, \\
	G(r) &= [-Cr^2+D]r^{\frac{2(N-1)(p+1)-2b}{p+3}-3},
	\end{align*}
	where
	\begin{align*}
	C&= \frac{(N-1)(p-1)-2b}{p+3}, \\
	D&= \frac{2(N-1)+b}{p+3} \times \frac{2N+2b-(N-2)(p+1)}{p+3}\times \frac{(N-2)(p+1)-2-b}{p+3}.
	\end{align*}
	
	It is obvious that the first assumption is satisfied. We have
	\[
	\frac{1}{f(r)} \int_0^r f(\tau) (|g(\tau)| + h(\tau)) d\tau = \frac{r}{N} + \frac{r^{b+1}}{N+b},
	\]
	hence the second assumption is satisfied. We also have for any $R>0$,
	\[
	f(r)(|g(r)| + h(r)) = r^{N-1}(1+r^b) \in L^1((0,R))
	\]
	and
	\[
	f(r)(|g(r)| + h(r)) \int_r^R \frac{1}{f(\tau)} d\tau = (1+r^b) \frac{r - R^{2-N}r^{N-1}}{N-2} \in L^1((0,R))
	\]
	and
	\[
	\frac{1}{f(r)} = r^{1-N} \notin L^1((0,R)).
	\]
	Thus the third assumption is fulfilled. The fourth assumption is readily to verify since
	\[
	\frac{2(N-1)(p+1)-2b}{p+3}-1 = \frac{(2N-3)(p+1)-2-2b}{p+3}>0
	\]
	for $p>1+\frac{2b}{N-1}$ and $N\geq 2$.
	To check the fifth assumption, we compute
	\[
	\frac{2(N-1)(p+1)-2b}{p+3}-2 = \frac{2((N-2)(p+1)-2-b)}{p+3}.
	\]
	When $N=2$, it is negative. When $N\geq 3$, it is positive since $p+1>2+\frac{2b}{N-1}\geq \frac{2+b}{N-2}$. This shows that $\lim_{r \rightarrow 0} \gamma(r)=\infty$ if $N=2$ and $\lim_{r \rightarrow 0} \gamma(r)=0$ if $N\geq 3$, hence the fifth assumption is fulfilled. From the assumption on $p$, we see that $C>0$ for all $N\geq 2$ and $D<0$ if $N=2$ and $D>0$ if $N\geq 3$. Thus the sixth condition is satisfied for $k=0$ if $N=2$ and some $k>0$ if $N\geq 3$. Finally, the last assumption is also satisfied as $C>0$. Applying Theorem \ref{theo-SW}, we end the proof of Proposition \ref{prop-unique}.
	\end{proof}
\section{Local well-posedness}
	\label{S3}
	\setcounter{equation}{0}
	In this section, we study the local well-posedness for \eqref{INLS} with data in $H^1_{\rad}(\R^N)$. We have the following local existence in the energy-subcritical case.
	
	\begin{proposition} \label{prop-lwp}
		Let $N\geq 2$, $b>0$, $p\geq 1+\frac{2b}{N-1}$, and $p<\frac{N+2+2b}{N-2}$ if $N\geq 3$. Then \eqref{INLS} is locally well-posed in $H^1_{\rad}(\R^N)$ in the sense that: for every $u_0 \in H^1_{\rad}(\R^N)$,
		\begin{itemize}[leftmargin=5mm]
			\item There exist $T^*\in (0,\infty]$ and a unique solution $u \in C([0,T^*), H^1_{\rad}(\R^N))$ to \eqref{INLS} with initial data $\left.u\right|_{t=0} = u_0$.
			\item The maximal time of existence satisfies the blow-up alternative: if $T^*<\infty$, then $$\lim_{t \nearrow T^*} \|u(t)\|_{H^1} = \infty.$$
			\item If $u_{0,n} \rightarrow u_0$ strongly in $H^1_{\rad}(\R^N)$ and $0<T <T^*$, then the maximal solution $u_n$ of \eqref{INLS} with initial data $\left. u_n\right|_{t=0} =u_{0,n}$ is defined on $[0,T]$ for $n$ large and $u_n \rightarrow u$ in $C([0,T],H^1_{\rad}(\R^N))$ as $n\rightarrow \infty$.
		\end{itemize}
		Moreover, there are conservation laws of mass and energy, i.e.,
		\[
		M(u(t)) = M(u_0), \quad E(u(t)) = E(u_0)\quad \mbox{for all}\quad t\in [0,T^*).
		\]
		
	\end{proposition}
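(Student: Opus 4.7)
The plan is to apply Cazenave's abstract energy method \cite[Theorem 3.3.9]{Cazenave}, which constructs $H^1$-solutions to semilinear Schr\"odinger equations without using Strichartz estimates. The abstract scheme applies once I verify three ingredients: (i) the nonlinearity $F(u) := |x|^b |u|^{p-1} u$ is Lipschitz on bounded sets as a map $H^1_{\rad}(\R^N) \to H^{-1}_{\rad}(\R^N)$; (ii) its antiderivative $G(u) := \frac{1}{p+1}|x|^b|u|^{p+1}$ induces a well-defined $C^1$ energy on $H^1_{\rad}(\R^N)$; and (iii) the gauge invariance $\ima\bigl(F(u)\overline{u}\bigr) \equiv 0$ holds. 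Item (iii) is trivial since $F(u)\overline{u} = |x|^b|u|^{p+1}$ is real, and item (ii) follows from Lemma \ref{lem-GN-ineq}, which I have for the full range $p\in [1+\tfrac{2b}{N-1},\tfrac{N+2+2b}{N-2})$.

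The core step is (i): for all $u,v\in H^1_{\rad}(\R^N)$ with $\|u\|_{H^1},\|v\|_{H^1}\le R$,
\[
\|F(u)-F(v)\|_{H^{-1}}\le C(R)\,\|u-v\|_{H^1}.
\]
By duality and the pointwise bound $|F(u)-F(v)|\lesssim |x|^b(|u|^{p-1}+|v|^{p-1})|u-v|$, this reduces to controlling $\||x|^b|w|^{p-1}(u-v)\|_{L^{q'}}$ for $w\in\{u,v\}$ and some $q$ with $H^1(\R^N)\hookrightarrow L^q(\R^N)$. Writing
\[
|x|^b|w|^{p-1} \;=\; |x|^{b-\frac{(N-2s)(p-1)}{2}}\,\bigl(|x|^{\frac{N-2s}{2}}|w|\bigr)^{p-1}
\]
and applying the Cho--Ozawa radial Sobolev inequality \eqref{est-CO}, one absorbs $|x|^b$ at the price of powers of $\|\nabla w\|_{L^2}$ and $\|w\|_{L^2}$, provided $s\in[\tfrac12,1]$ is chosen so that the residual weight $|x|^{b-(N-2s)(p-1)/2}$ is tractable by a H\"older--Sobolev argument (possibly after splitting the domain into $\{|x|\le 1\}$ and $\{|x|\ge 1\}$, as in the proof of Lemma \ref{lem-regu}). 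For $p<\tfrac{N+2}{N-2}+\tfrac{2b}{N-1}$ the Strauss endpoint $s=\tfrac12$ of \eqref{est-Strauss} suffices and reproduces the estimate in \cite{CG-DCDS-B,Chen,Chen-CMJ,CG-AM}; in the extended range one picks $s>\tfrac12$ as in Step 1 of Lemma \ref{lem-regu}. The endpoint $p=1+\tfrac{2b}{N-1}$ is the degenerate case where the $\|\nabla w\|_{L^2}$-exponent vanishes and the estimate becomes purely $L^2$-based.

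With (i)--(iii) in hand, existence of a maximal $H^1_{\rad}$-solution, its strong continuity in time, uniqueness, the blow-up alternative, and continuous dependence all follow from Cazenave's scheme in the standard way: one solves a regularized Cauchy problem whose nonlinearity has been truncated (so that it is globally Lipschitz and the semigroup theory of \cite[Sect.~3.3]{Cazenave} applies), derives uniform $H^1$-bounds from mass and energy conservation combined with Lemma \ref{lem-GN-ineq}, and passes to the limit using weak compactness. Conservation of $M(u(t))$ and $E(u(t))$ on $[0,T^*)$ is inherited from the conservation laws of the regularized problem in the limit.

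The main obstacle is the Lipschitz bound in the previously uncovered range $\tfrac{N+2}{N-2}+\tfrac{2b}{N-1}\le p<\tfrac{N+2+2b}{N-2}$ for $N\ge 3$: here the natural Sobolev exponent produced by H\"older crosses $\tfrac{2N}{N-2}$, so Strauss' inequality alone no longer distributes the weight $|x|^b$ efficiently, and one must invoke the Cho--Ozawa scale \eqref{est-CO} with $s>\tfrac12$ together with the domain-splitting trick of Lemma \ref{lem-regu}. Everything else is routine from Cazenave's framework.
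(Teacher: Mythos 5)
Your overall architecture coincides with the paper's: Cazenave's energy method \cite[Theorem 3.3.9]{Cazenave}, with the weight $|x|^b$ absorbed by radial Sobolev inequalities (Strauss for the basic range, the Cho--Ozawa scale \eqref{est-CO} with an iteration in $s$ for the extended range), which is exactly how Proposition \ref{prop-lwp} is proved via Lemmas \ref{lem-lwp-1} and \ref{lem-lwp-2}. However, your core step (i) is set up in a way that fails on part of the range you claim for it. You place the full power $p-1$ on the radial factor, writing $|x|^b|w|^{p-1}=|x|^{b-\frac{(N-2s)(p-1)}{2}}\bigl(|x|^{\frac{N-2s}{2}}|w|\bigr)^{p-1}$, and assert that for $p<\frac{N+2}{N-2}+\frac{2b}{N-1}$ the Strauss choice $s=\frac12$ ``suffices and reproduces the estimate'' of \cite{CG-DCDS-B,Chen,Chen-CMJ,CG-AM}. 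It does not: with $s=\frac12$ the residual weight is $|x|^{b-\frac{(N-1)(p-1)}{2}}=|x|^{-c}$, and for example for $N=3$ and $p$ close to $5+b$ one gets $c=(p-1)-b$ close to $4$, which is not even locally integrable in $\R^3$, so no H\"older--Sobolev or domain-splitting argument can pair it with an $H^1$-controlled Lebesgue norm of $u-v$. The paper (and the cited references) use the other distribution of the weight: raise the radial factor only to the power $\frac{2b}{N-2s}$, so that $|x|^b$ cancels identically, $|x|^b|w|^{p-1}=\bigl(|x|^{\frac{N-2s}{2}}|w|\bigr)^{\frac{2b}{N-2s}}|w|^{p-1-\frac{2b}{N-2s}}$, and then take $\rho=r=p+1-\frac{2b}{N-2s}\in\bigl[2,\tfrac{2N}{N-2}\bigr)$; the iteration in $s$ is needed precisely so that $p-1-\frac{2b}{N-2s}\geq 0$ and this exponent stays subcritical. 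Your version can be repaired without that identity only by choosing $s$ so that the residual exponent lies in $[0,2)$ (e.g. $N-2s=\frac{2b}{p-1}$ when $p\leq 1+\frac{2b}{N-2}$, and $s=1$ otherwise), not with $s=\frac12$ throughout the CG range.

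Two smaller points. Cazenave's Theorem 3.3.9 does not ask merely for Lipschitz continuity $H^1_{\rad}\to H^{-1}_{\rad}$ on bounded sets; it requires the structured bound $\|F(u)-F(v)\|_{L^{\rho'}}\leq C(M)\|u-v\|_{L^r}$ with $\rho,r$ strictly below $\frac{2N}{N-2}$, and uniqueness is not produced by the regularization/compactness scheme itself --- in the paper it follows from this same estimate via \cite[Proposition 4.2.3]{Cazenave}. So the estimate must be stated and proved in that form, with the subcriticality of $\rho,r$ checked. Finally, at the endpoint $p=1+\frac{2b}{N-1}$ the estimate is not ``purely $L^2$-based'': the Strauss factor still contributes $\|\nabla w\|_{L^2}^{\frac{b}{N-1}}\|w\|_{L^2}^{\frac{b}{N-1}}$; what vanishes there is only the unweighted power $p-1-\frac{2b}{N-1}$.
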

	
	\begin{remark}
		This result extends the one in \cite{CG-AM, CG-DCDS-B}, where the local well-posedness for \eqref{INLS} was stated with $N,b,p$ satisfying \eqref{cond-CG}.
	\end{remark}
	
	To prove Proposition \ref{prop-lwp}, we start with the following lemmas.
	
	\begin{lemma} \label{lem-lwp-1}
		Let $N\geq 2$, $b>0$, $p\geq 1+\frac{2b}{N-1}$, and $p<\frac{N+2}{N-2} + \frac{2b}{N-1}$ if $N\geq 3$. Then \eqref{INLS} is locally well-posed in $H^1_{\rad}(\R^N)$ in the sense of Proposition \ref{prop-lwp}. Moreover, there are conservation laws of mass and energy.
	\end{lemma}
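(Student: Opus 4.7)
The plan is to apply the abstract energy method of Cazenave (Theorem 3.3.9 in \emph{Semilinear Schr\"odinger Equations}) in the radial setting. Specifically, I would verify that the nonlinearity $g(u):=-|x|^b|u|^{p-1}u$ defines a gauge-invariant, locally Lipschitz map
\[
g : H^1_{\rad}(\R^N) \longrightarrow H^{-1}_{\rad}(\R^N),
\]
whose potential is the smooth functional $V(u)=-\frac{1}{p+1}\int|x|^b|u(x)|^{p+1}dx$. Note that radiality is preserved by $e^{it\Delta}$, so it is consistent to work on the closed subspace $H^1_{\rad}$. Once the local Lipschitz property is established, Cazenave's framework delivers, via a Banach fixed point argument on the Duhamel integral equation, a unique maximal solution $u\in C([0,T^*),H^1_{\rad})$, a blow-up alternative depending only on $\|u(t)\|_{H^1}$, and continuous dependence on initial data.

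The heart of the matter is therefore the Lipschitz estimate
\[
\|g(u)-g(v)\|_{H^{-1}_{\rad}} \le C(M)\,\|u-v\|_{H^1_{\rad}},\qquad u,v\in B_{H^1}(0,M),
\]
and the obstacle is the unbounded weight $|x|^b$ at infinity, which rules out a direct use of Sobolev embedding. The trick is to distribute the weight onto a factor on which the Strauss inequality \eqref{est-Strauss} applies: writing $\alpha=\frac{2b}{N-1}$ (which is $\le p-1$ precisely because $p\ge 1+\frac{2b}{N-1}$) and using
\[
|x|^b|w|^{p-1} = \bigl(|x|^{(N-1)/2}|w|\bigr)^{\alpha}\,|w|^{p-1-\alpha},
\]
we obtain, by duality and H\"older, for $\phi\in H^1_{\rad}$,
\[
\bigl|\langle g(u)-g(v),\phi\rangle\bigr| \lesssim \int \bigl(|x|^b|u|^{p-1}+|x|^b|v|^{p-1}\bigr)|u-v||\phi|\,dx \lesssim \bigl(\|u\|_{H^1}^\alpha+\|v\|_{H^1}^\alpha\bigr)\,I,
\]
where
\[
I:=\int\bigl(|u|^{p-1-\alpha}+|v|^{p-1-\alpha}\bigr)|u-v||\phi|\,dx
\]
is controlled by Sobolev embedding $H^1(\R^N)\hookrightarrow L^r(\R^N)$ for $r\in[2,\tfrac{2N}{N-2}]$ (with $r<\infty$ arbitrary if $N=2$). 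The upper bound $p<\frac{N+2}{N-2}+\frac{2b}{N-1}$ is exactly what makes the residual exponent $p-\alpha$ strictly subcritical for the Sobolev embedding, so the H\"older product closes inside $H^1$. A similar splitting gives continuity of $V$ and of its gradient, identifying $g$ as the $H^{-1}$-gradient of $V$.

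With the above, the abstract theorem of Cazenave provides the existence, uniqueness, blow-up alternative, and continuous dependence statements. The conservation laws are proved by the standard regularization argument: approximate $u_0\in H^1_{\rad}$ by smooth radial data $u_{0,n}$ for which one can differentiate the mass and energy along the flow and verify directly that they are constant; continuous dependence then passes these conservation laws to $u$. The main obstacle, as anticipated, is the interplay between the growing weight and the Sobolev scale: the lower endpoint $p=1+\frac{2b}{N-1}$ is dictated by the need to absorb the full weight into the Strauss factor, while the upper endpoint $p=\frac{N+2}{N-2}+\frac{2b}{N-1}$ is the Sobolev-critical threshold for what remains once the weight has been absorbed. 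This explains, in particular, why reaching the full energy-subcritical range $p<\frac{N+2+2b}{N-2}$ will require a genuinely different argument (taken up in Proposition~\ref{prop-lwp}).
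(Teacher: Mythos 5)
Your core nonlinear estimate coincides with the paper's: you absorb the growing weight through the Strauss inequality \eqref{est-Strauss} by writing $|x|^b|w|^{p-1}=\big(|x|^{\frac{N-1}{2}}|w|\big)^{\frac{2b}{N-1}}|w|^{p-1-\frac{2b}{N-1}}$ and close with H\"older and subcritical Sobolev embedding, the endpoints $p\ge 1+\frac{2b}{N-1}$ and $p<\frac{N+2}{N-2}+\frac{2b}{N-1}$ playing exactly the roles you describe. There is, however, a genuine gap in how you feed this into Cazenave's theorem. The hypothesis of the energy method (\cite[Theorem 3.3.9]{Cazenave}) is not that $g(u)=-|x|^b|u|^{p-1}u$ be locally Lipschitz from $H^1_{\rad}$ to $H^{-1}_{\rad}$; it is the Lebesgue-space estimate \eqref{est-rho-r}, namely $\|g(u)-g(v)\|_{L^{\rho'}}\le C(M)\|u-v\|_{L^{r}}$ for some $\rho,r\in\big[2,\frac{2N}{N-2}\big)$, with the constant depending on the $H^1$ bounds but the difference measured in $L^{r}$. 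That stronger structure is what drives uniqueness (via Strichartz estimates, cf.\ \cite[Proposition 4.2.3]{Cazenave}, which the paper invokes): knowing only $\|g(u)-g(v)\|_{H^{-1}}\le C(M)\|u-v\|_{H^1}$, the Gronwall argument for the difference of two $H^1$ solutions does not close, since the Duhamel term is then controlled only in $H^{-1}$ while smallness is needed in $H^1$ and $e^{it\Delta}$ has no smoothing to recover the loss. For the same reason, describing the method as ``a Banach fixed point argument on the Duhamel integral equation'' is inaccurate: with $g$ mapping $H^1$ only into $H^{-1}$ there is no contraction in $C([0,T];H^1)$; Theorem 3.3.9 proves existence by regularization and compactness, and uniqueness separately through the $L^{\rho'}$--$L^{r}$ estimate.

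The gap is repaired with ingredients you already have: in your H\"older step, do not convert the factor $|u-v|$ (nor the dual factor) back to $H^1$. Taking $\rho=r=p+1-\frac{2b}{N-1}$, your splitting yields exactly \eqref{est-rho-r}, and the requirements $r\ge 2$ and $r<\frac{2N}{N-2}$ for $N\ge 3$ are precisely $p\ge 1+\frac{2b}{N-1}$ and $p<\frac{N+2}{N-2}+\frac{2b}{N-1}$; this is the paper's proof. With that estimate in hand, the gauge invariance, the potential structure, the conservation laws, and continuous dependence are delivered by the theorem as you indicate.
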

	
	\begin{proof}
		We first show the existence of $H^1_{\rad}$-solution to \eqref{INLS}. To this end, we use the energy method of Cazenave (see \cite[Theorem 3.3.9]{Cazenave}). It suffices to find $\rho, r \in \left[2, \frac{2N}{N-2}\right)$ such that
		\begin{align} \label{est-rho-r}
		\||x|^b|u|^{p-1} u - |x|^b |v|^{p-1} v\|_{L^{\rho'}} \leq C(M) \|u-v\|_{L^r}
		\end{align}
		for all $u,v \in H^1_{\rad}(\R^N)$ satisfying $\|u\|_{H^1} + \|v\|_{H^1} \leq M$. By H\"older's inequality and
		the radial Sobolev inequality \eqref{est-Strauss}, we have
		\begin{align*}
		\||x|^b|u|^{p-1} u - |x|^b |v|^{p-1} v\|_{L^{\rho'}} &\lesssim \||x|^b (|u|^{p-1} + |v|^{p-1})|u-v|\|_{L^{\rho'}} \\
		&= \Big\|\Big(\Big( |x|^{\frac{N-1}{2}} |u|\Big)^{\frac{2b}{N-1}} |u|^{p-1-\frac{2b}{N-1}}  \\
		&\mathrel{\phantom{= \Big\|\Big(\Big( }}+ \Big( |x|^{\frac{N-1}{2}} |v|\Big)^{\frac{2b}{N-1}} |v|^{p-1-\frac{2b}{N-1}} \Big) |u-v|\Big\|_{L^{\rho'}} \\
		&\lesssim \Big( \|\nabla u\|^{\frac{b}{N-1}}_{L^2} \|u\|^{\frac{b}{N-1}}_{L^2} \|u\|^{\left(p-1-\frac{2b}{N-1}\right)n}_{L^{\left(p-1-\frac{2b}{N-1}\right)n}} \\
		&\mathrel{\phantom{\lesssim \Big\|\Big(\Big( }} +  \|\nabla v\|^{\frac{b}{N-1}}_{L^2} \|v\|^{\frac{b}{N-1}}_{L^2} \|v\|^{\left(p-1-\frac{2b}{N-1}\right)n}_{L^{\left(p-1-\frac{2b}{N-1}\right)n}} \Big) \|u-v\|_{L^r}
		\end{align*}
		provided that $n\geq 1$ satisfies
		\[
		\frac{1}{\rho'} = \frac{1}{n} +\frac{1}{r}.
		\]
		We now choose $\rho=r=p+1-\frac{2b}{N-1}$. We have $\left(p-1-\frac{2b}{N-1}\right)n =p+1-\frac{2b}{N-1}$ and
		\begin{align} \label{est-r-1}
		\||x|^b|u|^{p-1} u - |x|^b |v|^{p-1} v\|_{L^{r'}} \lesssim \Big(\|u\|^{\frac{2b}{N-1}}_{H^1} \|u\|^{p+1-\frac{2b}{N-1}}_{L^r} + \|v\|^{\frac{2b}{N-1}}_{H^1} \|v\|^{p+1-\frac{2b}{N-1}}_{L^r}\Big)\|u-v\|_{L^r}.
		\end{align}
		Since $r \in \left[2,\frac{2N}{N-2}\right)$, we infer from the standard Gagliardo-Nirenberg inequality that
		\begin{align*}
		\||x|^b|u|^{p-1} u - |x|^b |v|^{p-1} v\|_{L^{r'}} \lesssim \Big(\|u\|^{p-1}_{H^1} + \|v\|^{p-1}_{H^1} \Big) \|u-v\|_{L^r}.
		\end{align*}
		From this, we obtain \eqref{est-rho-r}. As a result, there exist $T^*\in (0,\infty]$ and a unique solution $u \in C([0,T^*), H^1_{\rad}(\R^N)) \cap C^1([0,T^*), H^{-1}_{\rad}(\R^N))$ to \eqref{INLS}, where $H^{-1}_{\rad}(\R^N)$ is the dual space of $H^1_{\rad}(\R^N)$. The uniqueness of $H^1_{\rad}$-solution follows from \eqref{est-rho-r} and \cite[Proposition 4.2.3]{Cazenave}. The proof is complete.	
	\end{proof}
	
	\begin{lemma} \label{lem-lwp-2}
		Let $N\geq 3$, $b>0$, $p\geq 1+\frac{2b}{N-2}$, and $p<\frac{N+2+2b}{N-2}$. Then \eqref{INLS} is locally well-posed in $H^1_{\rad}(\R^N)$ in the sense of Proposition \ref{prop-lwp}. Moreover, there are conservation laws of mass and energy.
	\end{lemma}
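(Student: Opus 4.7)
The plan is to mirror the proof of Lemma \ref{lem-lwp-1}, replacing the Strauss-type estimate \eqref{est-Strauss} by the stronger radial Sobolev inequality \eqref{est-BL}, which is available in dimensions $N\geq 3$ and controls $|x|^{(N-2)/2}|f(x)|$ purely in terms of $\|\nabla f\|_{L^2}$. This upgrade is precisely what allows the admissible range of $p$ to be pushed up to the energy-critical exponent. Following \cite[Theorem 3.3.9]{Cazenave}, the whole lemma reduces to establishing, for some $\rho, r \in [2, 2N/(N-2))$, a nonlinear estimate of the form
\[
\||x|^b|u|^{p-1}u - |x|^b|v|^{p-1}v\|_{L^{\rho'}} \leq C(M)\|u-v\|_{L^r}
\]
whenever $\|u\|_{H^1}+\|v\|_{H^1}\leq M$. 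Granted this estimate, uniqueness is \cite[Proposition 4.2.3]{Cazenave}, and existence, the blow-up alternative, continuous dependence, and the conservation laws follow from the standard Cazenave framework.

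For the nonlinear estimate, I would begin from $||u|^{p-1}u - |v|^{p-1}v| \lesssim (|u|^{p-1}+|v|^{p-1})|u-v|$ and use the decomposition
\[
|x|^b |u|^{p-1} = \bigl(|x|^{(N-2)/2}|u|\bigr)^{\frac{2b}{N-2}} |u|^{p-1-\frac{2b}{N-2}},
\]
which is meaningful precisely because the hypothesis $p \geq 1 + 2b/(N-2)$ makes the second exponent nonnegative. By \eqref{est-BL} the first factor is bounded in $L^\infty$ by $\|\nabla u\|_{L^2}^{2b/(N-2)}$. Taking $\rho = r := p+1 - \frac{2b}{N-2}$ and applying H\"older's inequality with $\frac{1}{\rho'} = \frac{1}{n} + \frac{1}{r}$, with $n$ determined by $(p-1-\frac{2b}{N-2})n = r$, yields
\[
\||x|^b|u|^{p-1}u - |x|^b|v|^{p-1}v\|_{L^{r'}} \lesssim \bigl(\|\nabla u\|_{L^2}^{\frac{2b}{N-2}}\|u\|_{L^r}^{p-1-\frac{2b}{N-2}} + \|\nabla v\|_{L^2}^{\frac{2b}{N-2}}\|v\|_{L^r}^{p-1-\frac{2b}{N-2}}\bigr)\|u-v\|_{L^r}.
\]

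The main obstacle amounts to an endpoint bookkeeping check: $r \geq 2$ is equivalent to $p \geq 1 + \frac{2b}{N-2}$ (our hypothesis) and $r < \frac{2N}{N-2}$ is equivalent to the energy-subcritical assumption $p < \frac{N+2+2b}{N-2}$. With $r$ strictly below the Sobolev conjugate exponent, the standard Gagliardo-Nirenberg inequality gives $\|u\|_{L^r} \lesssim \|u\|_{H^1}$, so the estimate closes in the form
\[
\||x|^b|u|^{p-1}u - |x|^b|v|^{p-1}v\|_{L^{r'}} \lesssim \bigl(\|u\|_{H^1}^{p-1}+\|v\|_{H^1}^{p-1}\bigr)\|u-v\|_{L^r},
\]
after which the existence of a unique solution in $C([0,T^*),H^1_{\rad}(\R^N))$, the blow-up alternative, continuous dependence on initial data, and conservation of mass and energy all follow verbatim from the argument given in Lemma \ref{lem-lwp-1}.
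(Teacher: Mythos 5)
Your proposal is correct and follows essentially the same route as the paper: replace \eqref{est-Strauss} by \eqref{est-BL}, factor $|x|^b|u|^{p-1}=\bigl(|x|^{\frac{N-2}{2}}|u|\bigr)^{\frac{2b}{N-2}}|u|^{p-1-\frac{2b}{N-2}}$, choose $\rho=r=p+1-\frac{2b}{N-2}$, and check $r\in\bigl[2,\frac{2N}{N-2}\bigr)$ under the stated hypotheses before invoking the Cazenave energy-method framework as in Lemma \ref{lem-lwp-1}. No discrepancies with the paper's argument.
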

	
	\begin{proof}
		The proof is very similar to that of Lemma \ref{lem-lwp-1}, however, we use the radial Sobolev embedding \eqref{est-BL} instead of \eqref{est-Strauss}. In particular, we have
		\begin{align*}
		\||x|^b|u|^{p-1} u - |x|^b |v|^{p-1} v\|_{L^{\rho'}} &\lesssim \||x|^b (|u|^{p-1} + |v|^{p-1})|u-v|\|_{L^{\rho'}} \\
		&= \Big\|\Big(\Big( |x|^{\frac{N-2}{2}} |u|\Big)^{\frac{2b}{N-2}} |u|^{p-1-\frac{2b}{N-2}}  \\
		&\mathrel{\phantom{= \Big\|\Big(\Big( }}+ \Big( |x|^{\frac{N-2}{2}} |v|\Big)^{\frac{2b}{N-2}} |v|^{p-1-\frac{2b}{N-2}} \Big) |u-v|\Big\|_{L^{\rho'}} \\
		&\lesssim \Big( \|\nabla u\|^{\frac{2b}{N-2}}_{L^2} \|u\|^{\left(p-1-\frac{2b}{N-2}\right)n}_{L^{\left(p-1-\frac{2b}{N-2}\right)n}} \\
		&\mathrel{\phantom{\lesssim \Big\|\Big(\Big( }} +  \|\nabla v\|^{\frac{2b}{N-2}}_{L^2}  \|v\|^{\left(p-1-\frac{2b}{N-2}\right)n}_{L^{\left(p-1-\frac{2b}{N-2}\right)n}} \Big) \|u-v\|_{L^r}
		\end{align*}
		provided that $n\geq 1$ satisfies
		\[
		\frac{1}{\rho'} = \frac{1}{n} +\frac{1}{r}.
		\]
		We choose $\rho=r=p+1-\frac{2b}{N-2}$ to get
		\begin{align} \label{est-r-2}
		\||x|^b|u|^{p-1} u - |x|^b |v|^{p-1} v\|_{L^{r'}} \lesssim \Big(\|u\|^{\frac{2b}{N-2}}_{H^1} \|u\|^{p+1-\frac{2b}{N-2}}_{L^r} + \|v\|^{\frac{2b}{N-2}}_{H^1} \|v\|^{p+1-\frac{2b}{N-2}}_{L^r}\Big)\|u-v\|_{L^r}.
		\end{align}
		By the standard Gagliardo-Nirenberg inequality with $r \in \left[2,\frac{2N}{N-2}\right)$, we have
		\begin{align*}
		\||x|^b|u|^{p-1} u - |x|^b |v|^{p-1} v\|_{L^{r'}} \lesssim \Big(\|u\|^{p-1}_{H^1} + \|v\|^{p-1}_{H^1} \Big) \|u-v\|_{L^r}.
		\end{align*}
		The proof then follows by the same argument as in Lemma \ref{lem-lwp-1}.
	\end{proof}

	We are now able to prove Proposition \ref{prop-lwp}.
	
	\begin{proof}[Proof of Proposition \ref{prop-lwp}]
		From Lemma \ref{lem-lwp-1}, we have the local well-posedness (LWP) for $N= 2$, $b>0$, and $p\geq 1+2b$. Let us consider the case $N\geq 3$. By Lemmas \ref{lem-lwp-1} and \ref{lem-lwp-2}, we have the LWP for
		\[
		N\geq 3, \quad 0<b\leq 2(N-1), \quad 1+\frac{2b}{N-1} \leq p<\frac{N+2+2b}{N-2},
		\]
		and
		\[
		N\geq 3, \quad b> 2(N-1), \quad 1+\frac{2b}{N-1}\leq p<\frac{N+2}{N-2}+\frac{2b}{N-1} \quad \text{or} \quad 1+\frac{2b}{N-2}\leq p<\frac{N+2+2b}{N-2}.
		\]
		
		\begin{center}
			\begin{figure}[H]
				\begin{tikzpicture}
				\draw [-] (-6,0) -- (-5,0);
				\draw [->] (5,0) -- (7,0);
				\draw [blue,fill] (-5,0) circle [radius=1pt] node[below] {$1+\frac{2b}{N-1}$};
				\draw [red,fill] (-2,0) circle [radius=1pt] node[below] {$1+\frac{2b}{N-2}$};
				\draw [blue,fill] (2,0) circle [radius=1pt] node[below] {$\frac{N+2}{N-2} +\frac{2b}{N-1}$};
				\draw [red,fill] (5,0) circle [radius=1pt] node[below] {$\frac{N+2+2b}{N-2}$};
				\draw [thick, blue] (-5,0.01) -- (2,0.01);
				\draw (-4, 0.04) node[above] {{\color{blue} LWP}};
				\draw [thick, red] (-2,-0.01) -- (5,-0.01);
				\draw (4, 0.04) node[above] {{\color{red} LWP}};
				\end{tikzpicture}
				\caption{LWP for $N\geq3$ and $0<b\leq 2(N-1)$}
			\end{figure}
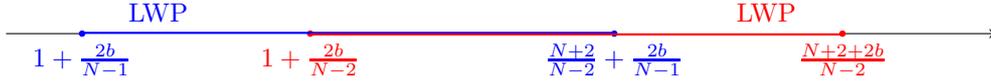
		\end{center}
		
		\begin{center}
			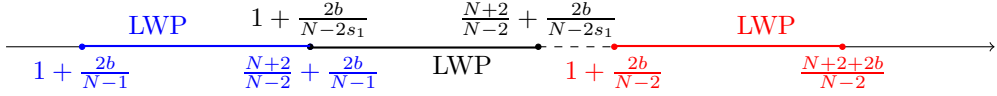
\begin{figure}[H]
				\begin{tikzpicture}
				\draw [-] (-6,0) -- (-5,0);
				\draw [->] (5,0) -- (7, 0);
				\draw [dashed] (1,0) -- (2,0);
				\draw [blue,fill] (-5,0) circle [radius=1pt] node[below] {$1+\frac{2b}{N-1}$};
				\draw [blue,fill] (-2,0) circle [radius=1pt] node[below] {$\frac{N+2}{N-2} +\frac{2b}{N-1}$};
				\draw [red,fill] (2,0) circle [radius=1pt] node[below] {$1+\frac{2b}{N-2}$};
				\draw [red,fill] (5,0) circle [radius=1pt] node[below] {$\frac{N+2+2b}{N-2}$};
				\draw [fill] (-2,0) circle [radius=1pt] node[above] {$1+\frac{2b}{N-2s_1}$};
				\draw [fill] (1,0) circle [radius=1pt] node[above] {$\frac{N+2}{N-2}+\frac{2b}{N-2s_1}$};
				\draw [thick, blue] (-5,0.01) -- (-2,0.01);
				\draw (-4, 0.04) node[above] {{\color{blue} LWP}};
				\draw [thick] (-2,-0.01) -- (1,-0.01);
				\draw (0, 0) node[below] {LWP};
				\draw [thick, red] (2,0.01) -- (5,0.01);
				\draw (4, 0.04) node[above] {{\color{red} LWP}};
				\end{tikzpicture}
				\caption{LWP for $N\geq3$ and $b> 2(N-1)$}
			\end{figure}
		\end{center}
		
		To fill the gap $p \in \left[\frac{N+2}{N-2}+\frac{2b}{N-1},1+\frac{2b}{N-2} \right)$ when $b>2(N-1)$, we proceed as in the proof of Lemma \ref{lem-regu}. We take $\frac{1}{2}<s_1<1$ such that $\frac{N+2}{N-2} +\frac{2b}{N-1} = 1+\frac{2b}{N-2s_1}$. Repeating the same argument as in the proofs of Lemmas \ref{lem-lwp-1} and \ref{lem-lwp-2} with \eqref{est-CO} instead of \eqref{est-Strauss} or \eqref{est-BL}, we can prove the LWP for \eqref{INLS} with
		\[
		1+\frac{2b}{N-2s_1} \leq p<\frac{N+2}{N-2} +\frac{2b}{N-2s_1}.
		\]
		If $2(N-1)<b\leq \frac{N-2s_1}{1-s_1}$, we have $\frac{N+2}{N-2} + \frac{2b}{N-2s_1} \geq 1+\frac{2b}{N-2}$. Thus we get the LWP for all $1+\frac{2b}{N-1} \leq p<\frac{N+2+2b}{N-2}$. Otherwise, if $b>\frac{N-2s_1}{1-s_1}$, we take $s_2 \in (s_1,1)$ so that $\frac{N+2}{N-2} + \frac{2b}{N-2s_1} =1+\frac{2b}{N-2s_2}$. Repeating the same reasoning, we prove the LWP for \eqref{INLS} with
		\[
		1+\frac{2b}{N-2s_2} \leq p<\frac{N+2}{N-2} +\frac{2b}{N-2s_2}.
		\]
		If $\frac{N-2s_1}{1-s_1}<b\leq \frac{N-2s_2}{1-s_2}$, we are done. Otherwise, we repeat the above argument until $\frac{N-2s_{k-1}}{1-s_{k-1}} < b \leq \frac{N-2s_k}{1-s_k}$. Since $s_k \rightarrow 1$ and $b>0$ is given, the above process will end in finite steps. The proof is complete.
	\end{proof}
	
	The above argument using the energy method does not work in the energy-critical case $p=\frac{N+2+2b}{N-2}$. Indeed, in such a case, there is no real numbers $\rho,r\in [2,\frac{2N}{N-2})$ satisfying \eqref{est-rho-r}. In this case, we will show the local well-posedness by using Strichartz estimates and the contraction mapping argument. Let us start by recalling the notion of Schr\"odinger admissible pairs.
	
	
	\begin{definition}
		A pair $(q,r)$ is said to be Schr\"odinger admissible if
		\[
		\frac{2}{q}+\frac{N}{r}=\frac{N}{2}, \quad \left\{
		\renewcommand*{\arraystretch}{1.2}
		\begin{array}{ll}
		r \in \left[2,\frac{2N}{N-2}\right] &\text{if } N\geq 3, \\
		r \in [2, \infty) &\text{if } N=2, \\
		r\in [2, \infty] &\text{if } N=1.
		\end{array}
		\right.
		\]
	\end{definition}
	
	\begin{proposition}[Strichartz estimates \cite{Cazenave}]
		Let $N\geq 1$ and $(q,r), (m,n)$ Schr\"odinger admissible pairs. Then there exists a constant $C>0$ such that
		\[
		\|e^{it\Delta} f\|_{L^q(\R,L^r)} \leq C\|f\|_{L^2}
		\]
		for any $f \in L^2(\R^N)$. Moreover, for any interval $I\subset \R$ containing $0$, there exists $C>0$ independent of $I$ such that
		\[
		\left\| \int_0^t e^{i(t-s)\Delta} F(s) ds \right\|_{L^q(I,L^r)} \leq C \|F\|_{L^{m'}(I,L^{n'})}
		\]
		for any $F \in L^{m'}(I, L^{n'}(\R^N))$.
	\end{proposition}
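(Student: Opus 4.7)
The plan is to follow the now-classical path to Strichartz estimates that goes back to Strichartz, Ginibre--Velo, Yajima, Cazenave--Weissler, and (for the endpoint) Keel--Tao. The two ingredients to start from are the unitarity of the free Schr\"odinger propagator on $L^2$, namely $\|e^{it\Delta} f\|_{L^2} = \|f\|_{L^2}$, which follows from Plancherel applied to the Fourier multiplier $e^{-it|\xi|^2}$, and the dispersive estimate
\[
\|e^{it\Delta} f\|_{L^\infty(\R^N)} \leq (4\pi|t|)^{-N/2} \|f\|_{L^1(\R^N)}, \quad t\neq 0,
\]
which is read off the explicit convolution kernel $(4\pi i t)^{-N/2} e^{i|x|^2/(4t)}$. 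Riesz--Thorin interpolation between these two endpoints yields the $L^{r'}\to L^r$ decay
\[
\|e^{it\Delta} f\|_{L^r(\R^N)} \leq C |t|^{-N\left(\frac12-\frac1r\right)} \|f\|_{L^{r'}(\R^N)}, \quad r\in [2,\infty].
\]

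First I would establish the homogeneous estimate by a $TT^*$ argument. Setting $T f(t,x) := (e^{it\Delta}f)(x)$, the bound $\|Tf\|_{L^q_t L^r_x} \leq C\|f\|_{L^2}$ is equivalent by duality to $\|T T^* F\|_{L^q_t L^r_x} \leq C\|F\|_{L^{q'}_t L^{r'}_x}$, where $(T T^* F)(t) = \int_\R e^{i(t-s)\Delta} F(s)\,ds$. Combining the dispersive estimate with the Hardy--Littlewood--Sobolev inequality in the time variable, on the admissibility scaling $\tfrac{2}{q}+\tfrac{N}{r}=\tfrac{N}{2}$ (which forces the correct exponents so that HLS applies), gives the desired non-endpoint bound for all $r \in [2,\tfrac{2N}{N-2})$. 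The endpoint $(q,r)=(2,\tfrac{2N}{N-2})$ when $N\geq 3$ is not accessible by HLS and has to be handled separately via the bilinear dyadic decomposition and real-interpolation argument of Keel--Tao; this is the delicate step and is exactly what forces $N\geq 3$ at the endpoint.

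For the inhomogeneous estimate with possibly distinct admissible pairs $(q,r)$ and $(m,n)$, I would first prove the retarded version $\bigl\| \int_0^t e^{i(t-s)\Delta} F(s)\,ds \bigr\|_{L^q(I,L^r)} \leq C\|F\|_{L^{m'}(I,L^{n'})}$ by pairing the truncated integral against a test function $G \in L^{q'}(I,L^{r'})$ and invoking the homogeneous estimate twice (to handle both $e^{-is\Delta}F(s)$ and $e^{-it\Delta}G(t)$ as elements of $L^2$). When $(q,r)=(m,n)$ this follows directly from the $TT^*$ bound applied on the half-line $\{s \leq t\}$ via the Christ--Kiselev lemma (needed to pass from the full time integral to the retarded one, valid whenever $q > m'$, which holds in the non-diagonal regime). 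In the diagonal double-endpoint case Christ--Kiselev fails and one again invokes the Keel--Tao argument, which produces the retarded bound directly.

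The main obstacle is therefore entirely concentrated in the double endpoint $q=m=2$, $r=n=\tfrac{2N}{N-2}$ in dimensions $N\geq 3$; everything else is a quantitative interpolation plus duality exercise built on the two starting inequalities. Since the present paper will only invoke this proposition as a black box (and with the range of admissible pairs already restricted by the definition preceding it), it suffices to cite \cite{Cazenave} for the full argument and record the statement in the form used in the subsequent sections.
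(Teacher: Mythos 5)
Your sketch is correct and coincides with the standard argument: the paper itself gives no proof of this proposition, citing Cazenave as a black box, and the route you outline (unitarity plus the dispersive estimate, $TT^*$ with Hardy--Littlewood--Sobolev for non-endpoint pairs, Keel--Tao at the endpoint, and Christ--Kiselev to pass to the retarded estimate for distinct admissible pairs with $q>m'$) is precisely the proof found in that reference. Nothing further is needed.
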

	
	We have the following local well-posedness in the energy-critical case.
	
	\begin{proposition} \label{prop-lwp-ener}
		Let $N\geq 3$, $b>0$, and $p=\frac{N+2+2b}{N-2}$. Denote
		\[
		q:=\frac{2N}{N-2}, \quad r:=\frac{2N^2}{N^2-2N+4}.
		\]
		Then for every $u_0 \in H^1_{\rad}(\R^N)$, there exist $T^*\in (0,\infty]$ and a unique solution
		\[
		u \in C([0,T^*), H^1_{\rad}(\R^N)) \cap L^q_{\loc}([0,T^*), W^{1,r}_{\rad}(\R^N))
		\]
		to \eqref{INLS} with initial data $\left. u \right|_{t=0} = u_0$. The maximal time of existence satisfies the blow-up alternative: if $T^*<\infty$, then $\|u\|_{L^p([0,T^*), W^{1,r})} =\infty$. Moreover, there are conservation laws of mass and energy. In addition, if $\|u_0\|_{H^1} <\vareps$ for some $\vareps>0$ sufficiently small, then $T^*=\infty$ and the solution scatters in $H^1_{\rad}(\R^N)$.
	\end{proposition}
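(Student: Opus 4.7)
The plan is a Banach fixed-point argument applied to the Duhamel formulation
\[
\Phi(u)(t) = e^{it\Delta}u_0 + i\int_0^t e^{i(t-s)\Delta}\bigl(|x|^b|u|^{p-1}u\bigr)(s)\,ds
\]
in the space $X(T) := C([0,T], H^1_{\rad}(\R^N)) \cap L^q([0,T], W^{1,r}_{\rad}(\R^N))$. A direct check shows that $(q,r)=(\tfrac{2N}{N-2},\tfrac{2N^2}{N^2-2N+4})$ is Schr\"odinger admissible with $r\in[2,\tfrac{2N}{N-2}]$, so the Strichartz estimates recalled above apply on both $(q,r)$ and its dual.

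The heart of the argument is the nonlinear estimate
\[
\bigl\||x|^b|u|^{p-1}u\bigr\|_{L^{q'}([0,T], W^{1,r'})} \lesssim \|u\|_{X(T)}^{p},
\]
together with the corresponding Lipschitz bound for differences. Since $p - 1 = \tfrac{4+2b}{N-2}$, the radial Sobolev embedding \eqref{est-BL} absorbs the weight via the pointwise factorization
\[
|x|^b|u|^{p-1} = \bigl(|x|^{(N-2)/2}|u|\bigr)^{\frac{2b}{N-2}}\, |u|^{\frac{4}{N-2}} \lesssim \|\nabla u\|_{L^2}^{\frac{2b}{N-2}}\, |u|^{\frac{4}{N-2}},
\]
which reduces the weighted nonlinearity to the classical energy-critical one $|u|^{4/(N-2)}u$, treatable by the standard H\"older bookkeeping on the pair $(q,r)$. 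For the gradient, $\nabla\bigl(|x|^b|u|^{p-1}u\bigr)$ produces a main term $|x|^b|u|^{p-1}\nabla u$ handled identically, plus a lower-order term with weight $|x|^{b-1}$.

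Once the nonlinear estimate is in place, $\Phi$ becomes a contraction on a small ball of $X(T)$ for $T$ small enough, giving local existence and uniqueness. Conservation of mass and energy is obtained by regularising $u_0$, applying the smooth conservation laws, and passing to the limit via continuity of the flow. The blow-up alternative follows from the fact that the existence time depends only on the $X(T)$-norm, so the solution extends as long as $\|u\|_{L^q_t W^{1,r}_x}$ is finite. Finally, for $\|u_0\|_{H^1}<\varepsilon$, the global-in-time Strichartz estimate gives $\|e^{it\Delta}u_0\|_{L^q(\R, W^{1,r})} \lesssim \varepsilon$, which allows the fixed-point argument to be run on all of $\R$; the scattering state $u^{+} = u_0 + i\int_0^\infty e^{-is\Delta}(|x|^b|u|^{p-1}u)(s)\,ds$ then converges in $H^1$ by the same bound.

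The main obstacle is the gradient estimate when $0<b<1$: the term $|x|^{b-1}|u|^p$ is singular at the origin and does not immediately submit to the radial-Sobolev trick. I would handle it by the cutoff splitting already used in Step~2 of the proof of Lemma~\ref{lem-regu}: on $\{|x|>1\}$ we have $|x|^{b-1}\leq |x|^b$ so the previous bound applies, while on $\{|x|\leq 1\}$ the local integrability of $|x|^{-(1-b)}$ against an auxiliary Lebesgue exponent, combined with H\"older's inequality and the Sobolev embedding $W^{1,r}\hookrightarrow L^\rho$ for a suitable $\rho$, reduces the remaining factor $|u|^p$ to a product of $X(T)$-norms. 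This technical splitting is the only step that truly departs from the classical energy-critical NLS template.
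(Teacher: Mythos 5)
Your overall strategy (Duhamel plus Strichartz for the admissible pair $(q,r)$, with the weight $|x|^b$ absorbed pointwise through the radial Sobolev bound \eqref{est-BL} so that the nonlinearity reduces to the classical energy-critical bookkeeping) is the same as the paper's, but there is a genuine gap in how you close the fixed point. You propose to contract on a small ball of $X(T)=C_tH^1\cap L^q_tW^{1,r}_x$ and you invoke ``the corresponding Lipschitz bound for differences'' at the level of $L^{q'}_tW^{1,r'}_x$. That bound requires estimating $\nabla\big(|x|^b(|u|^{p-1}u-|v|^{p-1}v)\big)$, which produces the factor $|u|^{p-1}-|v|^{p-1}$ against $\nabla v$; since $p-1=\frac{4+2b}{N-2}$, this factor is only H\"older (of order $p-1<1$) in $u-v$ whenever $N>6+2b$, which occurs for every fixed $b>0$ once $N$ is large, so the contraction in the full $X(T)$-norm does not close in the generality of the proposition. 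The paper avoids this in the standard Kato--Cazenave--Weissler way: the set $Y(I)$ carries the two bounds $\|u\|_{L^\infty H^1}\le L$ and $\|u\|_{L^qW^{1,r}}\le M$, but the contraction is run only in the weaker metric $d(u,v)=\|u-v\|_{L^q_tL^r_x}$, whose difference estimate involves no derivative of the nonlinearity. Relatedly, in the critical case the smallness cannot come from a power of $T$ nor from a ball that is small in all components: the $C_tH^1$ component stays of size $\|u_0\|_{H^1}$, and smallness is extracted from $\|e^{it\Delta}u_0\|_{L^q([0,T],W^{1,r})}\to0$ as $T\to 0$ (monotone convergence), which is exactly how the paper chooses $L$, $M$, and $\vareps$.

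On the term you single out as the main obstacle, the lower-order piece with weight $|x|^{b-1}$, the paper's treatment is simpler and requires no case distinction in $b$: write $|x|^{b-1}|u|^{p-1}u=\big(|x|^{\frac{N-2}{2}}|u|\big)^{\frac{2b}{N-2}}|u|^{\frac{4}{N-2}}\,(|x|^{-1}u)$, absorb the first factor by \eqref{est-BL}, and control $\||x|^{-1}u\|_{L^r}\lesssim\|\nabla u\|_{L^r}$ by Hardy's inequality \eqref{Hardy-ineq}, which applies because $r=\frac{2N^2}{N^2-2N+4}\in(1,N)$. Your alternative splitting (H\"older against $|x|^{-(1-b)}\in L^\gamma(B(0,1))$ with $\gamma<\frac{N}{1-b}$ near the origin, the crude bound $|x|^{b-1}\le|x|^b$ outside) can be made to work, since the epsilon lost in the spatial exponent is harmless on the bounded set, but you would still have to carry out the exponent bookkeeping so that the time exponents sum to $1/q'$ with exactly $\frac{N+2}{N-2}$ factors placed in $L^q_t$-based norms and the remaining $\frac{2b}{N-2}$ in $L^\infty_tH^1$; as written this step is only sketched. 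Apart from these two points, your estimates for the main term and the small-data global existence and scattering argument coincide with the paper's.
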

	
	\begin{proof}
		Let us consider
		\[
		Y(I):= \left\{ u \in C(I,H^1_{\rad}(\R^N)) \cap L^q(I, W^{1,r}_{\rad}(\R^N)) : \|u\|_{L^\infty(I, H^1)} \leq L, \|u\|_{L^q(I, W^{1,r})} \leq M \right\}
		\]
		equipped with the distance
		\[
		d(u,v):= \|u-v\|_{L^q(I,L^r)},
		\]
		where $I=[0,T]$ and $L, M,T>0$ to be chosen later. By the Duhamel formula, it suffices to show that the functional
		\[
		\Phi(u)(t):= e^{it\Delta} u_0 + i \int_0^t e^{i(t-s)\Delta} |x|^b |u(s)|^{\frac{4+2b}{N-2}} u(s) ds=: u_{\hom}(t) + u_{\inh}(t)
		\]
		is a contraction on the complete metric space $(Y(I),d)$. Since $(q,r)$ is a Schr\"odinger admissible pair, we have from Strichartz estimates that
		\[
		\|u_{\hom}\|_{L^\infty(I, H^1) \cap L^q(I,W^{1,r})} \lesssim \|u_0\|_{H^1}.
		\]
		In particular, we have $\|u_{\hom}\|_{L^q(I,W^{1,r})} \leq \vareps$ for some $\vareps>0$ sufficiently small provided that $T>0$ is taken small enough or $\|u_0\|_{H^1}$ is small. In the second case, we can take $I=[0,\infty)$. Also, by Strichartz estimates, we have
		\begin{align*}
		\|u_{\inh}\|_{L^\infty(I,H^1) \cap L^q(I,W^{1,r})} &\lesssim \|\scal{\nabla} (|x|^b|u|^{\frac{4+2b}{N-2}} u)\|_{L^{q'}(I,L^{r'})} \\
		&\sim \||x|^b |u|^{\frac{4+2b}{N-2}} u\|_{L^{q'}(I,L^{r'})} + \|\nabla(|x|^b |u|^{\frac{4+2b}{N-2}} u)\|_{L^{q'}(I,L^{r'})}.
		\end{align*}
		Let us estimate the second term. By the Leibniz's rule and the chain rule, we have
		\[
		\nabla(|x|^b|u|^{\frac{4+2b}{N-2}} u) \sim |x|^b |u|^{\frac{4+2b}{N-2}} \nabla u + |x|^b |u|^{\frac{4+2b}{N-2}} (|x|^{-1} u).
		\]
		By \eqref{est-BL}, H\"older's inequality, and Sobolev embedding, we have
		\begin{align*}
		\||x|^b |u|^{\frac{4+2b}{N-2}} \nabla u\|_{L^{r'}} &= \left\|\left( |x|^{\frac{N-2}{2}} |u|\right)^{\frac{2b}{N-2}} |u|^{\frac{4}{N-2}} \nabla u\right\|_{L^{r'}} \\
		&\lesssim \|\nabla u\|^{\frac{2b}{N-2}} \left\||u|^{\frac{4}{N-2}} \nabla u\right\|_{L^{r'}} \\
		&\lesssim \|\nabla u\|^{\frac{2b}{N-2}} \|u\|^{\frac{4}{N-2}}_{L^n} \|\nabla u\|_{L^r} \\
		&\lesssim \|\nabla u\|^{\frac{2b}{N-2}}_{L^2} \|\nabla u\|^{\frac{N+2}{N-2}}_{L^r},
		\end{align*}
		where $n=\frac{2N^2}{N^2-4N+4}$. It follows that
		\[
		\||x|^b |u|^{\frac{4+2b}{N-2}} \nabla u\|_{L^{q'}(I,L^{r'})} \lesssim \|\nabla u\|^{\frac{2b}{N-2}}_{L^\infty(I,L^2)} \|\nabla u\|_{L^{q}(I,L^r)}^{\frac{N+2}{N-2}}.
		\]
		By the Hardy's inequality (see e.g., \cite{OK}): for $N\geq 2$ and $1<r<N$,
		\begin{align} \label{Hardy-ineq}
		\||x|^{-1} f\|_{L^r}\leq \frac{r}{N-r} \|\nabla f\|_{L^r},
		\end{align}
		we have
		\begin{align*}
		\||x|^b |u|^{\frac{4+2b}{N-2}} |x|^{-1} u\|_{L^{r'}} &= \left\|\left( |x|^{\frac{N-2}{2}} |u|\right)^{\frac{2b}{N-2}} |u|^{\frac{4}{N-2}} |x|^{-1} u\right\|_{L^{r'}} \\
		&\lesssim \|\nabla u\|^{\frac{2b}{N-2}} \left\||u|^{\frac{4}{N-2}} |x|^{-1} u\right\|_{L^{r'}} \\
		&\lesssim \|\nabla u\|^{\frac{2b}{N-2}} \|u\|^{\frac{4}{N-2}}_{L^n} \||x|^{-1} u\|_{L^r} \\
		&\lesssim \|\nabla u\|^{\frac{2b}{N-2}}_{L^2} \|\nabla u\|^{\frac{N+2}{N-2}}_{L^r},
		\end{align*}
		where we readily check that $r=\frac{2N^2}{N^2-2N+4} \in (1,N)$. Thus we get
		\[
		\||x|^b |u|^{\frac{4+2b}{N-2}} |x|^{-1} u\|_{L^{q'}(I,L^{r'})} \lesssim \|\nabla u\|^{\frac{2b}{N-2}}_{L^\infty(I,L^2)} \|\nabla u\|_{L^{q}(I,L^r)}^{\frac{N+2}{N-2}},
		\]
		hence
		\[
		\|\nabla(|x|^b |u|^{\frac{4+2b}{N-2}} u)\|_{L^{q'}(I,L^{r'})} \lesssim \|\nabla u\|^{\frac{2b}{N-2}}_{L^\infty(I,L^2)} \|\nabla u\|_{L^{q}(I,L^r)}^{\frac{N+2}{N-2}}.
		\]
		A similar estimate goes for $\||x|^b |u|^{\frac{4+2b}{N-2}} u\|_{L^{q'}(I,L^{r'})}$ and we obtain
		\[
		\|u_{\inh}\|_{L^\infty(I,H^1)\cap L^q(I,W^{1,r})} \lesssim \|u\|^{\frac{2b}{N-2}}_{L^\infty(I,H^1)} \|u\|^{\frac{N+2}{N-2}}_{L^q(I,W^{1,r})}.
		\]
		On the other hand, we have
		\begin{align*}
		\|\Phi(u)-\Phi(v)\|_{L^q(I,L^r)} &\lesssim \||x|^b(|u|^{p-1} u - |v|^{p-1} v)\|_{L^{q'}(I,L^{r'})} \\
		&\lesssim \left(\|\nabla u\|^{\frac{2b}{N-2}}_{L^\infty(I,L^2)} \|\nabla u\|^{\frac{4}{N-2}}_{L^q(I,L^r)} + \|\nabla v\|^{\frac{2b}{N-2}}_{L^\infty(I,L^2)} \|\nabla v\|^{\frac{4}{N-2}}_{L^q(I,L^r)} \right) \|u-v\|_{L^q(I,L^r)}.
		\end{align*}
		Hence
		\[
		\|\Phi(u)-\Phi(v)\|_{L^q(I,L^r)} \lesssim \left(\| u\|^{\frac{2b}{N-2}}_{L^\infty(I,H^1)} \|u\|^{\frac{4}{N-2}}_{L^q(I,W^{1,r})} + \|v\|^{\frac{2b}{N-2}}_{L^\infty(I,H^1)} \|v\|^{\frac{4}{N-2}}_{L^q(I,W^{1,r})} \right) \|u-v\|_{L^q(I,L^r)}.
		\]
		This shows that there exists $C>0$ such that for all $u,v \in Y(I)$,
		\begin{align*}
		\|\Phi(u)\|_{L^\infty(I,H^1)} &\leq C\|u_0\|_{H^1} + C L^{\frac{2b}{N-2}} M^{\frac{N+2}{N-2}}, \\
		\|\Phi(u)\|_{L^q(I,W^{1,r})} &\leq \vareps + C L^{\frac{2b}{N-2}} M^{\frac{N+2}{N-2}}, \\
		d(\Phi(u), \Phi(v)) &\leq C L^{\frac{2b}{N-2}} M^{\frac{4}{N-2}} d(u,v).
		\end{align*}
		We take $L=2C\|u_0\|_{H^1}$. We then choose $M>0$ and $\vareps>0$ small so that
		\[
		CL^{\frac{2b}{N-2}} M^{\frac{4}{N-2}} \leq \frac{1}{2}, \quad M \leq C\|u_0\|_{H^1}, \quad \vareps + \frac{M}{2} \leq M.
		\]
		This shows that $\Phi$ is a contraction on $(Y(I),d)$ and the existence of solutions to \eqref{INLS} follows. The blow-up alternative comes from standard argument (see e.g., \cite{CW, Cazenave}). We thus omit the details.
		
		It remains to show the scattering for $\|u_0\|_{H^1}$ sufficiently small. Note that the solution exists globally in this case and
		\[
		\|u\|_{L^q([0,\infty),W^{1,r})} <\infty.
		\]
		Now let $0<t_1<t_2$. We have
		\begin{align*}
		\|e^{-it_2\Delta} u(t_2) - e^{-it_1\Delta} u(t_1)\|_{H^1} &=\left\| i \int_{t_1}^{t_2} e^{-is \Delta} |x|^b |u(s)|^{\frac{4+2b}{N-2}} u(s) ds \right\|_{H^1} \\
		&= \left\| i \int_{t_1}^{t_2} \scal{\nabla} (|x|^b |u(s)|^{\frac{4+2b}{N-2}} u(s))\right\|_{L^2} \\
		&\lesssim \|\scal{\nabla}(|x|^b|u|^{\frac{4+2b}{N-2}} u)\|_{L^{q'}((t_1,t_2),L^{r'})} \\
		&\lesssim \|u\|_{L^\infty((t_1,t_2),H^1)}^{\frac{2b}{N-2}} \|u\|^{\frac{N+2}{N-2}}_{L^q((t_1,t_2),W^{1,r})} \rightarrow 0
		\end{align*}
		as $t_1, t_2 \rightarrow \infty$. This shows that $(e^{-it\Delta} u(t))_{t\rightarrow \infty}$ is a Cauchy sequence in $H^1$, hence the limit
		\[
		u^+:=\lim_{t\rightarrow \infty} e^{-it\Delta} u(t)
		\]
		exists in $H^1_{\rad}(\R^N)$. We also have
		\[
		u(t)-e^{it\Delta} u^+ = -i\int_t^\infty e^{i(t-s)\Delta} |x|^b |u(s)|^{\frac{4+2b}{N-2}} u(s) ds.
		\]
		Repeating the above estimate, we prove as well that
		\[
		\|u(t)-e^{it\Delta} u^+\|_{H^1} \rightarrow 0 \text{ as } t \rightarrow \infty.
		\]
		The proof is complete.					
	\end{proof}

	\section{Global existence and energy scattering}
	\label{S4}
	\setcounter{equation}{0}
	
	This section is devoted to the proof of the global existence and energy scattering given in Proposition \ref{prop-gwp} and Theorem \ref{theo-scat-inte}.
	
	\subsection{Strichartz estimates for non Schr\"odinger admissible pairs}
	We recall the following inhomogeneous Strichartz estimates for non Schr\"odinger admissible pairs which play an essential role in the proof of energy scattering.
	
	\begin{lemma} [\cite{CW-CMP}]
		Let $N\geq 1$ and $I\subset \R$ be an interval containing 0. Let $(q,r)$ be a Schr\"odinger admissible pair with $r>2$. Fix $k>\frac{q}{2}$ and define $m$ by
		\begin{align} \label{cond-km}
		\frac{1}{k}+\frac{1}{m} = \frac{2}{q}.
		\end{align}
		Then there exist a constant $C>0$ such that
		\begin{align} \label{stri-est-non-adm}
		\left\|\int_0^t e^{i(t-s)\Delta} F(s) ds \right\|_{L^k(I,L^r)} \leq C \|F\|_{L^{m'}(I,L^{r'})}
		\end{align}
		for any $F \in L^{m'}(I,L^{r'})$.
	\end{lemma}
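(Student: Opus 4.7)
The plan is to reduce the inequality to the one-dimensional Hardy-Littlewood-Sobolev inequality by means of the pointwise dispersive bound for the Schr\"odinger group.

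First I would recall the dispersive estimate
\[
\|e^{it\Delta} f\|_{L^r_x} \lesssim |t|^{-N\left(\frac{1}{2}-\frac{1}{r}\right)} \|f\|_{L^{r'}_x},
\]
which follows by Riesz-Thorin interpolation between the $L^2$-unitarity of $e^{it\Delta}$ and the explicit kernel bound $\|e^{it\Delta}\|_{L^1_x\to L^\infty_x} \lesssim |t|^{-N/2}$, and is valid for $r\in[2,\infty]$. The Schr\"odinger admissibility of $(q,r)$ rewrites the decay exponent as $N(\tfrac12 - \tfrac1r) = \tfrac{2}{q}$.

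Next I would apply Minkowski's inequality in the $L^r_x$ norm and use the dispersive estimate pointwise in $s$ to obtain
\[
\left\|\int_0^t e^{i(t-s)\Delta}F(s)\,ds\right\|_{L^r_x} \lesssim \int_I \mathbf{1}_{(0,t)}(s)|t-s|^{-2/q}\,\|F(s)\|_{L^{r'}_x}\,ds.
\]
Taking the $L^k_t(I)$ norm of both sides, the right-hand side is recognized as a convolution on $\R$ of the scalar function $s\mapsto \mathbf{1}_I(s)\|F(s)\|_{L^{r'}_x}$ against the Riesz kernel $|t|^{-2/q}$. The one-dimensional Hardy-Littlewood-Sobolev inequality then gives the bound
\[
\left\|\,|t|^{-2/q} \ast G \right\|_{L^k_t(\R)} \lesssim \|G\|_{L^{m'}_t(\R)}
\]
provided $\tfrac{1}{m'}+\tfrac{2}{q}=1+\tfrac{1}{k}$, which, using $\tfrac{1}{m'}=1-\tfrac{1}{m}$, is precisely the hypothesis $\tfrac{1}{k}+\tfrac{1}{m}=\tfrac{2}{q}$. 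This chain produces exactly \eqref{stri-est-non-adm}.

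The final step is to check that the HLS hypotheses are genuinely satisfied. Since $r>2$, Schr\"odinger admissibility forces $q<\infty$, so $\tfrac{2}{q}>0$; the condition $q>2$ (equivalently $\tfrac{2}{q}<1$, needed for the kernel $|t|^{-2/q}$ to be locally integrable and lie in weak-$L^{q/2}$) holds in dimensions $N\le 2$ automatically and in $N\ge 3$ away from the energy-Strichartz endpoint $r=\tfrac{2N}{N-2}$. The assumption $k>q/2$ is exactly what makes $\tfrac{1}{m}>0$, ruling out the $m=\infty$ endpoint of HLS, and then $1<m'<\infty$ and $1<k<\infty$ follow routinely. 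The only delicate point, and the mildest obstacle of the proof, is the borderline case $q=2$, where the kernel becomes $|t|^{-1}$ and the naive HLS argument breaks; this is the classical Keel-Tao endpoint, which is not needed for the non-endpoint applications in Section \ref{S4}.
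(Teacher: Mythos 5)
The paper does not prove this lemma at all: it is quoted with the citation \cite{CW-CMP}, and your argument (pointwise dispersive estimate, Minkowski in $x$, then one-dimensional Hardy--Littlewood--Sobolev in $t$, with the exponent bookkeeping $\frac1{m'}+\frac2q=1+\frac1k$ reducing exactly to \eqref{cond-km}) is precisely the classical Cazenave--Weissler/Kato proof and is correct; your verification that $k>\frac q2$ gives $m'>1$ and that $q>2$ gives $m'<k$ is the right check. The only caveat, which you already flag, is that the hypotheses as written ($r>2$, admissible) formally allow $r=\frac{2N}{N-2}$, i.e.\ $q=2$, in dimensions $N\geq 3$, where the HLS kernel $|t|^{-1}$ is inadmissible; this borderline case is outside your argument, but it is never used here, since the application in Section \ref{S4} takes $r=\alpha+2<\frac{2N}{N-2}$ and hence $q>2$.
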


	\subsection{Properties of ground states}
	Let $Q$ be the unique positive radial solution to \eqref{equ-Q}. Since $Q$ is an optimizer for the Gagliardo-Nirenberg inequality: for $N\geq 2$, $b>0$, $p>1+\frac{2b}{N-1}$, and $p<\frac{N+2+2b}{N-2}$ if $N\geq 3$,
	\begin{align} \label{GN-opti}
	\int |x|^b |f(x)|^{p+1} dx \leq C_{\opt} \|\nabla f\|^{\frac{N(p-1)-2b}{2}}_{L^2} \|f\|^{\frac{4+2b-(N-2)(p-1)}{2}}_{L^2}, \quad f \in H^1_{\rad}(\R^N),
	\end{align}
	we have
	\[
	C_{\opt} = \int |x|^b |Q(x)|^{p+1}dx \div \left[\|\nabla Q\|_{L^2}^{\frac{N(p-1)-2b}{2}} \|Q\|^{\frac{4+2b-(N-2)(p-1)}{2}}_{L^2} \right].
	\]
	Thanks to the Pohozaev's identity \eqref{poho-iden}, we see that
	\begin{align}
	E(Q) &= \frac{1}{2}\|\nabla Q\|^2_{L^2}-\frac{1}{p+1}\int |x|^b |Q(x)|^{p+1}dx \nonumber \\
	&= \frac{N(p-1)-4-2b}{2(N(p-1)-2b)} \|\nabla Q\|^2_{L^2} = \frac{N(p-1)-4-2b}{4(p+1)} \int |x|^b |Q(x)|^{p+1}dx \label{E-Q}
	\end{align}
	and
	\begin{align} \label{opti-cons}
	C_{\opt} = \frac{2(p+1)}{N(p-1)-2b} \left( \|\nabla Q\|_{L^2} \|Q\|^{\sigc}_{L^2}\right)^{-\frac{N(p-1)-4-2b}{2}},
	\end{align}
	where $\sigc$ is as in \eqref{sigc}.

	\subsection{Morawetz estimates}
	In this subsection, we derive some Morawetz estimates related to \eqref{INLS}. Let us start with the following virial identity (see e.g., \cite{Cazenave})
	
	\begin{lemma} [Virial identity] \label{lem-viri-iden}
		Let $N\geq 2$, $b>0$, $p\geq 1+\frac{2b}{N-1}$, and $p\leq \frac{N+2+2b}{N-2}$ if $N\geq 3$. Let $\varphi: \R^N \rightarrow \R$ be a sufficiently smooth and decaying function. Let $u \in C([0,T^*), H^1_{\rad}(\R^N))$ be a solution to \eqref{INLS}. Define
		\begin{align} \label{V-varphi}
		V_{\varphi}(t) := \int \varphi(x)|u(t,x)|^2 dx.
		\end{align}
		Then we have
		\[
		V'_{\varphi}(t) =  2 \ima \int \nabla \varphi(x) \cdot \nabla u(t,x) \overline{u}(t,x) dx
		\]
		and
		\begin{align*}
		V''_{\varphi}(t) &= - \int \Delta^2 \varphi(x) |u(t,x)|^2 dx + 4 \rea \sum_{j,k=1}^N \int \partial^2_{jk} \varphi(x) \partial_j\overline{u}(t,x) \partial_k u(t,x) dx \\
		&\mathrel{\phantom{=}} - \frac{2(p-1)}{p+1} \int \Delta \varphi(x) |x|^b |u(t,x)|^{p+1} dx + \frac{4}{p+1}\int \nabla \varphi(x) \cdot \nabla(|x|^b)|u(t,x)|^{p+1}dx.
		\end{align*}
	\end{lemma}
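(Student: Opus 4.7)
The plan is the classical virial calculation: compute $V_\varphi'$ and $V_\varphi''$ formally for smooth solutions using the equation and integration by parts, and then extend to $H^1_{\rad}$ solutions by the continuous dependence furnished by Proposition \ref{prop-lwp}. To make the manipulations rigorous, I would first assume $u_0$ belongs to a dense subspace of $H^1_{\rad}(\R^N)$ (e.g.\ Schwartz radial data, or simply $H^3_{\rad}$) so that the corresponding solution $u$ has enough regularity for all the integrations by parts to be justified; the general case follows by approximation, since both sides of the identities depend continuously on $u$ in $C([0,T], H^1_{\rad}(\R^N))$ provided $\varphi$ is sufficiently decaying (recall that $\nabla \varphi, \Delta \varphi, \Delta^2\varphi$ are assumed bounded/decaying, and $\nabla \varphi \cdot \nabla(|x|^b)$ can be controlled using the Gagliardo--Nirenberg inequality \eqref{GN-ineq}).

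For the first-order identity, differentiate under the integral sign and use \eqref{INLS} in the form $\partial_t u = i\Delta u + i|x|^b|u|^{p-1}u$. This gives
\[
V'_\varphi(t) = 2\rea \int \varphi\, \partial_t u\, \overline{u}\, dx = -2\ima \int \varphi\, \Delta u\, \overline{u}\, dx,
\]
because the term $\int \varphi\, |x|^b |u|^{p+1}\, dx$ is real. One integration by parts yields $\ima \int \varphi \Delta u \overline u\, dx = -\ima \int \nabla \varphi \cdot \nabla u \,\overline u\, dx$, which produces the claimed expression for $V'_\varphi$.

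For the second-order identity I would differentiate $V'_\varphi$ in time and substitute $\partial_t u$ from the equation once more, so that
\[
V''_\varphi(t) = 2\ima \int \nabla\varphi \cdot \bigl(\nabla \partial_t u\, \overline u + \nabla u\, \partial_t \overline u\bigr)\, dx.
\]
Plugging $\partial_t u = i\Delta u + i|x|^b|u|^{p-1}u$ splits the right-hand side into a linear Schr\"odinger piece and a nonlinear piece. For the linear piece, the standard tensor computation (integrate by parts twice, keeping track of $\partial_{jk}^2\varphi$ terms, and use the identity $\ima(\Delta u \,\overline{\Delta u}) = 0$) produces exactly
\[
-\int \Delta^2 \varphi\, |u|^2\, dx + 4\rea \sum_{j,k=1}^N \int \partial^2_{jk}\varphi\, \partial_j \overline u\, \partial_k u\, dx.
\]
For the nonlinear piece, I would write
\[
\nabla\varphi \cdot (\nabla \partial_t u\, \overline u + \nabla u\, \partial_t \overline u)
\]
coming from $i|x|^b|u|^{p-1} u$ as $2\rea\bigl(i\,\nabla\varphi \cdot \nabla(|x|^b|u|^{p-1}u)\,\overline u\bigr)$ up to a real term that vanishes under $\ima$, then use $\nabla(|x|^b|u|^{p-1}u) = \nabla(|x|^b)|u|^{p-1}u + |x|^b\nabla(|u|^{p-1}u)$ together with $\rea(\nabla(|u|^{p-1}u)\,\overline u) = \tfrac{p+1}{p+1}\nabla|u|^{p+1}/\ldots$; more concretely, combine terms to get $\nabla\varphi \cdot \nabla\bigl(|x|^b |u|^{p+1}\bigr)$ modulo lower-order pieces, and then integrate by parts once. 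This produces the final two contributions
\[
-\tfrac{2(p-1)}{p+1}\int \Delta\varphi\, |x|^b |u|^{p+1}\, dx + \tfrac{4}{p+1}\int \nabla\varphi\cdot \nabla(|x|^b)\, |u|^{p+1}\, dx,
\]
matching the claim.

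The main obstacle is really book-keeping rather than mathematical depth: there are several cancellations between the gradient pieces coming from $\nabla \partial_t u$ versus $\partial_t \nabla u$, and one must be careful to isolate the $\nabla(|x|^b)$ contribution (which survives because of the spatial inhomogeneity) from the $\Delta\varphi$ contribution obtained after integration by parts on $|x|^b|u|^{p+1}$. A minor technical point is that $\nabla(|x|^b) = b|x|^{b-2}x$ is singular at the origin when $b<1$, so one should verify that $\nabla\varphi \cdot \nabla(|x|^b) |u|^{p+1}$ is integrable; this follows from $\varphi$ being smooth with $\nabla\varphi(0)=0$ (as is the case for all cutoffs used in applications) together with the radial Sobolev estimates \eqref{est-Strauss} and \eqref{est-BL}. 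Once the identity is established for smooth solutions, standard density and continuous dependence in $H^1_{\rad}$ extend it to the class stated in the lemma.
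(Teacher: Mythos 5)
The paper gives no proof of this lemma at all (it is quoted with the citation ``see e.g., \cite{Cazenave}''), and your proposal is precisely the standard derivation found in that reference: differentiate under the integral, substitute $\partial_t u = i\Delta u + i|x|^b|u|^{p-1}u$, integrate by parts — with the nonlinear contribution handled via $\rea\bigl(\overline{N(u)}\,\nabla u + N(u)\,\nabla \overline{u}\bigr) = \tfrac{2}{p+1}|x|^b\nabla\bigl(|u|^{p+1}\bigr)$, which after one further integration by parts yields exactly the $-\tfrac{2(p-1)}{p+1}\int\Delta\varphi\,|x|^b|u|^{p+1}$ and $\tfrac{4}{p+1}\int\nabla\varphi\cdot\nabla(|x|^b)|u|^{p+1}$ terms — and then extend from regular data by approximation and the continuous dependence of Proposition \ref{prop-lwp}. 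Apart from one garbled intermediate formula (the ``$\tfrac{p+1}{p+1}\nabla|u|^{p+1}/\ldots$'' line, which should be the identity just quoted), your cancellation structure and final coefficients match the stated identity, so the proposal is correct and follows essentially the same route as the cited source.
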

	
	Let $\zeta: [0,\infty) \rightarrow [0,2]$ be a smooth function satisfying
	\begin{align} \label{zeta}
	\zeta(r) = \left\{
	\begin{array}{ccl}
	2 & \text{if} & 0\leq r \leq 1, \\
	0 &\text{if} & r\geq 2.
	\end{array}
	\right.
	\end{align}
	We define the function $\theta: [0,\infty) \rightarrow [0,\infty)$ by
	\[
	\theta(r):= \int_0^r \int_0^s \zeta(z)dz ds.
	\]
	Given $R>0$, we define a radial function
	\begin{align} \label{varphi-R}
	\varphi_R(x) = \varphi_R(r):= R^2 \theta(r/R), \quad r=|x|.
	\end{align}
	It is easy to check that
	\begin{align*}
	2 \geq \varphi''_R(r) \geq 0, \quad 2- \frac{\varphi'_R(r)}{r} \geq 0, \quad 2N - \Delta \varphi_R(x) \geq 0, \quad \forall r\geq 0, \quad \forall x \in \R^N.
	\end{align*}
	
	\begin{lemma}[Coercivity] \label{lem-coer}
		Let $N\geq 2$, $b>0$, $p>\max\left\{\frac{N+4+2b}{N}, 1+\frac{2b}{N-1}\right\}$, and $p<\frac{N+2+2b}{N-2}$ if $N\geq 3$. Let $u_0 \in H^1_{\rad}(\R^N)$ satisfy \eqref{cond-scat-inte}. Then the corresponding solution to \eqref{INLS} with initial data $\left.u\right|_{t=0}=u_0$ satisfies
		\begin{align} \label{est-solu-scat}
		\|\nabla u(t)\|_{L^2}\|u(t)\|^{\sigc}_{L^2}< \|\nabla Q\|_{L^2}\|Q\|^{\sigc}_{L^2}
		\end{align}
		for all $t\in [0,T^*)$. In particular, the solution exists globally in time, i.e., $T^*=\infty$. Moreover, there exist $\delta=\delta(u_0,Q)>0$ and $R_0=R_0(u_0,Q)>0$ such that for all $R\geq R_0$,
		\begin{align} \label{coer-est}
		\int |\nabla(\chi_R u(t,x))|^2 dx - \frac{N(p-1)-2b}{2(p+1)}\int |x|^b |\chi_R u(t,x)|^{p+1}dx \geq \delta \int |x|^b|\chi_R u(t,x)|^{p+1} dx
		\end{align}
		for all $t\in [0,\infty)$, where $\chi_R(x)=\chi(x/R)$ with $\chi \in C^\infty_0(\R^N)$ satisfying $0\leq \chi \leq 1$,
		\begin{align} \label{chi}
		\chi(x) =\left\{
		\begin{array}{cc l}
		1 &\text{if} & |x| \leq 1/2, \\
		0 &\text{if} & |x| \geq 1.
		\end{array}
		\right.
		\end{align}
	\end{lemma}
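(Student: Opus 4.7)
The plan is to set $A := \frac{N(p-1)-2b}{2}$, $B := \frac{4+2b-(N-2)(p-1)}{2}$, $f(t) := \|\nabla u(t)\|_{L^2} \|u(t)\|_{L^2}^{\sigc}$, and $f_Q := \|\nabla Q\|_{L^2} \|Q\|_{L^2}^{\sigc}$, and to run the standard variational/continuity argument. Using that $A\sigc-B=2\sigc$, both $\|\nabla u\|^2$ and $\|\nabla u\|^A\|u\|^B$ reduce to powers of $f(t)$ times $\|u\|_{L^2}^{-2\sigc}$. Applying the sharp Gagliardo--Nirenberg inequality \eqref{GN-opti} with the value of $C_{\opt}$ from \eqref{opti-cons}, and multiplying $E(u(t))$ by $\|u(t)\|_{L^2}^{2\sigc}$, I obtain
\[
E(u_0)\,M(u_0)^{\sigc} \;\geq\; g(f(t)):=\tfrac12 f(t)^2 - f_Q^{-(A-2)} \tfrac{1}{A}\, f(t)^A,
\]
where a short calculation shows that $g$ is strictly increasing on $[0,f_Q]$, strictly decreasing on $[f_Q,\infty)$, and $g(f_Q)=E(Q)M(Q)^{\sigc}$. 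Since the hypothesis \eqref{cond-scat-inte} gives $f(0)<f_Q$ and $g(f(t))\le E(u_0)M(u_0)^{\sigc}<g(f_Q)$, a continuity argument in $t$ (the map $t\mapsto f(t)$ is continuous on $[0,T^*)$) forbids crossing $f_Q$, which proves \eqref{est-solu-scat} and in fact yields $f(t)\le f_Q-2\eta$ for some $\eta=\eta(u_0,Q)>0$ uniform in $t$.

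Global existence is then immediate: mass conservation gives $\|u(t)\|_{L^2}=\|u_0\|_{L^2}$, while the uniform bound on $f(t)$ gives $\|\nabla u(t)\|_{L^2}\le (f_Q-2\eta)/\|u_0\|_{L^2}^{\sigc}$. Hence $\|u(t)\|_{H^1}$ is uniformly bounded, and the blow-up alternative of Proposition~\ref{prop-lwp} forces $T^*=\infty$.

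For the coercivity \eqref{coer-est}, the strategy is to show that the cutoff $\chi_R u(t)$ remains strictly below the Gagliardo--Nirenberg threshold, uniformly in $t\ge 0$ and in $R\geq R_0$. The elementary pointwise identity $\nabla(\chi_R u)=(\nabla\chi_R)u+\chi_R\nabla u$ together with $\|\nabla\chi_R\|_{L^\infty}\lesssim R^{-1}$ and mass conservation yield
\[
\|\chi_R u(t)\|_{L^2}\le \|u_0\|_{L^2},\qquad \|\nabla(\chi_R u(t))\|_{L^2}\le \|\nabla u(t)\|_{L^2}+\tfrac{C}{R}\|u_0\|_{L^2}.
\]
Combining with the uniform bound from the first step, I get $\|\nabla(\chi_R u(t))\|_{L^2}\|\chi_R u(t)\|_{L^2}^{\sigc}\le f_Q-\eta$ as soon as $R\ge R_0$ for some $R_0=R_0(u_0,Q)$. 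Plugging $\chi_R u(t)$ into the sharp Gagliardo--Nirenberg inequality, using once again the algebraic relation $A\sigc-B=2\sigc$, gives
\[
K(\chi_R u(t)):=\|\nabla(\chi_R u(t))\|_{L^2}^2-\tfrac{N(p-1)-2b}{2(p+1)}\!\int |x|^b|\chi_R u(t,x)|^{p+1}dx \;\geq\; \Big(1-\big(\tfrac{f_Q-\eta}{f_Q}\big)^{A-2}\Big)\|\nabla(\chi_R u(t))\|_{L^2}^2.
\]
Finally, since $\|\nabla(\chi_R u(t))\|_{L^2}$ is bounded uniformly, a second application of \eqref{GN-opti} gives $\int |x|^b |\chi_R u(t)|^{p+1} \lesssim \|\nabla(\chi_R u(t))\|_{L^2}^2$ (here we use $A\ge 2$, which follows from $p\ge \frac{N+4+2b}{N}$). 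Combining these two estimates yields \eqref{coer-est} with an explicit $\delta=\delta(u_0,Q)>0$.

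The only delicate point is ensuring the cutoff does not destroy the subthreshold property, and this is quantitative: I need a fixed margin $\eta>0$ coming from the strict inequality in \eqref{cond-scat-inte} so that the $O(1/R)$ gradient error from the cutoff can be absorbed for large $R$. Everything else is algebraic manipulation based on the identity $A\sigc=B+2\sigc$ and the sharp constant formula \eqref{opti-cons}, both of which are already in hand.
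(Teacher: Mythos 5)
Your proposal is correct and follows essentially the same route as the paper: sharp Gagliardo--Nirenberg with the explicit constant \eqref{opti-cons}, conservation laws plus a continuity argument giving a uniform subthreshold margin, the blow-up alternative for global existence, and an $O(1/R)$ cutoff error to keep $\chi_R u(t)$ strictly below the threshold uniformly in $t$. The only (harmless) variation is the final coercivity step, where you apply \eqref{GN-opti} directly to $\|\nabla(\chi_R u)\|_{L^2}^2 - \frac{N(p-1)-2b}{2(p+1)}\int |x|^b|\chi_R u|^{p+1}dx$ using $B=\sigc(A-2)$, rather than passing through $E(\chi_R u)$ and a lower bound on $\|\nabla(\chi_R u)\|_{L^2}^2$ as the paper does; the algebra is equivalent.
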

	
	\begin{proof}
		Multiplying both side of the energy functional by $(M(u(t))^{\sigc}$, it follows from the sharp Gagliardo-Nirenberg inequality \eqref{GN-opti} that
		\begin{align}
		E(u(t)) (M(u(t)))^{\sigc} &= \frac{1}{2} \left( \|\nabla u(t)\|_{L^2} \|u(t)\|^{\sigc}_{L^2}\right)^2 - \frac{1}{p+1} \left(\int |x|^b |u(t,x)|^{p+1} dx\right) \|u(t)\|^{2\sigc}_{L^2} \nonumber \\
		&\geq \frac{1}{2} \left( \|\nabla u(t)\|_{L^2} \|u(t)\|^{\sigc}_{L^2}\right)^2 - \frac{C_{\opt}}{p+1} \|\nabla u(t)\|^{\frac{N(p-1)-2b}{2}}_{L^2} \|u(t)\|^{\frac{4+2b-(N-2)(p-1)}{2} +2{\sigc}}_{L^2} \nonumber\\
		& = F\left(\|\nabla u(t)\|_{L^2} \|u(t)\|^{\sigc}_{L^2} \right), \label{est-E}
		\end{align}
		where
		\[
		F(\lambda) = \frac{1}{2} \lambda^2 - \frac{C_{\opt}}{p+1} \lambda^{\frac{N(p-1)-2b}{2}}.
		\]
		Using \eqref{E-Q} and \eqref{opti-cons}, we have
		\[
		F \left( \|\nabla Q\|_{L^2} \|Q\|^{\sigc}_{L^2} \right) = \frac{N(p-1)-4-2b}{2(N(p-1)-2b)} \left( \|\nabla Q\|_{L^2} \|Q\|^{\sigc}_{L^2}  \right)^2 = E(Q) (M(Q))^{\sigc}.
		\]
		From the first condition in \eqref{cond-scat-inte} and the conservation laws of mass and energy, we deduce
		\[
		F\left(\|\nabla u(t)\|_{L^2} \|u(t)\|^{\sigc}_{L^2} \right) \leq E(u_0) (M(u_0))^{\sigc} < E(Q) (M(Q))^{\sigc} = F \left( \|\nabla Q\|_{L^2} \|Q\|^{\sigc}_{L^2} \right)
		\]
		for all $t \in [0,T^*)$. By the second condition in \eqref{cond-scat-inte}, the continuity argument implies that
		\[
		\|\nabla u(t)\|_{L^2} \|u(t)\|^{\sigc}_{L^2} < \|\nabla Q\|_{L^2} \|Q\|^{\sigc}_{L^2}
		\]
		for all $t\in [0,T^*)$. This shows \eqref{est-solu-scat}. By the conservation of mass, the blow-up alternative implies that the solution exists globally in time, i.e., $T^*=\infty$. We thus prove Proposition \ref{prop-gwp}.
		
		To see \eqref{coer-est}, we take $\vartheta = \vartheta(u_0,Q)>0$ so that
		\begin{align} \label{coer-est-prof-1}
		E(u_0) (M(u_0))^{\sigc} \leq (1-\vartheta) E(Q) (M(Q))^{\sigc}.
		\end{align}
		As
		\begin{align*}
		E(Q)(M(Q))^{\sigc} &=\frac{N(p-1)-4-2b}{2(N(p-1)-2b)} \left(\|\nabla Q\|_{L^2} \|Q\|^{\sigc}_{L^2} \right)^2 \\
		&= \frac{N(p-1)-4-2b}{4(p+1)} C_{\opt} \left(\|\nabla Q\|_{L^2} \|Q\|^{\sigc}_{L^2} \right)^{\frac{N(p-1)-2b}{2}},
		\end{align*}
		we infer from \eqref{est-E} and \eqref{coer-est-prof-1} that
		\begin{multline} \label{est-G}
		\frac{N(p-1)-2b}{N(p-1)-4-2b} \left(\frac{\|\nabla u(t)\|_{L^2} \|u(t)\|^{\sigc}_{L^2}}{\|\nabla Q\|_{L^2} \|Q\|^{\sigc}_{L^2}} \right)^2  \\ - \frac{4}{N(p-1)-4-2b} \left(\frac{\|\nabla u(t)\|_{L^2} \|u(t)\|^{\sigc}_{L^2}}{\|\nabla Q\|_{L^2} \|Q\|^{\sigc}_{L^2}} \right)^{\frac{N(p-1)-2b}{2}} \leq 1-\vartheta.
		\end{multline}
		Consider the function
		\[
		G(\lambda) := \frac{N(p-1)-2b}{N(p-1)-4-2b} \lambda^2 - \frac{4}{N(p-1)-4-2b} \lambda^{\frac{N(p-1)-2b}{2}}, \quad 0<\lambda<1.
		\]
		It is easy to see that $G$ is strictly increasing in $(0,1)$ with $G(0) = 0$ and $G(1) = 1$. It follows from \eqref{est-G} that there exists $\rho>0$ depending on $\vartheta$ such that 
		\[
		G(\lambda)\leq 1-\vartheta\Longrightarrow \lambda<1-2\rho,
		\] 
		which shows
		\begin{align} \label{coer-est-prof-2}
		\|\nabla u(t)\|_{L^2} \|u(t)\|^{\sigc}_{L^2} < (1-2\rho)\|\nabla Q\|_{L^2} \|Q\|^{\sigc}_{L^2}
		\end{align}
		for all $t\in [0,\infty)$.
		
		By the definition of $\chi_R$, we have $\|\chi_R u(t)\|_{L^2} \leq \|u(t)\|_{L^2}$. On the other hand, using
		\begin{align*}
		\int |\nabla (\chi f)|^2 dx 
		= \int \chi^2 |\nabla f|^2 dx - \int \chi \Delta \chi |f|^2 dx,
		\end{align*}
		we also have
		\[
		\|\nabla (\chi_R u(t))\|^2_{L^2} \leq \|\nabla u(t)\|^2_{L^2} + O\left(R^{-2} \|u(t)\|^2_{L^2} \right).
		\]
		Thus
		\begin{align*}
		\|\nabla(\chi_R u(t))\|_{L^2} \|\chi_R u(t)\|^{\sigc}_{L^2} &\leq \left( \|\nabla u(t)\|^2_{L^2} + O \left(R^{-2} \|u(t)\|^2_{L^2}\right)\right)^{\frac{1}{2}} \|u(t)\|^{\sigc}_{L^2} \\
		&\leq \|\nabla u(t)\|_{L^2} \|u(t)\|^{\sigc}_{L^2} + O \left(R^{-1} \|u(t)\|_{L^2}^{{\sigc}+1}\right) \\
		&<(1-2\rho)\|\nabla Q\|_{L^2} \|Q\|^{\sigc}_{L^2} + O\left(R^{-1} \|u_0\|_{L^2}^{{\sigc}+1}\right) \\
		&<(1-\rho)\|\nabla Q\|_{L^2} \|Q\|^{\sigc}_{L^2}
		\end{align*}
		provided that $R\geq R_0$, where $R_0>0$ is sufficiently large depending on $\rho, \|u_0\|_{L^2}$, hence on $u_0,Q$.
		
		Finally, \eqref{coer-est} follows from the following fact: if
		\begin{align} \label{coer-est-prof-3}
		\|\nabla f\|_{L^2} \|f\|_{L^2}^{\sigc} <(1-\rho) \|\nabla Q\|_{L^2} \|Q\|^{\sigc}_{L^2},
		\end{align}
		then there exists $\delta =\delta(\rho)>0$ such that
		\begin{align} \label{coer-est-prof-4}
		\|\nabla f\|^2_{L^2} - \frac{N(p-1)-2b}{2(p+1)} \int |x|^b |f(x)|^{p+1}dx \geq \delta \int |x|^b |f(x)|^{p+1}dx.
		\end{align}
		To see \eqref{coer-est-prof-4}, we first have from the Gagliardo-Nirenberg inequality, \eqref{poho-iden}, and \eqref{coer-est-prof-3} that
		\begin{align*}
		E(f) &\geq \frac{1}{2} \|\nabla f\|^2_{L^2} - \frac{C_{\opt}}{p+1} \|\nabla f\|^{\frac{N(p-1)-2b}{2}}_{L^2} \|f\|^{\frac{4+2b-(N-2)(p-1)}{2}}_{L^2} \\
		&= \frac{1}{2} \|\nabla f\|^2_{L^2} \left(1 - \frac{2C_{\opt}}{p+1} \|\nabla f\|^{\frac{N(p-1)-4-2b}{2}}_{L^2} \|f\|^{\frac{4+2b-(N-2)(p-1)}{2}} _{L^2}\right) \\
		&=\frac{1}{2} \|\nabla f\|^2_{L^2} \left(1 - \frac{2C_{\opt}}{p+1} \left(\|\nabla f\|_{L^2} \|f\|^{\sigc} _{L^2}\right)^{\frac{N(p-1)-4-2b}{2}}\right) \\
		&> \frac{1}{2} \|\nabla f\|^2_{L^2} \left(1 - \frac{2C_{\opt}}{p+1} (1-\rho)^{\frac{N(p-1)-4-2b}{2}} \left(\|\nabla Q\|_{L^2} \|Q\|^{\sigc} _{L^2} \right)^{\frac{N(p-1)-4-2b}{2}} \right) \\
		&=\frac{1}{2} \|\nabla f\|^2_{L^2} \left( 1- \frac{4}{N(p-1)-2b}(1-\rho)^{\frac{N(p-1)-4-2b}{2}} \right).
		\end{align*}
		This implies in particular that
		\[
		\|\nabla f\|^2_{L^2} \geq \frac{N(p-1)-2b}{2(p+1)(1-\rho)^{\frac{N(p-1)-4-2b}{2}}} \int |x|^b |f(x)|^{p+1}dx.
		\]
		We now set $K(f)$ the left hand side of \eqref{coer-est-prof-4}. We have that
		\begin{align*}
		K(f) &= \frac{N(p-1)-2b}{2} E(f) - \frac{N(p-1)-4-2b}{4} \|\nabla f\|^2_{L^2} \\
		&\geq \frac{N(p-1)-2b}{4} \|\nabla f\|^2_{L^2} \left(1-\frac{4}{N(p-1)-2b} (1-\rho)^{\frac{N(p-1)-4-2b}{2}} \right) - \frac{N(p-1)-4-2b}{4} \|\nabla f\|^2_{L^2} \\
		&=(1-(1-\rho)^{\frac{N(p-1)-4-2b}{2}}) \|\nabla f\|^2_{L^2} \\
		&\geq \frac{(N(p-1)-2b)(1-(1-\rho)^{\frac{N(p-1)-4-2b}{2}})}{2(p+1) (1-\rho)^{\frac{N(p-1)-4-2b}{2}}} \int |x|^b |f(x)|^{p+1}dx.
		\end{align*}
		This proves \eqref{coer-est-prof-4} and the proof is complete.
	\end{proof}
	
	\begin{lemma} [Morawetz estimate] \label{lem-mora-esti}
		Let $N\geq 2$, $b>0$, $p>\max\left\{\frac{N+4+2b}{N}, 1+\frac{2b}{N-1}\right\}$, and $p<\frac{N+2+2b}{N-2}$ if $N\geq 3$. Let $u_0 \in H^1_{\rad}(\R^N)$ satisfy \eqref{cond-scat-inte}. Then the corresponding solution to \eqref{INLS} with initial data $\left.u\right|_{t=0} = u_0$ satisfies for any time interval $I \subset [0,\infty)$,
		\begin{align} \label{mora-est-I}
		\int_I \int |x|^b|u(t,x)|^{p+1} dx dt \leq C(u_0,Q) |I|^{\beta}, \quad \beta:= \max\left\{ \frac{1}{3}, \frac{2}{(N-1)\alpha+2}\right\}
		\end{align}
		for some constant $C(u_0,Q)>0$ depending only on $u_0$ and $Q$, where $\alpha=p-1-\frac{2b}{N-1}$.
	\end{lemma}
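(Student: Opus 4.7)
\textbf{Proof plan for Lemma \ref{lem-mora-esti}.} The strategy is a truncated virial/Morawetz argument in the spirit of Dodson--Murphy, combined with the coercivity estimate \eqref{coer-est} and the radial Sobolev inequality \eqref{est-Strauss}. Fix $R \geq R_0$ (to be chosen later) and apply Lemma \ref{lem-viri-iden} with weight $\varphi_R$ defined in \eqref{varphi-R}. Since $u(t)$ is radial, the mixed derivative term simplifies to $4 \int \varphi_R'' |\nabla u|^2 \, dx$. Bounding $V'_{\varphi_R}$ via Cauchy--Schwarz and $|\nabla \varphi_R| \leq CR$ gives
\[
\sup_{t \in I} |V'_{\varphi_R}(t)| \leq C R \, \|u_0\|_{L^2}^2,
\]
so that $\int_I V''_{\varphi_R}(t)\, dt \leq CR$.

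Next, I split the expression for $V''_{\varphi_R}(t)$ into the region $|x| \leq R$, where $\varphi_R(x) = |x|^2$ (so $\varphi_R'' = 2$, $\Delta \varphi_R = 2N$, $\Delta^2 \varphi_R = 0$, $\nabla \varphi_R \cdot \nabla(|x|^b) = 2b|x|^b$), and the region $|x| > R$. The interior contribution is exactly
\[
8 K_{\leq R}(u(t)) := 8\Big( \|\nabla u\|_{L^2(|x|\leq R)}^2 - \tfrac{N(p-1)-2b}{2(p+1)} \int_{|x| \leq R} |x|^b |u|^{p+1}\, dx \Big),
\]
while the exterior contribution is controlled (using $0 \leq \varphi_R'' \leq 2$, $|\Delta^2 \varphi_R| \lesssim R^{-2}$, and pointwise bounds on $\Delta \varphi_R$ and $\nabla \varphi_R \cdot \nabla(|x|^b)$) by $CR^{-2}\|u_0\|_{L^2}^2 + C\int_{|x|>R}|x|^b|u|^{p+1}\,dx$. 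Thus
\[
V''_{\varphi_R}(t) \geq 8K_{\leq R}(u(t)) - C R^{-2} - C \int_{|x|>R} |x|^b |u(t,x)|^{p+1} dx.
\]

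Now I compare $K_{\leq R}(u)$ with $K(\chi_R u) := \|\nabla(\chi_R u)\|_{L^2}^2 - \tfrac{N(p-1)-2b}{2(p+1)} \int |x|^b |\chi_R u|^{p+1} dx$. Using $\|\nabla(\chi_R u)\|_{L^2}^2 = \int \chi_R^2 |\nabla u|^2 - \int \chi_R \Delta \chi_R |u|^2$ and $\chi_R^2, \chi_R^{p+1} \leq \mathbf{1}_{|x|\leq R}$, one obtains
\[
K_{\leq R}(u) \geq K(\chi_R u) - CR^{-2}\|u_0\|_{L^2}^2 - C \int_{R/2 \leq |x|\leq R} |x|^b |u|^{p+1} dx.
\]
The coercivity estimate \eqref{coer-est} then gives $K(\chi_R u) \geq \delta \int |x|^b |\chi_R u|^{p+1} dx \geq \delta \int_{|x|\leq R/2} |x|^b |u|^{p+1} dx$. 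Writing $\int |x|^b |u|^{p+1} dx = \int_{|x|\leq R/2} + \int_{|x|>R/2}$, I conclude
\[
V''_{\varphi_R}(t) \geq 8\delta \int |x|^b |u(t,x)|^{p+1} dx - C \int_{|x|>R/2} |x|^b |u(t,x)|^{p+1} dx - CR^{-2}.
\]

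For the exterior nonlinear term I use \eqref{est-Strauss}: since $|x|^{(N-1)/2}|u(t,x)| \lesssim \|\nabla u(t)\|_{L^2}^{1/2}\|u(t)\|_{L^2}^{1/2}$ and $\|\nabla u(t)\|_{L^2}, \|u(t)\|_{L^2}$ are uniformly bounded by \eqref{est-solu-scat} and mass conservation,
\[
\int_{|x|>R/2} |x|^b |u|^{p+1}\, dx = \int_{|x|>R/2} |x|^{b-\frac{(N-1)(p-1)}{2}} \bigl(|x|^{\frac{N-1}{2}}|u|\bigr)^{p-1} |u|^2 \, dx \leq C(u_0,Q) \, R^{-\frac{(N-1)\alpha}{2}},
\]
with $\alpha = p-1-\tfrac{2b}{N-1} > 0$. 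Integrating the virial lower bound over $I$ and combining with $\int_I V''_{\varphi_R}\, dt \leq CR$ yields
\[
\int_I \int |x|^b |u(t,x)|^{p+1} dx\, dt \leq C(u_0,Q) \bigl( R + |I|\, R^{-\frac{(N-1)\alpha}{2}} + |I|\, R^{-2}\bigr).
\]
Finally, I optimize by choosing $R = |I|^{1/3}$ when $(N-1)\alpha \geq 4$ (so the $|I|R^{-2}$ error dominates), and $R = |I|^{2/((N-1)\alpha+2)}$ otherwise; in either case the resulting bound is $C(u_0,Q) |I|^\beta$ with $\beta = \max\{1/3, 2/((N-1)\alpha+2)\}$.

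The main technical obstacle is the bookkeeping in the passage from $V''_{\varphi_R}$ to a clean lower bound proportional to $\int |x|^b |u|^{p+1}$: one must carefully track the three sources of error (the $\Delta^2\varphi_R$ contribution, the exterior nonlinear piece, and the transition zone $R/2 \leq |x|\leq R$ arising from comparing $K_{\leq R}(u)$ with $K(\chi_R u)$) so that only small remainders in $R$ persist, leaving the coercivity constant $\delta$ available to control the full nonlinear integral.
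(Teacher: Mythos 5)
Your proposal is correct and follows essentially the same route as the paper's proof: the truncated virial weight $\varphi_R$, the bound $\sup_I|V'_{\varphi_R}|\lesssim R$, the identification of the interior part with the coercive functional and its comparison with $K(\chi_R u)$ via the transition zone $R/2\leq|x|\leq R$, the coercivity estimate \eqref{coer-est}, the exterior bound $\int_{|x|>R/2}|x|^b|u|^{p+1}dx\lesssim R^{-\frac{(N-1)\alpha}{2}}$ from \eqref{est-Strauss}, and the optimization in $R$ (your two-case choice of $R$ is the same as the paper's $R=|I|^{1/(1+\sigma)}$ with $\sigma=\min\{2,\frac{(N-1)\alpha}{2}\}$). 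The only (shared with the paper) tacit point is that the optimized $R$ must exceed the coercivity radius $R_0$, i.e.\ $|I|$ large, the case of bounded $|I|$ being trivial from the uniform $H^1$ bound and the Gagliardo--Nirenberg inequality.
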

	
	\begin{proof}
		Let $\varphi_R$ be as in \eqref{varphi-R}. Define $V_{\varphi_R}(t)$ as in \eqref{V-varphi} and set $M_{\varphi_R}(t):=V'_{\varphi_R}(t)$. By the Cauchy-Schwarz inequality, \eqref{est-solu-scat}, and the conservation of mass, we have
		\begin{align} \label{est-M}
		\left|M_{\varphi_R}(t)\right| \lesssim \|\nabla \varphi_R\|_{L^\infty} \|u(t)\|_{L^2} \|\nabla u(t)\|_{L^2} \lesssim R
		\end{align}
		for all $t\in [0,\infty)$. Here the implicit constant depends only on $u_0$ and $Q$. By Lemma \ref{lem-viri-iden} and the fact that $\varphi_R(x)=|x|^2$ for $|x|\leq R$, we have
		\begin{align*}
		M'_{\varphi_R}(t) &= - \int \Delta^2 \varphi_R |u(t)|^2 dx + 4 \rea \sum_{j,k=1}^N \int \partial^2_{jk} \varphi_R \partial_j \overline{u}(t) \partial_k u(t) dx \\
		&\mathrel{\phantom{=}} - \frac{2(p-1)}{p+1} \int \Delta \varphi_R |x|^b|u(t)|^{p+1} dx + \frac{4}{p+1}\int \nabla \varphi_R \cdot \nabla(|x|^b)|u(t)|^{p+1}dx \\
		& = 8 \left( \int_{|x|\leq R} |\nabla u(t)|^2 dx - \frac{N(p-1)-2b}{2(p+1)} \int_{|x|\leq R} |x|^b |u(t)|^{p+1} dx \right) \\
		&\mathrel{\phantom{=}} - \int \Delta^2 \varphi_R |u(t)|^2 dx + 4 \rea \sum_{j,k=1}^N \int_{|x|> R} \partial^2_{jk} \varphi_R\partial_j\overline{u}(t) \partial_k u(t) dx \\
		&\mathrel{\phantom{=}} - \frac{2(p-1)}{p+1} \int_{|x|>R} \Delta \varphi_R |x|^b|u(t)|^{p+1} dx + \frac{4}{p+1}\int_{|x|>R} \nabla \varphi_R \cdot \nabla(|x|^b)|u(t)|^{p+1}dx.
		\end{align*}
		Since $\|\Delta^2 \varphi_R\|_{L^\infty} \lesssim R^{-2}$, the conservation of mass implies
		\[
		\left|\int \Delta^2 \varphi_R |u(t)|^2 dx\right| \lesssim R^{-2}.
		\]
		Since $u$ is radial, we have
		\[
		\rea \sum_{j,k=1}^N \int_{|x|>R} \partial^2_{jk} \varphi_R \partial_j\overline{u}(t) \partial_k u(t) dx = \int_{|x|>R} \varphi''_R |\partial_r u(t)|^2dx \geq 0.
		\]
		Moreover, since $\|\Delta \varphi_R\|_{L^\infty} \lesssim 1$ and $\|x \cdot \nabla \varphi_R\|_{L^\infty} \lesssim |x|^2$, we have from \eqref{est-Strauss}, \eqref{est-solu-scat}, and the conservation of mass that
		\begin{align*}
		\left| \int_{|x|>R} \left(\Delta \varphi_R |x|^b + \nabla \varphi_R \cdot \nabla (|x|^b)\right) |u(t)|^{p+1} dx \right| &\lesssim \int_{|x|>R} |x|^b |u(t)|^{p+1} dx \\
		&\lesssim \|u(t)\|^{\frac{2b}{N-1}}_{H^1} \int_{|x|>R} |u(t)|^{p+1-\frac{2b}{N-1}} dx \\
		&\lesssim \|u(t)\|^{\frac{2b}{N-1}}_{H^1} \left(\sup_{|x|>R} |u(t,x)| \right)^\alpha \|u(t)\|^2_{L^2} \\
		&\lesssim  R^{-\frac{(N-1)\alpha}{2}} \|u(t)\|^{p+1}_{H^1} \lesssim R^{-\frac{(N-1)\alpha}{2}}.
		\end{align*}
		We thus have
		\begin{multline*}
		\frac{d}{dt} M_{\varphi_R}(t) \geq 8 \left( \int_{|x|\leq R} |\nabla u(t)|^2 dx - \frac{N(p-1)-2b}{2(p+1)} \int_{|x|\leq R} |x|^b |u(t)|^{p+1} dx \right)  \\
		+ O\left(R^{-2} + R^{-\frac{(N-1)\alpha}{2}}\right).
		\end{multline*}
		Now let $\chi_R$ be as in Lemma \ref{lem-coer}. We have
		\begin{align*}
		\int |\nabla(\chi_R u(t))|^2 dx &= \int \chi_R^2 |\nabla u(t)|^2 dx - \int \chi_R \Delta(\chi_R) |u(t)|^2 dx \\
		&=\int_{|x|\leq R} |\nabla u(t)|^2 dx - \int_{R/2 \leq |x| \leq R} (1-\chi^2_R) |\nabla u(t)|^2 dx \\
		&\mathrel{\phantom{=\int_{|x|\leq R} |\nabla u(t)|^2 dx}}- \int \chi_R \Delta(\chi_R) |u(t)|^2 dx \\
		\int |x|^b|\chi_R u(t)|^{p+1} dx &= \int_{|x| \leq R} |x|^b|u(t)|^{p+1} dx - \int_{R/2\leq |x| \leq R}  (1-\chi_R^{p+1}) |x|^b |u(t)|^{p+1} dx.
		\end{align*}
		It follows that
		\begin{align*}
		\int_{|x| \leq R} |\nabla u(t)|^2 dx &- \frac{N(p-1)-2b}{2(p+1)} \int_{|x|\leq R} |x|^b|u(t)|^{p+1} dx \\
		&= \int |\nabla(\chi_R u(t))|^2 dx - \frac{N(p-1)-2b}{2(p+1)} \int |x|^b|\chi_R u(t)|^{p+1} dx \\
		&\mathrel{\phantom{=}} + \int_{R/2\leq |x|\leq R} (1-\chi^2_R) |\nabla u(t)|^2 dx + \int \chi_R \Delta(\chi_R) |u(t)|^2 dx \\
		&\mathrel{\phantom{=}} - \frac{N(p-1)-2b}{2(p+1)} \int_{R/2 \leq |x| \leq R} (1-\chi_R^{p+1}) |x|^b|u(t)|^{p+1} dx.
		\end{align*}
		Thanks to the fact that $0\leq \chi_R \leq 1$, $\|\Delta(\chi_R)\|_{L^\infty} \lesssim R^{-2}$ and the radial Sobolev embedding, we get
		\begin{align*}
		\int_{|x| \leq R} |\nabla u(t)|^2 & dx - \frac{N(p-1)-2b}{2(p+1)} \int_{|x|\leq R} |x|^b|u(t)|^{p+1} dx \\
		&\geq \int |\nabla(\chi_R u(t))|^2 dx - \frac{N(p-1)-2b}{2(p+1)} \int |x|^b|\chi_R u(t)|^{p+1} dx + O\left(R^{-2} +R^{-\frac{(N-1)\alpha}{2}}\right).
		\end{align*}
		We thus obtain
		\[
		\frac{d}{dt}M_{\varphi_R}(t) \geq 8 \left(\int |\nabla(\chi_R u(t))|^2 dx - \frac{N(p-1)-2b}{2(p+1)} \int |x|^b|\chi_R u(t)|^{p+1} dx \right) + O\left(R^{-2} +R^{-\frac{(N-1)\alpha}{2}}\right).
		\]
		By Lemma \ref{lem-coer} and \eqref{est-M}, there exist $\delta =\delta(u_0,Q)>0$ and $R_0=R_0(u_0,Q)>0$ such that for all $R\geq R_0$,
		\[
		8\delta \int |x|^b |\chi_R u(t,x)|^{p+1}dx\leq \frac{d}{dt} M_{\varphi_R}(t) + O\left(R^{-2} + R^{-\frac{(N-1)\alpha}{2}}\right)
		\]
		which implies for any time interval $I \subset \R$,
		\[
		8\delta \int_I \int |x|^b |\chi_R u(t,x)|^{p+1}dx dt \leq \sup_{t\in I} |M_{\varphi_R}(t)| + O\left(R^{-2} + R^{-\frac{(N-1)\alpha}{2}}\right) |I|.
		\]
		It follows from the definition of $\chi_R$ and \eqref{est-M} that
		\[
		\int_I \int_{|x|\leq R/2} |x|^b|u(t,x)|^{p+1} dx dt \lesssim R + \left(R^{-2} + R^{-\frac{(N-1)\alpha}{2}} \right)|I|.
		\]
		On the other hand, by radial Sobolev embeddings,
		\begin{align*}
		\int_{|x|\geq R/2} |x|^b |u(t,x)|^{p+1} dx & \leq \|u(t)\|^{\frac{2b}{N-1}}_{H^1}\int_{|x|\geq R/2} |u(t,x)|^{\alpha+2} dx \\
		&\leq \|u(t)\|^{\frac{2b}{N-1}}_{H^1}\left(\sup_{|x|\geq R/2} |u(t,x)|^\alpha \right) \|u(t)\|^2_{L^2} \lesssim R^{-\frac{(N-1)\alpha}{2}}.
		\end{align*}
		We thus get
		\[
		\int_I \int |x|^b |u(t,x)|^{p+1} dx dt \lesssim R + \left(R^{-2} + R^{-\frac{(N-1)\alpha}{2}} \right)|I| \lesssim R+ R^{-\sigma} |I|,
		\]
		where
		\[
		\sigma:= \min \left\{ 2, \frac{(N-1)\alpha}{2}\right\}.
		\]
		Taking $R=|I|^{\frac{1}{1+\sigma}}$, we get for $|I|$ sufficiently large,
		\[
		\int_I \int |x|^b |u(t,x)|^{p+1}dx dt \lesssim |I|^{\frac{1}{1+\sigma}} = |I|^{\beta},
		\]
		where $\beta$ is as in \eqref{mora-est-I}. The proof is complete.
	\end{proof}
	
	\begin{lemma} \label{lem-ener-evac}
		Let $N\geq 2$, $b>0$, $p>\max\left\{\frac{N+4+2b}{N}, 1+\frac{2b}{N-1}\right\}$, and $p<\frac{N+2+2b}{N-2}$ if $N\geq 3$. Let $u_0 \in H^1_{\rad}(\R^N)$ satisfy \eqref{cond-scat-inte}. Then the corresponding solution to \eqref{INLS} with initial data $\left.u\right|_{t=0} = u_0$ satisfies
		\begin{align} \label{ener-evac}
		\liminf_{t\rightarrow \infty} \int |x|^b |u(t,x)|^{p+1}dx =0.
		\end{align}
	\end{lemma}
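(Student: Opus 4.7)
The plan is to derive \eqref{ener-evac} by contradiction, using Lemma \ref{lem-mora-esti} together with the elementary fact that $\beta<1$.

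First I would verify that the exponent $\beta$ appearing in the Morawetz estimate \eqref{mora-est-I} is strictly less than $1$. Clearly $1/3<1$, and $2/((N-1)\alpha+2)<1$ holds as soon as $(N-1)\alpha>0$, which follows from $N\geq 2$ and $\alpha=p-1-\tfrac{2b}{N-1}>0$ (the latter because the hypothesis $p>\max\{\tfrac{N+4+2b}{N},1+\tfrac{2b}{N-1}\}$ forces $p>1+\tfrac{2b}{N-1}$). Hence $\beta<1$.

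Next, suppose for contradiction that \eqref{ener-evac} fails, so there exist $c>0$ and $T_0>0$ with
\[
\int |x|^b|u(t,x)|^{p+1}\,dx \;\geq\; c \qquad \text{for all } t\geq T_0.
\]
Applying Lemma \ref{lem-mora-esti} on the interval $I=[T_0,T_0+\tau]$ for $\tau>0$, we get on the one hand the lower bound
\[
\int_{T_0}^{T_0+\tau}\!\!\int |x|^b|u(t,x)|^{p+1}\,dx\,dt \;\geq\; c\tau,
\]
and on the other hand the upper bound
\[
\int_{T_0}^{T_0+\tau}\!\!\int |x|^b|u(t,x)|^{p+1}\,dx\,dt \;\leq\; C(u_0,Q)\,\tau^{\beta}.
\]
Combining the two yields $c\tau\leq C(u_0,Q)\,\tau^{\beta}$ for every $\tau>0$; since $\beta<1$, letting $\tau\to\infty$ produces a contradiction. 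Therefore $\liminf_{t\to\infty}\int |x|^b|u(t,x)|^{p+1}\,dx=0$.

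There is essentially no obstacle here: all the hard work is already packaged in Lemma \ref{lem-mora-esti}, and the proof reduces to a standard sub-linear Morawetz argument. The only point to check carefully is that the range of $p$ in the hypotheses of Lemma \ref{lem-ener-evac} is exactly the range in which Lemma \ref{lem-mora-esti} applies and in which $\beta<1$, both of which are automatic from $p>1+\tfrac{2b}{N-1}$.
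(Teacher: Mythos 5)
Your argument is correct and is essentially the paper's own proof: assume the liminf is positive, get linear-in-$|I|$ growth of the Morawetz quantity, and contradict the sublinear bound $|I|^{\beta}$ with $\beta<1$ from Lemma \ref{lem-mora-esti}. Your additional verification that $\beta<1$ (via $\alpha=p-1-\tfrac{2b}{N-1}>0$) is a harmless and correct elaboration of what the paper leaves implicit.
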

	
	\begin{proof}
		Assume by contradiction that \eqref{ener-evac} does not hold. Then there exist $t_0>0$ and $\varrho>0$ such that
		\[
		\int |x|^b |u(t,x)|^{p+1}dx \geq \varrho
		\]
		for all $t\geq t_0$. This implies in particular that for every $I \subset [t_0, \infty)$,
		\[
		\int_I \int |x|^b |u(t,x)|^{p+1}dx dt \geq \varrho |I|
		\]
		which contradicts \eqref{mora-est-I} for $|I|$ large as $\beta<1$.
	\end{proof}
	
	\begin{corollary} \label{coro-mora-est-1}
		Let $N\geq 2$, $b>0$, $p>\max\left\{\frac{N+4+2b}{N}, 1+\frac{2b}{N-1}\right\}$, and $p<\frac{N+2+2b}{N-2}$ if $N\geq 3$. Let $u_0 \in H^1_{\rad}(\R^N)$ satisfy \eqref{cond-scat-inte}. Then there exists $t_n \rightarrow \infty$ such that the corresponding solution to \eqref{INLS} with initial data $\left.u\right|_{t=0}=u_0$ satisfies for any $R>0$,
		\begin{align} \label{small-L2}
		\lim_{n\rightarrow \infty} \int_{|x| \leq R} |u(t_n,x)|^2 dx =0.
		\end{align}
	\end{corollary}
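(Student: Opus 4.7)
The plan is to combine the $\liminf$ statement of Lemma \ref{lem-ener-evac} with a splitting of the ball $\{|x| \leq R\}$ into an inner core and an outer annulus on which different tools apply. First, from Lemma \ref{lem-ener-evac} I extract a sequence $t_n \nearrow \infty$ along which $\int |x|^b |u(t_n)|^{p+1} dx \to 0$; this sequence is chosen once and for all, independent of $R$, which is crucial since \eqref{small-L2} is claimed to hold for the same $t_n$ simultaneously for every $R$.

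On the inner core $\{|x| \leq r_1\}$ the weight $|x|^b$ is small and hence useless for transferring information from the vanishing potential energy. Instead, I use the uniform $H^1$ bound provided by Lemma \ref{lem-coer} together with mass conservation, combined with the radial Sobolev embeddings: \eqref{est-BL} yields $|u(t,x)|^2 \leq C |x|^{-(N-2)}$ when $N \geq 3$, while \eqref{est-Strauss} yields $|u(t,x)|^2 \leq C|x|^{-1}$ when $N = 2$. Integrating in radial coordinates gives
\[
\int_{|x| \leq r_1} |u(t,x)|^2 dx \leq C r_1^{\gamma}, \qquad \gamma = \begin{cases} 2 & \text{if } N \geq 3, \\ 1 & \text{if } N = 2, \end{cases}
\]
with $C$ independent of $t$.

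On the annulus $\{r_1 \leq |x| \leq R\}$, the trivial lower bound $|x|^b \geq r_1^b$ converts the vanishing of the weighted potential energy into vanishing of the unweighted $L^{p+1}$ mass there, namely
\[
\int_{r_1 \leq |x| \leq R} |u(t_n,x)|^{p+1} dx \leq r_1^{-b} \int |x|^b |u(t_n,x)|^{p+1} dx \longrightarrow 0.
\]
Applying H\"older's inequality on the bounded annulus, whose measure is at most $|B_R|$, then gives $\int_{r_1 \leq |x| \leq R} |u(t_n,x)|^2 dx \to 0$ as $n \to \infty$ for every fixed $r_1 \in (0, R)$. Combining the two pieces via a standard $\varepsilon/2$ argument—first choose $r_1$ so small that $Cr_1^{\gamma} \leq \varepsilon/2$, which bounds the core uniformly in $n$; then choose $n$ large so that the annulus contribution is $\leq \varepsilon/2$—establishes \eqref{small-L2} for arbitrary $R > 0$.

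No substantial obstacle is expected, since all the hard analytic content has been absorbed into Lemma \ref{lem-ener-evac}. The only delicate point is that one cannot bound the core integral by a naive H\"older estimate against $\|u\|_{L^{p+1}}$, because in the inter-critical range $p+1$ may well exceed the Sobolev exponent $\tfrac{2N}{N-2}$; this is precisely why the radial Sobolev bounds \eqref{est-Strauss}--\eqref{est-BL}, which give pointwise decay in $|x|$ with universal constants depending only on the $H^1$ norm, are the right tool for the core.
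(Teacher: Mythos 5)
Your argument is correct, but it is genuinely different from the paper's. The paper handles the whole ball $\{|x|\leq R\}$ in one stroke: it writes $|u(t_n)|^2=|x|^{-\frac{2b}{p+1}}\cdot|x|^{\frac{2b}{p+1}}|u(t_n)|^2$ and applies H\"older with exponents $\frac{p+1}{p-1}$ and $\frac{p+1}{2}$, using that $|x|^{-\frac{2b}{p-1}}$ is locally integrable (since $p-1>\frac{2b}{N-1}$ gives $\frac{2b}{p-1}<N$), to obtain the quantitative bound
\begin{align*}
\int_{|x|\leq R}|u(t_n,x)|^2\,dx \ \lesssim\ R^{\frac{N(p-1)-2b}{p+1}}\Big(\int |x|^b|u(t_n,x)|^{p+1}\,dx\Big)^{\frac{2}{p+1}},
\end{align*}
which tends to zero directly along the sequence produced by Lemma \ref{lem-ener-evac}. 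You instead split into a core $\{|x|\leq r_1\}$, controlled uniformly in $t$ by the pointwise radial bounds \eqref{est-Strauss}--\eqref{est-BL} (which indeed hold for all $x\neq 0$, so their use near the origin is legitimate) together with the uniform $H^1$ bound from Lemma \ref{lem-coer}, and an annulus, where the trivial bound $|x|^b\geq r_1^b$ plus H\"older transfers the vanishing of the potential energy; an $\varepsilon/2$ argument then concludes. Both proofs correctly fix $t_n$ once, independent of $R$. What the paper's route buys is brevity and an explicit rate in terms of the potential energy; what yours buys is that it avoids the weighted H\"older computation and makes transparent why the vanishing weight at the origin is harmless (the core is small uniformly in time). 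One small remark: your claim that a ``naive'' H\"older on the core must fail is slightly overstated — one could also H\"older against $\|u\|_{L^{2N/(N-2)}}$ (controlled by Sobolev and the uniform $H^1$ bound) for $N\geq 3$ — but this does not affect the validity of your argument.
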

	
	\begin{proof}
		By \eqref{ener-evac}, there exists $t_n\rightarrow \infty$ such that
		\[
		\lim_{n\rightarrow \infty} \int |x|^b |u(t_n,x)|^{p+1}dx =0.
		\]
		Let $R>0$. By H\"older's inequality, we see that
		\begin{align*}
		\int_{|x| \leq R} |u(t_n,x)|^2 dx & = \int_{|x|\leq R} |x|^{-\frac{2b}{p+1}} |x|^{\frac{2b}{p+1}} |u(t_n,x)|^2 dx \\
		&\leq \left( \int_{|x| \leq R} |x|^{-\frac{2b}{p-1}} dx \right)^{\frac{p-1}{p+1}} \left( \int_{|x| \leq R} |x|^b|u(t_n,x)|^{p+1} dx \right)^{\frac{2}{p+1}} \\
		&\lesssim R^{\frac{N(p-1)-2b}{p+1}} \left( \int |x|^b|u(t_n,x)|^{p+1} dx  \right)^{\frac{2}{p+1}} \rightarrow 0 \text{ as } n \rightarrow \infty.
		\end{align*}
	\end{proof}
	
	\subsection{Proof of Theorem \ref{theo-scat-inte}}
	In this subsection, we give the proof of Theorem \ref{theo-scat-inte}. Recall that we assume the following conditions:
	\[
	N\geq 2, \quad b>0,\quad p> \frac{N+4}{N} + \frac{2b}{N-1}, \quad p<\frac{N+2+2b}{N-2} \text{ if } N\geq 3.
	\] 
	First we prove the following theorem.
	\begin{theorem} \label{theo-scat-inte-1}
		Let $N\geq 2$, $b>0$, $p>1+\frac{2b}{N-1}+\frac{4}{N}$, and $p<1+\frac{2b}{N-1} + \frac{4}{N-2}$ if $N\geq 3$. Let $u_0 \in H^1_{\rad}(\R^N)$ satisfy \eqref{cond-scat-inte}. Then the corresponding global solution to \eqref{INLS} scatters in $H^1_{\rad}(\R^N)$. In particular, Theorem \ref{theo-scat-inte} holds when $N=2$.
	\end{theorem}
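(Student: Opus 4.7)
The plan is to follow the Tao--Dodson--Murphy scheme: combine a Strichartz-based scattering criterion \cite{Tao} with the Morawetz-type estimates of Lemmas~\ref{lem-mora-esti}--\ref{lem-ener-evac}, in the spirit of \cite{DM}. Since Proposition~\ref{prop-gwp} already yields a global $H^1_{\rad}$-solution with the uniform bound \eqref{est-solu-scat}, it will suffice to show that a suitable space-time norm of $u$ on $[0,\infty)$ is finite. I will work with a Schr\"odinger admissible pair $(q,r)$ together with a non-admissible pair $(k,r)$ from \eqref{stri-est-non-adm}, choosing the Lebesgue exponents so that the weight $|x|^b$ can be absorbed via the radial Sobolev inequality \eqref{est-Strauss} through the pointwise identity $|x|^{b} |u|^{p-1} = (|x|^{(N-1)/2} |u|)^{2b/(N-1)} |u|^{p-1-2b/(N-1)}$.

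\textbf{Nonlinear estimate and small-data theory.} The first technical step will be to prove a nonlinear estimate of the form
\begin{equation*}
\bigl\||x|^b |u|^{p-1} u\bigr\|_{L^{m'}(I,L^{r'})} \lesssim \|u\|_{L^\infty(I,H^1)}^{\frac{2b}{N-1}} \|u\|_{L^k(I,L^r)}^{p-1-\frac{2b}{N-1}} \|u\|_{L^q(I,L^r)},
\end{equation*}
together with an analogous estimate at the $H^1$-level. This estimate is admissible precisely when $p > 1 + \frac{2b}{N-1} + \frac{4}{N}$, which is the effective mass-supercriticality condition once the weight has been absorbed into an $H^1$-factor, and when $r < \frac{2N}{N-2}$, i.e.\ precisely under the upper bound $p < 1 + \frac{2b}{N-1} + \frac{4}{N-2}$. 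Coupled with the Strichartz bound \eqref{stri-est-non-adm}, this will yield a standard small-data lemma: there exists $\eta > 0$ such that if $\|e^{i(t-T)\Delta} u(T)\|_{L^q([T,\infty),L^r)} \leq \eta$, then $\|u\|_{L^q([T,\infty),L^r)}$ is finite and $u$ scatters on $[T,\infty)$.

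\textbf{Tao's scattering criterion.} Next I would establish the following criterion: there exist $R, \varepsilon > 0$ depending on $(u_0, Q)$ such that, whenever some $T_0 > 0$ satisfies
\begin{equation*}
\int_{|x| \leq R} |u(T_0, x)|^2 \, dx \leq \varepsilon,
\end{equation*}
one has $\|e^{i(t-T_0)\Delta} u(T_0)\|_{L^q([T_0,\infty),L^r)} \leq \eta$. To prove it, I would write $u(T_0) = e^{iT_0 \Delta} u_0 + i\int_0^{T_0} e^{i(T_0-s)\Delta} |x|^b |u|^{p-1} u \, ds$ and split the Duhamel integral into a ``distant past'' part on $[0, T_0 - T]$, handled via the dispersive decay $\|e^{it\Delta}\|_{L^1 \to L^\infty} \lesssim |t|^{-N/2}$ combined with the non-admissible Strichartz inequality, and a ``recent past'' part on $[T_0 - T, T_0]$, handled by inserting the cut-off $\chi_R$ of Lemma~\ref{lem-coer}: on $\{|x| \leq R\}$ the $L^2$-smallness hypothesis gives the decay, while on $\{|x| > R\}$ the radial embedding \eqref{est-Strauss} together with the uniform $H^1$-bound \eqref{est-solu-scat} provides smallness in $R$.

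\textbf{Closing the argument and main obstacle.} Combining this criterion with Corollary~\ref{coro-mora-est-1}, which exhibits a sequence $t_n \to \infty$ along which $\int_{|x| \leq R} |u(t_n,x)|^2 \, dx \to 0$, furnishes an admissible $T_0$, and hence a finite scattering norm on $[T_0,\infty)$; extension to $[0, T_0]$ is standard. Existence of a scattering state $u^+ \in H^1_{\rad}(\R^N)$ will then follow from the Cauchy-in-$H^1$ argument as in Proposition~\ref{prop-lwp-ener}. The principal difficulty throughout is managing the spatial growth $|x|^b$ inside Strichartz norms; the trade of $|x|^{(N-1)/2}$ for $\|u\|_{H^1}$ via \eqref{est-Strauss} is exactly what forces the lower bound $p > 1 + \frac{2b}{N-1} + \frac{4}{N}$ and, in dimensions $N \geq 3$, the stricter upper bound $p < 1 + \frac{2b}{N-1} + \frac{4}{N-2}$. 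In dimension $N = 2$ no upper bound on $p$ is required, so the argument recovers the full range of Theorem~\ref{theo-scat-inte}, as asserted.
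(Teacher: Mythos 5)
Your outline follows essentially the same route as the paper: a small-data scattering lemma built on the non-admissible Strichartz estimate \eqref{stri-est-non-adm} with the weight $|x|^b$ absorbed through \eqref{est-Strauss}, a Tao-type criterion in which $e^{i(t-T)\Delta}u(T)$ is expanded by Duhamel from $t=0$ and split into a recent-past piece (treated with the cut-off $\chi_R$, the $L^2$-smallness on balls from Corollary \ref{coro-mora-est-1}, and the uniform $H^1$ bound) and a distant-past piece, closed by the Morawetz machinery of Lemmas \ref{lem-mora-esti}--\ref{lem-ener-evac}. The minor variation of requiring smallness of the linear flow in $L^q L^r$ rather than in the non-admissible norm $L^k L^r$ is harmless, since $k>q$ and interpolation with the uniform bound $\|u\|_{L^\infty_t L^r}\lesssim\|u\|_{L^\infty_t H^1}$ converts one into the other.

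There is, however, a genuine gap precisely in the case the statement singles out, $N=2$. You propose to handle the distant-past term by ``dispersive decay $\|e^{it\Delta}\|_{L^1\to L^\infty}\lesssim |t|^{-N/2}$ combined with the non-admissible Strichartz inequality,'' and then assert that in $N=2$ the argument goes through with no further restriction. But the dispersive-plus-interpolation estimate requires choosing an exponent $n$ with $n'(\alpha+1)$ in the Sobolev range and $\frac{N}{2}\bigl(1-\frac{2}{n}\bigr)-1-\frac{1}{k}>0$, so that the time integral over the distant past converges and the resulting tail lies in $L^k_t$; when $N=2$ the left-hand side equals $-\frac{2}{n}-\frac{1}{k}<0$, so no admissible $n$ exists and the decay $|t|^{-1}$ is simply not enough. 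This is exactly why the paper treats $N=2$ separately: it estimates $F_2$ in $L^\infty_x$ via the $L^1\to L^\infty$ dispersive bound, controls $\||x|^b|u|^{p+1}\|_{L^1_x}$ through the radial Sobolev embedding, and crucially invokes the space-time Morawetz bound \eqref{mora-est-I}, whose sublinear growth $|I|^{\beta}$ with $\beta<1$ (together with a H\"older-in-time argument and a careful choice $T=\vareps^{-a\sigma}$) is what makes the distant-past contribution small. Your plan cites the Morawetz lemmas only to produce the times $t_n$ of mass evacuation, not in the distant-past estimate, so as written the two-dimensional case — and hence the final clause of the statement — is not actually proved.
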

	To prove this result, we denote
	\begin{align}\label{alpha-1}
	\alpha:= p-1-\frac{2b}{N-1}
	\end{align}
	which, under the assumption of Theorem \ref{theo-scat-inte-1}, implies that $\alpha>\frac{4}{N}$ and $\alpha<\frac{4}{N-2}$ if $N\geq 3$.
	
	The following estimates are a direct consequence of H\"older's inequality 
	\begin{lemma} \label{lem-non-est-1}
		Let $\alpha$ be as in \eqref{alpha-1} and denote
		\begin{align} \label{qrkm-1}
		q:=\frac{4(\alpha+2)}{N\alpha}, \quad r:=\alpha+2, \quad k:=\frac{2\alpha(\alpha+2)}{4-(N-2)\alpha}, \quad m:=\frac{2\alpha(\alpha+2)}{N\alpha^2+(N-2)\alpha-4}.
		\end{align}
		We have
		\begin{align} \label{non-est-1}
		\begin{aligned}
		\||u|^\alpha u\|_{L^{m'}(I,L^{r'})} &\lesssim \|u\|^{\alpha+1}_{L^k(I,L^r)}, \\
		\||u|^\alpha u\|_{L^{q'}(I,L^{r'})} &\lesssim \|u\|^\alpha_{L^k(I,L^r)} \|u\|_{L^q(I,L^r)}, \\
		\|\nabla(|u|^\alpha u)\|_{L^{q'}(I,L^{r'})} &\lesssim \|u\|^\alpha_{L^k(I,L^r)} \|\nabla u\|_{L^q(I,L^r)}.
		\end{aligned}
		\end{align}
	\end{lemma}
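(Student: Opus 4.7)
The plan is to prove each of the three inequalities by a two-step H\"older argument: first in the spatial $L^{r'}$ norm, writing $|u|^\alpha u$ (or its gradient) as a product of powers of $u$; then integrating in time using H\"older again, where the relation $1/k+1/m=2/q$ is exactly what is needed to match exponents. Before starting, I would verify that the exponents are internally consistent. A direct check shows that $(q,r)=(4(\alpha+2)/(N\alpha),\alpha+2)$ is Schr\"odinger admissible, i.e.\ $2/q+N/r=N/2$, and that
\[
\frac{1}{k}+\frac{1}{m}=\frac{(4-(N-2)\alpha)+(N\alpha^2+(N-2)\alpha-4)}{2\alpha(\alpha+2)}=\frac{N\alpha}{2(\alpha+2)}=\frac{2}{q},
\]
as required by the non-admissible Strichartz framework stated in the previous lemma.

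For the first estimate, the key algebraic observation is $(\alpha+1)r'=r$, which holds since $r=\alpha+2$ forces $r'=(\alpha+2)/(\alpha+1)$. Consequently
\[
\||u|^\alpha u\|_{L^{r'}}=\|u\|_{L^{(\alpha+1)r'}}^{\alpha+1}=\|u\|_{L^r}^{\alpha+1},
\]
and taking the $L^{m'}$ time norm gives $\||u|^\alpha u\|_{L^{m'}(I,L^{r'})}=\|u\|_{L^{(\alpha+1)m'}(I,L^r)}^{\alpha+1}$. The claim then reduces to the identity $(\alpha+1)m'=k$, which follows by combining
\[
\frac{\alpha+1}{k}=\frac{(\alpha+1)(4-(N-2)\alpha)}{2\alpha(\alpha+2)}=\frac{-(N-2)\alpha^2+(6-N)\alpha+4}{2\alpha(\alpha+2)}
\]
with the analogous expansion of $1/m'=1-1/m$.

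For the second estimate I again use $\||u|^\alpha u\|_{L^{r'}}=\|u\|_{L^r}^{\alpha+1}$, and then split the time norm by H\"older with conjugate pair $(k/\alpha,q)$; this requires $\alpha/k+1/q=1/q'$, which one checks to equal $(8+(4-N)\alpha)/(4(\alpha+2))$ on both sides. For the third estimate, the chain rule gives $|\nabla(|u|^\alpha u)|\lesssim|u|^\alpha|\nabla u|$, and then H\"older in space with $(\alpha+1)/r=1/r'$ yields $\||u|^\alpha\nabla u\|_{L^{r'}}\leq\|u\|_{L^r}^\alpha\|\nabla u\|_{L^r}$. The same time-H\"older split with $(k/\alpha,q)$ finishes the proof.

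I expect no real obstacle here: the lemma is essentially an arithmetic verification that the chosen exponents $q,r,k,m$ fit together under H\"older. The only mild points to watch are that $k>0$, which is equivalent to $\alpha<4/(N-2)$ and follows from the energy-subcritical hypothesis, and that $m>1$ (so that $m'\in(1,\infty)$), which reduces to $N\alpha^2+(N-2)\alpha-4>0$ and is guaranteed by the mass-supercritical condition $\alpha>4/N$.
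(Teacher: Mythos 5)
Your proof is correct and takes essentially the same route as the paper, which simply records these bounds as ``a direct consequence of H\"older's inequality''; the space--time H\"older splitting you describe, together with the exponent identities $(\alpha+1)r'=r$, $(\alpha+1)m'=k$, and $\frac{1}{q'}=\frac{\alpha}{k}+\frac{1}{q}$, is exactly that computation and your arithmetic checks out. One tiny imprecision: $N\alpha^2+(N-2)\alpha-4>0$ only guarantees $m>0$, while $m>1$ additionally needs the numerator $2\alpha(\alpha+2)$ to exceed the denominator, which is equivalent to $\alpha<\frac{4}{N-2}$ (automatic for $N=2$) and is already among your hypotheses, so the argument is unaffected.
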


	\begin{lemma}[Small data scattering] \label{lem-smal-data-1}
		Let $N,b$, and $p$ be as in Theorem \ref{theo-scat-inte-1}. Suppose that $u \in C([0,\infty), H^1_{\rad}(\R^N))$ is a solution to \eqref{INLS} satisfying $\|u\|_{L^\infty([0,\infty), H^1)} <\infty$. Then there exists $\delta>0$ such that if
		\[
		\|e^{i(t-T)\Delta} u(T)\|_{L^k([T,\infty), L^r)} <\delta
		\]
		for some $T>0$, where $k$ and $r$ are as in  \eqref{qrkm-1}, then $u$ scatters in $H^1_{\rad}(\R^N)$.
	\end{lemma}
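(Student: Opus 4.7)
The plan is a standard small-data scattering bootstrap, built on the Duhamel formula from time $T$:
\[
u(t) = e^{i(t-T)\Delta} u(T) + i \int_T^t e^{i(t-s)\Delta} |x|^b |u(s)|^{p-1} u(s)\, ds, \quad t \geq T,
\]
combining the standard Strichartz estimate on the admissible pair $(q,r)$ from \eqref{qrkm-1} with the non-admissible inhomogeneous estimate \eqref{stri-est-non-adm} on the pair $(k,r)$ (the identity $1/k+1/m=2/q$ is already built into \eqref{qrkm-1}, so the estimate applies). The whole strategy is to reduce the weighted problem to a standard intercritical one via
\[
|x|^b |u|^{p-1} u = \bigl(|x|^{(N-1)/2}|u(x)|\bigr)^{2b/(N-1)} |u|^\alpha u,
\]
where the Strauss bound \eqref{est-Strauss} dominates the first factor pointwise by $C\|u\|_{L^\infty_t H^1}^{2b/(N-1)} =: C_0$, a finite constant by hypothesis. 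The residual $|u|^\alpha u$ is exactly the kind of nonlinearity handled by Lemma~\ref{lem-non-est-1}, and our hypotheses place $\alpha = p-1-\tfrac{2b}{N-1}$ in the intercritical window $(\tfrac{4}{N}, \tfrac{4}{N-2})$ where that lemma applies.

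First I would run a continuity argument for $X(\tau) := \|u\|_{L^k([T,\tau], L^r)}$. The non-admissible Strichartz estimate \eqref{stri-est-non-adm}, combined with the second bound of \eqref{non-est-1}, gives
\[
X(\tau) \leq \|e^{i(t-T)\Delta} u(T)\|_{L^k([T,\infty), L^r)} + C\, C_0\, X(\tau)^{\alpha+1} \leq \delta + C' X(\tau)^{\alpha+1}.
\]
Since $\tau \mapsto X(\tau)$ is continuous with $X(T)=0$, choosing $\delta$ small enough forces $X(\tau) \leq 2\delta$ for every $\tau \geq T$, so $\|u\|_{L^k([T,\infty),L^r)} \lesssim \delta$. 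I would then upgrade this to a finite $\|u\|_{L^q([T,\infty), W^{1,r})}$ bound using the standard Strichartz estimate on the admissible pair $(q,r)$ together with the third estimate of \eqref{non-est-1}; the weight derivative $\nabla(|x|^b)\sim |x|^{b-1}$ that appears when the gradient is moved inside the nonlinearity is absorbed exactly as in the proof of Proposition~\ref{prop-lwp-ener}, by writing $|x|^{b-1}|u|^\alpha u = |x|^b|u|^\alpha \cdot |x|^{-1}u$ and invoking Hardy's inequality \eqref{Hardy-ineq}.

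Once the global space-time bound $\|u\|_{L^k([T,\infty),L^r)} + \|u\|_{L^q([T,\infty),W^{1,r})} < \infty$ is in place, the scattering conclusion is routine. For $T\leq t_1<t_2$, Strichartz applied to
\[
e^{-it_2\Delta}u(t_2) - e^{-it_1\Delta}u(t_1) = i \int_{t_1}^{t_2} e^{-is\Delta}|x|^b|u(s)|^{p-1}u(s)\, ds
\]
produces $\|e^{-it_2\Delta}u(t_2) - e^{-it_1\Delta}u(t_1)\|_{H^1} \lesssim C_0 \|u\|_{L^k([t_1,t_2],L^r)}^\alpha \|u\|_{L^q([t_1,t_2],W^{1,r})}$, which tends to $0$ as $t_1\to\infty$ because the Strichartz tails do. Hence $u^+ := \lim_{t\to\infty} e^{-it\Delta}u(t)$ exists in $H^1_{\rad}(\mathbb{R}^N)$, and the identical estimate applied to the tail integral from $t$ to $\infty$ shows $\|u(t) - e^{it\Delta}u^+\|_{H^1}\to 0$.

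The main obstacle throughout is the spatial growth of $|x|^b$, which obstructs a naive Sobolev-based nonlinear estimate; it is precisely the radial Strauss inequality \eqref{est-Strauss}, paired with the a priori boundedness $\|u\|_{L^\infty_t H^1}<\infty$, that converts the weighted nonlinearity into an effective intercritical one. The lower bound $p>\tfrac{N+4}{N}+\tfrac{2b}{N-1}$ is then exactly what ensures $\alpha>4/N$, so that the effective power nonlinearity $|u|^\alpha u$ lies in the mass-supercritical, energy-subcritical regime where Lemma~\ref{lem-non-est-1} and the Strichartz bootstrap close.
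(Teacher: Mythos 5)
Your estimates are exactly the ones the paper uses — the Strauss inequality \eqref{est-Strauss} to peel off $|x|^b$ using the a priori $L^\infty_t H^1$ bound, Lemma \ref{lem-non-est-1} for the residual power $|u|^\alpha u$, the non-admissible inhomogeneous Strichartz estimate \eqref{stri-est-non-adm} on $(k,r)$ (valid since $\alpha>\tfrac4N$ gives $k>\tfrac q2$), Hardy's inequality \eqref{Hardy-ineq} for the $|x|^{b-1}$ term, and the Cauchy-sequence argument for the scattering state — so the proof is correct and in substance the same as the paper's. The one genuine difference is how the global space-time bounds are closed: you run a continuity/bootstrap argument on $X(\tau)=\|u\|_{L^k([T,\tau],L^r)}$ and then absorb the small factor in the $\scal{\nabla}u$ estimate, whereas the paper sets up a contraction mapping on the set $Y$ of functions with $\|u\|_{L^k(I,L^r)}\leq M$, $\|\scal{\nabla}u\|_{L^q(I,L^r)}\leq L$, obtaining \eqref{claim-uT} from the fixed point. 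The trade-off is where the soft technical point sits: your bootstrap needs the a priori local finiteness and continuity of the norms being bootstrapped — harmless for $X(\tau)$, since $r=\alpha+2$ is in the Sobolev range so $u\in C_tH^1$ gives $u\in L^k_{\loc}L^r$, but for the absorption in the $\scal{\nabla}u$ bound you should note that $u\in L^q_{\loc}([T,\infty),W^{1,r})$ follows from a local Strichartz argument plus uniqueness; the paper's fixed-point route avoids this but instead tacitly identifies the fixed point with the given solution $u$, which again rests on uniqueness. Both are standard and of comparable rigor, so your variant is perfectly acceptable.
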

		
	\begin{proof}
		We proceed in two steps.
		
		{\bf Step 1.} We first show that the solution satisfies
		\begin{align} \label{claim-uT}
		\begin{aligned}
		\|u\|_{L^k([T,\infty), L^r)} &\leq 2 \|e^{i(t-T)\Delta} u(T)\|_{L^k([T,\infty), L^r)}, \\
		\|\scal{\nabla} u\|_{L^q([T,\infty),L^r)} &\leq 2 C\|u(T)\|_{H^1},
		\end{aligned}
		\end{align}
		for some constant $C>0$, where $q, r, k$ are as in \eqref{qrkm-1}. To see this, we consider
		\begin{align*}
		Y:= \Big\{ u \in C(I,H^1_{\rad}(\R^N)) \cap L^k(I,L^r_{\rad}(\R^N)) &\cap L^q(I,W^{1,r}_{\rad}(\R^N)) \\
		&: \|u\|_{L^k(I, L^r)} \leq M, \quad \|\scal{\nabla} u\|_{L^q(I,L^r)} \leq L \Big\}
		\end{align*}
		equipped with the distance
		\[
		d(u,v) := \|u-v\|_{L^k(I,L^r)} + \|u-v\|_{L^q(I,L^r)},
		\]
		where $I=[T,\infty)$ and $M,L>0$ will be chosen later. We will show that the functional
		\[
		\Phi(u)(t):= e^{i(t-T)\Delta} u(T) + i \int_T^t e^{i(t-s)\Delta} |x|^b |u(s)|^{p-1} u(s) ds
		\]
		is a contraction on the complete metric space $(Y,d)$. Since $(q,r)$ is a Schr\"odinger admissible pair, $k,m,q$ satisfy \eqref{cond-km}, and $k>\frac{q}{2}$ due to the fact that $\alpha>\frac{4}{N}$, we see that \eqref{stri-est-non-adm} holds for this choice of exponents. By \eqref{stri-est-non-adm}, \eqref{est-Strauss}, and \eqref{non-est-1}, we have
		\begin{align*}
		\|\Phi(u)\|_{L^k(I,L^r)} &\leq \|e^{i(t-T)\Delta} u(T)\|_{L^k(I,L^r)} + C \||x|^b |u|^{p-1} u\|_{L^{m'}(I,L^{r'})} \\
		&= \|e^{i(t-T)\Delta} u(T)\|_{L^k(I,L^r)} + C \left\|\left(|x|^{\frac{N-1}{2}} |u|\right)^{\frac{2b}{N-1}} |u|^\alpha u\right\|_{L^{m'}(I,L^{r'})} \\
		&\leq \|e^{i(t-T)\Delta} u(T)\|_{L^k(I,L^r)} + C \|u\|^{\frac{2b}{N-1}}_{L^\infty(I,H^1)} \| |u|^\alpha u\|_{L^{m'}(I,L^{r'})} \\
		&\leq \|e^{i(t-T)\Delta} u(T)\|_{L^k(I,L^r)} + C \|u\|^{\frac{2b}{N-1}}_{L^\infty(I,H^1)} \|u\|^{\alpha+1}_{L^k(I,L^r)}.
		\end{align*}
		By Strichartz estimates, \eqref{est-Strauss}, and \eqref{non-est-1}, we have
		\begin{align*}
		\|\Phi(u)\|_{L^q(I,L^r)} &\leq \|e^{i(t-T)\Delta} u(T)\|_{L^q(I,L^r)} + C \||x|^b |u|^{p-1} u\|_{L^{q'}(I,L^{r'})} \\
		&\leq \|u(T)\|_{L^2} + C \left\|\left(|x|^{\frac{N-1}{2}} |u|\right)^{\frac{2b}{N-1}} |u|^\alpha u\right\|_{L^{q'}(I,L^{r'})} \\
		&\leq \|u(T)\|_{L^2} + C \|u\|^{\frac{2b}{N-1}}_{L^\infty(I,H^1)} \| |u|^\alpha u\|_{L^{q'}(I,L^{r'})} \\
		&\leq \|u(T)\|_{L^2} + C \|u\|^{\frac{2b}{N-1}}_{L^\infty(I,H^1)} \|u\|^\alpha_{L^k(I,L^r)} \|u\|_{L^q(I,L^r)}.
		\end{align*}
		We next have
		\[
		\|\nabla \Phi(u)\|_{L^q(I,L^r)} \leq \|\nabla e^{i(t-T)\Delta} u(T)\|_{L^q(I,L^r)} + C \|\nabla(|x|^b |u|^{p-1} u)\|_{L^{q'}(I,L^{r'})}
		\]
		Observe that
		\begin{align*}
		\nabla (|x|^b |u|^{p-1}u) &\sim |x|^b |u|^{p-1} \nabla u + |x|^{b-1} |u|^{p-1} u \\
		&\sim \left(|x|^{\frac{N-1}{2}} |u|\right)^{\frac{2b}{N-1}} |u|^\alpha \nabla u + \left(|x|^{\frac{N-1}{2}} |u|\right)^{\frac{2b}{N-1}} |x|^{-1} |u|^\alpha u.
		\end{align*}
		By \eqref{est-Strauss} and \eqref{non-est-1}, we have
		\begin{align*}
		\||x|^b |u|^{p-1} \nabla u\|_{L^{q'}(I,L^{r'})} &\lesssim \|u\|^{\frac{2b}{N-1}}_{L^\infty(I,H^1)} \|u\|^\alpha_{L^k(I,L^r)} \|\nabla u\|_{L^q(I,L^r)}.
		\end{align*}
		By Hardy's inequality \eqref{Hardy-ineq} with the fact that $1<r'<N$, we see that
		\begin{align*}
		\||x|^{b-1} |u|^{p-1} u\|_{L^{q'}(I,L^{r'})} &\lesssim \|u\|^{\frac{2b}{N-1}}_{L^\infty(I,H^1)} \||x|^{-1} |u|^\alpha u\|_{L^{q'}(I,L^{r'})} \\
		&\lesssim \|u\|^{\frac{2b}{N-1}}_{L^\infty(I,H^1)} \|\nabla(|u|^\alpha u)\|_{L^{q'}(I,L^{r'})} \\
		&\lesssim \|u\|^{\frac{2b}{N-1}}_{L^\infty(I,H^1)} \|u\|^\alpha_{L^k(I,L^r)} \|\nabla u\|_{L^q(I,L^r)}.
		\end{align*}
		Collecting the above estimates, we get
		\[
		\|\nabla \Phi(u)\|_{L^{q'}(I,L^{r'})} \lesssim \|u\|^{\frac{2b}{N-1}}_{L^\infty(I,H^1)} \|u\|^\alpha_{L^k(I,L^r)} \|\nabla u\|_{L^q(I,L^r)}.
		\]
		It follows that
		\[
		\|\scal{\nabla} \Phi(u)\|_{L^{q'}(I,L^{r'})} \lesssim \|u\|^{\frac{2b}{N-1}}_{L^\infty(I,H^1)} \|u\|^\alpha_{L^k(I,L^r)} \|\scal{\nabla} u\|_{L^q(I,L^r)}.
		\]
		We also have
		\begin{align*}
		\|\Phi(u)-\Phi(v)\|_{L^k(I,L^r)} &\lesssim \||x|^b(|u|^{p-1} u-|v|^{p-1} v)\|_{L^{m'}(I,L^{r'})} \\
		&\lesssim \||x|^b (|u|^{p-1} + |v|^{p-1})|u-v|\|_{L^{m'}(I,L^{r'})} \\
		&\lesssim \left(\|u\|^{\frac{2b}{N-1}}_{L^\infty(I,H^1)} \|u\|^\alpha_{L^k(I,L^r)} +\|v\|^{\frac{2b}{N-1}}_{L^\infty(I,H^1)} \|v\|^\alpha_{L^k(I,L^r)}\right) \|u-v\|_{L^k(I,L^r)}
		\end{align*}
		and
		\begin{align*}
		\|\Phi(u)-\Phi(v)\|_{L^q(I,L^r)} &\lesssim \||x|^b(|u|^{p-1} u - |v|^{p-1} v)\|_{L^{q'}(I,L^{r'})}\\
		&\lesssim \||x|^b(|u|^{p-1} + |v|^{p-1})|u-v|\|_{L^{q'}(I,L^{r'})} \\
		&\lesssim \left(\|u\|^{\frac{2b}{N-1}}_{L^\infty(I,H^1)} \|u\|^\alpha_{L^k(I,L^r)} +\|v\|^{\frac{2b}{N-1}}_{L^\infty(I,H^1)} \|v\|^\alpha_{L^k(I,L^r)}\right) \|u-v\|_{L^q(I,L^r)}.
		\end{align*}
		As $H^1$-norm of the solution is bounded uniformly on $[0,\infty)$, there exists $C>0$ such that for all $u,v \in Y$,
		\begin{align*}
		\|\Phi(u)\|_{L^k(I,L^r)} &\leq \|e^{i(t-T)\Delta} u(T)\|_{L^k(I,L^r)} + C M^{\alpha+1}, \\
		\|\scal{\nabla} \Phi(u)\|_{L^q(I,L^r)} &\leq C \|u(T)\|_{H^1} + CM^\alpha L
		\end{align*}
		and
		\[
		d(\Phi(u), \Phi(v)) \leq CM^\alpha d(u,v).
		\]
		By choosing $M=2\|e^{i(t-T)\Delta} u(T)\|_{L^k(I,L^r)}, L=2 C\|u(T)\|_{H^1}$, and taking $M>0$ sufficiently small so that $CM^\alpha \leq \frac{1}{2}$, we obtain that $\Phi$ is a contraction mapping on $(Y,d)$. This shows \eqref{claim-uT}.
		
		{\bf Step 2.} Now let $0<t_1<t_2$. We estimate
		\begin{align*}
		\|e^{-it_2\Delta} u(t_2) - e^{-it_1\Delta} u(t_1)\|_{H^1} &=\left\| i \int_{t_1}^{t_2} e^{-is \Delta} |x|^b |u(s)|^{p-1} u(s) ds \right\|_{H^1} \\
		&= \left\| i \int_{t_1}^{t_2} \scal{\nabla} (|x|^b |u(s)|^{p-1} u(s))\right\|_{L^2} \\
		&\lesssim \|\scal{\nabla}(|x|^b|u|^{p-1} u)\|_{L^{q'}((t_1,t_2),L^{r'})} \\
		&\lesssim \|u\|_{L^\infty((t_1,t_2),H^1)}^{\frac{2b}{N-1}} \|u\|^\alpha_{L^k((t_1,t_2),L^r)} \|\scal{\nabla} u\|_{L^q((t_1,t_2),L^r)}  \rightarrow 0
		\end{align*}
		as $t_1, t_2 \rightarrow \infty$. A similar argument as in the proof of Proposition \ref{prop-lwp-ener} shows the energy scattering. The proof is complete.
	\end{proof}

	\begin{proposition} \label{prop-scat-1}
		Let $N,b$, and $p$ be as in Theorem \ref{theo-scat-inte-1}. Let $u_0 \in H^1_{\rad}(\R^N)$ satisfy \eqref{cond-scat-inte}. Then for $\vareps>0$ sufficiently small, there exists $T=T(\vareps, u_0,Q)$ sufficiently large such that the corresponding solution to \eqref{INLS} with initial data $\left. u\right|_{t=0}= u_0$ satisfies
		\begin{align} \label{small-data}
		\|e^{i(t-T)\Delta} u(T)\|_{L^k([T,\infty),L^r)} \lesssim \vareps^\mu
		\end{align}
		for some $\mu>0$, where $k$ and $r$ are as in \eqref{qrkm-1}.
	\end{proposition}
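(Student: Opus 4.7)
The plan is to apply Tao's scattering criterion via the Dodson--Murphy methodology, adapted to the spatially growing nonlinearity $F(u):=|x|^b|u|^{p-1}u$. Starting from the backward Duhamel identity
\[
e^{i(t-T)\Delta}u(T)=e^{it\Delta}u_0+i\int_0^T e^{i(t-s)\Delta}F(u(s))\,ds,
\]
I would first dispose of the free-evolution term. By Strichartz estimates on the admissible pair $(q,r)$ combined with the non-admissible estimate \eqref{stri-est-non-adm}, the function $e^{it\Delta}u_0$ belongs to $L^k([0,\infty),L^r)$ whenever $u_0\in H^1_{\rad}(\R^N)$, and dominated convergence forces $\|e^{it\Delta}u_0\|_{L^k([T,\infty),L^r)}\to 0$ as $T\to\infty$; hence this tail is $\le\vareps$ provided $T=T(\vareps,u_0)$ is taken large enough.

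Now enlarge $T$ further so that Corollary \ref{coro-mora-est-1} provides $\|u(T)\|_{L^2(|x|\le R)}\le\vareps$ for a fixed $R=R(\vareps)$ to be chosen. Split the Duhamel integral over a \emph{far past} $J_1=[0,T-\vareps^{-\sigma}]$ and a \emph{near past} $J_2=[T-\vareps^{-\sigma},T]$, where $\sigma>0$ is to be optimized. On $J_1$, for $t\ge T$ and $s\in J_1$ one has $t-s\ge\vareps^{-\sigma}$, so the dispersive estimate
\[
\|e^{i(t-s)\Delta}F(u(s))\|_{L^r}\lesssim |t-s|^{-N(\tfrac12-\tfrac1r)}\|F(u(s))\|_{L^{r'}},
\]
combined with the radial Sobolev inequality \eqref{est-Strauss} and the uniform $H^1$ bound from Lemma \ref{lem-coer}, yields a kernel bound that, once integrated against $L^k_t([T,\infty))$ via Hardy--Littlewood--Sobolev, extracts a net smallness factor $\vareps^{\sigma\gamma}$ for some $\gamma>0$; here the assumption $\alpha:=p-1-\tfrac{2b}{N-1}>\tfrac4N$ ensures the relevant Lebesgue exponents are admissible.

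On the near past $J_2$, I would invoke the non-admissible Strichartz estimate \eqref{stri-est-non-adm},
\[
\Big\|\int_{J_2}e^{i(t-s)\Delta}F(u(s))\,ds\Big\|_{L^k([T,\infty),L^r)}\lesssim\|F(u)\|_{L^{m'}(J_2,L^{r'})},
\]
together with \eqref{est-Strauss}, the Hardy inequality \eqref{Hardy-ineq}, and the nonlinear bounds \eqref{non-est-1}. The smallness on $J_2$ is extracted by transferring the $L^2$-ball bound at time $T$ from Corollary \ref{coro-mora-est-1}: decomposing $u(s)=\chi_R u(s)+(1-\chi_R)u(s)$ for $s\in J_2$, the exterior piece contributes a factor $R^{-\eta}$ for some $\eta>0$ through the radial decay $|x|^{\frac{N-1}{2}}|u|\lesssim\|u\|_{H^1}$, while the interior piece is controlled by propagating $\|\chi_R u(T)\|_{L^2}\le\vareps$ backward through the Schr\"odinger flow on the short window of length $\vareps^{-\sigma}$ via Strichartz. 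Balancing $R$, $\sigma$ and the nonlinear exponents produces a bound of the form $\vareps^{\mu_2}$ on $J_2$, and combining with the free-evolution and far-past contributions gives $\|e^{i(t-T)\Delta}u(T)\|_{L^k([T,\infty),L^r)}\lesssim\vareps^\mu$ with $\mu:=\min(1,\sigma\gamma,\mu_2)>0$.

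The principal obstacle is the near-past estimate: since $|J_2|=\vareps^{-\sigma}$ is large, one cannot appeal to any a priori smallness of $\|u\|_{L^k(J_2,L^r)}$, and the $L^2$-ball smallness from Corollary \ref{coro-mora-est-1} must be transferred through a non-trivial time window while simultaneously taming the spatial growth $|x|^b$ via the radial Sobolev embedding \eqref{est-Strauss}. The assumption $p>\tfrac{N+4}{N}+\tfrac{2b}{N-1}$ is precisely the condition ensuring $k>q/2$ so that the non-admissible Strichartz pair $(k,r)$ from \eqref{qrkm-1} is available; the residual technical gap $\max\{\tfrac{N+4+2b}{N},1+\tfrac{2b}{N-1}\}<p\le\tfrac{N+4}{N}+\tfrac{2b}{N-1}$ mentioned in the remark after Theorem \ref{theo-scat-inte} corresponds exactly to the boundary where this inequality fails.
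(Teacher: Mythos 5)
Your overall skeleton (backward Duhamel, smallness of the free part by monotone convergence, split of the integral at $T-\vareps^{-\sigma}$, use of Corollary \ref{coro-mora-est-1} with a cutoff $\chi_R$) is the same as the paper's, but the far-past estimate as you describe it would fail. If you apply the dispersive estimate at the exponent $r=\alpha+2$ itself, the decay rate is $N\bigl(\tfrac12-\tfrac1r\bigr)=\tfrac{N\alpha}{2(\alpha+2)}$, which is strictly less than $1$ throughout the intercritical range $\alpha<\tfrac{4}{N-2}$ (and always for $N=2$); hence $\int_{J_1}(t-s)^{-\frac{N\alpha}{2(\alpha+2)}}\,ds$ is not uniformly bounded in $T$ and produces no smallness. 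Hardy--Littlewood--Sobolev does not rescue this: it is exactly the scaling-invariant estimate underlying \eqref{stri-est-non-adm}, so it yields $\|F_2\|_{L^k([T,\infty),L^r)}\lesssim\|F(u)\|_{L^{m'}(J_1,L^{r'})}$, which grows with $|J_1|\sim T$ and retains no gain from the separation $t-s\geq\vareps^{-\sigma}$. The paper's fix is to measure $F_2$ in $L^k_tL^n_x$ for a carefully chosen $n>r$ with $\tfrac N2\bigl(1-\tfrac2n\bigr)>1+\tfrac1k$ (so the time integral converges and produces a factor $\vareps^{\sigma\gamma}$), and then interpolate against the globally Strichartz-bounded $L^k_tL^l_x$ norm of $F_2=e^{i(t-T+\vareps^{-\sigma})\Delta}u(T-\vareps^{-\sigma})-e^{it\Delta}u_0$; moreover the admissibility constraints \eqref{condition} on $n$ cannot be met when $N=2$, and there the paper replaces this step by an $L^1\to L^\infty$ dispersive bound combined with the Morawetz space-time estimate \eqref{mora-est-I}, a case your argument does not address even though $N=2$ is covered by Theorem \ref{theo-scat-inte-1}.

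A second, smaller inaccuracy is the near-past step: you cannot propagate $\|\chi_R u(T)\|_{L^2}\lesssim\vareps$ backward over a window of length $\vareps^{-\sigma}$ ``via Strichartz''; Strichartz estimates do not prevent mass from entering the ball under the nonlinear flow. The correct mechanism (and the one in the paper) is almost conservation of localized mass,
\begin{align*}
\Bigl|\frac{d}{dt}\int\chi_R|u(t,x)|^2\,dx\Bigr|=2\Bigl|\int\nabla\chi_R\cdot\ima\bigl(\overline{u}\nabla u\bigr)\,dx\Bigr|\lesssim R^{-1},
\end{align*}
which bounds the drift over the window by $R^{-1}\vareps^{-\sigma}$ and forces the choice $R\gtrsim\vareps^{-2-\sigma}$; this smallness is then converted into $\|u\|_{L^\infty(I,L^r)}\lesssim\vareps^{\frac{4-(N-2)\alpha}{2(\alpha+2)}}$ by interpolation between the interior piece and the exterior piece controlled by the radial Sobolev decay. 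With these two repairs (and the separate $N=2$ argument), your outline matches the paper's proof.
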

	
	\begin{proof}
		
		We will consider separately two cases: $N\geq 3$ and $N=2$.
		
		\vspace{3mm}
		
		\noindent {\bf \underline{Case 1.} $N\geq 3$.}
		
		\vspace{3mm}
		
		Let $T>0$ be a large parameter depending on $\vareps, u_0$ and $Q$ to be chosen later. For $T>\vareps^{-\sigma}$ with some $\sigma>0$ to be chosen later, we use the Duhamel formula to write
		\begin{align} \label{duhamel}
		\begin{aligned}
		e^{i(t-T)\Delta} u(T) &= e^{it\Delta} u_0 + i \int_0^T e^{i(t-s)\Delta} |x|^b |u(s)|^{p-1} u(s) ds \\
		&= e^{it\Delta} u_0 + F_1(t) + F_2(t),
		\end{aligned}
		\end{align}
		where
		\[
		F_1(t):= i \int_I e^{i(t-s)\Delta} |x|^b |u(s)|^{p-1} u(s) ds, \quad F_2(t):= i\int_J e^{i(t-s)\Delta} |x|^b |u(s)|^{p-1} u(s) ds
		\]
		with $I:=  [T-\vareps^{-\sigma},T]$ and $J:= [0,T-\vareps^{-\sigma}]$.
		
		\noindent {\bf Step 1. Estimate the linear part.}
		By Strichartz estimates, Sobolev embeddings, we have
		\[
		\|e^{it\Delta} u_0\|_{L^k(\R,L^r)} \lesssim \||\nabla|^{\delta} e^{it\Delta} u_0\|_{L^k(\R,L^l)} \lesssim \|u_0\|_{H^1} <\infty,
		\]
		where
		\begin{align} \label{defi-l}
		l=\frac{2N\alpha(\alpha+2)}{N\alpha^2 +4(N-1)\alpha-8}, \quad \delta = \frac{N\alpha-4}{2\alpha}.
		\end{align}
		Here $(k,l)$ is a Schr\"odinger admissible pair. As $\alpha>\frac{4}{N}$ and $\alpha<\frac{4}{N-2}$ if $N\geq 3$, we readily check that $l \geq r$ and $\delta \in (0,1)$. By the monotone convergence, we may find $T>\vareps^{-\sigma}$ so that
		\begin{align} \label{est-linear}
		\|e^{it\Delta} u_0\|_{L^k([T,\infty),L^r)} \lesssim \vareps.
		\end{align}
		
		\noindent {\bf Step 2. Estimate $F_1$.} By Strichartz estimates \eqref{stri-est-non-adm}, \eqref{non-est-1}, \eqref{est-Strauss}, and Sobolev embedding, we have
		\begin{align*}
		\|F_1\|_{L^k([T,\infty),L^r)} \lesssim \||x|^b|u|^{p-1} u\|_{L^{m'}(I,L^{r'})} &\lesssim \|u\|^{\frac{2b}{N-1}}_{L^\infty(I,H^1)} \||u|^\alpha u\|_{L^{m'}(I,L^{r'})} \\
		&\lesssim   \|u\|^{\alpha+1}_{L^k(I,L^r)} \lesssim |I|^{\frac{\alpha+1}{k}} \|u\|^{\alpha+1}_{L^\infty(I,L^r)},
		\end{align*}
		where we have used the fact that $\|u\|_{L^\infty([0,\infty), H^1)} \leq C(u_0,Q)<\infty$.	We estimate $\|u\|_{L^\infty(I,L^r)}$ as follows. Fix $R = \max \left\{\vareps^{-2-\sigma}, \vareps^{-\frac{4-(N-2)\alpha}{(N-1)\alpha}}\right\}$, we have from \eqref{small-L2} (by enlarging $T$ if necessary) that
		\[
		\int_{|x|\leq R} |u(T,x)|^2 dx \lesssim \vareps^2.
		\]
		By the definition of $\chi_R$,
		\[
		\int \chi_R(x) |u(T,x)|^2 dx \lesssim \vareps^2.
		\]
		Using the fact that
		\begin{align*}
		\left| \frac{d}{dt} \int \chi_R(x) |u(t,x)|^2 dx \right| &= \left| 2 \int \nabla \chi_R(x) \cdot \ima (\overline{u}(t,x) \nabla u(t,x)) dx\right| \\
		&\leq 2 \|\nabla \chi_R\|_{L^\infty} \|u(t)\|_{L^2} \|\nabla u(t)\|_{L^2} \lesssim R^{-1}
		\end{align*}
		for all $t\in \R$, we have for any $t\in I$,
		\begin{align*}
		\int \chi_R(x)|u(t,x)|^2 dx &= \int \chi_R(x) |u(T,x)|^2 dx - \int_t^T \left(\frac{d}{ds} \int \chi_R(x) |u(s,x)|^2 dx \right) ds \\
		&\leq \int \chi_R(x)|u(T,x)|^2 dx +CR^{-1}(T-t) \\
		&\leq  C \vareps^2 + CR^{-1} \vareps^{-\sigma} \leq 2C \vareps^2
		\end{align*}
		for some constant $C=C(u_0,Q)>0$. This shows that
		\[
		\|\chi_R u\|_{L^\infty(I,L^2)} \lesssim \vareps,
		\]
		where we have used the fact $\chi_R^2 \leq \chi_R$ since $0\leq \chi_R \leq 1$.
		By H\"older's inequality, the radial Sobolev embedding \eqref{est-Strauss},
		\begin{align*}
		\|u\|_{L^\infty(I,L^r)} &\leq \|\chi_R u\|_{L^\infty(I,L^r)} + \|(1-\chi_R) u\|_{L^\infty(I,L^r)} \\
		&\leq \|\chi_R u\|^{\frac{4-(N-2)\alpha}{2(\alpha+2)}}_{L^\infty(I,L^2)} \|\chi_R u\|^{\frac{N\alpha}{2(\alpha+2)}}_{L^\infty(I,L^{\frac{2N}{N-2}})} \\
		&\mathrel{\phantom{\leq }}+ \|(1-\chi_R) u\|^{\frac{\alpha}{\alpha+2}}_{L^\infty(I,L^\infty)} \|(1-\chi_R) u\|^{\frac{2}{\alpha+2}}_{L^\infty(I,L^2)} \\
		&\lesssim \vareps^{\frac{4-(N-2)\alpha}{2(\alpha+2)}} + R^{-\frac{(N-1)\alpha}{2(\alpha+2)}} \lesssim \vareps^{\frac{4-(N-2)\alpha}{2(\alpha+2)}}.
		\end{align*}
		It follows that
		\begin{align*}
		\|F_1\|_{L^k([T,\infty),L^r)} \lesssim \vareps^{-\frac{(\alpha+1)\sigma}{k}} \vareps^{\frac{(4-(N-2)\alpha)(\alpha+1)}{2(\alpha+2)}} = \vareps^{(\alpha+1)\left[-\frac{\sigma}{k} + \frac{4-(N-2)\alpha}{2(\alpha+2)} \right]}.
		\end{align*}
		By the definition of $k$, we see that
		\begin{align} \label{est-F1}
		\|F_1\|_{L^k([T,\infty),L^r)} \lesssim \vareps^{\frac{(\alpha+1)(4-(N-2)\alpha)(\alpha-\sigma)}{2\alpha(\alpha+2)}}.
		\end{align}
		
		\noindent {\bf Step 3. Estimate $F_2$.} We estimate
		\[
		\|F_2\|_{L^k([T,\infty),L^r)} \leq \|F_2\|^{\theta}_{L^k([T,\infty),L^l)} \|F_2\|^{1-\theta}_{L^k([T,\infty),L^n)}
		\]
		where $l$ is as in \eqref{defi-l}, $\theta \in (0,1)$ and $n>r$ satisfy
		\[
		\frac{1}{r}=\frac{\theta}{l} +\frac{1-\theta}{n}.
		\]
		Using the fact $(k,l)$ is a Schr\"odinger admissible pair and
		\[
		F_2(t)=e^{i(t-T+\vareps^{-\sigma})\Delta} u(T-\vareps^{-\sigma}) - e^{it\Delta} u_0,
		\]
		Strichartz estimates imply
		\[
		\|F_2\|_{L^k([T,\infty),L^l)} \lesssim 1.
		\]
		On the other hand, by the dispersive estimates and Sobolev embeddings, we have for any $t\geq T$,
		\begin{align*}
		\|F_2(t)\|_{L^n} &\lesssim \int_J (t-s)^{-\frac{N}{2} \left(1-\frac{2}{n}\right)} \||x|^b|u(s)|^{p-1} u(s)\|_{L^{n'}} ds \\
		&\lesssim \int_0^{T-\vareps^{-\sigma}} (t-s)^{-\frac{N}{2}\left(1-\frac{2}{n}\right)} \|u(s)\|^{\frac{2b}{N-1}}_{H^1} \||u(s)|^\alpha u(s) \|_{L^{n'}} ds \\
		&\lesssim \int_0^{T-\vareps^{-\sigma}} (t-s)^{-\frac{N}{2}\left(1-\frac{2}{n}\right)} \|u(s)\|^{\alpha+1}_{L^{n'(\alpha+1)}} ds \\
		&\lesssim (t-T+\vareps^{-\sigma})^{-\frac{N}{2}\left(1-\frac{2}{n}\right) +1}
		\end{align*}
		provided that
		\[
		n'(\alpha+1) \in \left[2,\frac{2N}{N-2}\right], \quad \frac{N}{2}\left(1-\frac{2}{n}\right) -1 >0.
		\]
		This gives
		\[
		\|F_2\|_{L^k([T,\infty),L^n)} \lesssim \left(\int_T^{\infty} (t-T+\vareps^{-\sigma})^{-\left[\frac{N}{2}\left(1-\frac{2}{n}\right) -1\right]k} dt \right)^{\frac{1}{k}} \lesssim \vareps^{\sigma \left[\frac{N}{2}\left(1-\frac{2}{n}\right) - 1 - \frac{1}{k}\right]}
		\]
		provided that
		\[
		\frac{N}{2}\left(1-\frac{2}{n}\right) - 1 - \frac{1}{k}>0.
		\]
		We thus obtain
		\begin{align} \label{est-F2}
		\|F_2\|_{L^k([T,\infty),L^r)} \lesssim \vareps^{\sigma \left[\frac{N}{2}\left(1-\frac{2}{n}\right) - 1 - \frac{1}{k}\right] (1-\theta)}.
		\end{align}
		The above estimate holds true provided
		\[
		n>r, \quad n'(\alpha+1) \in \left[2,\frac{2N}{N-2}\right], \quad \frac{N}{2}\left(1-\frac{2}{n}\right) - 1 - \frac{1}{k}>0.
		\]
		We will choose a suitable $n$ satisfying the above conditions. By the choice of $r$ and $k$, the above conditions become
		\begin{align} \label{condition}
		0\leq \frac{1}{n} <\frac{1}{\alpha+2}, \quad \frac{1}{n} \in \left[ \frac{1-\alpha}{2},\frac{N+2-(N-2)\alpha}{2N}\right], \quad \frac{1}{n}<\frac{(N-2)(\alpha^2+3\alpha)-4}{2N\alpha(\alpha+2)}.
		\end{align}
		
		In the case $\alpha>1$, we take $\frac{1}{n}=0$ or $n=\infty$.
		
		In the case $\alpha\leq 1$, we take $\frac{1}{n}=\frac{1-\alpha}{2}$ or $n=\frac{2}{1-\alpha}$. The first two conditions in \eqref{condition} are satisfied. Let us check the last one which is equivalent to
		\begin{align} \label{alpha}
		N\alpha^3 +2(N-1)\alpha^2 + (N-6)\alpha -4 >0.
		\end{align}
		Since
		\[
		N\alpha^3+2(N-1)\alpha^2 +(N-6)\alpha-4 = (\alpha+1)(N\alpha^2 + (N-2)\alpha -4),
		\]
		it is clear that \eqref{alpha} is fulfilled for $\alpha>\frac{4}{N}$.
		
		\noindent {\bf Step 4. Conclusion.} By \eqref{duhamel}, we get from \eqref{est-linear}, \eqref{est-F1} and \eqref{est-F2} that for $\sigma>0$ sufficiently small, there exists $\mu=\mu(\sigma)>0$ such that
		\[
		\|e^{i(t-T)\Delta} u(T)\|_{L^k([T,\infty),L^r)} \lesssim \vareps^{\mu}.
		\]

		\vspace{3mm}
		
		\noindent \underline{\bf Case 2.} $N = 2$.
		
		\vspace{3mm}
		
		Note that we assume $\alpha> 2$ in this case. Note that the last condition in \eqref{condition} is not applicable for $N=2$. To overcome this difficulty, we use the space time estimate \eqref{mora-est-I} as follows. By the dispersive estimate and H\"older's inequality, we see that for $t\geq T$,
		\begin{align*}
		\|F_2(t)\|_{L^\infty} \lesssim \int_J (t-s)^{-1} \||x|^b|u(s)|^{p-1} u(s)\|_{L^1} ds = \int_J (t-s)^{-1} \||x|^b |u(s)|^{\alpha+1+2b}\|_{L^1}ds.
		\end{align*}
		By H\"older's inequality and the radial Sobolev embedding \eqref{est-Strauss} with $N=2$, we estimate
		\begin{align*}
		\||x|^b |u(s)|^{\alpha+1+2b}\|_{L^1} &\leq \||x|^b |u(s)|^{\frac{(\alpha-1)(\alpha+2)}{\alpha} +2b}\|_{L^{\frac{\alpha}{\alpha-1}}} \||u(s)|^{\frac{2}{\alpha}}\|_{L^{\alpha}} \\
		& = \left(\int |x|^{\frac{\alpha b}{\alpha-1}} |u(s,x)|^{\alpha+2 +\frac{2\alpha b}{\alpha-1}} dx \right)^{\frac{\alpha-1}{\alpha}} \|u(s)\|^{\frac{2}{\alpha}}_{L^2} \\
		& = \left( \int \left(|x|^{\frac{1}{2}} |u(s,x)|\right)^{\frac{2b}{\alpha-1}} |x|^b |u(s,x)|^{p+1} dx\right)^{\frac{\alpha-1}{\alpha}} \|u(s)\|^{\frac{2}{\alpha}}_{L^2} \\
		&\lesssim \|u(s)\|^{\frac{2b}{\alpha}}_{H^1} \left(\int |x|^b |u(s,x)|^{p+1}dx\right)^{\frac{\alpha-1}{\alpha}} \|u(s)\|^2_{L^2}.
		\end{align*}
		Since $\|u\|_{L^\infty([0,\infty),H^1)} \leq C(u_0,Q)<\infty$, we have
		\begin{align*}
		\|F_2(t)\|_{L^\infty} &\lesssim \int_J (t-s)^{-1} \left(\int |x|^b |u(s,x)|^{p+1}dx\right)^{\frac{\alpha-1}{\alpha}} ds \\
		&\lesssim \|(t-s)^{-1}\|_{L^\alpha_s(J)} \left\| \left(\int |x|^b |u(s,x)|^{p+1}dx\right)^{\frac{\alpha-1}{\alpha}} \right\|_{L^{\frac{\alpha}{\alpha-1}}_s(J)} \\
		&\lesssim \|(t-s)^{-1}\|_{L^\alpha_s(J)} \left( \int_J \int |x|^b |u(s,x)|^{p+1}dx ds \right)^{\frac{\alpha-1}{\alpha}}.
		\end{align*}
		We see that for $t\geq T$,
		\begin{align*}
		\|(t-s)^{-1}\|_{L^\alpha_s(J)} &=\left(\int_0^{T-\vareps^{-\sigma}} (t-s)^{-\alpha} ds \right)^{\frac{1}{\alpha}} \\
		&=\left( \left.\frac{(t-s)^{-\alpha +1}}{\alpha -1} \right|_{s=0}^{s=T-\vareps^{-\sigma}} \right)^{\frac{1}{\alpha}} \\
		&\approx \left( (t-T+\vareps^{-\sigma})^{-\alpha+1} - t^{-\alpha+1}\right)^{\frac{1}{\alpha}} \\
		&\lesssim (t-T+\vareps^{-\sigma})^{-\frac{\alpha-1}{\alpha}},
		\end{align*}
		where $t\geq t-T+\vareps^{-\sigma}$ as $T>\vareps^{-\sigma}$. On the other hand, by \eqref{mora-est-I},
		\[
		\int_J \int |x|^b |u(s,x)|^{p+1}dx ds\lesssim |J|^{\beta} \lesssim T^{\beta},
		\]
		where $\beta= \max \left\{\frac{1}{3},\frac{2}{\alpha+2}\right\}$. It yields for $t\geq T$,
		\[
		\|F_2(t)\|_{L^\infty} \lesssim (t-T+\vareps^{-\sigma})^{-\frac{\alpha-1}{\alpha}} T^{\frac{(\alpha-1)\beta}{\alpha}}.
		\]
		It follows that
		\begin{align*}
		\|F_2\|_{L^k([T,\infty),L^\infty)} &\lesssim T^{\frac{(\alpha-1)\beta}{\alpha}} \left(\int_T^{\infty} (t-T+\vareps^{-\sigma})^{-\frac{(\alpha-1)k}{\alpha}} dt \right)^{\frac{1}{k}} \\
		&\lesssim T^{\frac{(\alpha-1)\beta}{\alpha}} \left(\left.(t-T+\vareps^{-\sigma})^{-\frac{(\alpha-1)k}{\alpha}+1}\right|_{t=T}^{t=\infty}\right)^{\frac{1}{k}} \\
		&\lesssim T^{\frac{(\alpha-1)\beta}{\alpha}} \vareps^{\sigma\left(\frac{\alpha-1}{\alpha}-\frac{1}{k}\right)}.
		\end{align*}
		We thus get
		\begin{align} \label{est-F2-2d}
		\|F_2\|_{L^k([T,\infty),L^r)} \lesssim \left[T^{\frac{(\alpha-1)\beta}{\alpha}} \vareps^{\sigma\left(\frac{\alpha-1}{\alpha}-\frac{1}{k}\right)}\right]^{1-\frac{l}{r}}= \left(T^{\frac{(\alpha-1)\beta}{\alpha}} \vareps^{\frac{(\alpha^2+\alpha-4)\sigma}{\alpha(\alpha+2)}}\right)^{\frac{\alpha^2-4}{\alpha^2+2\alpha-4}}.
		\end{align}
		Collecting \eqref{duhamel}, \eqref{est-linear}, \eqref{est-F1}, and \eqref{est-F2-2d}, we have
		\[
		\|e^{i(t-T)\Delta} u(T)\|_{L^k([T,\infty),L^r)} \lesssim \vareps + \vareps^{\frac{2(\alpha+1)(\alpha-\sigma)}{\alpha(\alpha+2)}} + \left(T^{\frac{(\alpha-1)\beta}{\alpha}} \vareps^{\frac{(\alpha^2+\alpha-4)\sigma}{\alpha(\alpha+2)}}\right)^{\frac{\alpha^2-4}{\alpha^2+2\alpha-4}}.
		\]
		By taking $T=\vareps^{-a\sigma}$ with some $a>1$ to be chosen shortly (it ensures $T>\vareps^{-\sigma}$) and choosing $\sigma>0$ small enough, we obtain
		\begin{align} \label{est-eps-mu}
		\|e^{i(t-T)\Delta} u(T)\|_{L^k([T,\infty),L^r)} \lesssim \vareps^\mu
		\end{align}
		for some $\mu>0$. The above estimate requires
		\[
		\frac{\alpha^2+\alpha-4}{\alpha(\alpha+2)} - \frac{a(\alpha-1)\beta}{\alpha}>0 \quad \text{or} \quad a<\frac{\alpha^2+\alpha-4}{\beta(\alpha+2)(\alpha-1)}.
		\]
		It remains to show that
		\begin{align} \label{cond-a}
		\frac{\alpha^2+\alpha-4}{\beta(\alpha+2)(\alpha-1)}>1.
		\end{align}
		In the case $\beta=\frac{1}{3}$ or $\alpha \geq 4$, we see that \eqref{cond-a} is equivalent to
		\[
		\frac{2\alpha^2+2\alpha-10}{(\alpha+2)(\alpha-1)}>0
		\]
		which is satisfied for $\alpha\geq 4$. In the case $\beta=\frac{2}{\alpha+2}$ or $2<\alpha \leq 4$, \eqref{cond-a} is equivalent to
		\[
		\frac{\alpha^2-\alpha-2}{2(\alpha-1)}>0
		\]
		which is also satisfied for $2<\alpha\leq 4$. Therefore, \eqref{cond-a} is satisfied for all $\alpha>2$, and we can choose $a>1$ so that \eqref{est-eps-mu} holds. The proof is complete.
	\end{proof}
	
	\begin{proof}[Proof of Theorem \ref{theo-scat-inte-1}]
		It follows immediately from Lemma \ref{lem-smal-data-1} and Proposition \ref{prop-scat-1}.
	\end{proof}
	
	We are now able to prove Theorem \ref{theo-scat-inte}.
	
	\begin{proof}[Proof of Theorem \ref{theo-scat-inte}]
		We only need to consider the case $N\geq 3$ since the scattering for $N=2$ is included in Theorem \ref{theo-scat-inte-1}. We observe that the main elements needed for the proof of Theorem \ref{theo-scat-inte-1} are: the radial Sobolev embedding \eqref{est-Strauss} and $\frac{4}{N}<\alpha<\frac{4}{N-2}$. Now using \eqref{est-BL} instead of \eqref{est-Strauss}, the same argument as in the proof of Theorem \ref{theo-scat-inte} with $\alpha=p-1-\frac{2b}{N-2}$ gives the energy scattering for
		\[
		N\geq 3, \quad b>0, \quad 1+\frac{2b}{N-2} + \frac{4}{N} < p <1+\frac{2b}{N-2} +\frac{4}{N-2}.
		\]
		
		If $1+\frac{2b}{N-1} + \frac{4}{N-2} > 1+\frac{2b}{N-2} +\frac{4}{N}$ or $0<b<\frac{4(N-1)}{N}$, then we have the energy scattering for the full range $\frac{N+4}{N}+\frac{2b}{N-1}< p<\frac{N+2+2b}{N-2}$, and we are done.
		
		Now if $b\geq \frac{4(N-1)}{N}$, then there is a gap $1+\frac{2b}{N-1}+\frac{4}{N-2} \leq p \leq 1+\frac{2b}{N-2} +\frac{4}{N}$. We fill this gap as follows. As 
		\[
		1+\frac{2b}{N-1} +\frac{4}{N} < 1+\frac{2b}{N-1} +\frac{4}{N-2} \leq 1+\frac{2b}{N-2}+\frac{4}{N},
		\]
		there exists $s_1 \in \left(\frac{1}{2}, 1\right]$ such that
		\[
		1+\frac{2b}{N-1} +\frac{4}{N-2} = 1+\frac{2b}{N-2s_1} +\frac{4}{N}.
		\]
		Using \eqref{est-CO} with $s=s_1$ and repeating the same argument as in the proof of Theorem \ref{theo-scat-inte-1} with $\alpha=p-1-\frac{2b}{N-2s_1}$, we can prove the energy scattering for
		\[
		1+\frac{2b}{N-2s_1} + \frac{4}{N} \leq p <1+\frac{2b}{N-2s_1} + \frac{4}{N-2}.
		\]
		Note that the lower bound $p=1+\frac{2b}{N-2s_1} +\frac{4}{N}$ is admissible since we can write it as
		\[
		1+\frac{2b}{N-2s_1} +\frac{4}{N} = 1+\frac{2b}{N-1} +\frac{4}{N-2} = 1+ \frac{2b}{N-2s_\vareps} +\frac{4}{N-2+\vareps}
		\]
		with 
		\[
		s_\vareps = \frac{2N(N-1)\vareps + b(N-2)(N-2+\vareps)}{2b(N-2)(N-2+\vareps) + 4\vareps(N-1)}.
		\]
		Note that $s_\vareps \rightarrow \frac{1}{2}^+$ as $\vareps \rightarrow 0$. So by choosing $\vareps>0$ small enough, $s_\vareps$ is close to $\frac{1}{2}^+$. Using again \eqref{est-CO} with $s=s_\vareps$, the same argument as in the proof of Theorem \ref{theo-scat-inte-1} with $\alpha=p-1-\frac{2b}{N-2s_\vareps}$ gives the energy scattering for $p= 1+ \frac{2b}{N-2s_\vareps} +\frac{4}{N-2+\vareps}$, hence for $p=1+\frac{2b}{N-2s_1} +\frac{4}{N}$.
		
		If $1+\frac{2b}{N-2s_1} +\frac{4}{N-2} >1+\frac{2b}{N-2} +\frac{4}{N}$ or $\frac{4(N-1)}{N} \leq b<\frac{2(N-2s_1)}{N(1-s_1)}$, then the energy scattering holds for the whole range $\frac{N+4}{N}+\frac{2b}{N-1}<p<\frac{N+2+2b}{N-2}$.
		
		Otherwise, if $b\geq \frac{2(N-2s_1)}{N(1-s_1)}$, then we choose $s_2 \in (s_1, 1]$ such that
		\[
		1+\frac{2b}{N-2s_1} +\frac{4}{N-2} = 1+\frac{2b}{N-2s_2} +\frac{4}{N}.
		\]
		Note that it is possible due to
		\[
		1+\frac{2b}{N-2s_1}+\frac{4}{N}<1+\frac{2b}{N-2s_1} +\frac{4}{N-2} \leq 1+\frac{2b}{N-2}+\frac{4}{N}.
		\]
		Repeating the same reasoning, we get the energy scattering for 
		\[
		1+\frac{2b}{N-2s_2}+\frac{4}{N} \leq p <1+\frac{2b}{N-2s_2}+\frac{4}{N-2}.
		\]
		If $1+\frac{2b}{N-2s_2}+\frac{4}{N-2}>1+\frac{2b}{N-2}+\frac{4}{N}$ or $\frac{2(N-2s_1)}{N(1-s_1)} \leq b<\frac{2(N-2s_2)}{N(1-s_2)}$, we are done. Otherwise, we repeat the above argument until $\frac{2(N-2s_{k-1})}{N(1-s_{k-1})} \leq b <\frac{2(N-2s_k)}{N(1-s_k)}$. Note that $\frac{2(N-2s_k)}{N(1-s_k)} \rightarrow \infty$ as $s_k \rightarrow 1^-$ and $b>0$ is given, the above process will be stopped in finite steps. The proof is complete.
	\end{proof}

	\section{Blow-up solutions}
	\label{S5}
	\setcounter{equation}{0}
	In this section, we give the proofs of finite time blow-up given in Theorems \ref{theo-blow-mass-appl}, \ref{theo-blow-inter}, and \ref{theo-blow-ener}. The proof is based on an idea of Ogawa and Tsutsumi \cite{OT-JDE} using localized virial estimates and radial Sobolev embeddings.
	
	\subsection{Mass-critical nonlinearity}
	We shall show the existence of finite time blow-up solutions to \eqref{INLS} in the mass-critical regime. To this end, we introduce the real function $\vartheta: [0,\infty) \rightarrow [0,\infty)$ satisfying
	\[
	\vartheta(r)=
	\left\{
	\renewcommand*{\arraystretch}{1.2}
	\begin{array}{c c c}
	2r &\text{if} & 0 \leq r\leq 1,\\
	2[r-(r-1)^5] &\text{if} & 1<r\leq 1+\frac{1}{\sqrt[4]{5}},\\
	\vartheta'<0 &\text{if} & 1+\frac{1}{\sqrt[4]{5}}<r<2,\\
	0 &\text{if}  & r\geq2.
	\end{array}
	\right.
	\]
	We set $\theta(r):= \mathlarger{\int}_0^r \vartheta(\tau)d\tau$ and define the radial function
	\begin{align} \label{psi-R}
	\psi_R(x)= \psi_R(r) =R^2 \theta(r/R), \quad r =|x|.
	\end{align}
	It is straightforward to check that
	\[
	\psi''_R(r) \leq 2, \quad \frac{\psi'_R(r)}{r} \leq 2, \quad \Delta \psi_R(x) \leq 2N, \quad \forall r\geq 0, \quad \forall x \in \R^N.
	\]
	We need the following localized virial estimate in the mass-critical case.

	\begin{lemma}\label{lem-viri-est-mass}
		Let $N\geq 2$, $0<b < 2(N-1)$, and $p=\frac{N+4+2b}{N}$. Let $u \in C([0,T^*), H^1_{\rad}(\R^N))$ be a solution to \eqref{INLS}. Let $\psi_R$ be as in \eqref{psi-R} and define $V_{\psi_R}(t)$ as in \eqref{V-varphi}. Then for any $R, \vareps>0$, we have for all $t\in [0,T^*)$,
		\begin{align} \label{viri-est-mass}
		\begin{aligned}
		V_{\psi_R}''(t) \leq 16E(u_0)&+CR^{-2}+C \vareps R^{-2} + C\vareps^{-\frac{2+b}{2N-2-b}}R^{-2} \\
		&-2\int_{|x|>R}\Big(2\psi_{1,R}(r)-\frac{N \vareps}{2N+4+2b}(\psi_{2,R}(x))^{\frac{N}{2+b}}\Big)|\nabla u(t,x)|^2dx
		\end{aligned}
		\end{align}
		for some constant $C>0$, where
		\begin{align} \label{psi-12-R}
		\psi_{1,R}(r):=2-\psi_R''(r),\quad \psi_{2,R}(x):=\frac{4+2b}{N}(2N-\Delta\psi_R(x))-2b \left(2-\frac{\psi_R'(r)}{r}\right), \quad r=|x|.
		\end{align}
	\end{lemma}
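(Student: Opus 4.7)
Apply the virial identity of Lemma~\ref{lem-viri-iden} with $\varphi=\psi_R$. Using radiality (so that $\sum_{j,k}\partial^2_{jk}\psi_R\,\partial_j\bar u\,\partial_k u=\psi''_R|\nabla u|^2$) together with the computation $\nabla\psi_R\cdot\nabla(|x|^b)=b|x|^b\psi'_R(r)/r$, it reads
\[
V''_{\psi_R}=-\int\Delta^2\psi_R|u|^2\,dx+4\int\psi''_R|\nabla u|^2\,dx-\frac{2(p-1)}{p+1}\int|x|^b\Delta\psi_R|u|^{p+1}\,dx+\frac{4b}{p+1}\int|x|^b\frac{\psi'_R}{r}|u|^{p+1}\,dx.
\]
Now substitute $\psi''_R=2-\psi_{1,R}$, $\Delta\psi_R=2N-(2N-\Delta\psi_R)$, and $\psi'_R/r=2-(2-\psi'_R/r)$. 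Because the mass-critical exponent $p=(N+4+2b)/N$ gives $N(p-1)-2b=4$ and $p+1=2(N+2+b)/N$, the ``$\psi_R=|x|^2$'' pieces collapse to the classical Glassey virial $16E(u_0)$, while the cut-off residues bundle to yield
\[
V''_{\psi_R}=16E(u_0)-\int\Delta^2\psi_R|u|^2\,dx-4\int\psi_{1,R}|\nabla u|^2\,dx+\frac{N}{N+2+b}\int\psi_{2,R}|x|^b|u|^{p+1}\,dx.
\]
The biharmonic term is controlled by mass conservation and the scaling $\|\Delta^2\psi_R\|_{L^\infty}\lesssim R^{-2}$, producing the first $CR^{-2}$ in \eqref{viri-est-mass}.

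The core of the argument is the estimate of $\int\psi_{2,R}|x|^b|u|^{p+1}\,dx$. Since $\psi_{2,R}\equiv0$ on $\{|x|\leq R\}$, this integral lives on $\{|x|>R\}$. There the Strauss inequality \eqref{est-Strauss} yields the pointwise bound
\[
|x|^b|u(x)|^{p-1}\lesssim|x|^{(b-2(N-1))/N}M(u_0)^{(2+b)/(2N)}\|\nabla u\|_{L^2}^{(2+b)/N}\lesssim R^{(b-2(N-1))/N}\|\nabla u\|_{L^2}^{(2+b)/N},
\]
where the exponent $(b-2(N-1))/N$ is negative precisely because $0<b\leq N-2<2(N-1)$. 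Combining with the trivial $\int\psi_{2,R}|u|^2\lesssim M(u_0)$ and invoking Young's inequality with conjugate exponents $2N/(2+b)$ and $2N/(2N-2-b)$ (well defined since $b<2(N-1)$) leads to an estimate of the schematic form $\vareps\,G+C\vareps^{-(2+b)/(2N-2-b)}R^{-2}$, where $G$ is a suitable gradient norm; the check $R^{(b-2(N-1))/N\cdot 2N/(2N-2-b)}=R^{-2}$ and $1/(p-1)=(2+b)/(2N-2-b)$ is immediate and recovers the third error in \eqref{viri-est-mass}.

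The main obstacle is producing for $G$ the localized weighted gradient $\int_{|x|>R}\psi_{2,R}^{N/(2+b)}|\nabla u|^2\,dx$, with the sharp prefactor $N/(2N+4+2b)$, rather than the global $\|\nabla u\|_{L^2}^2$ that comes out naturally from the above Young argument. The plan is to run Strauss in its \emph{local} form on $[r,\infty)$ so that $\|\nabla u\|_{L^2}$ is upgraded to $\|\nabla u\|_{L^2(|y|\geq R)}$, and then to exploit the structural fact that $\psi_{2,R}$ is identically equal to the positive constant $8$ on $\{|x|\geq 2R\}$: this lets one control $\|\nabla u\|_{L^2(|y|\geq 2R)}^2$ by a constant multiple of $\int\psi_{2,R}^{N/(2+b)}|\nabla u|^2\,dx$. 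The annular contribution on $\{R<|y|<2R\}$, where both $\psi_{1,R}$ and $\psi_{2,R}$ vanish like $(|x|/R-1)^4$ near $|x|=R$, is absorbed into the additional $C\vareps R^{-2}$ term after a further Young-type splitting, while the bookkeeping of the constants $N/(N+2+b)=2N/(2N+4+2b)$ fixes the exact coefficient in \eqref{viri-est-mass}. Collecting the four resulting error contributions proves the claim.
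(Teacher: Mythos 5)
Your reduction of the virial identity is correct and matches the paper: with $\varphi=\psi_R$, radiality and $\nabla\psi_R\cdot\nabla(|x|^b)=b|x|^b\psi_R'/r$ give
\[
V''_{\psi_R}(t)=16E(u_0)-\int\Delta^2\psi_R|u|^2\,dx-4\int_{|x|>R}\psi_{1,R}|\nabla u|^2\,dx+\frac{2}{p+1}\int_{|x|>R}\psi_{2,R}|x|^b|u|^{p+1}\,dx,
\]
and your treatment of the biharmonic term and the Young exponents (conjugates $\tfrac{2N}{2+b}$, $\tfrac{2N}{2N-2-b}$, producing $R^{-2}$ and $\vareps^{-\frac{2+b}{2N-2-b}}$; note the relevant identity is $\tfrac{p-1}{5-p}=\tfrac{2+b}{2N-2-b}$, not $\tfrac{1}{p-1}$) is fine. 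The gap is exactly at the point you flag as "the main obstacle": after applying Strauss first and Young afterwards, your $\vareps$-term is the \emph{unweighted} gradient $\vareps\|\nabla u\|_{L^2(|x|>R)}^2$, and your proposed fix fails. The contribution $\vareps\int_{R<|x|<2R}|\nabla u(t)|^2\,dx$ cannot be "absorbed into $C\vareps R^{-2}$": there is no a priori bound on $\|\nabla u(t)\|_{L^2}$ (the lemma is precisely the tool used to prove blow-up, so this quantity may diverge as $t\to T^*$), and no further Young-type splitting can trade a gradient term for a power of $R$. Nor can it be absorbed by the good terms, because on the collar $R<r\leq(1+5^{-1/4})R$ both $\psi_{1,R}$ and $\psi_{2,R}$ vanish like $(r/R-1)^4$, so for any fixed $\vareps>0$ the bound $\vareps|\nabla u|^2\lesssim\psi_{1,R}|\nabla u|^2$ or $\lesssim\psi_{2,R}^{N/(2+b)}|\nabla u|^2$ fails near $|x|=R$. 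Your would-be conclusion therefore carries an uncontrolled positive term $C\vareps\int_{|x|>R}|\nabla u|^2$ that is not of the form \eqref{viri-est-mass}.

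The paper's proof removes this obstruction by a different ordering of the steps: it applies the radial Sobolev inequality \eqref{est-Strauss} to the \emph{weighted} function $\psi_{2,R}^{1/(p-1)}u$, so that
\[
\int_{|x|>R}\psi_{2,R}|x|^b|u|^{p+1}\,dx\lesssim R^{-\left(\frac{(N-1)(p-1)}{2}-b\right)}\Big\|\nabla\Big(\psi_{2,R}^{\frac{1}{p-1}}u\Big)\Big\|_{L^2}^{\frac{p-1}{2}},
\]
and Young's inequality then produces $\vareps\|\nabla(\psi_{2,R}^{1/(p-1)}u)\|_{L^2}^2$ directly, i.e.\ the gradient already carries the weight $\psi_{2,R}^{2/(p-1)}=\psi_{2,R}^{N/(2+b)}$. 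The commutator with the cutoff is controlled by Lemma \ref{lem-cutoff}, $\|\nabla(\psi_{2,R}^{1/(p-1)})\|_{L^\infty}\lesssim R^{-1}$, together with mass conservation, and this is precisely what generates the $C\vareps R^{-2}$ error in \eqref{viri-est-mass}. To repair your argument you should adopt this "weight inside Strauss" device (or an equivalent commutator argument); as written, the key step does not go through.
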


	\begin{proof}
		Denote $p_*:=\frac{N+4+2b}{N}$. Thanks to Lemma \ref{lem-viri-iden} and $\psi_R(x)=|x|^2$ for $|x|\leq R$, we have for all $t\in [0,T^*)$,
		\begin{align*}
			V_{\psi_R}''(t)
			&=-\int \Delta^2\psi_R|u(t)|^2dx + 4\rea\sum_{j,k=1}^N\int\partial_{jk}^2\psi_R\partial_ku(t)\partial_j\bar u(t)dx\\
			&\mathrel{\phantom{=}}-\frac{2(p_*-1)}{p_*+1}\int\Delta\psi_R|x|^b|u(t)|^{p_*+1}dx+\frac4{p_*+1}\int\nabla\psi_R \cdot \nabla(|x|^b)|u(t)|^{p_*+1}dx\\
			&= 16 E(u(t)) -8\int_{|x|>R}|\nabla u(t)|^2dx + \frac{4N(p_*-1) -8b}{p_*+1} \int_{|x|>R} |x|^b |u(t)|^{p_*+1} dx \\
			&\mathrel{\phantom{=}} -\int \Delta^2\psi_R|u(t)|^2dx +4\rea\int_{|x|>R}\partial_{jk}^2\psi_R\partial_ku(t)\partial_j\bar u(t)dx\\
			&\mathrel{\phantom{=}}-\frac{2(p_*-1)}{p_*+1}\int_{|x|>R}\Delta\psi_R|x|^b|u(t)|^{p_*+1}dx +\frac4{p_*+1}\int_{|x|>R}\nabla\psi_R \cdot \nabla(|x|^b)|u|^{p_*+1} dx.
		\end{align*}
		Since $u(t)$ is radial, we have from the conservation of energy and the following identities
		\[
		\rea \sum_{j,k=1}^N\partial_{jk}^2\psi_R\partial_ku(t)\partial_j\bar u(t) =\psi_R''|\partial_ru(t)|^2=\psi''_R |\nabla u(t)|^2, \quad \nabla\psi_R \cdot \nabla(|x|^b)=b|x|^b\frac{\psi_R'}{r}
		\]
		that
		\begin{align*}
			V_{\psi_R}''(t)
			&=16E(u_0)-\int\Delta^2\psi_R|u(t)|^2dx - 4\int_{|x|>R}(2-\psi_R'')|\nabla u(t)|^2dx\\
			&\mathrel{\phantom{\leq 16E(u_0)}}+\int_{|x|>R} \left[\frac{2(p_*-1)}{p_*+1} (2N-\Delta\psi_R)-\frac{4b}{p_*+1}\left(2-\frac{\psi_R'}{r}\right)\right]|x|^b|u(t)|^{p_*+1}dx\\
			&\leq 16E(u_0)-\int\Delta^2\psi_R|u(t)|^2dx \\
			&\mathrel{\phantom{\leq 16E(u_0)}}-4\int_{|x|>R}\psi_{1,R}|\nabla u(t)|^2dx+\frac{2}{p_*+1}\int_{|x|>R}\psi_{2,R}|x|^b|u(t)|^{p_*+1}dx,
		\end{align*}
		where $\psi_{1,R}$ and $\psi_{2,R}$ are as in \eqref{psi-12-R}.
		
		As $\|\Delta^2\psi_R\|_{L^\infty}\lesssim R^{-2}$, the conservation of mass implies
		\[
		\left|\int\Delta^2\psi_R|u(t)|^2dx\right| \lesssim R^{-2}.
		\]
		Next, using \eqref{est-Strauss}, $\|\psi_R\|_{L^\infty}\lesssim 1$, and the conservation of mass, we estimate
		\begin{align*}
			\int_{|x|>R}\psi_{2,R}|x|^b|u(t)|^{p_*+1}dx
			&\leq \sup_{|x|>R} \left(|x|^{\frac{b}{p_*-1}} \psi_{2,R}^{\frac{1}{p_*-1}} |u(t,x)|\right)^{p_*-1}\|u(t)\|^2_{L^2}\\
			&\leq CR^{-\left(\frac{(N-1)(p_*-1)}{2}-b\right)} \sup_{|x|>R} \left(|x|^{\frac{N-1}{2}} \psi_{2,R}^{\frac{1}{p_*-1}} |u(t,x)|\right)^{p_*-1} \|u(t)\|^2_{L^2}\\
			&\leq CR^{-\left(\frac{(N-1)(p_*-1)}{2}-b\right)}\Big\|\nabla \Big(\psi_{2,R}^{\frac{1}{p_*-1}}u(t) \Big)\Big\|_{L^2}^{\frac{p_*-1}{2}}.
		\end{align*}
		Note that
		\[
		\frac{(N-1)(p_*-1)}{2}-b>0, \quad \frac{p_*-1}{2}<2
		\]
		due to $N\geq 2$ and $0<b<2(N-1)$. Now, with Young's inequality, we have for any $\vareps>0$,
		\begin{align*}
			\int_{|x|>R}\psi_{2,R}|x|^b|u(t)|^{p_*+1}dx
			\leq \vareps \Big\|\nabla \Big(\psi_{2,R}^{\frac{1}{p_*-1}}u(t)\Big)\Big\|^2_{L^2} + C \vareps^{-\frac{p_*-1}{5-p_*}} R^{-\frac{4}{5-p_*}\left(\frac{(N-1)(p_*-1)}{2}-b\right)}.
		\end{align*}
		From Lemma \ref{lem-cutoff}, we have $\Big\|\nabla\Big(\psi_{2,R}^{\frac{1}{p_*-1}}\Big) \Big\|_{L^\infty} \lesssim R^{-1}$. It follows that		
		\begin{align*}
			\int_{|x|>R}\psi_{2,R}|x|^b|u(t)|^{p_*+1}dx
			&\leq \vareps \Big\|\psi_{2,R}^{\frac{1}{p_*-1}} \nabla u(t)\Big\|^2_{L^2} + C\vareps R^{-2}+ C \vareps^{-\frac{p_*-1}{5-p_*}} R^{-\frac{4}{5-p_*}\left(\frac{(N-1)(p_*-1)}{2}-b\right)}.
		\end{align*}
		Collecting the above estimates, we obtain
		\begin{align*}
			V_{\psi_R}''(t)
			&\leq 16E(u_0)+CR^{-2} + C\vareps R^{-2} + C \vareps^{-\frac{p_*-1}{5-p_*}} R^{-\frac{4}{5-p_*}\left(\frac{(N-1)(p_*-1)}{2}-b\right)} \\
			&\mathrel{\phantom{\leq 16E(u_0)}}-2\int_{|x|>R}\Big(2\psi_{1,R}-\frac\varepsilon{p_*+1}\psi_{2,R}^{\frac{2}{p_*-1}}\Big)|\nabla u(t)|^2dx
		\end{align*}
		which completes the proof. Note that $\frac{4}{4-p_*}\left(\frac{(N-1)(p_*-1)}{2}-b\right)=2$.
	\end{proof}	

	\begin{lemma} \label{lem-cutoff}
		Let $N\geq 2$, $0<b<2(N-1)$, and $p_*=\frac{N+4+2b}{N}$. Let $\psi_{2,R}$ be as in \eqref{psi-12-R}. Then we have
		\begin{align} \label{est-deri}
		\Big\| \nabla \Big(\psi_{2,R}^{\frac{1}{p_*-1}}\Big) \Big\|_{L^\infty} \lesssim R^{-1}.
		\end{align}
	\end{lemma}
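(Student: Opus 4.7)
The plan is to reduce the estimate to a one–dimensional computation on the profile of $\psi_{2,R}$. By definition $\psi_R(x)=R^2\theta(r/R)$ with $r=|x|$, so a direct computation gives $\psi_R'(r)=R\vartheta(s)$, $\psi_R''(r)=\vartheta'(s)$, and $\Delta\psi_R(x)=\vartheta'(s)+\tfrac{N-1}{s}\vartheta(s)$, where $s:=r/R$. Consequently, $\psi_{2,R}(x)=g(s)$ for a radial profile
\[
g(s)=\frac{4+2b}{N}\Bigl(2N-\vartheta'(s)-\tfrac{N-1}{s}\vartheta(s)\Bigr)-2b\Bigl(2-\tfrac{\vartheta(s)}{s}\Bigr).
\]
Since $\nabla(g(s)^{1/(p_*-1)})=\tfrac{1}{R}\,\hat{x}\,\partial_s\bigl(g(s)^{1/(p_*-1)}\bigr)$, the claim \eqref{est-deri} reduces to showing that $\partial_s\bigl(g(s)^{1/(p_*-1)}\bigr)$ is uniformly bounded in $s\in[0,\infty)$ (with a bound independent of $R$).

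The plan is then to analyse $g$ on the four pieces of the definition of $\vartheta$. For $s\in[0,1]$, $\vartheta(s)=2s$ and $\vartheta'(s)=2$ give $g\equiv 0$, hence $g^{1/(p_*-1)}\equiv 0$. For $s\geq 2$, $\vartheta\equiv 0$ gives $g\equiv \frac{4+2b}{N}\cdot 2N-4b=8$, a constant. On the ``non-trivial'' intervals $(1,1+5^{-1/4}]$ and $[1+5^{-1/4},2)$, $\vartheta\in C^1$ and $g$ is $C^1$ in $s$ with all derivatives uniformly bounded. Thus the only delicate point is the behavior of $g$ at $s=1^+$, where $g$ transitions from zero to a positive function.

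On $(1,1+5^{-1/4}]$ we substitute $\vartheta(s)=2[s-(s-1)^5]$, $\vartheta'(s)=2[1-5(s-1)^4]$ into the expression for $g$, which after simplification reads
\[
g(s)=\frac{10(4+2b)}{N}(s-1)^4+\frac{8(N-1)-4b}{N}\cdot\frac{(s-1)^5}{s}.
\]
In particular $g(s)\sim C_*(s-1)^4$ and $g'(s)\sim 4C_*(s-1)^3$ as $s\to 1^+$, with $C_*=10(4+2b)/N>0$. Since $p_*-1=(4+2b)/N$, this gives
\[
\bigl|\partial_s\bigl(g(s)^{1/(p_*-1)}\bigr)\bigr|
= \tfrac{1}{p_*-1}\,g(s)^{(2-p_*)/(p_*-1)}\,|g'(s)|
\;\lesssim\; (s-1)^{\,(4N-4-2b)/(4+2b)}\qquad \text{as } s\to 1^+.
\]
The exponent $(4N-4-2b)/(4+2b)$ is nonnegative precisely when $b\leq 2(N-1)$, which is ensured by our hypothesis $0<b<2(N-1)$. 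This yields boundedness of $\partial_s(g^{1/(p_*-1)})$ near $s=1^+$, and combined with the trivial/smooth bounds on the remaining intervals produces a uniform bound independent of $R$.

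The main (and only) obstacle is precisely this matching at $s=1$: one must verify that the quartic vanishing of $g$ is strong enough, relative to the power $1/(p_*-1)=N/(4+2b)$, to keep $\partial_s(g^{1/(p_*-1)})$ bounded; this is exactly what forces the technical restriction $b<2(N-1)$ appearing in the hypothesis of Theorem \ref{theo-blow-mass-appl}. Once this is checked, multiplication by the factor $|\nabla s|=1/R$ yields \eqref{est-deri} and completes the proof.
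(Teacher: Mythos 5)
Your proposal is correct in substance and follows essentially the same route as the paper: reduce to the one--dimensional profile $g(s)=\psi_{2,R}$, $s=r/R$, so that the factor $R^{-1}$ comes only from $|\nabla s|$; use the explicit form of $\vartheta$ on $(1,1+5^{-1/4}]$ to exhibit the fourth--order vanishing of $g$ at $s=1$ (your formula for $g$ agrees with the paper's \eqref{psi-2-R}); and conclude that $\partial_s\bigl(g^{1/(p_*-1)}\bigr)$ stays bounded near $s=1^+$ because the exponent $\frac{4N-4-2b}{4+2b}=\frac{5-p_*}{p_*-1}$ is nonnegative, i.e.\ precisely because $p_*\leq 5$, equivalently $b\leq 2(N-1)$ --- exactly the mechanism in the paper's proof.

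One step needs more than you give it. On $[1+5^{-1/4},2)$ you argue that $g$ is $C^1$ with uniformly bounded derivatives and conclude that the only delicate point is $s=1$. That inference is incomplete: when $p_*>2$ the map $t\mapsto t^{1/(p_*-1)}$ has unbounded derivative at $t=0$, so the delicate points are \emph{all} zeros of $g$, and smoothness of $g$ alone does not control $\partial_s\bigl(g^{1/(p_*-1)}\bigr)=\frac{1}{p_*-1}g^{\frac{2-p_*}{p_*-1}}g'$ there. What is needed, and what the paper verifies explicitly, is a positive lower bound for $g$ on this interval: since $\vartheta'\leq 0$ one has $2-\psi_R''\geq 2$, since $\vartheta$ is nonincreasing there and $\vartheta(s)\leq 2s$ on the previous interval one has $2-\psi_R'/r\geq 0$, and the coefficient $(N-1)(p_*-1)-2b>0$ (again using $b<2(N-1)$), whence $\psi_{2,R}\geq 2(p_*-1)>0$; combined with $|\nabla\psi_{2,R}|\lesssim R^{-1}$ this gives the bound on that region. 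Similarly, on $(1,1+5^{-1/4}]$ away from $s=1$ your asymptotic argument should be supplemented by the (immediate) remark that both coefficients in your explicit formula are positive, so $g>0$ there. With these short additions your argument is complete and coincides with the paper's.
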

	
	\begin{proof}
		Using the fact that $\Delta \psi_R(x) = \psi''_R(r) +\frac{N-1}{r} \psi'_R(r)$, we see that
		\begin{align*}
		\psi_{2,R}(x)
		&=(p_*-1) (2N-\Delta \psi_R(x)) -2b\left(2-\frac{\psi'_R(r)}{r}\right)\\
		&=(p_*-1)(2-\psi_R''(r))+((N-1)(p_*-1)-2b)\left(2-\frac{\psi_R'(r)}{r}\right).
		\end{align*}
		
		$\bullet$ When $R<r\leq \left(1+\frac1{\sqrt[4]{5}}\right)R$, we have
		\[
		2-\frac{\psi'_R(r)}{r} = \frac{2(r/R-1)^5}{r/R}, \quad 2-\psi''_R(r) = 10(r/R-1)^4,
		\]
		hence
		\begin{align} \label{psi-2-R}
		\psi_{2,R}(x)=(r/R-1)^4 \left( 10(p_*-1) + 2((N-1)(p_*-1)-2b)\left(1-\frac{1}{r/R} \right) \right).
		\end{align}
		It follows that
		\[
		(\psi_{2,R}(x))^{\frac{1}{p_*-1}} = g(r/R),
		\]
		where
		\[
		g(\lambda) := (\lambda-1)^{\frac{4}{p_*-1}}\left(C+D - \frac{D}{\lambda}\right)^{\frac{1}{p_*-1}}
		\]
		with
		\[
		C=10(p_*-1), \quad D= 2((N-1)(p_*-1)-2b).
		\]
		Note that $C,D>0$ as $0<b<2(N-1)$. We have
		\[
		\partial_r \Big( \psi_{2,R}^{\frac{1}{p_*-1}}\Big) = \frac{1}{R} g'(r/R).
		\]
		We compute for $1<\lambda \leq 1+\frac{1}{\sqrt[4]{5}}$,
		\begin{align*}
		g'(\lambda) = \frac{4}{p_*-1} (\lambda-1)^{\frac{5-p_*}{p_*-1}} \left(C+D-\frac{D}{\lambda}\right)^{\frac{1}{p_*-1}} + \frac{D}{p_*-1} (\lambda-1)^{\frac{1}{p_*-1}} \lambda^{-2} \left(C+D-\frac{D}{\lambda}\right)^{\frac{2-p_*}{p_*-1}}.
		\end{align*}
		The function $h:\lambda \mapsto C+D-\frac{D}{\lambda}$ is strictly increasing on $\left(1, 1+\frac{1}{\sqrt[4]{5}}\right)$. Thus
		\[
		C=h(1)<h(\lambda) \leq h\left(1+\frac{1}{\sqrt[4]{5}}\right) = C+\frac{D}{1+\sqrt[4]{5}}.
		\]
		This shows that $h(\lambda)$ is bounded both from above and from below away from zero. As $1<p_*<5$ due to $0<b<2(N-1)$, we see that $g'(\lambda)$ is bounded from above by some positive constant. Thus we get \eqref{est-deri} for $R <r \leq \left(1+\frac{1}{\sqrt[4]{5}}\right) R$. 
		
		$\bullet$ When $\left(1+\frac{1}{\sqrt[4]{5}}\right) R< r <2R$, we have $\psi''_R(r) = \vartheta'(r/R) <0$. As $\vartheta(r/R)\geq 0$, we also have
		\[
		\frac{\psi'_R(r)}{r} = \frac{\vartheta(r/R)}{r/R} \in \left(\frac{\vartheta(2)}{2}, \frac{\vartheta(1+1/\sqrt[4]{5})}{1+1/\sqrt[4]{5}} \right).
		\]
		Thus we get
		\begin{align*}
		\psi_{2,R}(x)
			&=(p_*-1)(2-\psi_R''(r))+((N-1)(p_*-1)-2b)\left(2-\frac{\psi_R'(r)}{r}\right)\\
			&>2(p_*-1)+((N-1)(p_*-1)-2b)\left(2-\frac{\sqrt[4]{5}}{1+\sqrt[4]{5}}\vartheta(1+1/\sqrt[4]{5})\right)\\
			&>2(p_*-1)+\frac{2((N-1)(p_*-1)-2b)}{5(1+\sqrt[4]{5})},
		\end{align*}
		where we have used the fact that $\vartheta(1+1/\sqrt[4]{5}) = \frac{2(1+\sqrt[4]{5})}{\sqrt[4]{5}} - \frac{2}{5\sqrt[4]{5}}$. Now, because of $|\nabla \psi_{2,R}|\lesssim R^{-1}$ and $|\psi_{2,R}|\gtrsim 1$, we obtain \eqref{est-deri} for $\left(1+\frac{1}{\sqrt{3}}\right) R<r<2R$.
		
		$\bullet$ When $r\geq 2R$, we have $\psi'_R(r)=\psi''_R(r)=0$. It is straightforward to see that \eqref{est-deri} holds in this case. The proof is complete.
	\end{proof}
	
	\begin{lemma} \label{lem-vareps}
		Let $N\geq 3$ and $0<b\leq N-2$. Let $\psi_{1,R}$ and $\psi_{2,R}$ be as in \eqref{psi-12-R}. Then we have for $\vareps>0$ sufficiently small,
		\begin{align} \label{choi-vareps}
		2\psi_{1,R}(r) - \frac{N\vareps}{2N+4+2b} (\psi_{2,R}(x))^{\frac{N}{2+b}} \geq 0, \quad \forall r =|x|>R.
		\end{align}
	\end{lemma}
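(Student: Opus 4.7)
The plan is to split $\{r>R\}$ into three regions corresponding to the definition of $\vartheta$, compute (or estimate) $\psi_{1,R}$ and $\psi_{2,R}$ explicitly on each, and verify the inequality region by region for $\vareps$ small enough, independent of $R$. Recall $\psi_R'(r)=R\vartheta(r/R)$ and $\psi_R''(r)=\vartheta'(r/R)$, so $\psi_{1,R}(r)=2-\vartheta'(r/R)$, while using $\Delta\psi_R=\psi_R''+\tfrac{N-1}{r}\psi_R'$ we rewrite
\[
\psi_{2,R}(x)=(p_*-1)\bigl(2-\vartheta'(r/R)\bigr)+\bigl((N-1)(p_*-1)-2b\bigr)\left(2-\frac{\vartheta(r/R)}{r/R}\right),
\]
where $p_*-1=\tfrac{4+2b}{N}$, so that $\tfrac{N}{2+b}=\tfrac{2}{p_*-1}$.

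First, on the core transition region $R<r\leq (1+1/\sqrt[4]{5})R$, with $\lambda=r/R$, the explicit formulas $\vartheta(\lambda)=2[\lambda-(\lambda-1)^5]$ and $\vartheta'(\lambda)=2[1-5(\lambda-1)^4]$ give $\psi_{1,R}(r)=10(\lambda-1)^4$ and
\[
\psi_{2,R}(x)=(\lambda-1)^4\left[10(p_*-1)+2((N-1)(p_*-1)-2b)\,\frac{\lambda-1}{\lambda}\right]=:(\lambda-1)^4\,g(\lambda),
\]
where $g$ is bounded on $(1,1+1/\sqrt[4]{5}]$. Thus $\psi_{2,R}^{N/(2+b)}=(\lambda-1)^{4N/(2+b)}g(\lambda)^{N/(2+b)}$, and the required inequality reduces to
\[
20(\lambda-1)^4\geq \frac{N\vareps}{2N+4+2b}\,(\lambda-1)^{4N/(2+b)}\,g(\lambda)^{N/(2+b)}.
\]
The crucial exponent is $\tfrac{4N}{2+b}-4=\tfrac{4(N-2-b)}{2+b}\geq 0$, which is precisely where the hypothesis $b\leq N-2$ is used: it ensures that $\psi_{2,R}^{N/(2+b)}$ vanishes at least as fast as $\psi_{1,R}$ as $\lambda\to 1^+$, so $(\lambda-1)^{4N/(2+b)-4}$ is bounded on the interval, and choosing $\vareps$ small enough (depending only on $N$ and $b$) gives \eqref{choi-vareps} uniformly in $R$ and $r$.

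Next, on the middle region $(1+1/\sqrt[4]{5})R<r<2R$, the function $\vartheta'$ is bounded (with $\vartheta'\leq 0$), so $\psi_{1,R}(r)=2-\vartheta'(r/R)\geq 2$. Simultaneously $2-\vartheta(r/R)/(r/R)$ is bounded above, hence $\psi_{2,R}$ is bounded by a constant depending only on $N,b$, and the inequality $4\geq\frac{N\vareps}{2N+4+2b}\cdot C^{N/(2+b)}$ holds for small $\vareps$. Finally, on the outer region $r\geq 2R$ we have $\psi_R'=\psi_R''=0$, yielding $\psi_{1,R}=2$ and a direct computation $\psi_{2,R}=2N(p_*-1)-4b=8$, so the required inequality becomes the trivial $4\geq \frac{N\vareps}{2N+4+2b}\cdot 8^{N/(2+b)}$.

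The only substantive step is the transition region: everything hinges on the exponent comparison $\tfrac{4N}{2+b}\geq 4$, which is exactly the restriction $0<b\leq N-2$ appearing in the statement. Once that is in hand, a single choice of $\vareps$ (the minimum of the three upper bounds obtained above) works uniformly across $r>R$ and $R>0$.
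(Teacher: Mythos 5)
Your argument is correct and follows essentially the same route as the paper: the same region-by-region analysis with the explicit formulas for $\psi_{1,R}$ and $\psi_{2,R}$ on the transition zone, the key exponent comparison $\frac{4N}{2+b}\geq 4$ (equivalently $b\leq N-2$) to absorb $(\lambda-1)^{4N/(2+b)}$ into $(\lambda-1)^4$, and boundedness of $\psi_{2,R}$ together with $\psi_{1,R}\geq 2$ beyond $r=(1+1/\sqrt[4]{5})R$. The only cosmetic difference is that you treat $r\geq 2R$ separately with the exact values $\psi_{1,R}=2$, $\psi_{2,R}=8$, which the paper folds into the outer region.
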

	
	\begin{proof}
		When $R<r \leq \left(1+\frac{1}{\sqrt[4]{5}}\right)R$, we have $\psi_{1,R}(r)=20(r/R-1)^4$ and from \eqref{psi-2-R},
		\begin{align*}
		\psi_{2,R}(x) &= 4(r/R-1)^4 \left( \frac{5(2+b)}{N} + \frac{2(N-1)-b}{N} \left(1-\frac{1}{r/R}\right)\right) \\
		&< 4(r/R-1)^4 \left( \frac{5(2+b)}{N} + \frac{2(N-1)-b}{(1+\sqrt[4]{5})N}\right).
		\end{align*}
		It follows that
		\[
		2\psi_{1,R}(r) - \frac{N\vareps}{2N+4+2b} (\psi_{2,R}(x))^{\frac{N}{2+b}} > 20(r/R-1)^4 - C\vareps (r/R-1)^{\frac{4N}{2+b}}.
		\]
		Since $0<r/R-1<1/\sqrt[4]{5}$ and $\frac{4N}{2+b} \geq 4$ as $0<b\leq N-2$, by taking $\vareps>0$ sufficiently small, we have \eqref{choi-vareps}. When $r>\left(1+\frac{1}{\sqrt[4]{5}}\right) R$, we see that $\vartheta'(r/R) \leq 0$, so $\psi_{1,R}(r) =2-\psi''_R(r)\geq 2$. We also have $\psi_{2,R}(x) \lesssim 1$. By choosing $\vareps>0$ small enough, we get \eqref{choi-vareps}.
	\end{proof}

	\begin{proof}[Proof of Theorem \ref{theo-blow-mass-appl}]
		By Lemma \ref{lem-viri-est-mass}, we have for all $t\in [0,T^*)$ and all $R,\vareps>0$,
		\begin{align*}
		V_{\psi_R}''(t) \leq 16E(u_0)&+CR^{-2}+C \vareps R^{-2} + C\vareps^{-\frac{2+b}{2N-2-b}}R^{-2} \\
		&-2\int_{|x|>R}\Big(2\psi_{1,R}(r)-\frac{N \vareps}{2N+4+2b}(\psi_{2,R}(x))^{\frac{N}{2+b}}\Big)|\nabla u(t,x)|^2dx,
		\end{align*}
		where $\psi_{1,R}$ and $\psi_{2,R}$ are as in \eqref{psi-12-R}. By Lemma \ref{lem-vareps}, there exists $\vareps>0$ sufficiently small so that
		\[
		2\psi_{1,R}(r)-\frac{N \vareps}{2N+4+2b}(\psi_{2,R}(x))^{\frac{N}{2+b}} \geq 0, \quad \forall r=|x|>R.
		\]
		By taking $R>0$ sufficiently large depending on $\vareps$, we get
		\[
		V_{\psi_R}''(t) \leq 8E(u_0)<0, \quad \forall t\in [0,T^*).
		\]
		The standard convexity argument (see e.g., \cite{Glassey}) shows $T^*<\infty$.
	\end{proof}
	
	\subsection{Inter-critical nonlinearity}
	We next establish the existence of blow-up solutions to \eqref{INLS} in the mass-supercritical and energy-sub-critical case.
	
	\begin{lemma} \label{lem-viri-est-inte}
		Let $N\geq 2$, $b>0$, $p> \frac{N+4+2b}{N}$, $p<\frac{N+2+2b}{N-2}$ if $N\geq 3$, and $p\leq 5$. Let $u \in C([0,T^*), H^1_{\rad}(\R^N))$ be a solution to \eqref{INLS}. Let $\psi_R$ be as in \eqref{psi-R} and define $V_{\psi_R}(t)$ as in \eqref{V-varphi}. Then for any $R>0$, we have for all $t\in [0,T^*)$,
		\begin{align} \label{viri-est-inte}
		\begin{aligned}
		V''_{\psi_R}(t) \leq 8 \|\nabla u(t)\|^2_{L^2} &- \frac{4N(p-1)-8b}{p+1} \int |x|^b |u(t,x)|^{p+1} dx \\
		&+ CR^{-2} + \left\{
		\begin{array}{ccc}
		C R^{-(2(N-1)-b)}\|\nabla u(t)\|^2_{L^2} &\text{if} & p=5, \\
		C R^{-\left(\frac{(N-1)(p-1)}{2}-b\right)} \left(\|\nabla u(t)\|^2_{L^2} + 1\right) &\text{if}& p<5,
		\end{array}
		\right.
		\end{aligned}
		\end{align}
		for some constant $C>0$.
	\end{lemma}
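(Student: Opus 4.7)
The plan is to apply the virial identity of Lemma \ref{lem-viri-iden} with the radial cutoff $\psi_R$ from \eqref{psi-R} and then exploit the fact that $\psi_R(x)=|x|^2$ on $\{|x|\le R\}$ to isolate the main ``full-space'' terms $8\|\nabla u(t)\|_{L^2}^2-\tfrac{4N(p-1)-8b}{p+1}\int|x|^b|u(t,x)|^{p+1}dx$. Concretely, since for radial $u$ and radial $\psi_R$ one has $\sum_{j,k}\partial^2_{jk}\psi_R\,\partial_j\bar u\,\partial_k u=\psi_R''\,|\partial_r u|^2=\psi_R''|\nabla u|^2$ and $\nabla\psi_R\cdot\nabla(|x|^b)=b|x|^b\psi_R'/r$, I would rewrite
\[
4\psi_R''|\partial_r u|^2=8|\nabla u|^2-4(2-\psi_R'')|\partial_r u|^2,\qquad -\tfrac{2(p-1)}{p+1}\Delta\psi_R=-\tfrac{4N(p-1)}{p+1}+\tfrac{2(p-1)}{p+1}(2N-\Delta\psi_R),
\]
and similarly for the $\psi_R'/r$-term, obtaining the compact expression
\[
V''_{\psi_R}(t)=8\|\nabla u\|_{L^2}^2-\tfrac{4N(p-1)-8b}{p+1}\!\int|x|^b|u|^{p+1}dx-\mathrm{(I)}-\mathrm{(II)}+\mathrm{(III)},
\]
where $\mathrm{(I)}=\int\Delta^2\psi_R|u|^2dx$, $\mathrm{(II)}=4\int(2-\psi_R'')|\partial_r u|^2 dx\ge0$, and $\mathrm{(III)}=\tfrac{2}{p+1}\int\big[(p-1)(2N-\Delta\psi_R)-2b(2-\psi_R'/r)\big]|x|^b|u|^{p+1}dx$. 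Because $\psi_R(x)=|x|^2$ for $|x|\le R$, the integrands in (I), (II), (III) are supported on $\{|x|>R\}$.

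Since $\mathrm{(II)}\ge0$ can be discarded, it remains to bound (I) and (III). For (I), the standard estimate $\|\Delta^2\psi_R\|_{L^\infty}\lesssim R^{-2}$ together with mass conservation gives $|\mathrm{(I)}|\lesssim R^{-2}$, producing the $CR^{-2}$ term in \eqref{viri-est-inte}. For (III), I observe that the bracket is bounded (uniformly in $R$) and the integrand is supported on $\{|x|>R\}$, so
\[
|\mathrm{(III)}|\lesssim \int_{|x|>R}|x|^b|u(t,x)|^{p+1}dx.
\]

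The key step is then to estimate this tail via the radial Sobolev inequality \eqref{est-Strauss}. Writing
\[
\int_{|x|>R}|x|^b|u|^{p+1}dx=\int_{|x|>R}|x|^{b-\frac{(N-1)(p-1)}{2}}\bigl(|x|^{\frac{N-1}{2}}|u|\bigr)^{p-1}|u|^2\,dx,
\]
and using $\tfrac{(N-1)(p-1)}{2}>b$ (which follows from $p>1+\tfrac{2b}{N-1}$, guaranteed by $p>\tfrac{N+4+2b}{N}$ and $p\le 5$ forcing $b<2(N-1)$), together with \eqref{est-Strauss} and the conservation of mass, yields
\[
\int_{|x|>R}|x|^b|u|^{p+1}dx\lesssim R^{-(\frac{(N-1)(p-1)}{2}-b)}\|\nabla u(t)\|_{L^2}^{\frac{p-1}{2}}.
\]
For $p=5$ the power is exactly $2$, giving the first alternative in \eqref{viri-est-inte}. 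For $p<5$ one has $\tfrac{p-1}{2}<2$, and Young's inequality $\|\nabla u\|_{L^2}^{(p-1)/2}\le C(1+\|\nabla u\|_{L^2}^2)$ (with conjugate exponents $\tfrac{4}{p-1}$ and $\tfrac{4}{5-p}$) produces the second alternative. The main obstacle in the write-up is precisely this step: Young's inequality is only applicable for $p\le 5$, which explains that technical restriction; it is also the step where one must be careful to absorb the tail into $\|\nabla u\|_{L^2}^2$ rather than a higher power, since otherwise the convexity argument in the subsequent blow-up proof would fail.
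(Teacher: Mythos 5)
Your proposal is correct and follows essentially the same route as the paper: the same virial-identity decomposition isolating the full-space terms (with the remainder supported on $\{|x|>R\}$), the same bound $\|\Delta^2\psi_R\|_{L^\infty}\lesssim R^{-2}$ with mass conservation, the same tail estimate via the radial Sobolev inequality \eqref{est-Strauss} giving $R^{-(\frac{(N-1)(p-1)}{2}-b)}\|\nabla u(t)\|_{L^2}^{(p-1)/2}$, and the same application of Young's inequality when $p<5$. Your justification that $p\le 5$ forces $b<2(N-1)$, hence $\frac{(N-1)(p-1)}{2}>b$, is a correct (and slightly more explicit) version of what the paper only asserts.
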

	
	\begin{proof}
		Thanks to Lemma \ref{lem-viri-iden} and $\psi_R(x)=|x|^2$ for $|x|\leq R$, we have for all $t\in [0,T^*)$,
		\begin{align*}
		V_{\psi_R}''(t)
		&=-\int \Delta^2\psi_R|u(t)|^2dx + 4\rea\sum_{j,k=1}^N\int\partial_{jk}^2\psi_R\partial_ku(t)\partial_j\bar u(t)dx\\
		&\mathrel{\phantom{=}}-\frac{2(p-1)}{p+1}\int\Delta\psi_R|x|^b|u(t)|^{p+1}dx+\frac{4}{p+1}\int\nabla\psi_R \cdot \nabla(|x|^b)|u(t)|^{p+1}dx\\
		&= 8 \|\nabla u(t)\|^2_{L^2} -\frac{4N(p-1)-8b}{p+1} \int |x|^b |u(t)|^{p+1} dx \\
		&\mathrel{\phantom{=}}-8\int_{|x|>R}|\nabla u(t)|^2dx + \frac{4N(p-1) -8b}{p+1} \int_{|x|>R} |x|^b |u(t)|^{p+1} dx \\
		&\mathrel{\phantom{=}} -\int \Delta^2\psi_R|u(t)|^2dx +4\rea\int_{|x|>R}\partial_{jk}^2\psi_R\partial_ku(t)\partial_j\bar u(t)dx\\
		&\mathrel{\phantom{=}}-\frac{2(p-1)}{p+1}\int_{|x|>R}\Delta\psi_R|x|^b|u(t)|^{p+1}dx +\frac4{p+1}\int_{|x|>R}\nabla\psi_R \cdot \nabla(|x|^b)|u|^{p+1} dx.
		\end{align*}
		Since $u(t)$ is radial, we have from the conservation of energy and the following identities
		\[
		\rea \sum_{j,k=1}^N\partial_{jk}^2\psi_R\partial_ku(t)\partial_j\bar u(t) =\psi_R''|\partial_ru(t)|^2=\psi''_R |\nabla u(t)|^2, \quad \nabla\psi_R \cdot \nabla(|x|^b)=b|x|^b\frac{\psi_R'}{r}
		\]
		that
		\begin{align*}
		V_{\psi_R}''(t)
		&= 8 \|\nabla u(t)\|^2_{L^2} -\frac{4N(p-1)-8b}{p+1} \int |x|^b |u(t)|^{p+1} dx \\
		&\mathrel{\phantom{=}}-\int\Delta^2\psi_R|u(t)|^2dx - 4\int_{|x|>R}(2-\psi_R'')|\nabla u(t)|^2dx\\
		&\mathrel{\phantom{=}}+\int_{|x|>R} \left[\frac{2(p-1)}{p+1} (2N-\Delta\psi_R)-\frac{4b}{p+1}\left(2-\frac{\psi_R'}{r}\right)\right]|x|^b|u(t)|^{p+1}dx.
		\end{align*}
		
		As $\|\Delta^2\psi_R\|_{L^\infty}\lesssim R^{-2}$, the conservation of mass implies
		\[
		\left|\int\Delta^2\psi_R|u(t)|^2dx\right| \lesssim R^{-2}.
		\]
		As $\psi''_R \leq 2$ and $\|2N-\Delta \psi_R\|_{L^\infty}, \left\|2-\frac{\psi'_R}{r}\right\|_{L^\infty} \lesssim 1$, we have
		\begin{align*}
		V''_{\psi_R}(t) \leq 8 \|\nabla u(t)\|^2_{L^2} &- \frac{4N(p-1)-8b}{p+1} \int |x|^b |u(t)|^{p+1} dx \\
		& + CR^{-2} + C \int_{|x|>R} |x|^b|u(t)|^{p+1} dx.
		\end{align*}
		By \eqref{est-Strauss} and the conservation of mass, we estimate
		\begin{align*}
		\int_{|x|>R} |x|^b |u(t)|^{p+1} dx &\leq \sup_{|x|>R} \left( |x|^{\frac{b}{p-1}} |u(t,x)|\right)^{p-1} \|u(t)\|^2_{L^2} \\
		&\leq C R^{-\left(\frac{(N-1)(p-1)}{2}-b \right)} \sup_{|x|>R} \left(|x|^{\frac{N-1}{2}} |u(t,x)|\right)^{p-1} \|u(t)\|^2_{L^2} \\
		&\leq C R^{-\left(\frac{(N-1)(p-1)}{2}-b \right)} \|\nabla u(t)\|^{\frac{p-1}{2}}_{L^2}.
		\end{align*}
		Note that
		\[
		\frac{(N-1)(p-1)}{2}-b>0, \quad \frac{p-1}{2} \leq 2.
		\]
		When $p=5$, we are done. When $p<5$, we have from Young's inequality that
		\begin{align*}
		\int_{|x|>R}\psi_{2,R}|x|^b|u(t)|^{p+1}dx
		\leq CR^{-\left(\frac{(N-1)(p-1)}{2}-b \right)} \left(\|\nabla u(t)\|^2_{L^2} + 1\right).
		\end{align*}
		This completes the proof.
	\end{proof}
	
	\begin{lemma} \label{lem-Q-blow}
		Let $N\geq 2$, $b>0$, $p>\max\left\{\frac{N+4+2b}{N}, 1+\frac{2b}{N-1}\right\}$, and $p<\frac{N+2+2b}{N-2}$ if $N\geq 3$. Let $u_0 \in H^1_{\rad}(\R^N)$ satisfy either $E(u_0)<0$ or if $E(u_0) \geq0$, we assume that \eqref{cond-blow-inte}. 
		Then there exist $\vareps, \nu>0$ such that for all $t\in [0,T^*)$,
		\begin{align} \label{est-blow}
		\|\nabla u(t)\|^2_{L^2} - \frac{N(p-1)-2b}{2(p+1)} \int |x|^b |u(t,x)|^{p+1} dx + \vareps \|\nabla u(t)\|^2_{L^2} \leq -\nu.
		\end{align}
	\end{lemma}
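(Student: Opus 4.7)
The plan is to rewrite the quantity $K(u) := \|\nabla u\|^2_{L^2} - \frac{N(p-1)-2b}{2(p+1)}\int |x|^b|u|^{p+1}dx$ purely in terms of the gradient and the energy by eliminating the potential energy through the definition of $E(u)$. A direct computation gives
\[
K(u) \;=\; -\frac{N(p-1)-4-2b}{4}\,\|\nabla u\|^2_{L^2} + \frac{N(p-1)-2b}{2}\,E(u),
\]
so conservation of energy yields
\[
K(u(t)) + \vareps\|\nabla u(t)\|^2_{L^2} \;=\; -\Big(\frac{N(p-1)-4-2b}{4}-\vareps\Big)\|\nabla u(t)\|^2_{L^2} + \frac{N(p-1)-2b}{2}\,E(u_0).
\]
Since $\frac{N(p-1)-4-2b}{4}>0$ in the inter-critical regime, I would always restrict $\vareps<\frac{N(p-1)-4-2b}{4}$ so that the coefficient of the gradient term is strictly negative.

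If $E(u_0)<0$, the first term is non-positive for any such $\vareps$, and \eqref{est-blow} follows immediately with $\nu:=-\frac{N(p-1)-2b}{2}E(u_0)>0$. The substantive case is $E(u_0)\geq 0$ together with \eqref{cond-blow-inte}. Here I would exploit the dual, outside, coercivity statement: setting $\mu(t):=\|\nabla u(t)\|_{L^2}\|u(t)\|^{\sigc}_{L^2}$ and $\mu_Q:=\|\nabla Q\|_{L^2}\|Q\|^{\sigc}_{L^2}$, the sharp Gagliardo-Nirenberg inequality \eqref{GN-opti} together with the identities \eqref{E-Q} and \eqref{opti-cons} give
\[
E(u(t))(M(u(t)))^{\sigc} \;\geq\; F(\mu(t)), \qquad F(\lambda) := \tfrac{1}{2}\lambda^2 - \tfrac{C_{\opt}}{p+1}\lambda^{\frac{N(p-1)-2b}{2}},
\]
where $F$ attains its unique maximum at $\lambda=\mu_Q$, with $F(\mu_Q)=E(Q)(M(Q))^{\sigc}$, and strictly decreases to $-\infty$ on $(\mu_Q,\infty)$.

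Writing $E(u_0)(M(u_0))^{\sigc}\leq(1-\vartheta)E(Q)(M(Q))^{\sigc}$ for some $\vartheta>0$ and invoking the conservation laws together with the continuity of $\mu(\cdot)$, starting from $\mu(0)>\mu_Q$ I would obtain, by the exact analogue of the monotonicity-and-continuity argument in Lemma \ref{lem-coer} (now applied on the branch $\lambda>\mu_Q$), some $\rho=\rho(u_0,Q)>0$ such that
\[
\mu(t) \;\geq\; (1+2\rho)\mu_Q \qquad \text{for all } t\in[0,T^*).
\]

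Given this lower bound, I would multiply the displayed identity for $K(u(t))+\vareps\|\nabla u(t)\|^2_{L^2}$ by $(M(u_0))^{\sigc}$ and combine $\mu(t)\geq(1+2\rho)\mu_Q$ with $E(Q)(M(Q))^{\sigc}=\frac{N(p-1)-4-2b}{2(N(p-1)-2b)}\mu_Q^2$, obtaining
\[
\big(K(u(t))+\vareps\|\nabla u(t)\|^2_{L^2}\big)(M(u_0))^{\sigc} \;\leq\; \tfrac{N(p-1)-4-2b}{4}\mu_Q^2\big[(1-\vartheta)-(1+2\rho)^2\big] + \vareps(1+2\rho)^2\mu_Q^2.
\]
Since $(1+2\rho)^2>1>1-\vartheta$, the leading bracket is a fixed negative quantity; choosing $\vareps$ small enough in terms of $\rho$ and $\vartheta$ (in addition to the earlier upper bound) keeps the right-hand side strictly negative, and dividing by $(M(u_0))^{\sigc}$ produces \eqref{est-blow}. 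The main obstacle is the quantitative propagation $\mu(t)\geq(1+2\rho)\mu_Q$ in the outside regime; once this is granted, the remainder reduces to bookkeeping of constants.
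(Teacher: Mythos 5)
Your proposal is correct and follows essentially the same route as the paper: rewrite the left-hand side via the conservation of energy, handle $E(u_0)<0$ directly, and in the case $E(u_0)\geq 0$ use the sharp Gagliardo--Nirenberg inequality together with a continuity/monotonicity argument (the analogue of Lemma \ref{lem-coer} on the branch $\lambda>1$) to upgrade the second condition in \eqref{cond-blow-inte} to a quantitative bound $\|\nabla u(t)\|_{L^2}\|u(t)\|^{\sigc}_{L^2}>(1+\rho)\|\nabla Q\|_{L^2}\|Q\|^{\sigc}_{L^2}$, after which the constant bookkeeping with \eqref{E-Q} and \eqref{opti-cons} yields \eqref{est-blow} for $\vareps$ small. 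The only cosmetic difference is that in the negative-energy case the paper takes $\vareps$ equal to $\frac{N(p-1)-4-2b}{4}$ so the gradient term cancels exactly, while you keep $\vareps$ strictly below this value; both work.
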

	
	\begin{proof}
		We consider separately two cases: $E(u_0)<0$ and $E(u_0)\geq 0$.
		
		If $E(u_0)<0$, we have from the conservation of energy that
		\begin{align*}
		\|\nabla u(t)\|^2_{L^2} &- \frac{N(p-1)-2b}{2(p+1)} \int |x|^b |u(t,x)|^{p+1} dx \\
		&= - \frac{N(p-1)-4-2b}{4} \|\nabla u(t)\|^2_{L^2} + \frac{N(p-1)-2b}{2} E(u_0)
		\end{align*}
		which shows \eqref{est-blow} with
		\[
		\vareps = \frac{N(p-1)-4-2b}{4}, \quad \nu = -\frac{N(p-1)-2b}{2} E(u_0).
		\]
		
		Let us consider the case $E(u_0)\geq 0$. In this case, we assume that \eqref{cond-blow-inte} holds. Arguing as in the proof of Lemma \ref{lem-coer}, we have
		\[
		\|\nabla u(t)\|_{L^2} \|u(t)\|^{\sigc}_{L^2} > \|\nabla Q\|_{L^2} \|Q\|^{\sigc}_{L^2}
		\]
		for all $t\in [0,T^*)$. Moreover, by taking $\vartheta= \vartheta(u_0,Q)>0$ as in \eqref{coer-est-prof-1}, we see that \eqref{est-G} holds for all $t\in [0,T^*)$. Consider the function
		\[
		G(\lambda) = \frac{N(p-1)-2b}{N(p-1)-4-2b} \lambda^2 - \frac{4}{N(p-1)-4-2b} \lambda^{\frac{N(p-1)-2b}{2}}, \quad \lambda>1.
		\]
		From \eqref{est-G}, we infer that there exists $\rho>0$ depending on $\vartheta$ such that
		\[
		G(\lambda)\leq 1-\vartheta\Longrightarrow\lambda>1+\rho.
		\] 
		Hence
		\begin{align} \label{est-blow-prof}
		\|\nabla u(t)\|_{L^2} \|u(t)\|^{\sigc}_{L^2} > (1+\rho)\|\nabla Q\|_{L^2} \|Q\|^{\sigc}_{L^2}
		\end{align}
		for all $t\in [0,T^*)$. Denote the left hand side of \eqref{est-blow} by $H(u(t))$. It follows that
		\begin{align*}
		H(u(t))(M(u(t)))^{\sigc} &= \frac{N(p-1)-2b}{2} E(u(t)) (M(u(t)))^{\sigc}  \\
		&\mathrel{\phantom{=}}+ \left(1+\vareps - \frac{N(p-1)-2b}{4}\right) \|\nabla u(t)\|^2_{L^2}(M(u(t)))^{\sigc}.
		\end{align*}
		By the conservation laws of mass and energy, \eqref{coer-est-prof-1}, \eqref{E-Q}, and \eqref{est-blow-prof}, we get
		\begin{align*}
		H(u(t)) (M(u_0))^{\sigc} &=\frac{N(p-1)-2b}{2} E(u_0) (M(u_0))^{\sigc} \\
		&\mathrel{\phantom{=}} + \left(1+\vareps - \frac{N(p-1)-2b}{4}\right) \|\nabla u(t)\|^2_{L^2}(M(u(t)))^{\sigc} \\
		&\leq \frac{N(p-1)-2b}{2} (1-\vartheta) E(Q) (M(Q))^{\sigc} \\
		&\mathrel{\phantom{=}}+ \left(1+\vareps - \frac{N(p-1)-2b}{4}\right) (1+\rho)^2 \left(\|\nabla Q\|_{L^2}\|Q\|^{\sigc}_{L^2}\right)^2 \\
		&= \left(\|\nabla Q\|_{L^2} \|Q\|^{\sigc}_{L^2}\right)^2 \left( -\frac{N(p-1)-4-2b}{4}(\vartheta + 2\rho + \rho^2) + \vareps (1+\rho)^2 \right) <0
		\end{align*}
		provided that $\vareps>0$ is taken sufficiently small. This shows \eqref{est-blow} with
		\[
		\nu = \|\nabla Q\|^2_{L^2} \left(\frac{M(Q)}{M(u_0)}\right)^{\sigc} \left(\frac{N(p-1)-4-2b}{4}(\vartheta + 2\rho + \rho^2) - \vareps (1+\rho)^2 \right) >0.
		\]
		The proof is complete.		
	\end{proof}
	
	\begin{proof}[Proof of Theorem \ref{theo-blow-inter}]
		By Lemma \ref{lem-viri-est-inte}, we have for all $t\in [0,T^*)$ and all $R>0$,
		\begin{align*}
		V''_{\psi_R}(t) \leq 8 \|\nabla u(t)\|^2_{L^2} &- \frac{4N(p-1)-8b}{p+1} \int |x|^b |u(t,x)|^{p+1} dx \\
		&+ CR^{-2} + \left\{
		\begin{array}{ccc}
		C R^{-(2(N-1)-b)}\|\nabla u(t)\|^2_{L^2} &\text{if} & p=5, \\
		C R^{-\left(\frac{(N-1)(p-1)}{2}-b\right)} \left(\|\nabla u(t)\|^2_{L^2} + 1\right) &\text{if}& p<5.
		\end{array}
		\right.
		\end{align*}
		From Lemma \ref{lem-Q-blow}, there exist $\vareps, \nu>0$ such that
		\[
		\|\nabla u(t)\|^2_{L^2} - \frac{N(p-1)-2b}{2(p+1)} \int |x|^b |u(t,x)|^{p+1} dx + \vareps \|\nabla u(t)\|^2_{L^2} \leq -\nu
		\]
		for all $t\in [0,T^*)$. It follows that
		\[
		V''_{\psi_R}(t) \leq -8\nu-8\vareps \|\nabla u(t)\|^2_{L^2} + CR^{-2} + \left\{
		\begin{array}{ccc}
		C R^{-(2(N-1)-b)}\|\nabla u(t)\|^2_{L^2} &\text{if} & p=5, \\
		C R^{-\left(\frac{(N-1)(p-1)}{2}-b\right)} \left(\|\nabla u(t)\|^2_{L^2} + 1\right) &\text{if}& p<5.
		\end{array}
		\right.
		\]
		Taking $R>0$ sufficiently large, we get
		\[
		V''_{\psi_R}(t) \leq -4\nu
		\]
		for all $t\in [0,T^*)$. The standard convexity argument yields $T^*<\infty$. The proof is complete.
	\end{proof}
	
	\subsection{Energy-critical nonlinearity} This subsection is devoted to show the existence of finite time blow-up solutions to \eqref{INLS} with the energy-critical nonlinearity. To this end, we recall some properties of $W$ (see \eqref{W}). One can readily check that $W$ is a solution to \eqref{equ-W}.
	
	\begin{proposition}[\cite{Lieb, Yanagida}]
		Let $N\geq 3$ and $b>0$. Then the optimal constant in the Sobolev inequality
		\begin{align} \label{Sobo-ineq}
		\int |x|^b |f(x)|^{\frac{2N+2b}{N-2}} dx \leq C_{\sha} \|\nabla f\|^{\frac{2N+2b}{N-2}}_{L^2}, \quad f \in \dot{H}^1_{\rad}(\R^N)
		\end{align}
		is achieved by $W$, i.e.,
		\[
		C_{\sha} = \int |x|^b |W(x)|^{\frac{2N+2b}{N-2}} dx \div \|\nabla W\|^{\frac{2N+2b}{N-2}}_{L^2}.
		\]
		Moreover, $W$ is the unique positive radial solution to \eqref{equ-W}.
	\end{proposition}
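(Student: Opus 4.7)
The plan is to split the proposition into three pieces: (i) verify by direct computation that $W$ given in \eqref{W} satisfies \eqref{equ-W}; (ii) establish existence of an optimizer for \eqref{Sobo-ineq}, which must (after scaling) be a positive radial solution of \eqref{equ-W}; and (iii) prove uniqueness of the positive radial solution via the criterion of Shioji--Watanabe (Theorem \ref{theo-SW}). Combining these three ingredients identifies the optimizer as $W$ and produces the displayed formula for $C_{\sha}$.

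For (i), I would write $W=H^{-(N-2)/(2+b)}$ with $H(r)=1+r^{2+b}/((N+b)(N-2))$ and substitute into the radial form $W''+\tfrac{N-1}{r}W'+r^b W^{(N+2+2b)/(N-2)}$. The computation is routine once one notes that $H'(r)=r^{1+b}/(N-2)$ and $H''(r)+\tfrac{N-1}{r}H'(r) = (2+b)r^b /(N-2)$, so all powers of $r$ and $H$ align to make the expression vanish identically.

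For (ii), the inequality \eqref{Sobo-ineq} itself is already contained in estimate \eqref{est-2}. Take a maximizing sequence $(f_n)\subset \dot H^1_{\rad}(\R^N)\setminus\{0\}$ for the scale-invariant ratio $J(f):=\int |x|^b|f|^{(2N+2b)/(N-2)}dx \div \|\nabla f\|_{L^2}^{(2N+2b)/(N-2)}$. Since $J$ is invariant under the two-parameter family $f(x)\mapsto \mu\, \lambda^{(N-2)/(2+b)}f(\lambda x)$, one may normalize $\|\nabla f_n\|_{L^2}=1$ and fix $|f_n|$ at one chosen radius. A weakly convergent subsequence in $\dot H^1_{\rad}$ yields a candidate limit $f$; using the pointwise bound \eqref{est-BL} to dominate outside a compact annulus and strong convergence on each annulus (as in the proof of Lemma \ref{lem-comp-embe}), one obtains $J(f_n)\to J(f)$, so $f\not\equiv 0$ is an optimizer. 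The Euler--Lagrange equation, after a further scaling, reduces to \eqref{equ-W}; positivity follows from replacing $f$ by $|f|$ via $|\nabla |f|\,|\leq |\nabla f|$ together with the strong maximum principle, exactly as in the proof of Theorem \ref{theo-GN-ineq}.

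For (iii), I apply Theorem \ref{theo-SW} with $f(r)=r^{N-1}$, $g(r)\equiv 0$, $h(r)=r^b$, and $p=(N+2+2b)/(N-2)$, paralleling the proof of Proposition \ref{prop-unique}. Because $g\equiv 0$, the quantity $G$ collapses to $\tfrac12\gamma'(r)$, which is (a constant multiple of) a signed monomial in $r$; checking its sign and verifying the remaining integrability and limit hypotheses are elementary power computations analogous to those already performed for \eqref{equ-Q}. The main obstacle is step (ii): with only dilation invariance available (the factor $|x|^b$ breaks translation invariance, and the energy-critical scaling leaves $\dot H^1$ invariant), one must prevent the maximizing sequence from concentrating at a point or vanishing at infinity, which is precisely what the two-parameter normalization, combined with the radial pointwise decay \eqref{est-BL} and the compactness of the embedding into weighted $L^{p+1}$ spaces on annuli, is designed to rule out.
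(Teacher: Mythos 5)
Your step (i) is a routine verification and is fine (up to a harmless slip: $H'(r)=\frac{(2+b)\,r^{1+b}}{(N+b)(N-2)}$, not $r^{1+b}/(N-2)$), but the paper does not attempt steps (ii) and (iii) at all: it simply quotes Lieb \cite{Lieb} for the existence of an optimizer of \eqref{Sobo-ineq} and Yanagida \cite{Yanagida} for uniqueness, and both of your replacements contain genuine gaps. For step (ii), the direct maximizing-sequence argument cannot work as sketched because \eqref{Sobo-ineq} is the scale-invariant (energy-critical) case: the ratio $J$ is invariant under $f\mapsto\lambda^{\frac{N-2}{2}}f(\lambda\cdot)$ and the embedding of $\dot H^1_{\rad}$ into $L^{\frac{2N+2b}{N-2}}(|x|^b dx)$ is \emph{not} compact. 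Concretely, \eqref{est-BL} only gives $|x|^b|f_n(x)|^{\frac{2N+2b}{N-2}}\lesssim |x|^{-N}$, which is not integrable near the origin nor at infinity, so there is no domination of the tails; the mechanism of Lemma \ref{lem-comp-embe} degenerates exactly at the critical exponent (there one is forced to take $c=b$, so the factor $\vareps^{b-c}$ near the origin yields no smallness, and the Strauss bound \eqref{est-Strauss} is unavailable since only $\|\nabla f_n\|_{L^2}$ is controlled). A maximizing sequence can concentrate at the origin or spread out by dilation, making the weak limit $0$; your two-parameter normalization does not by itself rule this out, so $J(f_n)\to J(f)$ is unjustified. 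One needs a concentration-compactness argument adapted to the weight, a rearrangement/Bliss-type reduction identifying the extremal explicitly, or the citation to Lieb that the paper uses.

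Step (iii) actually fails. With $f(r)=r^{N-1}$, $g\equiv 0$, $h(r)=r^b$ and the critical power $p=\frac{N+2+2b}{N-2}$, the function $\gamma$ computed in the proof of Proposition \ref{prop-unique} has coefficient proportional to $2N+2b-(N-2)(p+1)$, which vanishes identically at the critical exponent; hence $\gamma\equiv 0$ and, since $g\equiv 0$, $G=\tfrac12\gamma'\equiv 0$. It is therefore not a signed monomial but the zero function, so hypothesis (7) of Theorem \ref{theo-SW} ($G^-\not\equiv 0$) is violated and the Shioji--Watanabe criterion yields nothing. This is no accident: \eqref{equ-W} is invariant under $W\mapsto \lambda^{\frac{N-2}{2}}W(\lambda\cdot)$, so positive radial decaying solutions form a one-parameter family, and no theorem asserting ``at most one positive solution'' of the ODE problem with $\phi(0)>0$, $\phi(r)\to 0$ can possibly apply; uniqueness must be understood modulo this scaling (equivalently, after a normalization such as $W(0)=1$), which is precisely what the cited result of Yanagida provides. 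In short, both the existence of an optimizer and the uniqueness statement require external inputs that your argument does not reconstruct.
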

	
	\begin{proof} The existence of optimizers for \eqref{Sobo-ineq} was established by Lieb \cite[Theorem 4.3]{Lieb}. By a suitable scaling, we see that optimizers of \eqref{Sobo-ineq} are solutions to \eqref{equ-W}. Moreover, by \cite[Remark 2.1]{Yanagida}, $W$ is indeed a unique radial solution to \eqref{equ-W}.
	\end{proof}

	\begin{corollary}
		Let $N\geq 3$ and $b>0$. Let $W$ be the unique positive radial solution to \eqref{equ-W}. Then
		\begin{align} 
		\|\nabla W\|^2_{L^2} &= \int |x|^{-b} |W(x)|^{\frac{2N+2b}{N-2}} dx, \label{prop-W} \\
		E(W)&=\frac{b+2}{2N+2b}\|\nabla W\|_{L^2}^2. \label{prop-EW}
		\end{align}
	\end{corollary}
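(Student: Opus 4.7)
Both identities are direct consequences of testing the equation \eqref{equ-W} against $W$ and then substituting into the definition of $E$, so no compactness or variational machinery is needed beyond what is already recorded.

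For \eqref{prop-W} (which, to match the Sobolev inequality, one reads with $|x|^b$ in place of the exponent $-b$; I proceed on that understanding), the plan is to multiply the Euler--Lagrange equation
\[
-\Delta W - |x|^b |W|^{\frac{4+2b}{N-2}} W = 0
\]
by $\overline{W}$ and integrate over $\R^N$. The decay and regularity of $W$ (which follow from Proposition \ref{prop-phi}, applied in the energy-critical setting, or directly from the explicit formula \eqref{W}) justify the integration by parts
\[
-\int \Delta W \cdot \overline{W}\,dx = \int |\nabla W|^2\,dx,
\]
and the right-hand side becomes $\int |x|^b |W|^{\frac{2N+2b}{N-2}}\,dx$ since $\frac{4+2b}{N-2} + 2 = \frac{2N+2b}{N-2}$. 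This yields \eqref{prop-W}.

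For \eqref{prop-EW}, I would simply plug \eqref{prop-W} into the definition of the energy at the energy-critical exponent $p = \frac{N+2+2b}{N-2}$, for which $p+1 = \frac{2N+2b}{N-2}$ and hence $\frac{1}{p+1} = \frac{N-2}{2N+2b}$. This gives
\[
E(W) = \frac{1}{2}\|\nabla W\|_{L^2}^2 - \frac{N-2}{2N+2b}\int |x|^b |W|^{\frac{2N+2b}{N-2}}\,dx = \left(\frac{1}{2} - \frac{N-2}{2N+2b}\right)\|\nabla W\|_{L^2}^2,
\]
and the arithmetic $\frac{1}{2} - \frac{N-2}{2N+2b} = \frac{N+b-(N-2)}{2N+2b} = \frac{b+2}{2N+2b}$ finishes the proof. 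There is essentially no obstacle: the only subtlety is checking that the integration by parts is legitimate, which is handled by the explicit decay of $W$ from formula \eqref{W}, making the boundary terms vanish.
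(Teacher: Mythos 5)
Your proof is correct and is exactly the argument the paper leaves implicit: the corollary is stated without proof, and the intended justification is precisely your Pohozaev-type computation (pair \eqref{equ-W} with $W$, integrate by parts using the explicit decay from \eqref{W}) followed by the algebra $\tfrac12-\tfrac{N-2}{2N+2b}=\tfrac{b+2}{2N+2b}$; you are also right that the exponent $-b$ in \eqref{prop-W} is a typo for $+b$, as the later use of \eqref{prop-W} in Lemma \ref{lem-W-blow} confirms. The only small slip is the appeal to Proposition \ref{prop-phi}, which is stated for the subcritical equation \eqref{equ-Q} and does not cover the energy-critical case, but your alternative justification via the explicit formula \eqref{W} (giving $W\sim|x|^{-(N-2)}$ and $|\nabla W|\sim|x|^{-(N-1)}$, so the boundary terms vanish and all integrals converge) is sufficient.
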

	
	\begin{lemma} \label{lem-W-blow}
		Let $N\geq 3$, $b>0$, and $p=\frac{N+2+2b}{N-2}$. Let $u_0 \in H^1_{\rad}(\R^N)$ satisfy either $E(u_0)<0$ or if $E(u_0) \geq0$, we assume that \eqref{cond-blow-ener} holds. Then there exist $\vareps, \nu>0$ such that for all $t\in [0,T^*)$,
		\begin{align} \label{est-blow-ener}
		\|\nabla u(t)\|^2_{L^2} -  \int |x|^b |u(t,x)|^{\frac{2N+2b}{N-2}} dx + \vareps \|\nabla u(t)\|^2_{L^2} \leq -\nu.
		\end{align}
	\end{lemma}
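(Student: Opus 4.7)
The approach mirrors the proof of Lemma \ref{lem-Q-blow} but is cleaner because the energy-critical problem is invariant under $L^2$-scaling, so no mass factor is needed. The plan splits into two cases according to the sign of $E(u_0)$, and in each case the conclusion follows from a convexity/continuity argument based on the sharp Sobolev inequality \eqref{Sobo-ineq} together with the identities \eqref{prop-W} and \eqref{prop-EW}.

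\textbf{Step 1 (Setup and key function).} First I would introduce the real variable function
\[
f(y) := \tfrac{1}{2} y^2 - \tfrac{C_{\sha}}{p+1} y^{p+1}, \qquad p+1 = \tfrac{2N+2b}{N-2},
\]
and observe, via \eqref{Sobo-ineq}, that $E(u(t)) \geq f(\|\nabla u(t)\|_{L^2})$ for every $t \in [0,T^*)$. A direct computation of $f'$ shows that $f$ attains its maximum on $(0,\infty)$ at $y_* = C_{\sha}^{-1/(p-1)}$. Using \eqref{prop-W} one gets $C_{\sha} = \|\nabla W\|_{L^2}^{-(p-1)}$, so $y_* = \|\nabla W\|_{L^2}$ and, using \eqref{prop-EW}, $f(y_*) = \tfrac{p-1}{2(p+1)} \|\nabla W\|^2_{L^2} = E(W)$.

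\textbf{Step 2 (Case $E(u_0) < 0$).} Using $E(u(t)) = E(u_0)$, I would rewrite
\[
\int |x|^b|u(t,x)|^{p+1}\,dx = \tfrac{2N+2b}{N-2}\Bigl(\tfrac{1}{2}\|\nabla u(t)\|^2_{L^2} - E(u_0)\Bigr),
\]
so that the left-hand side of \eqref{est-blow-ener} becomes
\[
\Bigl(\vareps - \tfrac{b+2}{N-2}\Bigr)\|\nabla u(t)\|^2_{L^2} + \tfrac{2N+2b}{N-2} E(u_0).
\]
Choosing any $\vareps \in (0,\tfrac{b+2}{N-2})$ gives \eqref{est-blow-ener} with $\nu = -\tfrac{2N+2b}{N-2} E(u_0) > 0$.

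\textbf{Step 3 (Case $E(u_0) \geq 0$, the main case).} Here I would fix $\delta > 0$ with $E(u_0) \leq (1-\delta) E(W)$ and exploit the shape of $f$. The inequality $f(\|\nabla u(t)\|_{L^2}) \leq E(u_0) < E(W) = f(y_*)$ combined with the assumption $\|\nabla u_0\|_{L^2} > y_*$, the continuity of $t\mapsto \|\nabla u(t)\|_{L^2}$, and the fact that $f$ is strictly decreasing on $(y_*,\infty)$, forces $\|\nabla u(t)\|_{L^2}$ to stay in the right branch and, in fact, yields $\rho > 0$ (depending only on $\delta$, hence on $u_0$ and $W$) such that
\[
\|\nabla u(t)\|^2_{L^2} \geq (1+\rho)^2 \|\nabla W\|^2_{L^2}, \quad \forall t \in [0,T^*).
\]
Substituting the energy identity from Step 2 into the left-hand side of \eqref{est-blow-ener} gives
\[
\Bigl(\vareps - \tfrac{b+2}{N-2}\Bigr)\|\nabla u(t)\|^2_{L^2} + \tfrac{2N+2b}{N-2} E(u_0),
\]
and bounding $\|\nabla u(t)\|^2_{L^2}$ from below by $(1+\rho)^2\|\nabla W\|^2_{L^2}$ and $E(u_0)$ from above by $(1-\delta)\tfrac{b+2}{2(N+b)}\|\nabla W\|^2_{L^2}$, I arrive at an expression proportional to $\tfrac{\vareps(N-2)}{b+2}(1+\rho)^2 - (2\rho + \rho^2) - \delta$. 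Taking $\vareps > 0$ sufficiently small makes this quantity strictly negative, which provides the desired $\nu > 0$.

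\textbf{Main obstacle.} The only delicate point is the continuity/trapping argument in Step 3: I must rule out the possibility that $\|\nabla u(t)\|_{L^2}$ crosses the barrier $\|\nabla W\|_{L^2}$, which requires that $t\mapsto \|\nabla u(t)\|_{L^2}$ be continuous on $[0,T^*)$. This is guaranteed by the local well-posedness in $H^1_{\rad}(\R^N)$ from Proposition \ref{prop-lwp-ener} (note that the local solution belongs to $C([0,T^*),H^1_{\rad}(\R^N))$, so the gradient norm is continuous in time). Everything else reduces to elementary algebraic manipulations of the energy identity and scalar analysis of $f$.
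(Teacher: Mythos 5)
Your proposal is correct and follows essentially the same route as the paper: the sharp Sobolev inequality \eqref{Sobo-ineq} gives $E(u(t))\geq f(\|\nabla u(t)\|_{L^2})$, the continuity/trapping argument with $E(u_0)\leq(1-\delta)E(W)$ yields the quantitative lower bound $\|\nabla u(t)\|_{L^2}\geq(1+\rho)\|\nabla W\|_{L^2}$, and the conclusion follows by rewriting the left-hand side of \eqref{est-blow-ener} via the energy identity and taking $\vareps$ small. Working with the unnormalized function $f$ instead of the paper's rescaled $G(\lambda)$ is only a cosmetic difference, so no gaps to report.
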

	
	\begin{proof}
		The case $E(u_0)<0$ is similar to that of Lemma \ref{lem-Q-blow}. Let us consider the case $E(u_0)\geq 0$. In this case, we assume that \eqref{cond-blow-ener} holds. By \eqref{Sobo-ineq}, we have
		\begin{align}
		E(u(t)) &= \frac{1}{2} \|\nabla u(t)\|^2_{L^2} - \frac{N-2}{2N+2b} \int |x|^b |u(t,x)|^{\frac{2N+2b}{N-2}}dx \nonumber\\
		&\geq \frac{1}{2} \|\nabla u(t)\|^2_{L^2} - \frac{N-2}{2N+2b} C_{\sha} \|\nabla u(t)\|^{\frac{2N+2b}{N-2}}_{L^2} \nonumber\\
		&=F\left(\|\nabla u(t)\|_{L^2}\right), \label{defi-F-ener}
		\end{align}
		where
		\[
		F(\lambda) := \frac{1}{2}\lambda^2 -\frac{N-2}{2N+2b} C_{\sha} \lambda^{\frac{2N+2b}{N-2}}.
		\]
		By the definition of $C_{\sha}$, we see that
		\begin{align*}
		F\left(\|\nabla W\|_{L^2}\right) &= \frac{1}{2} \|\nabla W\|^2_{L^2} -\frac{N-2}{2N+2b}\int |x|^b |W(x)|^{\frac{2N+2b}{N-2}} dx\\
		&= \frac{1}{2}\|\nabla W\|^2_{L^2} - \frac{N-2}{2N+2b} \int |x|^b |W(x)|^{\frac{2N+2b}{N-2}} dx = E(W).
		\end{align*}
		By the first assumption in \eqref{cond-blow-ener} and the conservation of energy, we have
		\[
		F\left(\|\nabla u(t)\|_{L^2}\right) \leq E(u_0) <E(W) = F\left(\|\nabla W\|_{L^2}\right)
		\]
		for all $t\in [0,T^*)$. From this and the second assumption in \eqref{cond-blow-ener}, the continuity argument yields $\|\nabla u(t)\|_{L^2}>\|\nabla W\|_{L^2}$ for all $t\in [0,T^*)$. We next take $\vartheta =\vartheta(u_0,W)>0$ such that
		\begin{align} \label{defi-vartheta}
		E(u_0) \leq (1-\vartheta) E(W).
		\end{align}
		By \eqref{prop-W}, we have
		\[
		E(W) =\frac{2+b}{2N+2b} \|\nabla W\|^2_{L^2} = \frac{2+b}{2N+2b} C_{\sha} \|\nabla W\|^{\frac{2N+2b}{N-2}}_{L^2}
		\]
		which together with \eqref{defi-F-ener} and \eqref{defi-vartheta} yield
		\[
		\frac{N+b}{2+b} \left(\frac{\|\nabla u(t)\|_{L^2}}{\|\nabla W\|_{L^2}} \right)^2 -\frac{N-2}{2+b} \left(\frac{\|\nabla u(t)\|_{L^2}}{\|\nabla W\|_{L^2}} \right)^{\frac{2N+2b}{N-2}} \leq 1-\vartheta
		\]
		for all $t\in [0,T^*)$. As $\|\nabla u(t)\|_{L^2}>\|\nabla W\|_{L^2}$ for all $t\in [0,T^*)$, we consider
		\[
		G(\lambda):= \frac{N+b}{2+b} \lambda^2 -\frac{N-2}{2+b} \lambda^{\frac{2N+2b}{N-2}}, \quad \lambda>1.
		\]
		As $G$ is strictly decreasing on $(1,\infty)$ with $G(1)=1$, there exists $\rho>0$ depending on $\vartheta$ such that 
		\[
		G(\lambda)\leq 1-\vartheta\Longrightarrow \lambda>1+\rho.
		\]
		In particular, we obtain
		\begin{align} \label{est-solu-ener}
		\|\nabla u(t)\|_{L^2} >(1+\rho) \|\nabla W\|_{L^2}
		\end{align}
		for all $t\in [0,T^*)$. Denote the left hand side of \eqref{est-blow-ener} by $H(u(t))$. Using the conservation of energy, \eqref{defi-vartheta}, \eqref{prop-EW}, and \eqref{est-solu-ener}, we have for $\vareps<\frac{2+b}{N-2}$,
		\begin{align*}
		H(u(t)) &= \frac{2N+2b}{N-2} E(u(t)) - \left(\frac{2+b}{N-2}-\vareps\right) \|\nabla u(t)\|^2_{L^2} \\
		& \leq \frac{2N+2b}{N-2} (1-\vartheta)E(W) - \left(\frac{2+b}{N-2}-\vareps\right) (1+\rho)^2 \|\nabla W\|^2_{L^2} \\
		&=\left( -\frac{2+b}{N-2} (\vartheta+2\rho +\rho^2) + \vareps(1+\rho)^2\right) \|\nabla W\|^2_{L^2}
		\end{align*}
		for all $t\in [0,T^*)$. Taking $\vareps>0$ sufficiently small, we obtain \eqref{est-blow-ener} with
		\[
		\nu = \left(\frac{2+b}{N-2}(\vartheta + 2\rho +\rho^2) -\vareps (1+\rho)^2\right) \|\nabla W\|^2_{L^2}>0.
		\]
		The proof is complete.		
	\end{proof}
	
	We have the following virial estimates in the energy-critical case whose proof is similar to that of Lemma \ref{lem-viri-est-inte}.
	\begin{lemma} \label{lem-viri-est-ener}
		Let $N\geq 4$, $0<b\leq 2(N-3)$, and $p=\frac{N+2+2b}{N-2}$. Let $u \in C([0,T^*), H^1_{\rad}(\R^N))$ be a solution to \eqref{INLS}. Let $\psi_R$ be as in \eqref{psi-R} and define $V_{\psi_R}(t)$ as in \eqref{V-varphi}. Then for any $R>0$, we have for all $t\in [0,T^*)$,
		\begin{align} \label{viri-est-ener}
		\begin{aligned}
		V''_{\psi_R}(t) \leq 8 \|\nabla u(t)\|^2_{L^2} &- 8 \int |x|^b |u(t,x)|^{\frac{2N+2b}{N-2}} dx \\
		&+ CR^{-2} + \left\{
		\begin{array}{ccc}
		C R^{-(2(N-1)-b)}\|\nabla u(t)\|^2_{L^2} &\text{if} & p=5, \\
		C R^{-\left(\frac{(2+b)(N-1)}{N-2}-b\right)} \left(\|\nabla u(t)\|^2_{L^2} + 1\right) &\text{if}& p<5,
		\end{array}
		\right.
		\end{aligned}
		\end{align}
		for some constant $C>0$.
	\end{lemma}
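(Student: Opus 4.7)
The plan is to mimic the proof of Lemma \ref{lem-viri-est-inte} step by step, taking advantage of the algebraic simplifications produced by the energy-critical exponent $p=\frac{N+2+2b}{N-2}$. First I apply the virial identity of Lemma \ref{lem-viri-iden} to $\psi_R$. Because $\psi_R(x)=|x|^2$ on $\{|x|\le R\}$, the contribution of the ball reorganises into
\[
8\int_{|x|\le R}|\nabla u(t)|^2\,dx-\frac{4N(p-1)-8b}{p+1}\int_{|x|\le R}|x|^b|u(t)|^{p+1}\,dx,
\]
and a direct computation with $p-1=\frac{4+2b}{N-2}$, $p+1=\frac{2N+2b}{N-2}$ gives $\frac{4N(p-1)-8b}{p+1}=8$, which yields the desired principal part $8\|\nabla u(t)\|_{L^2}^2-8\int|x|^b|u(t)|^{\frac{2N+2b}{N-2}}dx$ once the outer integrals are added back.

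Next I control the six outer remainder terms. The bi-Laplacian term is bounded by $CR^{-2}\|\Delta^2\psi_R\|_{L^\infty}\|u(t)\|_{L^2}^2\lesssim R^{-2}$ thanks to mass conservation. Using that $u$ is radial, the Hessian term reduces to $4\int_{|x|>R}(\psi_R''-2)|\nabla u(t)|^2dx\le 0$ since $\psi_R''\le 2$, so it can be dropped. The two outer potential terms are combined and estimated in absolute value by a constant times
\[
\int_{|x|>R}|x|^b|u(t)|^{p+1}\,dx,
\]
using only $\|\Delta\psi_R\|_{L^\infty}+\|\psi_R'/r\|_{L^\infty}\lesssim 1$.

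The heart of the argument is then to bound this potential tail by a radial Sobolev embedding. Factoring $|x|^b|u|^{p+1}=(|x|^{b/(p-1)}|u|)^{p-1}|u|^2$ and invoking \eqref{est-Strauss} together with mass conservation gives
\[
\int_{|x|>R}|x|^b|u(t)|^{p+1}\,dx\le C\,R^{-\left(\frac{(N-1)(p-1)}{2}-b\right)}\|\nabla u(t)\|_{L^2}^{\frac{p-1}{2}},
\]
and the exponent of $R$ equals $\frac{(2+b)(N-1)}{N-2}-b$ at the energy-critical $p$. When $p=5$, i.e.\ $b=2(N-3)$, we have $\frac{p-1}{2}=2$ and the claimed estimate follows with the coefficient $CR^{-(2(N-1)-b)}\|\nabla u(t)\|_{L^2}^2$. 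When $p<5$, i.e.\ $0<b<2(N-3)$, we have $\frac{p-1}{2}<2$, and the elementary inequality $x^{(p-1)/2}\le x^2+1$ for $x\ge 0$ (applied with $x=\|\nabla u(t)\|_{L^2}$) converts $\|\nabla u(t)\|_{L^2}^{(p-1)/2}$ into $\|\nabla u(t)\|_{L^2}^2+1$, yielding the second branch of the estimate. Combining the principal part with these outer bounds gives \eqref{viri-est-ener}.

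The restrictions $N\ge 4$ and $0<b\le 2(N-3)$ enter exactly at the last step: they guarantee $p=\frac{N+2+2b}{N-2}\le 5$, which is what allows the monomial $\|\nabla u(t)\|_{L^2}^{(p-1)/2}$ to be dominated by $\|\nabla u(t)\|_{L^2}^2+1$ (respectively to match the $p=5$ branch). I expect no serious obstacle beyond careful bookkeeping of the constants arising from the cutoff $\psi_R$; the only delicate point is checking that the coefficient of the potential term in the inner region collapses to exactly $8$, which is the signature of the energy-critical scaling and is what produces the right-hand side of \eqref{viri-est-ener}.
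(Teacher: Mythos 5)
Your proposal is correct and follows exactly the route the paper intends: the paper omits the proof, stating only that it is "similar to that of Lemma \ref{lem-viri-est-inte}", and your argument is precisely that adaptation, with the key algebraic check that $\frac{4N(p-1)-8b}{p+1}=8$ at $p=\frac{N+2+2b}{N-2}$ and the radial Sobolev bound on the tail $\int_{|x|>R}|x|^b|u|^{p+1}dx$ carried out correctly, including the exponent computations for the two branches $p=5$ and $p<5$.
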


	\begin{proof}[Proof of Theorem \ref{theo-blow-ener}]
	The proof is similar to that of Theorem \ref{theo-blow-inter} using \eqref{est-blow-ener}. We thus omit the details.
	\end{proof}

	\section*{Acknowledgment}
	V.D.D. was partially supported by the ERC Grant CORFRONMAT No. 758620 (PI: Nicolas Rougerie). V.D.D. would like to express his deep gratitude to his wife - Uyen Cong for her encouragement and support.




\begin{thebibliography}{99}

		\bibitem{AT} L. Aloui and S. Tayachi, Local well-posedness for the inhomogeneous nonlinear Schr\"odinger equation, Discrete Cont. Dyn. Syst. 41 (2021), no. 11, 5409--5437.
		
		\bibitem{Arora} A. K. Arora, Scattering of radial data in the focusing NLS and generalized Hartree equations, Discrete Cont. Dyn. Syst. 39 (2019), 6643--6668.
		
		\bibitem{ADM} A. K. Arora, B. Dodson, and J. Murphy, Scattering below the ground state for the 2D radial
		nonlinear Schrödinger equation, Proc. Amer. Math. Soc. 148 (2020), 1653--1663.
		
		\bibitem{BL} R. Bai and B. Li, Blow-up for the inhomogeneous nonlinear Schr\"odinger equation, preprint, \url{https://arxiv.org/abs/2103.13214}, 2021.
		
		\bibitem{BPVT} J. Belmonte-Beitia, V. M. P\'erez-Garc\'ia, V. Vekslerchik, and P. J. Torres, Lie symmetries and solitons in nonlinear systems with spatially inhomogeneous nonlinearities, Phys. Rev. Lett. 98 (2007), 064102.
		
		\bibitem{BL} H. Berestycki and P.-L. Lions, Nonlinear scalar field equation, I: Existence of ground state, Arch. Ration. Mech. Anal. 82 (1983), 313--345.
		
		\bibitem{CKN84}	L. Caffarelli, R. Kohn, and L. Nirenberg, First order interpolation inequalities with weights, Compos. Math. 53 (1984), 259--275.
		
		\bibitem{Campos} L. Campos, Scattering of radial solutions to the inhomogeneous nonlinear Schr\"odinger equation, Nonlinear Anal. 202 (2021), 112118.
		
		\bibitem{CFGM} M. Cardoso, L. G. Farah, C. M. Guzm\'an, and J. Murphy, Scattering below the ground state for the intercritical non-radial inhomogeneous NLS, preprint, \url{http://arxiv.org/abs/2007.06165}, 2021.
		
		\bibitem{CW} T. Cazenave and F. B. Weissler, The Cauchy problem for the critical nonlinear Schr\"odinger equation in $H^s$, Nonlinear Anal. 14 (1990), no. 10, 807--836.
		
		\bibitem{CW-CMP} T. Cazenave and F. B. Weissler, Rapidly decaying solutions of the nonlinear Schr\"odinger equation, Commun. Math. Phys. 147 (1992), 75--100.
		
		\bibitem{Cazenave} T. Cazenave, Semilinear Schr\"odinger equations, Courant Lecture Notes in Mathematics, Vol. 10, New York University, Courant Institute of Mathematical Sciences, New York; American Mathematical Society, Providence, RI, 2003.
		
		\bibitem{CG-DCDS-B} J. Chen and B. Guo, Sharp global existence and blowing up results for inhomogeneous
		Schr\"odinger equations. Discrete Contin. Dynam. Systems Series B 8 (2007), 357--367.
		
		\bibitem{Chen} J. Chen, On a class of nonlinear inhomogeneous Schr\"odinger equation, J. Appl. Math. Comput. 32 (2010), 237--253.
		
		\bibitem{Chen-CMJ} J. Chen, On the inhomogeneous nonlinear Schr\"odinger equation with harmonic potential and unbounded coefficient, Czech. Math. J. 60 (2010), 715--736.
		
		\bibitem{CG-AM} J. Chen and B. Guo, Sharp constant of an improved Gagliardo-Nirenberg inequality and its application, Annali di Matematica 190 (2011), 341--354.
		
		\bibitem{CO} Y. Cho and T. Ozawa, Sobolev inequalities with symmetry, Comm. Contemp. Math. 11 (2009), no. 3, 355--365.
		
		\bibitem{Dinh-NA} V. D. Dinh, Blowup of $H^1$ solutions for a class of the focusing inhomogeneous nonlinear Schr\"odinger equation, Nonlinear Anal., 174 (2018), 169--188.
		
		\bibitem{Dinh} V. D. Dinh, Scattering theory in weighted $L^2$ space for a class of the defocusing inhomogeneous nonlinear Schrödinger equation, Adv. Pure Appl. Math. 12 (2021), no. 3, 38--72..
		
		\bibitem{Dinh-2D} V. D. Dinh, Energy scattering for a class of inhomogeneous nonlinear Schr\"odinger equations in two dimensions, J. Hyper. Diff. Equ. 18 (2021), 1--28.
		
		\bibitem{DK} V. D. Dinh and S. Keraani, The Sobolev-Morawetz approach for the energy scattering of nonlinear Schr\"odinger-type equations with radial data, Discrete Cont. Dyn. Syst. Series S 14 (2021), no. 8, 2837--2876.
		
		\bibitem{DK-SIAM} V. D. Dinh and S. Keraani, Long time dynamics of non-radial solutions to inhomogeneous nonlinear Schr\"odinger equations, SIAM J. Math. Anal. 54 (2021), no. 4, 4765--4811.
		
		\bibitem{DM} B. Dodson and J. Murphy, A new proof of scattering below the ground state for the 3D radial focusing cubic NLS, Proc. Amer. Math. Soc. 145 (2017), no. 11, 4859--4867.
		
		\bibitem{Farah} L. G. Farah, Global well-posedness and blow-up on the energy space for the inhomogeneous nonlinear Schrödinger equation, J. Evol. Equ. 16 (2016), no. 1, 193--208.
		
		\bibitem{FG-JDE} L. G. Farah and C. M. Guzm\'an, Scattering for the radial 3D cubic focusing inhomogeneous nonlinear Schrödinger equation, J. Differential Equations 262 (2017), no. 8, 4175--4231.
		
		\bibitem{FG-BBMS} L. G. Farah and C. M. Guzm\'an, Scattering for the radial focusing inhomogeneous NLS equation in higher dimensions, Bull. Braz. Math. Soc. (N.S.) 51 (2020), no. 2, 449--512.
		
		\bibitem{FW} G. Fibich and X. -P. Wang, Stability of solitary waves for non-linear Schr\"odinger equations with inhomogeneous nonlinearity, Phys. D 175 (2003), 96--108.

 		\bibitem{GS} F. Genoud and C. A. Stuart, Schr\"odinger equations with a spatially decaying nonlinearity: existence and stability of standing waves, Discrete Contin. Dyn. Syst. 21 (2008), 137--18.
		
		\bibitem{GT} D. Gilbarg and N. S. Trudinger, Elliptic partial differential equations of second order, Classics in Mathematics, Springer-Verlag, Berlin, 2001. Reprint of the 1998 edition.
		
		\bibitem{Gill} T. S. Gill, Optical guiding of laser beam in nonuniform plasma, Pramana J. Phys. 55 (2000), 845--852.
		
		\bibitem{Glassey} R. T. Glassey, On the blowing up of solutions to the Cauchy problem for nonlinear Schr\"odinger equations, J. Math. Phys. 18 (1977), no. 9, 1794--1797.

		\bibitem{Guzman} C. M. Guzm\'an, On well posedness for the inhomogeneous nonlinear Schr\"odinger equation, Nonlinear Anal. Real World Appl. 37 (2017), 249--286.
		
		\bibitem{KM} C. E. Kenig and F. Merle, Global well-posedness, scattering and blow-up for the energy-critical, focusing, nonlinear Schr\"odinger equation in the radial case, Invent. Math. 166 (2006), no. 3, 645--675.

		\bibitem{KLS} J. Kim, Y. Lee, and I. Seo, On well-posedness for the inhomogeneous nonlinear Schr\"odinger equation in the critical case, J. Differential Equations 280 (2021), 179--202.
		
		\bibitem{Lieb} E. H. Lieb, Sharp constants in the Hardy-Littlewood-Sobolev and related inequalities, Ann. Math.
		118 (1983), 349--374.
		
		\bibitem{LWW} Y. Liu, X. -P. Wang and K. Wang, Instability of standing waves of the Schr\"odinger equation with inhomogeneous nonlinearity, Trans. Am. Math. Soc. 358 (2006), 2105--2122.
		
		\bibitem{LW} Z. Liu and Z.-Q. Wang, Ground states and bound states of a nonlinear Schr\"odinger system, Adv. Nonlinear Stud. 10 (2010), 175--193.
		
		\bibitem{Merle} F. Merle, Nonexistence of minimal blow-up solutions of equations $iu_ t = -\Delta u - k(x)| u|^{4/N} u$ in $\mathbb{R}^N$, Ann. Inst. Henri Poincar\'e, Phys. Th\'eor., 64 (1996), 33--85.
		
		\bibitem{MMZ} C. Miao, J. Murphy, and J. Zheng, Scattering for the non-radial inhomogeneous NLS, to appear in Mathematical Research Letters, \url{http://arxiv.org/abs/1912.01318}, 2021.
		
		\bibitem{Murphy} J. Murphy, A simple proof of scattering for the intercritical inhomogeneous NLS, Proc. Amer. Math. Soc., \url{ https://doi.org/10.1090/proc/15717}, 2021.
		
		\bibitem{OT-JDE} T. Ogawa and Y. Tsutsumi, Blow-up of $H^1$ solution for the nonlinear Schr\"odinger equation, J. Differential Equations 92 (1991), no. 2, 317--330.
		
		\bibitem{OT-PAMS} T. Ogawa and Y. Tsutsumi, Blow-up of $H^1$ solutions for the one-dimensional nonlinear Schr\"odinger equation with critical power nonlinearity, Proc. Amer. Math. Soc. 111 (1991), no. 2, 487--496.
		
		\bibitem{OK} B. Opic and A. Kufner, Hardy-type inequalities, Pitman Research Notes in Mathematics Series, Vol. 219, Longman Scientific \& Technical, Harlow, 1990.
		
		\bibitem{Strauss} W. A. Strauss, Existence of solitary waves in higher dimensions, Comm. Math. Phys. 55 (1977), no. 2, 149--162.
		
		\bibitem{SW} N. Shioji and K. Watanabe, A generalized Pohozaev identity and uniqueness of positive radial
		solutions of $\Delta u + g(r)u + h(r)u^p = 0$, J. Differential Equations 255 (2013), 4448--4475.
		
		\bibitem{TY2021} R. Takada and K. Yoneda, Higher-order interpolation inequalities with weights for radial functions, Nonlinear Anal. Theory Methods Appl. Ser. A, Theory Methods. 203 (2021), 112158.
		
		\bibitem{TS} X.-Y. Tang and P. K. Shukla, Solution of the one-dimensional spatially inhomogeneous cubic-quintic nonlinear Schr\"odinger equation with an external potential, Physical Review A 76 (2007), 013612.
		
		\bibitem{Tao} T. Tao, On the asymptotic behavior of large radial data for a focusing non-linear Schr\"odinger equation, Dyn. Partial. Differ. Equ. 1 (2004), no. 1, 1--48.
		
		\bibitem{Yanagida} E. Yanagida, Uniqueness of positive radial solutions of $\Delta u + g(r)u + h(r)u^p = 0$ in $\R^n$, Arch. Rational Mech. Anal. 115 (1991), 257--274.
		
		\bibitem{Zhu} S. Zhu, Blow-up solutions for the inhomogeneous Schr\"odinger equation with $L^2$ supercritical nonlinearity, J. Math. Anal. Appl. 409 (2014), 760--776.
		
	\end{thebibliography}
\end{document}